\documentclass[10pt]{article}

\usepackage{amssymb,amsmath,amsthm,amsfonts}
\usepackage{graphicx}
\usepackage{mathrsfs}
\usepackage{dsfont}
\usepackage{subfig}
\usepackage{comment}
\usepackage[title,titletoc]{appendix} 
\usepackage{bm}
\graphicspath{{./pics/}}
\usepackage[colorlinks,linktocpage,linkcolor=blue]{hyperref}
\usepackage{verbatim}
\usepackage{algorithm,algorithmic}
\usepackage{pdflscape}
\usepackage{rotating}
\usepackage{relsize}
\usepackage{subcaption}
\usepackage[compact]{titlesec}
\titlespacing*{\section}{0pt}{*0.8}{*0.5} 
\allowdisplaybreaks[4] 

\newtheorem{theorem}{Theorem}[section]
\newtheorem{remark}{Remark}[section]
\newtheorem{definition}{Definition}[section]
\newtheorem{lemma}{Lemma}[section]

\newtheorem{proposition}{Proposition}[section]

\title{Analysis of Robin-boundary control for the Boussinesq equations}
\author{Wei Gong$^\diamond$, Dongdong Liang$^\dag$, Xianbing Luo$^\ddag$ and Changlun Ye$^{\star}$}
 \date{\footnotetext{$^\diamond$SKLMS \& NCMIS, Institute of Computational Mathematics, Academy of Mathematics and Systems Science, Chinese Academy of Sciences, Beijing 100190, China. Email: wgong@lsec.cc.ac.cn. This author was supported by the National Key Research and Development Program of China (2022YFA1004402), the NSFC under grant no. 12494543 and 12471393, the Strategic Priority Research Program of the Chinese Academy of Sciences XDB0640000 \& XDB0640200.\\
 $^\dag$School of Mathematics, Xi'an University of Technology, Xi'an 710048, China. 
 Email: liang\_d1221@163.com.\\
 $^\ddag$School of Mathematics and Statistics, Guizhou University, Guiyang 550025, China. Email: xbluo1@gzu.edu.cn.\\
 $^\star$School of Mathematical Sciences, Guizhou Normal University, Guiyang 550025, China. Email: chlunye@gznu.edu.cn.}}

\begin{document}
\maketitle

\begin{abstract}
In this paper, we study an optimal boundary control problem for the Boussinesq equations, which couple the time-dependent Navier-Stokes system with a heat equation, where the control enters through a Robin boundary condition on temperature. We begin by establishing the well-posedness of the optimization problem via a variational framework. We then derive both first- and second-order optimality conditions, including explicit characterizations of the adjoint state and the optimal control. Next, we perform a detailed numerical analysis of a fully discrete scheme: using finite elements in space and a semi-implicit scheme in time, combined with variational discretization for the control. We present rigorous a prior error estimates for the state, adjoint state, and control variables. Numerical experiments are provided to validate our theoretical results.

\textbf{Keywords}: Boussinesq equations, Robin boundary control, first and second order optimality conditions, finite element, error estimate

\textbf{Subject Classification}: 49J20, 49K20, 65M60, 65M12, 76D05.

\end{abstract}

\section{Introduction}\setcounter{equation}{0}
Let $\Omega\subseteq \mathbb{R}^2$ be a convex polygonal domain with boundary $\Gamma$ and $I:=(0,T)$ for some constant $T>0$. Given the regularization parameter $\alpha>0$, the target states $\mathbf y_d\in  L^2(I;\mathbb{L}^2(\Omega))$ and $\mathbf \theta_d\in L^2(0,T;L^2(\Omega))$, consider the following optimal Robin boundary control problem:
\begin{equation}
\mathrm{(P)}\quad\begin{aligned}
		\min_{\substack{\tiny u\in \mathcal U_{ad}}}\quad J(u,(\mathbf y,\theta))=\frac{1}{2}\int_{I}\int_\Omega |\mathbf{y}-\mathbf{y}_d|^2dx dt&+\frac{1}{2}\int_{I}\int_\Omega |  \theta- \theta_d|^2dx dt+\frac{\alpha}{2}\int_{I}\int_\Gamma| u|^2ds dt,
	\end{aligned}\label{Goal:Tracking}
\end{equation}
where $u$ is the control and $(\mathbf y,\theta)$ is the state, which satisfy the following Boussinesq equations
	\begin{equation}\label{intr:Rb_state}
		\begin{cases}
			\partial_t \mathbf y-\nu \Delta \mathbf y+(\mathbf y\cdot\nabla)\mathbf y+\nabla p+\beta\theta \mathbf g=\mathbf  h &\mbox{in}\  \Omega_T:=I\times\Omega, \\ 
			\mathrm{div}\, \mathbf y= 0  &\mbox{in}\ \Omega_T,\\
            \mathbf y=\mathbf 0 & \mbox{on}\  \Sigma_T:=I\times\Gamma,\\
          \partial_t  \theta-\chi\Delta\theta+(\mathbf y\cdot\nabla)\theta=f &\mbox{in}\ \Omega_T,\\
             \frac{\chi}{\eta}\frac{\partial \theta}{\partial \mathbf n}+\gamma \theta=u& \mbox{on}\  \Sigma_T
		\end{cases}
	\end{equation}
with initial values 
\begin{equation}\label{intr:Rb_state_initial}
    \mathbf y(0)=\mathbf y_0\quad   \mbox{in}\ \Omega,\quad
    \theta(0)=\theta_0\quad  \mbox{in}\ \Omega.
\end{equation}
Here  $\mathbf g\in \mathbb{R}^2$ is the acceleration of gravity, $\nu$, $\chi$, $\gamma$ and $\eta$ are some given positive constants, and $\beta$ is a given constant. The admissible set of controls $\mathcal U_{ad}$ is defined as 
	\begin{equation}
		\mathcal{U}_{ad}:=\big\{u\in L^2(I;L^2(\Gamma)):\quad u_a\le u(t,x)\le u_b,\quad  \mbox{a.e.}\  (t,x)\in \Sigma_T\big\}\nonumber
	\end{equation}
with $u_a,\,u_b\in \mathbb{R}\cup\{\infty\}$ satisfying  $u_a<u_b$.

Thermally driven flows are commonly described by the Boussinesq equations. In practical applications, thermal convection plays a crucial role in the regulation of a variety of physical processes, including crystal growth, solidification, cooling of fluids in channels surrounding nuclear reactor cores, and the design and control of energy-efficient building systems (see, e.g., \cite{Kunisch_mardue, BarwolffHinze2007, ItoRavindran, AbergelTemam}). Given its wide applicability, the control of thermal convection has been extensively studied from various perspectives over the past few decades. Firstly, for the derivation of the optimality conditions and numerical simulations related to the optimal control of stationary thermally convected fluid flows, we refer to \cite{LeeImanuvilov, Gunzburger_Hou_1993, ItoRavindran, Ravindran}. Specifically, \cite{ItoRavindran, Gunzburger_Lee_1994} derived finite element error estimates within the framework of BRR theory (see \cite{Girault_1986}), which is, however, not applicable for the control constrained case.  Secondly, although the derivation of optimality conditions and numerical simulations for the optimal control problem of unsteady thermal convection fluids can be found in the literature (see, e.g., \cite{BarwolffHinze2007, Hinze_Matthes_2007, Jork_Simon, Peralta, SongYuanYue}), the analysis of its numerical convergence remains unexplored. This is mainly due to the high nonlinearity and coupling characteristics of the problem $(\mathrm{P})$ that make the numerical analysis within the classical framework highly challenging.

 Theoretical and numerical analysis for optimal controls of fluid flows has been a hot research topic since the 1990s (cf. \cite{AbergelTemam,DeckelnickHinze2004,Gunzburger,Gunzburger_Hou_1993,Gunzburger_Hou_Svobodny,Hinze_2001}). For error estimates of control problems governed by isothermal steady fluid dynamics, we refer the reader to \cite{Casas_Mateos_Raymond_2007, ZhouGong2022, RoschVexler2006, LiuYan2006}. In particular, the development of error estimates for the unsteady case has produced a substantial body of work focusing primarily on distributed controls (see \cite{AnhNguyet, DeckelnickHinze2004, Casas_Chrysafinos_2016, Casas_Chrysafinos_2015, Casas_Chrysafinos_2012, CasasChrysafinos2017, PeraltaSimon2021, LeykekhmanVexlerWagner2025}).  The problem $(\mathrm{P})$ belongs to the category of Robin boundary control, whose numerical analysis in the literature is rather limited. \cite{ChrysafGunHou2006} establishes the existence of an optimal control and derives the corresponding optimality system for a Robin boundary control problem governed by semilinear parabolic PDEs. A semidiscrete finite element scheme is then developed, for which rigorous error estimates are proved using the BRR theory.  \cite{Chrysafinos2014} establishes fully discrete finite element error estimates for an optimal Robin boundary control problem governed by linear parabolic PDEs with low-regularity initial data, employing conforming spatial discretization and discontinuous Galerkin time-stepping under weak regularity assumptions. Here, it is important to note that the time step and the mesh size are coupled.   Additionally, temporal discretization was performed using the discontinuous Galerkin time-stepping method in the previous analysis, primarily due to its consistency in discretizing the time direction, which facilitates numerical analysis. In this context, the main novelty of this paper lies in the following several aspects:
\begin{enumerate}
    \item Although there are a lot of theoretical works for optimal control problems governed by multiphysical problems, we are not aware of any numerical analysis for such kind of problems, and this work contributes to a first rigorous error estimate without  the coupling condition between the spatial and temporal step sizes;
    
    \item  All numerical analysis of time-dependent optimal control problems governed by nonlinear state equations considers fully implicit time discretization method or implicit-explicit scheme with fixed time step size,  this work gives a first numerical analysis for an implicit-explicit scheme with variable time step size  under reasonable regularity assumption. 
\end{enumerate}

Specifically, for the problem $(\mathrm{P})$ addressed in this paper, the second-order optimality conditions have not been established in the literature, but are indispensable for numerical analysis (cf. \cite{Casas_Troltzsch_2012}). Under low-regularity assumptions on the data, we derive both necessary and sufficient second-order optimality conditions for the problem. Furthermore, assuming higher data regularity, we employ a bootstrapping argument to obtain higher-order regularity estimates for the optimal control, as well as the corresponding state and adjoint variables. The another main contributions of this work consist of deriving optimal-order space-time error estimates for a fully discrete scheme under suitable regularity assumption, and presenting a detailed error analysis for the uncontrolled state and adjoint equations associated with the proposed scheme. To decouple the dependence between the time step size and the spatial mesh size in the error analysis, we first introduce a spatially semi-discrete formulation of the state and adjoint equations, providing stability analysis and spatial error estimates. On this basis, we discretize the time direction using a variable-step implicit-explicit (IMEX) scheme, leading to a fully discrete state equation for which corresponding stability and error estimates are established. Subsequently, we discretize the optimal control problem $(\mathrm{P})$ via a variational discretization approach. This yields a discrete first-order optimality system, and we provide stability and error analysis for the discrete adjoint equation. In particular, our analysis reveals that a relatively mild condition on the time step is required to obtain optimal-order error estimates for  the adjoint variable in the $L^2(I;L^2(\Omega))$-norm—a requirement stemming from the shifted temporal grid structure in the discretized adjoint equation. To relate the error estimates for the optimal control problem to those for the uncontrolled state and adjoint equations, we adapt the general analytical framework of \cite{Casas_Chrysafinos_2012} that concerns particularly a distributed optimal control problem. With careful modifications tailored to the nonlinear unsteady boundary control problem studied here, we ultimately derive optimal error estimates.

The remainder of this paper is organized as follows.  Section 2 introduces the necessary preliminaries. In Section 3, we derive the first- and second- order optimality conditions and investigate the regularity of the optimal solution pair for the Robin boundary control problem. Our main theoretical results are presented in Section 4, which includes the analysis of both semi- and fully-discrete schemes for the state and adjoint equations, the proof of convergence for the discrete control problem, and the derivation of optimal space-time error estimates.  Numerical experiments that validate the theoretical convergence rates are provided in Section 5. 

\section{Preliminaries}\setcounter{equation}{0}
We follow the standard notation for function spaces, norms and differential
operators that can be found for example in \cite{Adam_2005,SB2008}. The inner products and norms for $L^2(\Omega)$ and $L^2(\Gamma)$ are denoted by $(\cdot,\cdot)$, $\|\cdot\|$ and $(\cdot,\cdot)_\Gamma$, $\|\cdot\|_\Gamma$, respectively.
Let $H^{-s}(\Gamma):=H^s(\Gamma)^*$ and \({ H}^{-1}(\Omega) := ({H}^1_0(\Omega))^* \). The notation $\langle\cdot,\cdot\rangle_{H^s(\Gamma),H^{-s}(\Gamma)}$ (abr. $\langle\cdot,\cdot\rangle_{\Gamma}$ for $s=\frac{1}{2}$) denotes the duality pairing between $H^s(\Gamma)$ and $H^{-s}(\Gamma)$, and the analogue for the notation \( \langle \cdot, \cdot \rangle_{H^1(\Omega), H^{-1}(\Omega)}\). 
Define
$$L_0^2(\Omega) := \{ v \in L^2(\Omega) : (v,1) = 0 \}.$$
Furthermore, we define the Cartesian spaces \( \mathbb{L}^p(\Omega) := L^p(\Omega)^2 \), \( \mathbb{H}^1(\Omega) := H^1(\Omega)^2 \), \( \mathbb{H}_0^1(\Omega) := H_0^1(\Omega)^2 \), \( \mathbb{W}^{s,p}(\Omega) := W^{s,p}(\Omega)^2 \) and \(\mathbb{ H}^{-1}(\Omega) := (\mathbb{H}_0^1(\Omega))^* \). Define the divergence-free vector fields as follows:
\begin{align*}
\mathbb{X}: = \left\{ \mathbf y \in \mathbb{H}_0^1(\Omega) : \text{div} \, \mathbf y = 0\ \, \text{in}\ \, \Omega \right\},\quad
\mathbb{H}: = \left\{ \mathbf y \in \mathbb L^2(\Omega) : \text{div} \,\mathbf y = 0\ \, \text{in}\ \, \Omega,\ \,\mathbf y \cdot \mathbf n = 0 \, \text{ on } \, \Gamma \right\}.
\end{align*}
The dual space of $\mathbb{X}$ is denoted as $\mathbb{X}^*$, with the duality pairing \( \langle \cdot, \cdot \rangle_{\mathbb X, \mathbb X^*} \).   

For given Banach spaces $X$, we also use the standard notation $L^p(I; X)$ and $W^{s,p}(I; X)$ $(p\ge 1, s\geq 0)$ for Bohner spaces with associated norms (cf. \cite{Lions_1972}). 
The dual space of $L^p(I; X)(1<p<\infty) $ is \( L^{p'}(I; X^*)\) with \( \frac{1}{p} + \frac{1}{p'} = 1 \).
For simplicity, we abbreviate  $H^s(I; X):=W^{s,2}(I; X)$ for $p=2$.
The duality relationship between \( L^p(I; X) \) and \(L^{p'}(I; X^*) \) is defined as
\[
\langle u, v \rangle_{L^p(I;X), L^{p'}(I;X^*)} = \int_I\, \langle u(t), v(t) \rangle_{X, X^*} \, dt\qquad\forall (u, v)\in L^p(I; X)\times L^{p'}(I; X^*).
\]
Let $H^s(I; X) := [H^k(I; X), L^2(I; X)]_\theta$ with $(1-\theta)k = s$ (cf. \cite[p. 47]{Lions_1972}). 

Define the following spaces:
\begin{align*}
&\mathbb{W}(I) := \left\{ \mathbf{y} \in L^2(I; \mathbb{X}) : \partial_t \mathbf{y} \in L^2(I; \mathbb{X}^*) \right\}, \quad W(I) := \left\{ \theta \in L^2(I; H^1(\Omega)) : \partial_t \theta \in L^2(I; H^{1}(\Omega)^*) \right\},\\
&\mathbb{V}(I) := \left\{ \mathbf{y} \in L^2(I; \mathbb{H}^2(\Omega) \cap \mathbb{X}) : \partial_t \mathbf{y} \in L^2(I; \mathbb{H}) \right\}, \quad V(I) := \{ \theta \in L^2(I; H^2(\Omega)) : \partial_t \theta \in L^2(I; L^2(\Omega))\}
\end{align*}
endowed with the following norms for any $y\in \mathbb{W}(I)$, $\theta\in W(I)$, $\hat y\in \mathbb{V}(I)$ and $\hat\theta\in V(I)$:
\begin{align*}
&\|\mathbf{y}\|_{\mathbb{W}(I)} = \left( \|\mathbf{y}\|^2_{L^2(I; \mathbb{X})} + \|\partial_t \mathbf{y}\|^2_{L^2(I; \mathbb{X}^*)} \right)^{\frac{1}{2}}, \quad \|\theta\|_{W(I)} = \left( \|\theta\|^2_{L^2(I; H^1(\Omega))} + \|\partial_t \theta\|^2_{L^2(I; H^{1}(\Omega)^*)} \right)^{\frac{1}{2}},\\
&\|\mathbf{\hat y}\|_{\mathbb{V}(I)} = \left( \|\mathbf{\hat y}\|^2_{L^2(I; \mathbb{H}^2(\Omega) \cap \mathbb{X})} + \|\partial_t \mathbf{\hat y}\|^2_{L^2(I; \mathbb{H})} \right)^{\frac{1}{2}}, \quad \|\hat\theta\|_{V(I)} = \left( \|\hat\theta\|^2_{L^2(I; H^2(\Omega))} + \|\partial_t \hat\theta\|^2_{L^2(I; L^2(\Omega))} \right)^{\frac{1}{2}}.
\end{align*} 
Let $\mathbb W:=\mathbb{W}(I)\times {W}(I)$ and $\mathbb V:=\mathbb V(I)\times V(I)$, then we have \(\mathbb{W} \hookrightarrow C(\bar{I}; \mathbb{H}) \times C(\bar{I};L^2(\Omega))\) and \(\mathbb V\hookrightarrow C(\bar{I}; \mathbb{X})\times C(\bar{I}; H^1(\Omega)) \) (cf. \cite{Lions_1972}).

Define the bilinear forms \( \mathbf a: \mathbb{H}^1(\Omega) \times \mathbb{H}^1(\Omega) \rightarrow \mathbb{R} \) and \( a: {H}^1(\Omega) \times {H}^1(\Omega) \rightarrow \mathbb{R}\) such that
\begin{align*}
&\mathbf  a(\mathbf u,\mathbf v) = \int_{\Omega} \nabla\mathbf  u:\nabla \mathbf vdx,\quad a(u,v)=\int_{\Omega} \nabla u\cdot\nabla v dx,
\end{align*}
and the trilinear forms \(\mathbf b: \mathbb{L}^4(\Omega) \times \mathbb{H}^1(\Omega) \times \mathbb{H}^1(\Omega) \rightarrow \mathbb{R}\), \( b: \mathbb{L}^4(\Omega) \times {H}^1(\Omega) \times {H}^1(\Omega) \rightarrow \mathbb{R}\), \(\mathbf c: \mathbb{L}^4(\Omega) \times \mathbb{H}^1(\Omega) \times \mathbb{H}^1(\Omega) \rightarrow \mathbb{R}\) and \( c: \mathbb{L}^4(\Omega) \times {H}^1(\Omega) \times {H}^1(\Omega) \rightarrow \mathbb{R}\) such that
\begin{align*}
&\mathbf b(\mathbf u,\mathbf v,\mathbf w) := \frac{1}{2} \left[ \mathbf c(\mathbf u,\mathbf v,\mathbf w) - \mathbf c(\mathbf u,\mathbf w,\mathbf v)\right], \quad b(\mathbf u,v,w):=\frac{1}{2} \left[ c(\mathbf u, v, w) - c(\mathbf u, w, v) \right],\\
&\mathbf c(\mathbf u,\mathbf v,\mathbf w) :=  \int_{\Omega}(\mathbf u\cdot\nabla)\mathbf v\cdot \mathbf w dx,\quad c(\mathbf u, v, w):=\int_{\Omega}(\mathbf u\cdot\nabla) v\cdot w dx.
\end{align*}
 We refer to Appendix \ref{Appendix:A} and  Lemma \ref{lem:b:xingzhi} for the properties of the trilinear form $\mathbf b$. Moreover, there hold
\begin{eqnarray}
|\mathbf b(\mathbf u, \mathbf v, \mathbf w)| &\leq& \|\mathbf u\|_{\mathbb L^4(\Omega)} \|\nabla \mathbf v\|_{\mathbb L^2(\Omega)} \|\mathbf w\|_{\mathbb L^4(\Omega)},\label{b_estimate}\\
|\mathbf b(\mathbf u, \mathbf v, \mathbf w)| &\leq& C \|\mathbf u\|_{\mathbb L^2(\Omega)}^{1/2} \|\nabla \mathbf u\|_{\mathbb L^2(\Omega)}^{1/2} \|\nabla \mathbf v\|_{\mathbb L^2(\Omega)} \|\mathbf w\|_{\mathbb L^2(\Omega)}^{1/2} \|\nabla \mathbf w\|_{\mathbb L^2(\Omega)}^{1/2}\label{pre:estimate}
\end{eqnarray}
for any $(\mathbf u, \mathbf w, \mathbf v)\in \mathbb{X}\times \mathbb{X}\times \mathbb{H}^1(\Omega)$, by using the following interpolation inequality (cf. \cite[Lemma 3.2]{vexler_wagner_2024}) 
\begin{align}
\|z\|_{L^4(\Omega)} \leq C \|z\|_{L^2(\Omega)}^{1/2} \| z\|_{H^1(\Omega)}^{1/2}\quad \forall z \in H^1(\Omega).\label{Interpolation:conti:1}
\end{align}
The properties of $\mathbf b$ also hold for the trilinear form \( b\). 
Throughout this article, the positive constants \( C \)  may vary on different occasions. We abbreviate the duality pairings \( \langle \cdot, \cdot \rangle_{H^1(\Omega), H^{-1}(\Omega)} \) and \( \langle \cdot, \cdot \rangle_{\mathbb X, \mathbb X^*} \) to \( \langle \cdot, \cdot \rangle \), if this does not lead to ambiguity.

\section{Optimal control problem}\setcounter{equation}{0}
In the following subsection \ref{subsec:eq}-\ref{subsec:Regular}, we will derive the optimality condition for the optimal control problem $\mathrm{(P)}$ and analyze the regularity of the optimal solutions.

\subsection{The Boussinesq equations}\label{subsec:eq}
First, we give the definition of the weak solution to Boussinesq system \eqref{intr:Rb_state}.
\begin{definition}\label{def:1}For any $u \in L^2(I; H^{-\frac{1}{2}}(\Gamma))$, $f\in L^2(I;H^{1}(\Omega)^*)$,  $\mathbf h\in L^2(I;\mathbb X^*)$, $\mathbf y_0\in \mathbb H$ and $\theta_0\in L^2(\Omega)$, a pair $(\mathbf{y}, \theta) \in \mathbb{W}$ is called the weak solution to system \eqref{intr:Rb_state} if they satisfy 
\begin{subequations}\label{Exist:weak:solution}
\begin{align}
&\langle \partial_t  \mathbf y,\mathbf v\rangle+\nu \mathbf a(\mathbf y,\mathbf v)+\mathbf b(\mathbf y,\mathbf y,\mathbf v)+\beta(\theta \mathbf g,\mathbf v)= \langle \mathbf h,\mathbf v\rangle,\label{Exist:weak:solution:a}\\
&\langle  \partial_t \theta, \psi\rangle+\chi a(\theta, \psi)+b(\mathbf y, \theta,\psi)+\eta \gamma\langle \theta, \psi\rangle_\Gamma=\langle f,\psi\rangle+\eta\langle u, \psi\rangle_\Gamma,\label{Exist:weak:solution:c}\\
&\mathbf y(0)=\mathbf y_0,\quad \theta(0)=\theta_0
\end{align}
\end{subequations}
for any $(\mathbf v,\psi)\in \mathbb X\times H^1(\Omega)$ and almost all (a.a.) $t\in (0,T)$. 
\end{definition}

The following lemma (cf. \cite[Theorem 2.1, p. 151]{Hinze_Matthes_2007}) reveals the existence and uniqueness of weak solutions to the system \eqref{intr:Rb_state}. 
\begin{lemma}\label{thm:exist:state:1}
The problem \eqref{intr:Rb_state} admits a unique weak solution $(\mathbf y, \theta )\in \mathbb W$ such that 
\begin{align}
&\|\mathbf y\|_{\mathbb W(I)}+\| \theta\|_{W(I)}\leq C\Big(\|f\|_{L^2(I;H^{1}(\Omega)^*)},\|\mathbf h\|_{L^2(I;\mathbb X^*)},\|\mathbf y_0\|_{\mathbb L^2(\Omega)},\|\theta_0\|_{L^2(\Omega)},\|u\|_{L^2(I;H^{-\frac{1}{2}}(\Gamma))}\Big).\label{weak:est:1}
\end{align}
\end{lemma}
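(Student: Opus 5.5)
We follow the standard Faedo--Galerkin approach for the 2D Navier--Stokes system, adapted to the coupling with the temperature equation and the Robin boundary term. First, choose a basis $\{\mathbf w_k\}$ of $\mathbb X$ (eigenfunctions of the Stokes operator) and a basis $\{\phi_k\}$ of $H^1(\Omega)$ (eigenfunctions of the Neumann--Laplacian). Then seek Galerkin approximations $\mathbf y_m(t)=\sum_{k\le m}a_k(t)\mathbf w_k$ and $\theta_m(t)=\sum_{k\le m}c_k(t)\phi_k$ satisfying \eqref{Exist:weak:solution:a}--\eqref{Exist:weak:solution:c} for test functions in the finite-dimensional spans. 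The initial data are the $\mathbb L^2$- and $L^2$-projections of $\mathbf y_0$ and $\theta_0$. This gives an ODE system with locally Lipschitz right-hand side, so Carath\'eodory theory yields a local solution; the a priori bounds below extend it to all of $I$.

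\textbf{A priori estimates.} Test the temperature equation with $\theta_m$ and use $b(\mathbf y_m,\theta_m,\theta_m)=0$ (skew-symmetry of $b$). Bound $\langle f,\theta_m\rangle$ by $\|f\|_{H^1(\Omega)^*}\|\theta_m\|_{H^1(\Omega)}$. For the boundary term use the trace theorem:
\[
\eta|\langle u,\theta_m\rangle_\Gamma|\le C\|u\|_{H^{-1/2}(\Gamma)}\|\theta_m\|_{H^1(\Omega)}.
\]
The Robin term $\eta\gamma\|\theta_m\|_\Gamma^2$, together with $\chi\|\nabla\theta_m\|^2$, controls $\|\theta_m\|_{H^1(\Omega)}^2$ by a Friedrichs-type inequality. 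Young's inequality and Gronwall then bound $\theta_m$ in $L^\infty(I;L^2)\cap L^2(I;H^1)$ in terms of the data, independently of $\mathbf y_m$. This is the key observation that decouples the system.

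Next, test the momentum equation with $\mathbf y_m$, using $\mathbf b(\mathbf y_m,\mathbf y_m,\mathbf y_m)=0$ and $|\beta(\theta_m\mathbf g,\mathbf y_m)|\le C\|\theta_m\|\|\mathbf y_m\|$. With the bound on $\theta_m$ already in hand, Gronwall gives $\mathbf y_m$ bounded in $L^\infty(I;\mathbb H)\cap L^2(I;\mathbb X)$.

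For the time derivatives, use \eqref{pre:estimate} to get
\[
\|\mathbf b(\mathbf y_m,\mathbf y_m,\cdot)\|_{\mathbb X^*}\le C\|\mathbf y_m\|\,\|\nabla\mathbf y_m\|,
\]
which lies in $L^2(I)$. The same argument applies to $b(\mathbf y_m,\theta_m,\cdot)$ in $H^1(\Omega)^*$, since $\|\mathbf y_m\|_{\mathbb L^4}\|\theta_m\|_{L^4}\le C\|\mathbf y_m\|^{1/2}\|\mathbf y_m\|_{H^1}^{1/2}\|\theta_m\|^{1/2}\|\theta_m\|_{H^1}^{1/2}$, which is square integrable by the previous bounds. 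Because the bases are spectral, the Galerkin projections are stable in the dual norms, and so $\partial_t\mathbf y_m$ and $\partial_t\theta_m$ are bounded in $L^2(I;\mathbb X^*)$ and $L^2(I;H^1(\Omega)^*)$.

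\textbf{Passage to the limit.} Extract weakly convergent subsequences in $\mathbb W$. The Aubin--Lions lemma gives strong convergence in $L^2(I;\mathbb L^2)\times L^2(I;L^2)$. Pass to the limit in the nonlinear terms by writing them as $-\mathbf c(\mathbf y_m,\mathbf v,\mathbf y_m)$-type expressions for smooth test functions multiplied by time functions; strong $L^2$ convergence suffices there. The initial conditions follow from $\mathbb W\hookrightarrow C(\bar I;\mathbb H)\times C(\bar I;L^2)$. The bound \eqref{weak:est:1} follows from weak lower semicontinuity.

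\textbf{Uniqueness.} This is the main obstacle, since it requires handling the convective coupling with only $\mathbb W$-regularity. Take two solutions, with differences $\mathbf e=\mathbf y_1-\mathbf y_2$ and $\rho=\theta_1-\theta_2$, and test with $(\mathbf e,\rho)$. The critical terms are $\mathbf b(\mathbf e,\mathbf y_2,\mathbf e)$ and $b(\mathbf e,\theta_2,\rho)$. By \eqref{pre:estimate} and Young's inequality they are bounded by
\[
\epsilon(\|\nabla\mathbf e\|^2+\|\rho\|_{H^1}^2)+C\big(\|\nabla\mathbf y_2\|^2+\|\theta_2\|_{H^1}^2\big)\big(\|\mathbf e\|^2+\|\rho\|^2\big).
\]
The second term is where the two-dimensional Ladyzhenskaya inequality \eqref{Interpolation:conti:1} is essential. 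The weight $\|\nabla\mathbf y_2\|^2+\|\theta_2\|_{H^1}^2$ lies in $L^1(I)$, so Gronwall gives $\mathbf e=0$ and $\rho=0$. The energy identity $\frac{d}{dt}\|\mathbf e\|^2=2\langle\partial_t\mathbf e,\mathbf e\rangle$ needed in this computation is justified by the Lions--Magenes lemma in $\mathbb W$.
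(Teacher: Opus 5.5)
Your proposal is correct. The paper gives no argument of its own here: it only cites Theorem 2.1 of Hinze and Matthes. Your Faedo--Galerkin construction is the standard proof behind that citation, so there is no genuinely different route to compare: energy bounds, dual-norm bounds for the time derivatives via $H^1$-stable spectral projections, Aubin--Lions compactness, and a Gronwall uniqueness argument based on the two-dimensional Ladyzhenskaya inequality. What your self-contained version adds is transparency. It shows that $b(\mathbf y_m,\theta_m,\theta_m)=0$ decouples the temperature energy estimate from the velocity. It also shows that the trace theorem is all that is needed to accommodate $u\in L^2(I;H^{-\frac12}(\Gamma))$, which explains why the constant depends only on the data norms listed in \eqref{weak:est:1}.

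One point of precision. Applying \eqref{pre:estimate} directly to $\mathbf b(\mathbf y_m,\mathbf y_m,\mathbf w)$ only yields $\|\mathbf y_m\|^{1/2}\|\nabla\mathbf y_m\|^{3/2}$, which lies in $L^{4/3}(I)$ and is not enough for $\partial_t\mathbf y_m\in L^2(I;\mathbb X^*)$. To get your bound $C\|\mathbf y_m\|\,\|\nabla\mathbf y_m\|\in L^2(I)$, first use $\mathbf b(\mathbf y_m,\mathbf y_m,\mathbf w)=-\mathbf b(\mathbf y_m,\mathbf w,\mathbf y_m)$ and then apply \eqref{pre:estimate}. Likewise, for the temperature you need $b(\mathbf y_m,\theta_m,\psi)=-c(\mathbf y_m,\psi,\theta_m)$ before invoking the $\|\mathbf y_m\|_{\mathbb L^4}\|\theta_m\|_{L^4}$ bound. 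This identity holds because $\mathbf y_m\in\mathbb X$ is divergence-free and vanishes on $\Gamma$, even though $\psi\in H^1(\Omega)$ does not. The bounds you state are the right ones; they just have to be reached this way.
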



Denote by $\mathcal S$ the control-to-state mapping, defined by $\mathcal{S}: L^2(I;L^2(\Gamma))\rightarrow {\mathbb W}$ such that $u\mapsto \mathcal{S}(u):=(\mathbf y_u, \theta_u)$, where $(\mathbf y_u, \theta_u)$ is the unique pair of solutions of \eqref{intr:Rb_state} defined in Definition \ref{def:1}.
The following lemma states the weak--weak continuity and differentiability of the mapping $\mathcal S$.

\begin{lemma}\label{thm:Weak_Weak:S}Let $u_n \rightharpoonup  u$ in $L^2(I;L^2(\Omega))$, then  we have  $(\mathbf y_{u_n}, \theta_{u_n}) \rightharpoonup (\mathbf y_{u}, \theta_{u})$ in the space $\mathbb W$.
\begin{proof} 
From Lemma \ref{thm:exist:state:1} it follows that the sequence $\{(\mathbf{y}_{u_n}, \theta_{u_n})\}_n$ is bounded in the space $\mathbb{W}$. Therefore, we can extract a subsequence that weakly converges to some $(\mathbf{y}, \theta) \in \mathbb{W}$. Subsequently, passing to the limit in \eqref{Exist:weak:solution} and by the uniqueness we can show that $(\mathbf{y}, \theta) = (\mathbf{y}_u, \theta_u)$. Given the uniqueness of the solution to the system \eqref{Exist:weak:solution}, we can conclude that the entire sequence converges. More details can be found in  \cite{Hinze_Matthes_2007}.
\end{proof}

\end{lemma}

\begin{proposition}\label{thm:Lineared:state}
 The mapping $\mathcal S$ is infinitely often Fr\'echet-differentiable. If we set $(\mathbf z_{u,v}, \xi_{u,v})=\mathcal{S}'(u)v$, $(\mathbf \mathbf z_{u,vv}, \xi_{u,vv})=\mathcal{S}''(u)v^2$. Then \( (\mathbf z_{u,v}, \xi_{u,v}) \) and $(\mathbf  z_{u,vv}, \xi_{u,vv})$ solve uniquely the following system:
\begin{equation}\label{Diff:first:deri}
\left\{\begin{aligned}
&\langle\partial_t  \mathbf z_{u,v},\mathbf v\rangle+\nu \mathbf a(\mathbf z_{u,v} ,\mathbf v)+\mathbf b(\mathbf z_{u,v},\mathbf y_u,\mathbf v)+\mathbf b(\mathbf y_{u},\mathbf z_{u,v},\mathbf v)+\beta(\xi_{u,v} \mathbf g,\mathbf v)= 0,\\
&\langle \partial_t  \xi_{u,v}, \psi\rangle+\chi a(\xi_{u,v}, \psi)+b(\mathbf z_{u,v}, \theta_u,\psi)+b(\mathbf y_u, \xi_{u,v},\psi)+\eta \gamma\langle \xi_{u,v}, \psi\rangle_\Gamma=\eta\langle v, \psi\rangle_\Gamma,\\
&\mathbf  z_{u,v}(0)=0,\quad \xi_{u,v}(0)=0
\end{aligned}\right.
\end{equation}
and
\begin{equation}\label{Diff:second:deri}
\left\{\begin{aligned}
&\langle  \partial_t  \mathbf z_{u,vv},\mathbf v\rangle+\nu \mathbf a(\mathbf z_{u,vv} ,\mathbf v)+\mathbf b(\mathbf z_{u,vv},\mathbf y_u,\mathbf v)+\mathbf b(\mathbf y_{u},\mathbf z_{u,vv},\mathbf v)+\beta(\xi_{u,vv} \mathbf g,\mathbf v)= -2\mathbf b(\mathbf z_{u,v},\mathbf z_{u,v},\mathbf v),\\
&\langle \partial_t \xi_{u,vv}, \psi\rangle+\chi a(\xi_{u,vv}, \psi)+b(\mathbf z_{u,vv}, \theta_u,\psi)+b(\mathbf y_u, \xi_{u,vv},\psi)+\eta \gamma\langle \xi_{u,vv}, \psi\rangle_\Gamma=-2 b(\mathbf z_{u,v},\xi_{u,v},\psi),\\
&\mathbf  z_{u,vv}(0)=0,\quad \xi_{u,vv}(0)=0
\end{aligned}\right.
\end{equation}
for any $(\mathbf v, \psi)\in \mathbb X\times H^1(\Omega)$ and a.a. $t\in(0,T)$, respectively.
\end{proposition}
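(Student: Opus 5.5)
We will prove the statement with the implicit function theorem. Set
\[
\mathcal Y:=\{(\mathbf y,\theta)\in\mathbb W:\ \mathbf y(0)=0,\ \theta(0)=0\},
\]
shifted by a fixed solution so that the initial data are incorporated. Define
\[
F:\mathcal Y\times L^2(I;L^2(\Gamma))\to L^2(I;\mathbb X^*)\times L^2(I;H^1(\Omega)^*)
\]
as the residual of \eqref{Exist:weak:solution}. Concretely, $F(\mathbf y,\theta,u)$ is the pair of functionals
\[
\mathbf v\mapsto\langle\partial_t\mathbf y,\mathbf v\rangle+\nu\mathbf a(\mathbf y,\mathbf v)+\mathbf b(\mathbf y,\mathbf y,\mathbf v)+\beta(\theta\mathbf g,\mathbf v)-\langle\mathbf h,\mathbf v\rangle,
\]
\[
\psi\mapsto\langle\partial_t\theta,\psi\rangle+\chi a(\theta,\psi)+b(\mathbf y,\theta,\psi)+\eta\gamma\langle\theta,\psi\rangle_\Gamma-\langle f,\psi\rangle-\eta\langle u,\psi\rangle_\Gamma .
\]
By Lemma \ref{thm:exist:state:1}, $F(\mathcal S(u),u)=0$ defines $\mathcal S$ uniquely.

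\textbf{Step 1: $F$ is $C^\infty$.} The terms in $F$ are affine in $(\mathbf y,\theta,u)$, except for the two trilinear terms $\mathbf b(\mathbf y,\mathbf y,\cdot)$ and $b(\mathbf y,\theta,\cdot)$, which are quadratic. It therefore suffices to show that these define continuous bilinear maps $\mathbb W(I)\times\mathbb W(I)\to L^2(I;\mathbb X^*)$ and $\mathbb W(I)\times W(I)\to L^2(I;H^1(\Omega)^*)$. Continuous bilinear maps are $C^\infty$, with vanishing third derivative. Using \eqref{b_estimate} together with \eqref{Interpolation:conti:1}, we get
\[
\|\mathbf b(\mathbf y,\mathbf z,\cdot)\|_{\mathbb X^*}\le C\|\mathbf y\|^{1/2}\|\mathbf y\|_{\mathbb H^1}^{1/2}\|\mathbf z\|_{\mathbb H^1},
\]
where the skew form $\mathbf b$ is used so that the derivative can be placed on either argument. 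Integrating in time and using $\mathbb W\hookrightarrow C(\bar I;\mathbb H)\times C(\bar I;L^2(\Omega))$ gives the bound $C\|\mathbf y\|_{L^\infty(L^2)}^{1/2}\|\mathbf y\|_{L^2(H^1)}^{1/2}\|\mathbf z\|_{L^2(H^1)}$. This is finite because the $L^4$-in-time factor is controlled by the $L^\infty$-in-time bound. The same argument applies to $b$. Hence $F$ is infinitely often Fréchet differentiable. Its partial derivative $\partial_{(\mathbf y,\theta)}F(\mathbf y_u,\theta_u,u)$ applied to $(\mathbf z,\xi)$ is exactly the left-hand side of \eqref{Diff:first:deri}.

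\textbf{Step 2: the linearized operator is an isomorphism $\mathcal Y\to L^2(I;\mathbb X^*)\times L^2(I;H^1(\Omega)^*)$.} This is the main obstacle. Fix right-hand sides $(\mathbf r,s)$ and consider the linear coupled system. Existence follows from Faedo–Galerkin. The a priori estimate comes from testing with $(\mathbf z,\xi)$. The term $\mathbf b(\mathbf y_u,\mathbf z,\mathbf z)=0$ and $b(\mathbf y_u,\xi,\xi)=0$ vanish by skew-symmetry. The remaining couplings are estimated by \eqref{pre:estimate} and Young's inequality:
\[
|\mathbf b(\mathbf z,\mathbf y_u,\mathbf z)|\le C\|\mathbf z\|\|\nabla\mathbf z\|\|\nabla\mathbf y_u\|\le\tfrac\nu4\|\nabla\mathbf z\|^2+C\|\nabla\mathbf y_u\|^2\|\mathbf z\|^2 .
\]
Similarly, $|b(\mathbf z,\theta_u,\xi)|\le\epsilon(\|\nabla\mathbf z\|^2+\|\xi\|_{H^1}^2)+C\|\theta_u\|_{H^1}^2(\|\mathbf z\|^2+\|\xi\|^2)$. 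For this we use the 2D Ladyzhenskaya bound with $\|\mathbf z\|_{\mathbb L^4}\|\xi\|_{L^4}$ and the skew form, and the term $\beta(\xi\mathbf g,\mathbf z)$ is trivial. The Robin term $\eta\gamma\|\xi\|_\Gamma^2$ is nonnegative, and $\chi\|\nabla\xi\|^2+\eta\gamma\|\xi\|_\Gamma^2$ controls $\|\xi\|_{H^1}^2$. Since $\|\nabla\mathbf y_u\|^2+\|\theta_u\|_{H^1}^2\in L^1(I)$, Gronwall's inequality gives bounds in $L^\infty(L^2)\cap L^2(H^1)$. The equation then bounds the time derivatives in the dual spaces via the Step 1 estimates. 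Uniqueness follows from the same estimate with zero data. Thus the operator is bijective and bounded, and hence an isomorphism by the open mapping theorem.

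\textbf{Step 3: conclusion.} The implicit function theorem now gives that $\mathcal S$ is $C^\infty$ near every $u$. Moreover $\partial_uF\,v=(0,-\eta\langle v,\cdot\rangle_\Gamma)$. Differentiating $F(\mathcal S(u),u)=0$ once in direction $v$ yields \eqref{Diff:first:deri}. Differentiating twice, and using that $F$ is affine in $u$ and quadratic in the state, yields \eqref{Diff:second:deri}. The right-hand sides $-2\mathbf b(\mathbf z_{u,v},\mathbf z_{u,v},\cdot)$ and $-2b(\mathbf z_{u,v},\xi_{u,v},\cdot)$ are the second derivatives of the quadratic terms. By Step 1 they lie in the correct dual spaces. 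Uniqueness of both systems follows from Step 2.
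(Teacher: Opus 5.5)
Your implicit-function-theorem argument is the standard route to this result; the paper gives no proof of its own and only refers to Bärwolff--Hinze. Steps 2 and 3 are organized correctly. However, one estimate in Step 1 fails as written, and Steps 2 and 3 reuse it.

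\textbf{The problem.} Your bound $\|\mathbf b(\mathbf y,\mathbf z,\cdot)\|_{\mathbb X^*}\le C\|\mathbf y\|^{1/2}\|\mathbf y\|_{\mathbb H^1}^{1/2}\|\mathbf z\|_{\mathbb H^1}$ keeps the gradient on $\mathbf z$.
\begin{itemize}
\item Its square $\|\mathbf y\|\,\|\mathbf y\|_{\mathbb H^1}\|\mathbf z\|_{\mathbb H^1}^2$ is not integrable in time for general $\mathbf y,\mathbf z\in L^2(I;\mathbb X)\cap L^\infty(I;\mathbb H)$. For $\mathbf z=\mathbf y$ it would require $\|\mathbf y\|_{\mathbb H^1}\in L^3(I)$.
\item The quantity $C\|\mathbf y\|_{L^\infty(L^2)}^{1/2}\|\mathbf y\|_{L^2(H^1)}^{1/2}\|\mathbf z\|_{L^2(H^1)}$ that you reach is, by H\"older with exponents $\infty,4,2$, a bound in $L^{4/3}(I;\mathbb X^*)$, not in $L^2(I;\mathbb X^*)$.
\item The remark that ``the $L^4$-in-time factor is controlled by the $L^\infty$-in-time bound'' is wrong, since that factor is $\|\mathbf y\|_{\mathbb H^1}^{1/2}$.
\end{itemize}
So $F$ is not shown to map into your target space. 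The same gap affects the bound on $\partial_t\mathbf z$ in Step 2 and the placement of $-2\mathbf b(\mathbf z_{u,v},\mathbf z_{u,v},\cdot)$ in Step 3. Changing the target to $L^{4/3}(I;\mathbb X^*)$ is no way out: your Step 2 energy estimate would then not close, because $\int_I\langle\mathbf r,\mathbf z\rangle\,dt$ with $\mathbf r\in L^{4/3}(I;\mathbb X^*)$ is not controlled by $\|\mathbf z\|_{L^2(I;\mathbb X)}$.

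\textbf{The repair.} Use the two-dimensional symmetric estimate, moving the derivative onto the test function. By skew-symmetry and \eqref{b_estimate},
\[
|\mathbf b(\mathbf y,\mathbf z,\mathbf v)|=|\mathbf b(\mathbf y,\mathbf v,\mathbf z)|\le\|\mathbf y\|_{\mathbb L^4(\Omega)}\|\nabla\mathbf v\|\,\|\mathbf z\|_{\mathbb L^4(\Omega)} .
\]
Then \eqref{Interpolation:conti:1} gives
\[
\|\mathbf b(\mathbf y,\mathbf z,\cdot)\|_{L^2(I;\mathbb X^*)}^2\le C\int_I\|\mathbf y\|\,\|\mathbf y\|_{\mathbb H^1(\Omega)}\|\mathbf z\|\,\|\mathbf z\|_{\mathbb H^1(\Omega)}\,dt\le C\|\mathbf y\|_{\mathbb W(I)}^2\|\mathbf z\|_{\mathbb W(I)}^2 .
\]
For the temperature term, use $c(\mathbf y,\theta,\psi)=-c(\mathbf y,\psi,\theta)$. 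This holds even though $\psi\in H^1(\Omega)$ does not vanish on $\Gamma$, because $\mathrm{div}\,\mathbf y=0$ and $\mathbf y=\mathbf 0$ on $\Gamma$. It gives $|b(\mathbf y,\theta,\psi)|\le\|\mathbf y\|_{\mathbb L^4(\Omega)}\|\theta\|_{L^4(\Omega)}\|\nabla\psi\|$ and the same $L^2$-in-time bound.

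With these two bilinear estimates, $F$ is a continuous quadratic map into $L^2(I;\mathbb X^*)\times L^2(I;H^1(\Omega)^*)$. The time derivatives in Step 2 and the second-order right-hand sides in Step 3 then land in the correct spaces, and the rest of your argument goes through unchanged. This is also why the adjoint behaves differently: there the test function occupies the first slot of $\mathbf b(\mathbf w,\mathbf y,\bm\mu)$, the derivative cannot be moved, and the paper only obtains $\partial_t\bm\mu_u\in L^{4/3}(I;\mathbb X^*)$.
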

\begin{proof} 
The details of this proof can be found in \cite{BarwolffHinze2007}.
\end{proof}

\subsection{The optimality conditions}
 In this subsection, we intend to derive the first- and second-order optimality conditions. To do this, we introduce the reduced functional ${J}(u) := J(u,\mathcal{S}(u))$ with the help of the control-to-state mapping $\mathcal{S}$. 

\begin{theorem}
\label{thm:first:second} The reduced functional ${J}: L^2(I;L^2(\Gamma))\rightarrow\mathbb R $ is of class $C^\infty$. For any $u,v\in L^2(I;L^2(\Gamma))$, the first and second order derivatives of $ J$ satisfy
\begin{align}
{J}'(u)v&=\int_{\Sigma_T}(\eta\kappa_u+\alpha u)v\, ds dt,\label{thm:J:diff1}\\\
{J}''(u)v^2&=\int_{\Omega_T}\left(|\mathbf z_{u,v}|^2+|\xi_{u,v}|^2-2(\mathbf z_{u,v}\cdot\nabla)\mathbf z_{u,v}\cdot\bm \mu_u-2(\mathbf z_{u,v}\cdot\nabla)\xi_{u,v} \kappa_u\right)dxdt+\int_{\Sigma_T}\alpha |v|^2ds dt,\label{thm:J:diff2}
\end{align}
where $(\bm{\mu}_u, \kappa_u)\in L^2(I;\mathbb{X})\times L^2(I;H^1(\Omega))$ solves the adjoint system
\begin{subequations}\label{adjoint:weak:solution}
\begin{align}
&\langle -\partial_t \bm \mu,\mathbf w\rangle+\nu \mathbf a(\bm \mu,\mathbf w)+\mathbf b(\mathbf w ,\mathbf y,\bm \mu)+\mathbf b(\mathbf y ,\mathbf w,\bm \mu)+b(\mathbf w,\theta,\kappa)=(\mathbf y-\mathbf y_d,\mathbf w),\label{adjoint:Exist:weak:solution:a}\\
&\langle -\partial_t \kappa, \zeta\rangle+\chi a(\kappa, \zeta)+b(\mathbf y,  \zeta,\kappa)+\beta(\mathbf g\cdot\bm \mu,\zeta)+\eta \gamma \langle \kappa, \zeta\rangle_\Gamma=(\theta-\theta_d,\zeta),\label{adjoint:eq:d:311}\\
&\bm \mu(T)=\mathbf 0,\quad \kappa(T)=0
\end{align}
\end{subequations}
for any $(\mathbf w,\zeta)\in \mathbb X\times H^1(\Omega)$ and a.a. $t\in (0,T)$, and possesses the following regularity
\[
\bm{\mu}_u \in L^2(I; \mathbb{X}), \quad \partial_t \bm{\mu}_u \in L^{\frac{4}{3}}(I; \mathbb{X}^*) \cap \mathbb{W}(I)^*, \quad \kappa_u \in L^2(I; H^1(\Omega)), \quad \partial_t \kappa_u \in L^{\frac{4}{3}}(I; H^{1}(\Omega)^*) \cap W(I)^*,
\]
where $(\mathbf{y}, \theta)$ is the solution to \eqref{Exist:weak:solution}. Moreover, the following estimate holds:
\begin{align}\label{adjoint:stab: 312}
\|\bm \mu_u\|_{L^2(0, T; \mathbb H^{1}(\Omega))} &+ \|\kappa_u\|_{L^2(0, T; H^1(\Omega))}+\big\| \partial_t \bm\mu_u\big\|_{L^\frac{4}{3}(I;\mathbb{X}^*)} +
\big\|\partial_t  \kappa_u\big\|_{L^\frac{4}{3}(I;H^{1}(\Omega)^*)}\nonumber\\
&\leq C\Big(\|\mathbf y_u\|_{\mathbb W(I)},\|\theta_u\|_{W(I)}, \|\mathbf y_d\|_{L^2(I;L^2(\Omega))},\|\theta_d\|_{L^2(I;L^2(\Omega))} \Big).
\end{align}
\end{theorem}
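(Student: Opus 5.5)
The proof splits into three parts: smoothness of $J$ and the chain rule, well-posedness of the adjoint system, and the adjoint representation of the derivatives.

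\textbf{Smoothness and chain rule.} By Proposition \ref{thm:Lineared:state}, $\mathcal S$ is $C^\infty$ from $L^2(I;L^2(\Gamma))$ into $\mathbb W$. The tracking part of $J$ is a continuous quadratic form on $\mathbb W\hookrightarrow L^2(I;\mathbb L^2(\Omega))\times L^2(I;L^2(\Omega))$, and the control cost is a continuous quadratic form on $L^2(I;L^2(\Gamma))$. Hence $J$ is $C^\infty$ by the chain rule. Direct differentiation gives
\[
J'(u)v=(\mathbf y_u-\mathbf y_d,\mathbf z_{u,v})_{L^2(\Omega_T)}+(\theta_u-\theta_d,\xi_{u,v})_{L^2(\Omega_T)}+\alpha(u,v)_{L^2(\Sigma_T)},
\]
and
\[
J''(u)v^2=\|\mathbf z_{u,v}\|^2+\|\xi_{u,v}\|^2+(\mathbf y_u-\mathbf y_d,\mathbf z_{u,vv})+(\theta_u-\theta_d,\xi_{u,vv})+\alpha\|v\|^2_{L^2(\Sigma_T)}.
\]

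\textbf{The adjoint system.} This is the main obstacle. The system \eqref{adjoint:weak:solution} is linear and backward in time, with coefficients $\mathbf y\in \mathbb W(I)$ and $\theta\in W(I)$ that are only weak-solution regular. The right-hand sides lie in $L^2(I;L^2)$.

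I would obtain existence by Galerkin approximation, or equivalently by duality with the linearized operator in \eqref{Diff:first:deri}, which is an isomorphism $\mathbb W\to L^2(I;\mathbb X^*)\times L^2(I;H^1(\Omega)^*)$. For the energy estimate, test with $(\bm\mu,\kappa)$ on $(t,T)$. Two terms vanish by skew-symmetry: $\mathbf b(\mathbf y,\bm\mu,\bm\mu)=0$ and $b(\mathbf y,\kappa,\kappa)=0$. The remaining couplings are bounded with \eqref{pre:estimate} and Young's inequality:
\[
|\mathbf b(\bm\mu,\mathbf y,\bm\mu)|\le C\|\bm\mu\|\|\nabla\bm\mu\|\|\nabla\mathbf y\|\le \tfrac{\nu}{4}\|\nabla\bm\mu\|^2+C\|\nabla \mathbf y\|^2\|\bm\mu\|^2,
\]
\[
|b(\bm\mu,\theta,\kappa)|\le C\|\bm\mu\|^{1/2}\|\nabla\bm\mu\|^{1/2}\|\nabla\theta\|\|\kappa\|^{1/2}\|\kappa\|_{H^1}^{1/2}\le \epsilon(\|\nabla\bm\mu\|^2+\|\kappa\|_{H^1}^2)+C\|\nabla\theta\|^2(\|\bm\mu\|^2+\|\kappa\|^2).
\]
The boundary term $\eta\gamma\|\kappa\|_\Gamma^2$ has the good sign and gives $H^1$ control together with $\chi\|\nabla\kappa\|^2$. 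The weight $\|\nabla\mathbf y\|^2+\|\nabla\theta\|^2$ lies in $L^1(I)$, so Gronwall yields $L^\infty(I;L^2)\cap L^2(I;H^1)$ bounds depending only on $\|\mathbf y_u\|_{\mathbb W(I)},\|\theta_u\|_{W(I)}$ and the data. Uniqueness follows from the same estimate with zero data.

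\textbf{Time-derivative regularity.} Bound the nonlinear terms in duality. For example,
\[
|\mathbf b(\mathbf w,\mathbf y,\bm\mu)|\le C\|\mathbf w\|_{H^1}\|\nabla\mathbf y\|\|\bm\mu\|^{1/2}\|\nabla\bm\mu\|^{1/2},
\]
and $\|\nabla \mathbf y\|\,\|\bm\mu\|^{1/2}\|\nabla\bm\mu\|^{1/2}\in L^{4/3}(I)$ since it is $L^2\cdot L^\infty\cdot L^4$. The terms $\mathbf b(\mathbf y,\mathbf w,\bm\mu)$ and $b(\mathbf w,\theta,\kappa)$ are handled the same way. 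This gives $\partial_t\bm\mu\in L^{4/3}(I;\mathbb X^*)$ and $\partial_t\kappa\in L^{4/3}(I;H^1(\Omega)^*)$, which is \eqref{adjoint:stab: 312}. Membership in $\mathbb W(I)^*$ (resp.\ $W(I)^*$) comes from reading the equation tested against $\mathbf w\in\mathbb W(I)$: every term then pairs boundedly using $\mathbb W(I)\hookrightarrow L^\infty(I;\mathbb H)\cap L^2(I;\mathbb X)$.

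\textbf{Adjoint representation.} This regularity justifies the integration by parts in time
\[
\int_I\langle-\partial_t\bm\mu,\mathbf z\rangle=\int_I\langle\partial_t\mathbf z,\bm\mu\rangle \quad (\mathbf z(0)=0,\ \bm\mu(T)=0)
\]
for $\mathbf z\in\mathbb W(I)$, and likewise for $\kappa$.

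For the first derivative, test \eqref{adjoint:weak:solution} with $(\mathbf z_{u,v},\xi_{u,v})$ and \eqref{Diff:first:deri} with $(\bm\mu_u,\kappa_u)$, then subtract. All bilinear and trilinear terms cancel pairwise, including $\beta(\xi\mathbf g,\bm\mu)$ against $\beta(\mathbf g\cdot\bm\mu,\xi)$ and the Robin terms. What remains is
\[
(\mathbf y-\mathbf y_d,\mathbf z)+(\theta-\theta_d,\xi)=\eta\int_{\Sigma_T}v\kappa_u,
\]
which gives \eqref{thm:J:diff1}.

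For the second derivative, repeat the argument with $(\mathbf z_{u,vv},\xi_{u,vv})$ and \eqref{Diff:second:deri}. This gives
\[
(\mathbf y-\mathbf y_d,\mathbf z_{u,vv})+(\theta-\theta_d,\xi_{u,vv})=-2\mathbf b(\mathbf z,\mathbf z,\bm\mu)-2b(\mathbf z,\xi,\kappa).
\]
Since $\mathbf z\in\mathbb X$ is divergence free with zero trace, $\mathbf b(\mathbf z,\mathbf z,\bm\mu)=\mathbf c(\mathbf z,\mathbf z,\bm\mu)$. For the temperature term, $b(\mathbf z,\xi,\kappa)=\mathbf c$-type form $c(\mathbf z,\xi,\kappa)$, because $\mathbf z\cdot\mathbf n=0$ on $\Gamma$ and integration by parts leaves no boundary term. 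Substituting yields \eqref{thm:J:diff2}.
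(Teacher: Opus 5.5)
Your argument is correct. It is the standard adjoint calculus. The paper gives no proof of its own and defers to B\"arwolff--Hinze. Judging from how the paper later uses that reference (the proof of Lemma~\ref{lem:Mu_Kappa} invokes their Lemma~1 with right-hand sides in $\mathbb W(I)^*\times W(I)^*$), their construction is the transposition one you mention only in passing. The linearized operator in \eqref{Diff:first:deri} is an isomorphism from $\{(\mathbf z,\xi)\in\mathbb W:\mathbf z(0)=0,\ \xi(0)=0\}$ onto $L^2(I;\mathbb X^*)\times L^2(I;H^1(\Omega)^*)$. Its transpose is therefore an isomorphism from $L^2(I;\mathbb X)\times L^2(I;H^1(\Omega))$ onto the dual of that subspace, and $(\bm\mu_u,\kappa_u)$ is obtained by inverting it on $(\mathbf y-\mathbf y_d,\theta-\theta_d)$. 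That route gives existence, uniqueness and \eqref{thm:J:diff1} at once, with no integration by parts in time. It also handles arbitrary data in $\mathbb W(I)^*\times W(I)^*$, which the paper needs later. The $L^{4/3}$ regularity of the time derivatives is then read off from the equation exactly as you do.

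Your Galerkin route produces the energy bound more explicitly, but two steps are stated too quickly.

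First, the identity $\int_I\langle-\partial_t\bm\mu,\mathbf z\rangle\,dt=\int_I\langle\partial_t\mathbf z,\bm\mu\rangle\,dt$ for $\mathbf z\in\mathbb W(I)$ does not follow from $\partial_t\bm\mu\in L^{4/3}(I;\mathbb X^*)$, because $L^{4/3}$ and $L^2$ are not H\"older dual in time. It does follow from your $\mathbb W(I)^*$ bound. The identity is classical for $\mathbf z\in C^1(\bar I;\mathbb X)$ with $\mathbf z(0)=0$. These functions are dense in the corresponding subspace of $\mathbb W(I)$, and both sides are continuous in the $\mathbb W(I)$-norm.

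Second, ``uniqueness from the same estimate with zero data'' needs the energy identity for a weak solution tested with itself, and this runs into the same $L^{4/3}$--$L^2$ mismatch. The clean fix is duality again: pair a solution of the homogeneous adjoint system with the solution of the linearized system for arbitrary data in $L^2(I;\mathbb X^*)\times L^2(I;H^1(\Omega)^*)$, which forces it to vanish.

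With these two points filled in, your cancellations give \eqref{thm:J:diff1}--\eqref{thm:J:diff2} exactly as claimed. This uses the identities $\mathbf b(\mathbf z,\mathbf z,\bm\mu)=\mathbf c(\mathbf z,\mathbf z,\bm\mu)$ and $b(\mathbf z,\xi,\kappa)=c(\mathbf z,\xi,\kappa)$ for $\mathbf z\in\mathbb X$.
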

\begin{proof}
The details of the proof can be found in \cite{BarwolffHinze2007}.
\end{proof}

Using standard arguments (cf. \cite{Troltzsh_2010}), it is easy to show that problem (P) has at least one optimal solution. Since problem (P) is non-convex, we usually consider local optimal solutions. A control \( \bar{u} \in \mathcal{U}_{ad} \) is called a local optimal control  of \( \mathrm{(P)} \) if there exists \( \rho > 0 \) such that \( J(\bar{u}) \leq J(u) \) for every \( u \in \mathcal{U}_{{ad}} \cap \mathcal{B}_{\rho}(\bar{u}) \), where \( \mathcal{B}_{\rho}(\bar{u}) \) denotes the open ball of \( L^2(I; L^2(\Gamma)) \) centered at \( \bar{u} \) with radius \( \rho \). The next theorem establishes the first-order necessary optimality condition for local minimum of $(\mathrm{P})$.
 
\begin{theorem} \label{Theorem:fisrt_order_condtion}Let \( \bar{u} \) be a local optimal control of problem $\mathrm{(P)}$ and  \( (\bar{\mathbf y},\bar{\theta}) \in \mathbb W\) be  the corresponding state. Then there exist unique adjoint states \( (\bar{\bm \mu},\bar{\kappa})\in L^2(I; \mathbb X)\times  L^2(I; H^{1}(\Omega))\), satisfying $\partial_t \bar{\bm{\mu}}\in L^{\frac{4}{3}}(I;\mathbb X^*)\cap \mathbb W(I)^*$ and $\partial_t \bar{k}\in L^{\frac{4}{3}}(I;H^{1}(\Omega)^*)\cap W(I)^*$, such that  for a.a. $t\in (0,T)$, 
\begin{equation}\label{first:optimal:state}
\left\{\begin{aligned}
&\langle  \partial_t \bar{\mathbf y},\mathbf v\rangle+\nu \mathbf a(\bar{\mathbf y},\mathbf v)+\mathbf b(\bar{\mathbf y},\bar{\mathbf y},\mathbf v)+\beta(\bar{\theta} \mathbf g,\mathbf v)= \langle \mathbf h,\mathbf v\rangle\quad \forall  \mathbf v \in \mathbb X,\\
&\langle  \partial_t \bar{\theta}, \psi\rangle+\chi a(\bar{\theta}, \psi)+b(\bar{\mathbf y}, \bar{\theta},\psi)+\eta \gamma\langle \bar{\theta}, \psi\rangle_\Gamma=\langle f,\psi\rangle+\eta\langle \bar{u}, \psi\rangle_\Gamma \quad  \forall \psi \in H^1(\Omega),\\
&\bar{\mathbf y}(0)=\mathbf y_0,\quad \bar{\theta}(0)=\theta_0,
\end{aligned}\right.
\end{equation}
\begin{equation}\label{first:optimal:adjoint}
\left\{\begin{aligned}
&\langle - \partial_t \bar{\bm  \mu},\mathbf w\rangle+\nu \mathbf a(\bar{\bm \mu},\mathbf w)+\mathbf b(\mathbf w ,\bar{\mathbf y},\bar{\bm \mu})+\mathbf b(\bar{\mathbf y} ,\mathbf w,\bar{\bm \mu})+b(\mathbf w,\bar{\theta},\bar{\kappa})=(\bar{\mathbf y}-\mathbf y_d,\mathbf w)\quad  \forall \mathbf w\in \mathbb X,\\
&\langle  -\partial_t \bar{\kappa}, \zeta\rangle+\chi a(\bar{\kappa}, \zeta)+b(\bar{\mathbf y}, \zeta,\bar{\kappa})+\beta(\mathbf g\cdot\bar{\bm \mu},\zeta)+\eta \gamma\langle \bar{\kappa}, \zeta\rangle_\Gamma=(\bar{\theta}-\theta_d,\zeta) \quad  \forall \zeta\in H^1(\Omega),\\
&\bar{\bm \mu}(T)=\mathbf 0,\quad \bar{\kappa}(T)=0
\end{aligned}\right.
\end{equation}
\begin{align}
\int_{\Sigma_T}(\eta\bar{\kappa}+\alpha \bar{u})(u-\bar{u})\,dsdt\geq 0\quad \forall u\in \mathcal U_{ad}.\label{first_cond:varition_ineq}
\end{align}
\end{theorem}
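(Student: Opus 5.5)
The proof is the standard first-order argument for a $C^1$ reduced functional over a convex admissible set, with Theorem \ref{thm:first:second} supplying the derivative. The state system \eqref{first:optimal:state} holds by definition: $(\bar{\mathbf y},\bar\theta)=\mathcal S(\bar u)$ is the unique weak solution from Lemma \ref{thm:exist:state:1} with $u=\bar u$. This is legitimate because $\bar u\in L^2(I;L^2(\Gamma))\hookrightarrow L^2(I;H^{-\frac12}(\Gamma))$, so Definition \ref{def:1} applies.

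For the adjoint system, apply Theorem \ref{thm:first:second} at $u=\bar u$. It gives $(\bar{\bm\mu},\bar\kappa):=(\bm\mu_{\bar u},\kappa_{\bar u})$ solving \eqref{adjoint:weak:solution} with $(\mathbf y,\theta)=(\bar{\mathbf y},\bar\theta)$, which is exactly \eqref{first:optimal:adjoint}. It also gives the stated regularity $\partial_t\bar{\bm\mu}\in L^{\frac43}(I;\mathbb X^*)\cap\mathbb W(I)^*$ and $\partial_t\bar\kappa\in L^{\frac43}(I;H^1(\Omega)^*)\cap W(I)^*$.

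Uniqueness of the adjoint is the one point needing an argument, since \eqref{first:optimal:adjoint} is linear. Let $(\bm\delta,\varrho)$ be the difference of two solutions; it solves the homogeneous system with zero terminal data. To conclude $(\bm\delta,\varrho)=0$, I would use a duality argument rather than energy estimates.

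\begin{itemize}
\item For arbitrary $(\mathbf F,G)\in L^2(I;\mathbb L^2(\Omega))\times L^2(I;L^2(\Omega))$, solve the linearized forward problem \eqref{Diff:first:deri} with right-hand sides $(\mathbf F,G)$ in place of $(0,\eta v)$. This problem is uniquely solvable in $\mathbb W$, by the same argument as in Proposition \ref{thm:Lineared:state}.
\item Test the homogeneous adjoint equation with this solution $(\mathbf z,\xi)$, and test the forward equation with $(\bm\delta,\varrho)$.
\item Integrate by parts in time. This is justified by the pairing $\partial_t\bm\delta\in\mathbb W(I)^*$ against $\mathbf z\in\mathbb W(I)$, together with the vanishing initial data of $(\mathbf z,\xi)$ and terminal data of $(\bm\delta,\varrho)$.
\item The trilinear terms cancel pairwise by construction of the adjoint: $\mathbf b(\mathbf z,\bar{\mathbf y},\bm\delta)+\mathbf b(\bar{\mathbf y},\mathbf z,\bm\delta)+b(\mathbf z,\bar\theta,\varrho)$ matches the forward terms, and the coupling terms $\beta(\mathbf g\cdot\bm\delta,\xi)$ and the boundary terms match as well.
\end{itemize}

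This yields $\int_{\Omega_T}(\mathbf F\cdot\bm\delta+G\varrho)\,dxdt=0$ for all $(\mathbf F,G)$, hence $(\bm\delta,\varrho)=0$. The delicate point is that $(\bm\delta,\varrho)$ only has time derivative in $L^{4/3}$ and $\mathbb W(I)^*$. The integration-by-parts formula must therefore be justified through the $\mathbb W(I)^*$ duality, which is precisely why this regularity is recorded in Theorem \ref{thm:first:second}. I expect this to be the main technical obstacle.

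For the variational inequality, let $u\in\mathcal U_{ad}$ and $t\in(0,1]$. The set $\mathcal U_{ad}$ is convex, so $u_t:=\bar u+t(u-\bar u)\in\mathcal U_{ad}$, and for $t$ small enough $u_t\in\mathcal B_\rho(\bar u)$. Local optimality then gives $J(u_t)-J(\bar u)\ge0$. Divide by $t$ and let $t\to0^+$; since $J$ is Fréchet differentiable by Theorem \ref{thm:first:second}, this yields $J'(\bar u)(u-\bar u)\ge0$. Inserting the representation \eqref{thm:J:diff1} gives \eqref{first_cond:varition_ineq}.
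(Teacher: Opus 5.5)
Your proposal is correct and follows the route the paper takes. The paper gives no separate proof and treats the theorem as an immediate consequence of Theorem~\ref{thm:first:second} (smoothness of $J$, the representation \eqref{thm:J:diff1}, and existence and regularity of the adjoint taken from the cited work of B\"arwolff--Hinze) together with convexity of $\mathcal{U}_{ad}$.

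The one addition is your transposition (duality) argument for uniqueness of the adjoint, which the paper leaves to the cited well-posedness result for the linear adjoint system with data in $\mathbb W(I)^*$. Your argument is sound: it rightly avoids an energy estimate that the $L^{4/3}$ time regularity would not justify, and it identifies the $\mathbb W(I)^*$ pairing as what makes the integration by parts legitimate.
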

\begin{remark} Equivalently, using the projection operator, the inequality \eqref{first_cond:varition_ineq} can be rewritten as
\begin{align}\label{Proj:equation}
\bar{u}(t,x)=\mathrm{Proj}_{[u_a,u_b]}\Big(-\frac{\eta}{\alpha}\bar{\kappa}(t,x)\Big)\quad \text{for a.e. } (t, x)\in \Sigma_T.
\end{align}
\end{remark}
Before investigating the second-order necessary and sufficient optimality conditions, we first study the Lipschitz continuity of the solutions to the state and adjoint systems with respect to the control. The following lemma can be obtained using standard energy estimates; we refer to \cite{Hinze_Matthes_2007} for details.
\begin{lemma}\label{lem:estimate:Y}
There exists a constant $C_\rho>0$ such that for any $u_1, u_2\in \mathcal B_\rho(\bar{u})\cap\mathcal{U}_{ad}$, there holds
\begin{align}
\|\mathbf y_{u_1}-\mathbf y_{u_2}\|_{L^{\infty}(I;\mathbb L^2(\Omega))}&+\|\theta_{u_1}-\theta_{u_2}\|_{L^\infty(I;L^2(\Omega))}+\|\mathbf y_{u_1}-\mathbf y_{u_2}\|_{L^{2}(I;\mathbb H^1(\Omega))}+\|\theta_{u_1}-\theta_{u_2}\|_{L^2(I;H^1(\Omega))}\nonumber\\
&\leq C_\rho\|u_1-u_2\|_{L^2(I;L^2(\Gamma))}.\nonumber
\end{align}
\end{lemma}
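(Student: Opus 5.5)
We argue by energy estimates for the difference of the two states. Set $\mathbf e:=\mathbf y_{u_1}-\mathbf y_{u_2}$, $\sigma:=\theta_{u_1}-\theta_{u_2}$ and $w:=u_1-u_2$. Subtracting the two weak formulations \eqref{Exist:weak:solution} and using the trilinearity of $\mathbf b$ and $b$, we obtain for a.a.\ $t$ and all $(\mathbf v,\psi)\in\mathbb X\times H^1(\Omega)$:
\begin{align*}
&\langle\partial_t\mathbf e,\mathbf v\rangle+\nu\mathbf a(\mathbf e,\mathbf v)+\mathbf b(\mathbf e,\mathbf y_{u_1},\mathbf v)+\mathbf b(\mathbf y_{u_2},\mathbf e,\mathbf v)+\beta(\sigma\mathbf g,\mathbf v)=0,\\
&\langle\partial_t\sigma,\psi\rangle+\chi a(\sigma,\psi)+b(\mathbf e,\theta_{u_1},\psi)+b(\mathbf y_{u_2},\sigma,\psi)+\eta\gamma(\sigma,\psi)_\Gamma=\eta(w,\psi)_\Gamma,
\end{align*}
with $\mathbf e(0)=\mathbf 0$ and $\sigma(0)=0$.

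Since $u_1,u_2\in\mathcal B_\rho(\bar u)$, their $L^2(I;L^2(\Gamma))$ norms are bounded by $\|\bar u\|+\rho$. Lemma \ref{thm:exist:state:1} then gives a uniform bound $M_\rho$ for $\|\mathbf y_{u_i}\|_{\mathbb W(I)}+\|\theta_{u_i}\|_{W(I)}$. In particular, this bounds the norms in $L^\infty(I;L^2)\cap L^2(I;H^1)$, and this constant is the source of $C_\rho$.

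Test with $\mathbf v=\mathbf e$ and $\psi=\sigma$. This is legitimate because the solutions lie in $\mathbb W$, and we use the Lions--Magenes formula $\langle\partial_t\mathbf e,\mathbf e\rangle=\frac12\frac{d}{dt}\|\mathbf e\|^2$. By skew-symmetry, $\mathbf b(\mathbf y_{u_2},\mathbf e,\mathbf e)=0$ and $b(\mathbf y_{u_2},\sigma,\sigma)=0$; the latter holds for $\psi\in H^1(\Omega)$ precisely because $b$ is defined in its skew-symmetrized form. The Robin term $\eta\gamma\|\sigma\|_\Gamma^2$ is nonnegative. Together with $\chi\|\nabla\sigma\|^2$, it controls $c\|\sigma\|_{H^1}^2$ up to adding $\|\sigma\|^2$.

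The remaining terms are bounded as follows:
\begin{itemize}
\item The buoyancy term satisfies $|\beta(\sigma\mathbf g,\mathbf e)|\le C(\|\sigma\|^2+\|\mathbf e\|^2)$.
\item The control term satisfies $\eta|(w,\sigma)_\Gamma|\le \epsilon\|\sigma\|_{H^1}^2+C_\epsilon\|w\|_\Gamma^2$, by the trace theorem.
\item The nonlinear terms are handled with \eqref{pre:estimate}. For instance,
\[
|\mathbf b(\mathbf e,\mathbf y_{u_1},\mathbf e)|\le C\|\mathbf e\|\,\|\nabla\mathbf e\|\,\|\nabla\mathbf y_{u_1}\|\le\frac{\nu}{4}\|\nabla\mathbf e\|^2+C\|\nabla\mathbf y_{u_1}\|^2\|\mathbf e\|^2.
\]
Similarly, $|b(\mathbf e,\theta_{u_1},\sigma)|\le C\|\mathbf e\|^{1/2}\|\nabla\mathbf e\|^{1/2}\|\nabla\theta_{u_1}\|\,\|\sigma\|^{1/2}\|\sigma\|_{H^1}^{1/2}$. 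Young's inequality absorbs the gradient factors and leaves $C\|\theta_{u_1}\|_{H^1}^2(\|\mathbf e\|^2+\|\sigma\|^2)$.
\end{itemize}

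The main care is needed for the temperature convection term. Here $\sigma$ does not vanish on $\Gamma$, so we use the $H^1(\Omega)$ version of \eqref{Interpolation:conti:1}. Note also that in the skew form $b$ carries a second piece, $c(\mathbf e,\sigma,\theta_{u_1})$. We bound it the same way, $\le\|\mathbf e\|_{L^4}\|\nabla\sigma\|\,\|\theta_{u_1}\|_{L^4}$, and then interpolate $\|\theta_{u_1}\|_{L^4}^2\le C\|\theta_{u_1}\|\,\|\theta_{u_1}\|_{H^1}$, which is integrable in time.

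Collecting terms gives
\[
\frac{d}{dt}\big(\|\mathbf e\|^2+\|\sigma\|^2\big)+c\big(\|\nabla\mathbf e\|^2+\|\sigma\|_{H^1}^2\big)\le g(t)\big(\|\mathbf e\|^2+\|\sigma\|^2\big)+C\|w\|_\Gamma^2,
\]
where $g=C\big(1+\|\mathbf y_{u_1}\|_{H^1}^2+\|\theta_{u_1}\|_{H^1}^2\big)$ and $\|g\|_{L^1(I)}\le C(M_\rho)$. Gronwall's inequality with zero initial data yields the $L^\infty(I;L^2)$ bounds. Integrating the inequality in time then gives the $L^2(I;H^1)$ bounds, using Poincaré for $\mathbf e\in\mathbb X$, with constant $C_\rho=C\exp(C\|g\|_{L^1})$.
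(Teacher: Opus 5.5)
Your proposal is correct and is exactly the ``standard energy estimate'' the paper appeals to for this lemma (the paper gives no proof of its own beyond a reference). It has the same structure the paper carries out explicitly for Lemma \ref{lem:estimate:Z}: subtract the weak forms, test with $(\mathbf e,\sigma)$, drop the transport terms by skew-symmetry, bound the cross terms via \eqref{pre:estimate} and Young, and close with Gronwall from zero initial data.

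One cosmetic point concerns the second skew piece $c(\mathbf e,\sigma,\theta_{u_1})$. Since $\mathbf e\in\mathbb X$, Lemma \ref{lem:b:xingzhi} gives $b(\mathbf e,\theta_{u_1},\sigma)=c(\mathbf e,\theta_{u_1},\sigma)$, so that piece need not be treated separately. If you do keep it, Young actually produces $C\|\theta_{u_1}\|_{L^4}^4\|\mathbf e\|^2\le C\|\theta_{u_1}\|^2\|\theta_{u_1}\|_{H^1}^2\|\mathbf e\|^2$ rather than a square; this is still integrable thanks to the $L^\infty(I;L^2(\Omega))$ bound $M_\rho$.
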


\begin{lemma}\label{lem:estimate:Z}
There exists a constant $C_\rho>0$ such that for all $u_1, u_2 \in \mathcal{B}_\rho(\bar{u}) \cap \mathcal{U}_{ad}$ and all $v \in L^2(I; L^2(\Gamma))$, it holds that
\begin{align}\label{estimate:Z:results}
\|\mathbf z_{u_1,v}-\mathbf z_{u_2,v}\|_{L^\infty(I;\mathbb L^2(\Omega))}&+\|\mathbf z_{u_1,v}-\mathbf z_{u_2,v}\|_{L^2(I;\mathbb H^{1}(\Omega))}+\|\xi_{u_1,v}-\xi_{u_2,v}\|_{L^\infty(I; L^2(\Omega))}\nonumber\\
&+\|\xi_{u_1,v}-\xi_{u_2,v}\|_{L^2(I;H^1(\Omega))}\leq C_\rho\|u_1-u_2\|_{L^2(I;L^2(\Gamma))}\|v\|_{L^2(I;L^2(\Gamma))}.
\end{align}
\end{lemma}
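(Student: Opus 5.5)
Set $\mathbf e:=\mathbf z_{u_1,v}-\mathbf z_{u_2,v}$ and $\sigma:=\xi_{u_1,v}-\xi_{u_2,v}$, and write $\mathbf y_i=\mathbf y_{u_i}$, $\theta_i=\theta_{u_i}$. Subtracting the two linearized systems \eqref{Diff:first:deri} for $u_1$ and $u_2$, the boundary term $\eta\langle v,\psi\rangle_\Gamma$ cancels. We obtain
\begin{align*}
&\langle\partial_t\mathbf e,\mathbf v\rangle+\nu\mathbf a(\mathbf e,\mathbf v)+\mathbf b(\mathbf e,\mathbf y_1,\mathbf v)+\mathbf b(\mathbf y_1,\mathbf e,\mathbf v)+\beta(\sigma\mathbf g,\mathbf v)=-\mathbf b(\mathbf z_{u_2,v},\mathbf y_1-\mathbf y_2,\mathbf v)-\mathbf b(\mathbf y_1-\mathbf y_2,\mathbf z_{u_2,v},\mathbf v),\\
&\langle\partial_t\sigma,\psi\rangle+\chi a(\sigma,\psi)+b(\mathbf e,\theta_1,\psi)+b(\mathbf y_1,\sigma,\psi)+\eta\gamma\langle\sigma,\psi\rangle_\Gamma=-b(\mathbf z_{u_2,v},\theta_1-\theta_2,\psi)-b(\mathbf y_1-\mathbf y_2,\xi_{u_2,v},\psi),
\end{align*}
with zero initial data. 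Thus $(\mathbf e,\sigma)$ solves a linearized system of the same type as \eqref{Diff:first:deri}, with right-hand sides bilinear in (state difference) $\times$ (linearized state for $u_2$).

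We test with $\mathbf v=\mathbf e$ and $\psi=\sigma$, which is admissible by the usual Lions--Magenes argument since the data lie in $L^2(I;\mathbb X^*)$. Skew-symmetry gives $\mathbf b(\mathbf y_1,\mathbf e,\mathbf e)=0$ and $b(\mathbf y_1,\sigma,\sigma)=0$. The Robin term $\eta\gamma\|\sigma\|_\Gamma^2\ge0$ together with $\chi\|\nabla\sigma\|^2$ controls the full $H^1$-norm of $\sigma$ up to an $L^2$ term. The remaining terms are handled as follows.
\begin{itemize}
\item The coupling $\beta(\sigma\mathbf g,\mathbf e)$ is bounded by Young's inequality.
\item $\mathbf b(\mathbf e,\mathbf y_1,\mathbf e)$ and $b(\mathbf e,\theta_1,\sigma)$ are bounded via \eqref{pre:estimate} and \eqref{Interpolation:conti:1}. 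For example, $|b(\mathbf e,\theta_1,\sigma)|\le C\|\mathbf e\|^{1/2}\|\mathbf e\|_{H^1}^{1/2}\|\theta_1\|_{H^1}\|\sigma\|^{1/2}\|\sigma\|_{H^1}^{1/2}\le \epsilon(\|\mathbf e\|_{H^1}^2+\|\sigma\|_{H^1}^2)+C\|\theta_1\|_{H^1}^2(\|\mathbf e\|^2+\|\sigma\|^2)$.
\item The source terms, e.g. $\mathbf b(\mathbf z_{u_2,v},\mathbf y_1-\mathbf y_2,\mathbf e)$, are bounded by $C\|\mathbf z_{u_2,v}\|_{L^4}\|\mathbf y_1-\mathbf y_2\|_{H^1}\|\mathbf e\|_{L^4}$. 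Applying \eqref{Interpolation:conti:1} to $\mathbf e$ and Young's inequality, this is at most $\epsilon\|\mathbf e\|_{H^1}^2+\|\mathbf e\|^2+C\|\mathbf z_{u_2,v}\|_{L^4}^2\|\mathbf y_1-\mathbf y_2\|_{H^1}^2$.
\end{itemize}

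To apply Gronwall we integrate in time and need the factor $\int_I\|\mathbf z_{u_2,v}\|_{L^4}^2\|\mathbf y_1-\mathbf y_2\|_{H^1}^2\,dt$ to be bounded by $C\|u_1-u_2\|^2\|v\|^2$. This is the main obstacle, because $\|\mathbf y_1-\mathbf y_2\|_{H^1}$ is only in $L^2(I)$ while $\|\mathbf z\|_{L^4}^2\le C\|\mathbf z\|\|\mathbf z\|_{H^1}$ is only in $L^1(I)$, so the naive product is not integrable. Instead we split the $L^4$ norms symmetrically: $|\mathbf b(\mathbf z,\mathbf y_1-\mathbf y_2,\mathbf e)|=|\mathbf b(\mathbf z,\mathbf e,\mathbf y_1-\mathbf y_2)|$ (up to the skew definition, treating both terms of $\mathbf b$ in the same way). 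This is at most $\|\mathbf z\|_{L^4}\|\mathbf e\|_{H^1}\|\mathbf y_1-\mathbf y_2\|_{L^4}$, which is in turn at most $\epsilon\|\mathbf e\|_{H^1}^2+C\|\mathbf z\|\|\mathbf z\|_{H^1}\|\mathbf y_1-\mathbf y_2\|\|\mathbf y_1-\mathbf y_2\|_{H^1}$. Integrating in time, the last term is bounded by
\[
C\|\mathbf z\|_{L^\infty(L^2)}\|\mathbf z\|_{L^2(H^1)}\|\mathbf y_1-\mathbf y_2\|_{L^\infty(L^2)}\|\mathbf y_1-\mathbf y_2\|_{L^2(H^1)}.
\]
Lemma \ref{lem:estimate:Y} bounds the $\mathbf y_1-\mathbf y_2$ factors by $C_\rho\|u_1-u_2\|$. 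The $\mathbf z$ factors are bounded by $C_\rho\|v\|$ from the energy estimate for \eqref{Diff:first:deri}: testing with $(\mathbf z,\xi)$, using the trace inequality for $\eta\langle v,\xi\rangle_\Gamma$, Gronwall, and the bound on $\|(\mathbf y_u,\theta_u)\|_{\mathbb W}$ that is uniform on $\mathcal B_\rho(\bar u)$ by Lemma \ref{lem:estimate:Z}'s hypotheses and \eqref{weak:est:1}. The same symmetric splitting applies to the three other source terms, including the temperature ones with $\theta_1-\theta_2$ and $\xi_{u_2,v}$.

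Finally, absorbing the $\epsilon$-terms gives
\[
\frac{d}{dt}\big(\|\mathbf e\|^2+\|\sigma\|^2\big)+c\big(\|\mathbf e\|_{H^1}^2+\|\sigma\|_{H^1}^2\big)\le C\big(1+\|\mathbf y_1\|_{H^1}^2+\|\theta_1\|_{H^1}^2\big)\big(\|\mathbf e\|^2+\|\sigma\|^2\big)+F(t),
\]
with $\int_I F\le C_\rho\|u_1-u_2\|^2\|v\|^2$. Since $\mathbf y_1,\theta_1$ are uniformly bounded in $L^2(I;H^1)$ on the ball, Gronwall's lemma gives \eqref{estimate:Z:results}.
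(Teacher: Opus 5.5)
Your proof is correct and follows the paper's overall strategy: subtract the two linearized systems \eqref{Diff:first:deri}, test with the differences, absorb the gradient terms, and apply Gronwall. Your rearrangement of the trilinear differences (transport field $\mathbf y_{u_1}$, with $\mathbf z_{u_2,v}$ in both velocity sources) is equivalent to the paper's. The one substantive difference is how the source terms are bounded, and there your version is the more careful one.

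The paper bounds $J_2+J_3$ (and, ``similarly'', $J_6+J_7$) by $C\int_0^t\|\nabla\mathbf z\|\,\|\nabla(\mathbf y_{u_1}-\mathbf y_{u_2})\|\,\|\nabla\mathbf e_{\mathbf z}\|\,ds$. It then claims a bound $C\|u_1-u_2\|^2\|\nabla\mathbf z\|^2_{L^2(I;\mathbb L^2(\Omega))}$ via Lemma~\ref{lem:estimate:Y}. After Young's inequality this needs an $L^\infty$-in-time bound on $\|\nabla\mathbf z_{u,v}\|$ or on $\|\nabla(\mathbf y_{u_1}-\mathbf y_{u_2})\|$, and Lemma~\ref{lem:estimate:Y} supplies neither. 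Such a bound could come from a strong-solution estimate like $\mathbf z_{u,v}\in L^\infty(I;\mathbb X)$, but that requires the extra data regularity of Theorem~\ref{lem:conL2:Regualrit:YTheta}.

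You instead move the derivative onto the test function. This is legitimate because the first argument lies in $\mathbb X$, so \eqref{b_estimate} applies after skew-symmetry. You then use \eqref{Interpolation:conti:1} on both remaining $L^4$ factors. The time integral then needs only $L^\infty(I;L^2)\cap L^2(I;H^1)$ control of $\mathbf z_{u_2,v},\xi_{u_2,v}$ and of the state differences. That is exactly what the linearized energy estimate and Lemma~\ref{lem:estimate:Y} provide, so your argument closes at the weak-solution level with no additional regularity.

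A small aside: $\|\mathbf z\|_{\mathbb L^4(\Omega)}^2\le C\|\mathbf z\|\,\|\mathbf z\|_{H^1}$ lies in $L^2(I)$, not merely $L^1(I)$. This still does not pair with $\|\mathbf y_{u_1}-\mathbf y_{u_2}\|_{H^1}^2\in L^1(I)$, so your diagnosis of the obstacle stands.
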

\begin{proof}
Let \( (\mathbf{e}_{\mathbf z}, e_\xi) = (\mathbf{z}_{u_1,v} - \mathbf{z}_{u_2,v}, \xi_{u_1,v} - \xi_{u_2,v}) \). From \eqref{Diff:first:deri}, we know that \( (\mathbf{e}_{\mathbf z}, e_\xi) \) satisfies
\begin{equation}\label{estimate:Z:1}
\left\{
\begin{aligned}
&\langle  \partial_t \mathbf{e}_{\mathbf z}, \mathbf{v} \rangle + \nu \mathbf{a}(\mathbf{e}_{\mathbf z}, \mathbf{v}) + \mathbf{b}(\mathbf{e}_{\mathbf z}, \mathbf{y}_{u_1}, \mathbf{v})+ \mathbf{b}(\mathbf{z}_{u_2,v}, \mathbf{y}_{u_1} - \mathbf{y}_{u_2}, \mathbf{v}) + \mathbf{b}(\mathbf{y}_{u_1} - \mathbf{y}_{u_2}, \mathbf{z}_{u_1,v}, \mathbf{v}) \\
&+ \mathbf{b}(\mathbf{y}_{u_2}, \mathbf{e}_{\mathbf z}, \mathbf{v}) + \beta ({e}_\xi \mathbf{g}, \mathbf{v}) = 0 \quad \forall \mathbf{v} \in \mathbb{X}, \\
&\langle \partial_t {e}_\xi, \psi \rangle + \chi a(e_\xi, \psi) + b(\mathbf{e}_{\mathbf z}, \theta_{u_1}, \psi) + b(\mathbf{z}_{u_2,v}, \theta_{u_1}-\theta_{u_2}, \psi) + b(\mathbf{y}_{u_1} - \mathbf{y}_{u_2}, \xi_{u_1,v}, \psi) \\
&+ b(\mathbf{y}_{u_2}, e_\xi, \psi) + \eta \gamma \langle {e}_\xi, \psi \rangle_\Gamma = 0 \quad \forall \psi \in H^1(\Omega), \\
&\mathbf{e}_z(0) = 0, \quad  {e}_\xi(0) = 0.
\end{aligned}
\right.
\end{equation}
Taking \((\mathbf v,\psi)=(\mathbf e_{\mathbf z}, e_\xi ) \) in \eqref{estimate:Z:1} and integrating over the interval \( (0, t) \) for any $t\in (0,T)$, there holds
\begin{align}\label{estimate:Z:2}
&\frac{1}{2}\big(\|\mathbf e_{\mathbf z}(t)\|^2+\|e_\xi(t)\|^2\big)+\nu\int_{0}^t\|\nabla \mathbf e_{\mathbf z} \|^2ds+\chi\int_0^t\|\nabla e_\xi\|^2ds+\gamma \eta \int_0^t\|e_\xi\|^2_\Gamma ds\nonumber\\
&\leq  \int_{0}^t |\mathbf{b}(\bm{e}_{\mathbf z}, \mathbf{y}_{u_1}, \mathbf e_{\mathbf z})|ds+ \int_{0}^t|\mathbf{b}(\mathbf{z}_{u_2,v}, \mathbf{y}_{u_1} - \mathbf{y}_{u_2}, \mathbf e_{\mathbf z})|ds + \int_{0}^t|\mathbf{b}(\mathbf{y}_{u_1} - \mathbf{y}_{u_2}, \mathbf{z}_{u_1,v}, \mathbf e_{\mathbf z})|ds +\int_0^t |\beta({e}_\xi \mathbf{g}, \mathbf e_{\mathbf z})|ds\nonumber\\
&+\int_0^t |b(\bm{e}_{\mathbf z}, \theta_{u_1}, e_\xi)|ds + \int_0^t|b(\mathbf{z}_{u_2,v}, \theta_{u_1}-\theta_{u_2}, e_\xi)|ds+\int_0^t |b(\mathbf{y}_{u_1} - \mathbf{y}_{u_2}, \xi_{u_1,v}, e_\xi)|ds=:\sum_{i=1}^{7}J_i.
\end{align}
In the following, we estimate each term on the right-hand side of \eqref{estimate:Z:2}. For the term $J_1$, we have
\begin{align*}
J_1 &\leq C \int_0^t \|\mathbf{e}_{\mathbf z}\|\|\nabla \mathbf{e}_{\mathbf z}\| \|\nabla \mathbf{y}_{u_1}\|  ds 
    \leq \frac{C}{\nu} \int_0^t \|\nabla \mathbf{y}_{u_1}\|^2 \|\mathbf{e}_{\mathbf z}\|^2 \, ds + \frac{\nu}{6} \int_0^t \|\nabla \mathbf{e}_{\mathbf z}\|^2\, ds,
\end{align*}
where inequality \eqref{pre:estimate} is used. Using \eqref{b_estimate}, the term $J_2+J_3$ can be estimated by 
\begin{equation}
\begin{aligned}\label{estimate:Z:4}
J_2+J_3
&\leq  C\int_{0}^t\|\nabla\mathbf{z}_{u_2,v}\|\|\nabla(\mathbf y_{u_1}-\mathbf y_{u_2})\|\|\nabla \mathbf e_{z}\|ds+  C\int_{0}^t\|\nabla\mathbf{z}_{u_1,v}\|\|\nabla(\mathbf y_{u_1}-\mathbf y_{u_2})\|\|\nabla \mathbf e_{{\mathbf z}}\|ds\\
&\leq  \frac{\nu}{6}\int_{0}^t\|\nabla \mathbf e_{\mathbf z}\|^2ds+C\Vert u_1-u_2\Vert^2_{L^2(I;L^2(\Gamma))}\Big(\|\nabla\mathbf{z}_{u_2,v}\|^2_{L^2(I;\mathbb L^2(\Omega))}+\|\nabla\mathbf{z}_{u_1,v}\|^2_{L^2(I;\mathbb L^2(\Omega))}\Big),
\end{aligned}
\end{equation}
where we have used Lemma \ref{lem:estimate:Y}. By Young's inequality, the term $J_4$ can be bounded as
\begin{align*}
J_4 \leq C \Big( \int_0^t \|\mathbf{e_{\mathbf z}}\|^2 \, ds + \int_0^t \|e_\xi\|^2 \, ds \Big).
\end{align*}
By the Poincar$\mathrm{\acute{e}}$ inequality, the  term $J_5$ can be bounded as
\begin{align*}
J_5&\leq C\int_{0}^t\|\mathbf e_{\mathbf z}\|^{\frac{1}{2}}\|\nabla\mathbf e_{\mathbf z}\|^{\frac{1}{2}}\|e_\xi\|^\frac{1}{2}\| e_\xi\|^{\frac{1}{2}}_{H^1(\Omega)}\|\nabla \theta_{u_1}\|ds\nonumber\\
&\leq C \int_0^t \|\mathbf e_{\mathbf z}\|\|\nabla\mathbf e_{\mathbf z}\|\|\nabla \theta_{u_1}\|ds+C\int_0^t \|e_\xi\|\| e_\xi\|_{H^1(\Omega)}\|\nabla \theta_{u_1}\|ds\nonumber\\
&\leq\frac{\nu}{6}\int_0^t\|\nabla\mathbf e_{\mathbf z}\|^2ds +\frac{C}{\nu}\int_0^t\|\mathbf e_{\mathbf z}\|^2\|\nabla \theta_{u_1}\|^2ds+\frac{C}{\min\{\chi,\gamma\eta\}}\int_0^t\|e_\xi\|^2\|\nabla \theta_{u_1}\|^2ds\nonumber\\
&+\frac{\min\{\chi,\gamma\eta\}}{4}\int_0^t\Big(\|\nabla e_\xi\|^2+\|e_\xi\|^2_\Gamma\Big)ds.
\end{align*}
Similar to \eqref{estimate:Z:4}, the term $J_6+J_7$ can be bounded as
\begin{align*}
J_6+J_7&\leq \frac{\min\{\chi,\gamma\eta\}}{4}\int_0^t\Big(\|\nabla e_\xi\|^2+\|e_\xi\|^2_\Gamma\Big)ds+\frac{C}{\min\{\chi,\gamma\eta\}}\Vert u_1-u_2\Vert^2_{L^2(I;L^2(\Gamma))}\Big(\|\nabla\mathbf{z}_{u_2,v}\|^2_{L^2(I;\mathbb L^2(\Omega))}\\
&+\|{\xi}_{u_1,v}\|^2_{L^2(I; H^1(\Omega))}\Big).
\end{align*}
Combining the estimates for terms $J_1-J_7$ with \eqref{estimate:Z:2} and using the continuity of the operator $\mathcal{S}'(u)$,  we obtain \eqref{estimate:Z:results} via Gronwall’s inequality, which completes the proof.
\end{proof}

\begin{lemma}\label{lem:Mu_Kappa}
There exists a constant $C_\rho>0$ such that for any $ u_1, u_2\in \mathcal B_\rho(\bar{u})\cap\mathcal{U}_{ad}$, one has
\begin{align}\label{lem:estimate:Mu_Kappa}
\|\bm \mu_{u_1}-\bm \mu_{u_2}\|_{L^2(I;\mathbb H^1(\Omega))}+\|\kappa_{u_1}-\kappa_{u_2}\|_{L^2(I;H^1(\Omega))}\leq C_\rho\|u_1-u_2\|_{L^2(I;L^2(\Gamma))}.
\end{align}
\end{lemma}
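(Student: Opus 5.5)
We estimate the differences $\mathbf e_{\bm\mu}:=\bm\mu_{u_1}-\bm\mu_{u_2}$ and $e_\kappa:=\kappa_{u_1}-\kappa_{u_2}$ by an energy argument run \emph{backward} in time, in the same spirit as the proof of Lemma \ref{lem:estimate:Z}. Subtracting the two adjoint systems \eqref{adjoint:weak:solution} written for $(\mathbf y_{u_i},\theta_{u_i})$, we obtain for all $(\mathbf w,\zeta)\in\mathbb X\times H^1(\Omega)$
\begin{align*}
&\langle -\partial_t \mathbf e_{\bm\mu},\mathbf w\rangle+\nu\mathbf a(\mathbf e_{\bm\mu},\mathbf w)+\mathbf b(\mathbf w,\mathbf y_{u_1},\mathbf e_{\bm\mu})+\mathbf b(\mathbf y_{u_1},\mathbf w,\mathbf e_{\bm\mu})+b(\mathbf w,\theta_{u_1},e_\kappa)\\
&\quad=(\mathbf y_{u_1}-\mathbf y_{u_2},\mathbf w)-\mathbf b(\mathbf w,\mathbf y_{u_1}-\mathbf y_{u_2},\bm\mu_{u_2})-\mathbf b(\mathbf y_{u_1}-\mathbf y_{u_2},\mathbf w,\bm\mu_{u_2})-b(\mathbf w,\theta_{u_1}-\theta_{u_2},\kappa_{u_2}),\\
&\langle -\partial_t e_\kappa,\zeta\rangle+\chi a(e_\kappa,\zeta)+b(\mathbf y_{u_1},\zeta,e_\kappa)+\beta(\mathbf g\cdot\mathbf e_{\bm\mu},\zeta)+\eta\gamma\langle e_\kappa,\zeta\rangle_\Gamma\\
&\quad=(\theta_{u_1}-\theta_{u_2},\zeta)-b(\mathbf y_{u_1}-\mathbf y_{u_2},\zeta,\kappa_{u_2}),
\end{align*}
together with $\mathbf e_{\bm\mu}(T)=\mathbf 0$ and $e_\kappa(T)=0$.

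\textbf{Step 1: testing.} We test with $(\mathbf w,\zeta)=(\mathbf e_{\bm\mu},e_\kappa)$ and integrate over $(t,T)$. Since $\partial_t\bm\mu_{u}$ only lies in $L^{4/3}(I;\mathbb X^*)\cap\mathbb W(I)^*$, we justify the formula $\int_t^T\langle-\partial_t\mathbf e_{\bm\mu},\mathbf e_{\bm\mu}\rangle=\tfrac12\|\mathbf e_{\bm\mu}(t)\|^2$ by a Galerkin approximation (or a time-regularization) argument, exactly as in \cite{BarwolffHinze2007}. Alternatively, one observes that $\mathbf y_{u}\in L^\infty(I;\mathbb L^2)\cap L^2(I;\mathbb H^1)$ gives, in 2D, $\partial_t\bm\mu\in L^2(I;\mathbb X^*)$, so the standard Lions--Magenes formula applies. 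Skew symmetry gives $\mathbf b(\mathbf y_{u_1},\mathbf e_{\bm\mu},\mathbf e_{\bm\mu})=0$ and $b(\mathbf y_{u_1},e_\kappa,e_\kappa)=0$.

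\textbf{Step 2: the remaining terms.} We estimate the remaining left-hand terms as in the terms $J_1$ and $J_5$ of the previous proof:
\begin{itemize}
\item $|\mathbf b(\mathbf e_{\bm\mu},\mathbf y_{u_1},\mathbf e_{\bm\mu})|\le C\|\mathbf e_{\bm\mu}\|\|\nabla\mathbf e_{\bm\mu}\|\|\nabla\mathbf y_{u_1}\|$, using \eqref{pre:estimate};
\item $|b(\mathbf e_{\bm\mu},\theta_{u_1},e_\kappa)|$ is bounded with \eqref{pre:estimate} and the $H^1$ norm of $e_\kappa$, which is controlled by $\|\nabla e_\kappa\|^2+\|e_\kappa\|_\Gamma^2$;
\item the coupling $\beta(\mathbf g\cdot\mathbf e_{\bm\mu},e_\kappa)$ is handled by Young's inequality.
\end{itemize}
Each of these is absorbed into the diffusion terms, leaving Gronwall weights $\|\nabla\mathbf y_{u_1}\|^2+\|\nabla\theta_{u_1}\|^2\in L^1(I)$, bounded uniformly on $\mathcal B_\rho(\bar u)$ by Lemma \ref{thm:exist:state:1}.

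The right-hand sides are driven by the state differences. The $L^2$ terms are bounded by Lemma \ref{lem:estimate:Y}. For the trilinear terms we use \eqref{b_estimate}, e.g.
\[
|\mathbf b(\mathbf e_{\bm\mu},\mathbf y_{u_1}-\mathbf y_{u_2},\bm\mu_{u_2})|\le C\|\nabla\mathbf e_{\bm\mu}\|\,\|\nabla(\mathbf y_{u_1}-\mathbf y_{u_2})\|\,\|\nabla\bm\mu_{u_2}\|,
\]
and similarly $|b(\mathbf y_{u_1}-\mathbf y_{u_2},e_\kappa,\kappa_{u_2})|\le C\|\mathbf y_{u_1}-\mathbf y_{u_2}\|_{\mathbb L^4}\|\nabla e_\kappa\|\|\kappa_{u_2}\|_{L^4}$.

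\textbf{Step 3 (main obstacle): time integrability of the products.} The product $\|\nabla(\mathbf y_{u_1}-\mathbf y_{u_2})\|\,\|\nabla\bm\mu_{u_2}\|$ is only $L^1$ in time, so after Young's inequality we would need it in $L^2(I)$, which fails. To fix this we use the interpolation \eqref{Interpolation:conti:1} on both factors:
\[
\|\mathbf y_{u_1}-\mathbf y_{u_2}\|_{\mathbb L^4}\le C\|\mathbf y_{u_1}-\mathbf y_{u_2}\|^{1/2}\|\mathbf y_{u_1}-\mathbf y_{u_2}\|_{\mathbb H^1}^{1/2},\qquad \|\bm\mu_{u_2}\|_{\mathbb L^4}\le C\|\bm\mu_{u_2}\|^{1/2}\|\bm\mu_{u_2}\|^{1/2}_{\mathbb H^1}.
\]
Combined with the $L^\infty(I;\mathbb L^2)$ bound on $\mathbf y_{u_1}-\mathbf y_{u_2}$ from Lemma \ref{lem:estimate:Y}, this leaves products of two $L^2$-in-time factors only if $\bm\mu_{u_2}\in L^\infty(I;\mathbb L^2)$. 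So we first verify a uniform $L^\infty(I;\mathbb L^2)\times L^\infty(I;L^2)$ bound for the adjoints on $\mathcal B_\rho(\bar u)$, by the same backward energy estimate applied to \eqref{adjoint:weak:solution} itself. For this reason we also write the convective terms with the test function in the $\mathbb L^4$ slot, moving the derivative onto the difference, which is possible through the skew-symmetric form of $\mathbf b$. The result is a bound
\[
C\,\|\nabla\mathbf e_{\bm\mu}\|\,\|\mathbf y_{u_1}-\mathbf y_{u_2}\|_{\mathbb H^1}\,\|\bm\mu_{u_2}\|^{1/2}\|\bm\mu_{u_2}\|_{\mathbb H^1}^{1/2}.
\]
This is still not square integrable in general, so we absorb $\|\nabla\mathbf e_{\bm\mu}\|^2$ and place $\|\bm\mu_{u_2}\|_{\mathbb H^1}$ into the Gronwall weight instead. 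This works via the splitting $\mathbf b(\mathbf w,\mathbf d,\bm\mu)=-\mathbf b(\mathbf w,\bm\mu,\mathbf d)$, where $\mathbf d:=\mathbf y_{u_1}-\mathbf y_{u_2}$, with the bound
\[
|\mathbf b(\mathbf w,\bm\mu,\mathbf d)|\le C\|\mathbf w\|_{\mathbb L^4}\|\nabla\bm\mu\|\|\mathbf d\|_{\mathbb L^4}.
\]
Young's inequality then yields $\frac{\nu}{8}\|\nabla\mathbf e_{\bm\mu}\|^2+C\|\mathbf e_{\bm\mu}\|^2\|\nabla\bm\mu_{u_2}\|^2+C\|\nabla\bm\mu_{u_2}\|\,\|\mathbf d\|\|\mathbf d\|_{\mathbb H^1}$. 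The last term is integrable, with integral bounded by $C\|u_1-u_2\|^2$ via Lemma \ref{lem:estimate:Y}.

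\textbf{Step 4: conclusion.} Collecting all bounds and applying the backward Gronwall inequality yields
\[
\|\mathbf e_{\bm\mu}\|_{L^\infty(I;\mathbb L^2)}^2+\|e_\kappa\|^2_{L^\infty(I;L^2)}+\int_I\big(\|\nabla\mathbf e_{\bm\mu}\|^2+\|\nabla e_\kappa\|^2+\|e_\kappa\|_\Gamma^2\big)\le C_\rho\|u_1-u_2\|^2_{L^2(I;L^2(\Gamma))},
\]
which implies \eqref{lem:estimate:Mu_Kappa} after using the Poincar\'e inequality for $\mathbf e_{\bm\mu}$ and the Friedrichs-type inequality for $e_\kappa$.
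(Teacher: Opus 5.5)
\textbf{Verdict.} Your argument is correct, but it takes a genuinely different route from the paper.

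\textbf{The paper's route.} The paper never tests the error system with $(\mathbf e_{\bm\mu},e_\kappa)$. It writes the difference as the linear adjoint system with coefficients $(\mathbf y_{u_2},\theta_{u_2})$. It then shows that the forcing terms, e.g.\ $\mathbf w\mapsto \mathbf b(\mathbf w,\mathbf y_{u_1}-\mathbf y_{u_2},\bm\mu_{u_1})$, are bounded linear functionals on $\mathbb W(I)$ and $W(I)$ with norm $\le C\|u_1-u_2\|_{L^2(I;L^2(\Gamma))}$. The $L^\infty(I;\mathbb L^2(\Omega))$ control needed on the test function comes for free from $\mathbb W(I)\hookrightarrow C(\bar I;\mathbb H)$. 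Finally it quotes the a priori bound of Lemma~1 in \cite{BarwolffHinze2007} for the adjoint system with data in $\mathbb W(I)^*\times W(I)^*$. No Gronwall argument and no energy identity appear at this level.

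\textbf{Your route.} You run a backward energy/Gronwall estimate instead. Your key device uses skew-symmetry to write $\mathbf b(\mathbf e_{\bm\mu},\mathbf d,\bm\mu_{u_2})=-\mathbf b(\mathbf e_{\bm\mu},\bm\mu_{u_2},\mathbf d)$. Then $\|\nabla\bm\mu_{u_2}\|^2$ becomes an additional Gronwall weight. The remaining forcing $\|\nabla\bm\mu_{u_2}\|\,\|\mathbf d\|\,\|\mathbf d\|_{\mathbb H^1}$ has time integral at most $\|\mathbf d\|_{L^\infty(I;\mathbb L^2(\Omega))}\|\nabla\bm\mu_{u_2}\|_{L^2(I;\mathbb L^2(\Omega))}\|\mathbf d\|_{L^2(I;\mathbb H^1(\Omega))}\le C\|u_1-u_2\|^2_{L^2(I;L^2(\Gamma))}$, so the estimate closes.
\begin{itemize}
\item The same splitting handles $\mathbf b(\mathbf d,\mathbf e_{\bm\mu},\bm\mu_{u_2})=-\mathbf b(\mathbf d,\bm\mu_{u_2},\mathbf e_{\bm\mu})$ and $b(\mathbf e_{\bm\mu},\theta_{u_1}-\theta_{u_2},\kappa_{u_2})$.
\item Your bound on $b(\mathbf d,e_\kappa,\kappa_{u_2})$ works once you have the uniform $L^\infty(I;L^2(\Omega))$ bound on $\kappa_{u_2}$.
\end{itemize}

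\textbf{What each route buys.} Yours is self-contained and also yields $L^\infty(I;L^2)$ control of the differences. The price is that the energy identity must be justified for functions whose time derivative lies only in $L^{4/3}(I;\mathbb X^*)$. That requires a Galerkin approximation of the linear difference system, plus uniqueness of its solution to identify the limit with $(\mathbf e_{\bm\mu},e_\kappa)$. This is essentially the same machinery that sits behind the cited lemma of \cite{BarwolffHinze2007}.

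\textbf{One correction.} Your ``alternative'' justification in Step~1 does not hold in this setting. With only $\mathbf y_u\in L^\infty(I;\mathbb L^2(\Omega))\cap L^2(I;\mathbb H^1(\Omega))$, the functional $\mathbf w\mapsto\mathbf b(\mathbf w,\mathbf y_u,\bm\mu_u)$ is controlled by $\|\nabla\mathbf y_u\|\,\|\bm\mu_u\|_{\mathbb L^4(\Omega)}$. The available bounds place this in $L^{4/3}(I)$, not $L^2(I)$, which is exactly why the paper records only $\partial_t\bm\mu_u\in L^{4/3}(I;\mathbb X^*)$. So $\partial_t\bm\mu_u\in L^2(I;\mathbb X^*)$ does not follow, and the Lions--Magenes formula cannot be invoked directly. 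You must rely on the Galerkin (or time-regularization) argument.
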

\begin{proof}
Let \( (\mathbf{e}_\mu, e_\kappa) := (\bm{\mu}_{u_1} - \bm {\mu}_{u_2}, \kappa_{u_1} - \kappa_{u_2}) \), from \eqref{adjoint:weak:solution} we know that \( (\mathbf{e}_\mu, e_\kappa) \) satisfies
\begin{equation}\label{estimae:Mu_Kappa:equations}
\left\{\begin{aligned}
&\langle - \partial_t \mathbf  e_\mu,\mathbf w\rangle+\nu \mathbf a(\mathbf e_\mu,\mathbf w)+\mathbf b(\mathbf w ,\mathbf y_{u_2},\mathbf e_\mu)+\mathbf b(\mathbf y_{u_2} ,\mathbf w,\mathbf e_\mu)+b(\mathbf w,\theta_{u_2},e_\kappa)=(\mathbf y_{u_1}-\mathbf y_{u_2},\mathbf w)\\
&-\mathbf b(\mathbf w, \mathbf y_{u_1}-\mathbf y_{u_2},\bm \mu_{u_1})-\mathbf b(\mathbf y_{u_1}-\mathbf y_{u_2},\mathbf w, \bm \mu_{u_1})-b(\mathbf w,\theta_{u_1}-\theta_{u_2},\kappa_{u_1}),\\
&\langle   -\partial_t e_\kappa, \zeta\rangle+\chi a(e_\kappa, \zeta)+b(\mathbf y_{u_2}, \zeta,e_\kappa)+\beta(\mathbf g\cdot\mathbf e_\mu,\zeta)+\eta \gamma\langle e_\kappa, \zeta\rangle_\Gamma=(\theta_{u_1}-\theta_{u_2},\zeta)-b(\mathbf y_{u_1}-\mathbf y_{u_2},\zeta,\kappa_{u_1}),\\
&\mathbf e_\mu(T)=\mathbf 0,\quad e_\kappa(T)=0
\end{aligned}\right.
\end{equation}
for any $ (\mathbf  w, \zeta)\in \mathbb W$. For any \(\mathbf{w} \in \mathbb{W}(I)\), we have
\begin{align}
\int_I\mathbf b(\mathbf w, \mathbf y_{u_1}-\mathbf y_{u_2},\bm \mu_{u_1})dt &\leq \int_I \|\mathbf w\|_{\mathbb L^4(\Omega)}\|\nabla(\mathbf y_{u_1}-\mathbf y_{u_2})\|\|\bm \mu_{u_1}\|_{\mathbb L^4(\Omega)}dt\nonumber\\
&\leq \int_I\|\mathbf w\|^\frac{1}{2}\|\nabla \mathbf w\|^\frac{1}{2}\|\nabla(\mathbf y_{u_1}-\mathbf y_{u_2})\|\|\bm \mu_{u_1}\|^\frac{1}{2}\|\nabla \bm \mu_{u_1}\|^\frac{1}{2}dt \label{estimae:Mu_Kappa:1}\\
&\leq \|\nabla(\mathbf y_{u_1}-\mathbf y_{u_2})\|_{L^2(I;\mathbb L^2(\Omega))} \|\mathbf w\|_{\mathbb W(I)}\|\bm \mu_{u_1}\|^{\frac{1}{2}}_{L^{\infty}(I;\mathbb L^2(\Omega))}\|\nabla \bm \mu_{u_1}\|^{\frac{1}{2}}_{L^{2}(I;\mathbb L^2(\Omega))},\nonumber
\end{align}
\begin{align}
&\int_I\mathbf b(\mathbf y_{u_1}-\mathbf y_{u_2},\mathbf w, \bm \mu_{u_1})dt \leq \int_I \|\mathbf y_{u_1}-\mathbf y_{u_2}\|_{\mathbb L^4(\Omega)}\|\nabla \mathbf w\|\|\bm \mu_{u_1}\|_{\mathbb L^4(\Omega)}dt\label{estimae:Mu_Kappa:2}\\
&\leq \|\mathbf y_{u_1}-\mathbf y_{u_2}\|^{\frac{1}{2}}_{L^\infty(I;\mathbb L^2(\Omega))}\|\nabla(\mathbf y_{u_1}-\mathbf y_{u_2})\|^{\frac{1}{2}}_{L^2(I;\mathbb L^2(\Omega))} \|\nabla\mathbf w\|_{L^2(I;\mathbb L^2(\Omega))}\|\bm \mu_{u_1}\|^{\frac{1}{2}}_{L^{\infty}(I;\mathbb L^2(\Omega))}\|\nabla \bm \mu_{u_1}\|^{\frac{1}{2}}_{L^{2}(I;\mathbb L^2(\Omega))}\nonumber
\end{align}
and
\begin{align}\label{estimae:Mu_Kappa:3}
\int_I b(\mathbf w,\theta_{u_1}-\theta_{u_2},\kappa_{u_1})ds\leq \|\nabla( \theta_{u_1}- \theta_{u_2})\|_{L^2(I;\mathbb L^2(\Omega))} \|\mathbf w\|_{\mathbb W(I)}\| \kappa_{u_1}\|^{\frac{1}{2}}_{L^{\infty}(I; L^2(\Omega))}\| \kappa_{u_1}\|^{\frac{1}{2}}_{L^{2}(I; H^1(\Omega))}.
\end{align}
Similarly, for any \(\psi \in W(I)\) we have
\begin{align}\label{estimae:Mu_Kappa:4}
  \int_I b(\mathbf y_{u_1}-\mathbf y_{u_2},\psi,\kappa_{u_1}) dt&\leq \|\mathbf y_{u_1}-\mathbf y_{u_2}\|^{\frac{1}{2}}_{L^\infty(I;\mathbb L^2(\Omega))}\|\nabla(\mathbf y_{u_1}-\mathbf y_{u_2})\|^{\frac{1}{2}}_{L^2(I;\mathbb L^2(\Omega))} \|\nabla\psi\|_{L^2(I;\mathbb L^2(\Omega))}\nonumber\\
  &\| \kappa_{u_1}\|^{\frac{1}{2}}_{L^{\infty}(I; L^2(\Omega))}\|\kappa_{u_1}\|^{\frac{1}{2}}_{L^{2}(I; H^1(\Omega))}.
\end{align}
Collecting the estimates  \eqref{estimae:Mu_Kappa:1}--\eqref{estimae:Mu_Kappa:4}, we draw the conclusion that the right-hand sides of \eqref{estimae:Mu_Kappa:equations} form  linear functionals satisfying $$(\mathbf y_{u_1}-\mathbf y_{u_2},\cdot)-\mathbf b(\cdot, \mathbf y_{u_1}-\mathbf y_{u_2},\bm \mu_{u_1})-\mathbf b(\mathbf y_{u_1}-\mathbf y_{u_2},\cdot, \bm \mu_{u_1})-b(\cdot,\theta_{u_1}-\theta_{u_2},\kappa_{u_1})\in {\mathbb W^*(I)},$$ $$(\theta_{u_1}-\theta_{u_2},\cdot)-b(\mathbf y_{u_1}-\mathbf y_{u_2},\cdot,\kappa_{u_1})\in W^*(I).$$
Thus, combining Lemma 1 (\cite[p. 5]{BarwolffHinze2007}) and Lemma \ref{lem:estimate:Y} we can obtain \eqref{lem:estimate:Mu_Kappa}. This finishes the proof.
\end{proof}

Combining the previous theorems and lemmas, the following lemma can be derived through standard inequality estimates.
\begin{lemma}\label{lem:second_first_stab_Lip}
There exist positive constants $C^1_\rho$ and $C^2_\rho$, such that for any $u\in \mathcal B_\rho(\bar{u})\cap\mathcal{U}_{ad}$ and $v\in L^2(I;L^2(\Gamma))$ there holds
\begin{align}\label{J:estimate:stab}
\big|J'(u)v\big|\leq C^1_\rho\|v\|_{L^2(I;L^2(\Gamma))}\quad\text{and}\quad\big|J''(u)v^2\big|\leq C^2_\rho\|v\|^2_{L^2(I;L^2(\Gamma))}.
\end{align}
Furthermore, there exist positive constants $\widetilde{C}^1_\rho$ and $\widetilde{C}^2_\rho$, such that for any $u_1,u_2\in \mathcal B_\rho(\bar{u})\cap\mathcal{U}_{ad}$ there hold
\begin{align}
\big|\big(J'(u_1)-J'(u_2)\big)v\big| &\leq \widetilde{C}^1_\rho\|u_1-u_2\|_{L^2(I;L^2(\Gamma))}\|v\|_{L^2(I;L^2(\Gamma))},\label{J:estimate:first:Lip}\\
\big|\big(J''(u_1)-J''(u_2)\big)v^2 \big| &\leq \widetilde{C}^2_\rho\|u_1-u_2\|_{L^2(I;L^2(\Gamma))}\|v\|^2_{L^2(I;L^2(\Gamma))}.\label{J:estimate:second:Lip}
\end{align}
\end{lemma}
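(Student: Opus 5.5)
The plan is to read off all four bounds from the representations \eqref{thm:J:diff1} and \eqref{thm:J:diff2}, the stability of $\mathcal S$, $\mathcal S'$ and the adjoint (Lemma \ref{thm:exist:state:1}, Proposition \ref{thm:Lineared:state}, \eqref{adjoint:stab: 312}), and the Lipschitz estimates in Lemmas \ref{lem:estimate:Y}--\ref{lem:Mu_Kappa}. First, for $u\in\mathcal B_\rho(\bar u)\cap\mathcal U_{ad}$ we have $\|u\|_{L^2(I;L^2(\Gamma))}\le \|\bar u\|+\rho$. By \eqref{weak:est:1}, $\|(\mathbf y_u,\theta_u)\|_{\mathbb W}$ is then bounded uniformly, and by \eqref{adjoint:stab: 312} together with the embedding $\mathbb W\hookrightarrow C(\bar I;\mathbb H)\times C(\bar I;L^2)$ we also get uniform bounds on $\|\bm\mu_u\|_{L^2(I;\mathbb H^1)}$ and $\|\kappa_u\|_{L^2(I;H^1)}$. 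I also need $L^\infty(I;L^2)$ bounds for the adjoint. I would get these either from the time-reversed energy estimate (testing \eqref{adjoint:weak:solution} with $(\bm\mu,\kappa)$, which is admissible because $\partial_t\bm\mu\in\mathbb W(I)^*$), or from Lemma 1 of \cite{BarwolffHinze2007}, as already used in Lemma \ref{lem:Mu_Kappa}.

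For the first bound, the trace theorem gives $|J'(u)v|\le(\eta\|\kappa_u\|_{L^2(I;L^2(\Gamma))}+\alpha\|u\|)\|v\|\le C^1_\rho\|v\|$. For the second, energy estimates on \eqref{Diff:first:deri}, done exactly as in the proof of Lemma \ref{lem:estimate:Z} but with the source $\eta\langle v,\psi\rangle_\Gamma$ absorbed by Young's inequality and the trace inequality, and then Gronwall, give
\[
\|\mathbf z_{u,v}\|_{L^\infty(I;\mathbb L^2)\cap L^2(I;\mathbb H^1)}+\|\xi_{u,v}\|_{L^\infty(I;L^2)\cap L^2(I;H^1)}\le C_\rho\|v\|.
\]
Next I bound the trilinear terms in \eqref{thm:J:diff2} using \eqref{b_estimate} and \eqref{Interpolation:conti:1}:
\[
\int_I|\mathbf c(\mathbf z,\mathbf z,\bm\mu)|\,dt\le C\int_I\|\mathbf z\|^{1/2}\|\mathbf z\|_{H^1}^{3/2}\|\bm\mu\|^{1/2}\|\bm\mu\|_{H^1}^{1/2}\,dt .
\]
Using $L^\infty$ in time for the $L^2$ factors and Hölder with exponents $4/3$ and $4$ is insufficient, since it requires $\|\bm\mu\|_{H^1}\in L^4$ in time. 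So I would instead estimate $\|\mathbf z\|_{L^4}^2\|\nabla\bm\mu\|$ via $\int_I|\mathbf c(\mathbf z,\mathbf z,\bm\mu)|=\int_I|\mathbf c(\mathbf z,\bm\mu,\mathbf z)|\le \int_I\|\mathbf z\|\,\|\mathbf z\|_{H^1}\|\nabla\bm\mu\|\,dt$, which is bounded by $\|\mathbf z\|_{L^\infty L^2}\|\mathbf z\|_{L^2H^1}\|\bm\mu\|_{L^2H^1}\le C\|v\|^2$. Here the antisymmetry used is valid because $\mathbf z$ is divergence free with zero trace. The term $(\mathbf z\cdot\nabla)\xi\,\kappa$ is handled the same way by moving the gradient onto $\kappa$, which again relies only on $\mathrm{div}\,\mathbf z=0$ and $\mathbf z|_\Gamma=0$. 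This gives the second bound in \eqref{J:estimate:stab}.

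For \eqref{J:estimate:first:Lip}, I write $(J'(u_1)-J'(u_2))v=\int_{\Sigma_T}(\eta(\kappa_{u_1}-\kappa_{u_2})+\alpha(u_1-u_2))v$ and apply the trace inequality and Lemma \ref{lem:Mu_Kappa}. For \eqref{J:estimate:second:Lip}, I expand the differences telescopically, for example
\[
|\mathbf z_1|^2-|\mathbf z_2|^2=(\mathbf z_1-\mathbf z_2)\cdot(\mathbf z_1+\mathbf z_2),
\]
and
\[
\mathbf c(\mathbf z_1,\mathbf z_1,\bm\mu_1)-\mathbf c(\mathbf z_2,\mathbf z_2,\bm\mu_2)=\mathbf c(\mathbf z_1-\mathbf z_2,\mathbf z_1,\bm\mu_1)+\mathbf c(\mathbf z_2,\mathbf z_1-\mathbf z_2,\bm\mu_1)+\mathbf c(\mathbf z_2,\mathbf z_2,\bm\mu_1-\bm\mu_2),
\]
and similarly for the $\kappa$ term. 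Each piece is estimated as above by moving the derivative onto the adjoint factor, and Lemma \ref{lem:estimate:Z} and Lemma \ref{lem:Mu_Kappa} supply the factor $\|u_1-u_2\|$. The last piece needs only $\|\bm\mu_1-\bm\mu_2\|_{L^2(I;H^1)}$, which is available, so no $L^\infty$ Lipschitz bound on the adjoint is required.

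I expect the main obstacle to be the integrability in time of these cubic terms given the limited adjoint regularity, since $\partial_t\bm\mu$ lies only in $L^{4/3}$. The device of integrating by parts in $\mathbf c$, so that the adjoint appears only through $L^2(I;H^1)$ and the linearized states through $L^\infty(I;L^2)\cap L^2(I;H^1)$, is what makes everything close.
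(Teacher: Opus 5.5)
Your proposal is correct and is essentially the argument the paper has in mind. The paper only says that the lemma follows from \eqref{thm:J:diff1}--\eqref{thm:J:diff2} and Lemmas \ref{lem:estimate:Y}--\ref{lem:Mu_Kappa} "through standard inequality estimates", and you supply exactly those estimates. Your device of moving the gradient onto the adjoint is a useful addition, because it means only the $L^2(I;\mathbb H^1(\Omega))$ Lipschitz bound of Lemma \ref{lem:Mu_Kappa} is needed.

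Two small slips, neither affecting the final argument:
\begin{itemize}
\item The H\"older route you discarded for \eqref{J:estimate:stab} actually closes, given an $L^\infty(I;\mathbb L^2(\Omega))$ adjoint bound. It only needs $\|\bm\mu\|_{\mathbb H^1}^{1/2}\in L^4(I)$, which is the same as $\bm\mu\in L^2(I;\mathbb H^1(\Omega))$, not $\|\bm\mu\|_{\mathbb H^1}\in L^4(I)$.
\item Your final argument never uses that $L^\infty(I;\mathbb L^2(\Omega))$ adjoint bound. This is fortunate, since testing with $\bm\mu$ is not justified by $\partial_t\bm\mu\in\mathbb W(I)^*$ alone; that pairing would require $\bm\mu\in\mathbb W(I)$.
\end{itemize}
The integration by parts is genuinely indispensable only for the piece $\mathbf c(\mathbf z_2,\mathbf z_2,\bm\mu_1-\bm\mu_2)$ in \eqref{J:estimate:second:Lip}.
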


The following lemma asserts that $J''(u)$ is a Legendre form (see \cite{BonnansShapiro2000}).
\begin{lemma}\label{lem:conv:scond} Let $v_n\rightharpoonup v$ in $L^2(I;L^2(\Gamma))$ as $n\to \infty$, then $J''(u)v^2\leq \underset{n\to \infty}{\lim \inf} J''(u)v^2_n$. If, in addition,  we assume $ \lim\limits_{n\to \infty} J''(u)v^2_n=J''(u)v^2$ for $v_n\rightharpoonup v$ in $L^2(I;L^2(\Gamma))$, then $\lim\limits_{n\to \infty} \|v_n-v\|_{L^2(I;L^2(\Gamma))}=0$.
\end{lemma}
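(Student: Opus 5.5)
The plan is to exploit the structure of \eqref{thm:J:diff2}. We split $J''(u)v^2$ into the term $\alpha\|v\|^2_{L^2(I;L^2(\Gamma))}$ and a remainder that depends on $v$ only through the linearized states $(\mathbf z_{u,v},\xi_{u,v})=\mathcal S'(u)v$. Under weak convergence $v_n\rightharpoonup v$, the linearized states will converge strongly. Hence the remainder will converge, and the only source of lower semicontinuity is the weakly lower semicontinuous norm term. Concretely, write
\[
J''(u)v^2=Q(v)+\alpha\|v\|^2_{L^2(I;L^2(\Gamma))},
\]
where
\[
Q(v)=\int_{\Omega_T}\big(|\mathbf z_{u,v}|^2+|\xi_{u,v}|^2-2(\mathbf z_{u,v}\cdot\nabla)\mathbf z_{u,v}\cdot\bm\mu_u-2(\mathbf z_{u,v}\cdot\nabla)\xi_{u,v}\kappa_u\big)\,dxdt .
\]

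\textbf{Step 1: weak convergence of the linearized states.} By Proposition \ref{thm:Lineared:state}, $v\mapsto(\mathbf z_{u,v},\xi_{u,v})$ is linear and continuous from $L^2(I;L^2(\Gamma))$ into $\mathbb W$. It is therefore weak–weak continuous, so $(\mathbf z_{u,v_n},\xi_{u,v_n})\rightharpoonup(\mathbf z_{u,v},\xi_{u,v})$ in $\mathbb W$.

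\textbf{Step 2: strong convergence via compactness.} The Aubin–Lions lemma gives the compact embedding $\mathbb W(I)\hookrightarrow L^2(I;\mathbb L^2(\Omega))$, using $\mathbb X\hookrightarrow\hookrightarrow\mathbb H\hookrightarrow\mathbb X^*$, and likewise $W(I)\hookrightarrow L^2(I;L^2(\Omega))$. So the linearized states converge strongly in $L^2(\Omega_T)$. The sequence is also bounded in $L^\infty(I;L^2)$. Combining this with the interpolation inequality \eqref{Interpolation:conti:1}, we get strong convergence in $L^{p}(I;\mathbb L^4(\Omega))$ for every $p<4$. In particular, since $\|\mathbf z\|_{L^{8/3}(I;\mathbb L^4)}\le C\|\mathbf z\|^{1/4}_{L^2(I;\mathbb L^2)}\|\mathbf z\|^{3/4}_{L^2(I;\mathbb H^1)}$ by Hölder, this holds for $p=8/3$.

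\textbf{Step 3: passing to the limit in $Q(v_n)$.} The quadratic $L^2$ terms converge by Step 2. For the trilinear terms we use the antisymmetric form. Writing the integrand as $\mathbf c(\mathbf z,\mathbf z,\bm\mu_u)$, divergence-freeness of $\mathbf z$ gives $\mathbf c(\mathbf z,\mathbf z,\bm\mu_u)=-\mathbf c(\mathbf z,\bm\mu_u,\mathbf z)$, so the gradient falls on the fixed function $\bm\mu_u\in L^2(I;\mathbb X)$. Then
\[
|\mathbf c(\mathbf z_n,\bm\mu_u,\mathbf z_n)-\mathbf c(\mathbf z,\bm\mu_u,\mathbf z)|\le \|\mathbf z_n-\mathbf z\|_{\mathbb L^4}\|\nabla\bm\mu_u\|\big(\|\mathbf z_n\|_{\mathbb L^4}+\|\mathbf z\|_{\mathbb L^4}\big).
\]
Integrating in time requires $\mathbf z_n\to\mathbf z$ in $L^4(I;\mathbb L^4)$, which is exactly at the borderline of Step 2. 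This is the main obstacle.

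To get around it, I would first try to upgrade the adjoint regularity. Theorem \ref{thm:first:second} gives $\bm\mu_u\in L^\infty(I;\mathbb L^2)$, and the $\mathbb L^4$ bound from the interpolation inequality then gives $\bm\mu_u\in L^4(I;\mathbb L^4)$. With this, I would keep the gradient on the weakly convergent factor and split
\[
\mathbf c(\mathbf z_n,\mathbf z_n,\bm\mu_u)-\mathbf c(\mathbf z,\mathbf z,\bm\mu_u)=\mathbf c(\mathbf z_n-\mathbf z,\mathbf z_n,\bm\mu_u)+\mathbf c(\mathbf z,\mathbf z_n-\mathbf z,\bm\mu_u).
\]
For the first piece, Hölder with exponents $(8/3,2,8)$ in time requires $\bm\mu_u\in L^8(I;\mathbb L^4)$, which is still not available. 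So I would instead handle both pieces as follows:
\begin{itemize}
\item the second piece is a weakly continuous linear functional of $\nabla(\mathbf z_n-\mathbf z)\rightharpoonup0$ in $L^2(\Omega_T)$, since $(\mathbf z\cdot)\bm\mu_u\in L^2(\Omega_T)$ when $\mathbf z,\bm\mu_u\in L^4(I;\mathbb L^4)$;
\item the first piece is bounded by $\|\mathbf z_n-\mathbf z\|_{L^{4}(I;\mathbb L^4)}$ times bounded quantities. Here I would use that $(\mathbf z_n-\mathbf z)$ is bounded in $L^\infty(I;\mathbb L^2)\cap L^2(I;\mathbb H^1)$ and tends to $0$ in $L^2(\Omega_T)$. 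Interpolating with the $L^\infty(I;\mathbb L^2)$ bound gives convergence in $L^{4-\epsilon}(I;\mathbb L^{4-\epsilon})$, and I would absorb the $\epsilon$-loss using the slightly better integrability of $\bm\mu_u$ obtained from $\partial_t\bm\mu_u\in L^{4/3}(I;\mathbb X^*)$ (an $L^\infty(I;\mathbb L^2)$-type improvement).
\end{itemize}
If this stays stuck, the fallback is to prove $\bm\mu_u\in L^\infty(I;\mathbb L^2)\cap L^2(I;\mathbb H^1)$ with slightly more time integrability via a local energy estimate on the adjoint system \eqref{adjoint:weak:solution}. The $\kappa_u$ term is treated identically. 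The conclusion of Step 3 is $Q(v_n)\to Q(v)$.

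\textbf{Step 4: conclusion.} Weak lower semicontinuity of the norm gives $\liminf\|v_n\|^2\ge\|v\|^2$. Together with $Q(v_n)\to Q(v)$ this yields $J''(u)v^2\le\liminf_{n\to\infty} J''(u)v_n^2$. If moreover $J''(u)v_n^2\to J''(u)v^2$, subtracting the convergent $Q$ parts gives $\|v_n\|^2\to\|v\|^2$ (using $\alpha>0$). Weak convergence plus convergence of norms in the Hilbert space $L^2(I;L^2(\Gamma))$ then gives $v_n\to v$ strongly.
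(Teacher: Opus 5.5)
Your skeleton is the right one: split off $\alpha\|v\|^2_{L^2(I;L^2(\Gamma))}$, show the remainder $Q$ is weakly sequentially continuous, then combine weak convergence with convergence of norms. This is what the paper's omitted ``standard compactness argument'' refers to. Steps 1, 2 and 4 are fine, with one exception: the displayed $L^{8/3}(I;\mathbb L^4(\Omega))$ bound is false as written (concentrate in time) and needs a factor of $\|\mathbf z\|_{L^\infty(I;\mathbb L^2(\Omega))}$. Your conclusion for $p<4$ still holds.

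The genuine gap is Step 3, which you leave open. Set $\mathbf e_n:=\mathbf z_{u,v_n}-\mathbf z_{u,v}$. You only know that $\mathbf e_n\to 0$ in $L^p(I;\mathbb L^4(\Omega))$ for $p<4$, that $\nabla\mathbf e_n\rightharpoonup 0$ in $L^2(\Omega_T)$, and that $\bm\mu_u,\kappa_u\in L^4(I;L^4(\Omega))$. With only this, every H\"older splitting of the part of $\int_{\Omega_T}(\mathbf z_{u,v_n}\cdot\nabla)\mathbf z_{u,v_n}\cdot\bm\mu_u$ that is quadratic in $\mathbf e_n$ requires $\mathbf e_n\to 0$ in $L^4(I;\mathbb L^4(\Omega))$. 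That does not follow from boundedness in $\mathbb W(I)$. In two dimensions $L^4(I;\mathbb L^4(\Omega))$ is the critical, scale-invariant space for $L^\infty(I;\mathbb L^2(\Omega))\cap L^2(I;\mathbb H^1(\Omega))$, and the embedding of $\mathbb W(I)$ into it is not compact.

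Your remedies do not close this. The regularity $\partial_t\bm\mu_u\in L^{4/3}(I;\mathbb X^*)$ gives no extra time integrability of $\|\bm\mu_u\|_{\mathbb L^4(\Omega)}$. You would need something like $\bm\mu_u\in L^{4+\delta}(I;\mathbb L^4(\Omega))$, roughly $L^{2+\delta}(I;\mathbb H^1(\Omega))$, and no energy estimate gives that at this regularity level. The ``fallback'' is not an argument. The $\kappa_u$-term is not harmless either: $\nabla\xi_{u,v_n}$ converges only weakly, so it runs into the same borderline.

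The missing idea is structural. The control enters the velocity equation of \eqref{Diff:first:deri} only through the zero-order source $\beta(\xi_{u,v}\mathbf g,\mathbf v)$. Moreover, both trilinear terms in \eqref{thm:J:diff2} have $\mathbf z_{u,v}$ as the transporting field.

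By Aubin--Lions, $\xi_n:=\xi_{u,v_n}\to\xi:=\xi_{u,v}$ strongly in $L^2(I;L^2(\Omega))$. Subtracting the velocity equations, $\mathbf e_n$ solves the linearized Navier--Stokes equation with $\mathbf e_n(0)=\mathbf 0$ and right-hand side $-\beta((\xi_n-\xi)\mathbf g,\mathbf v)$. Test with $\mathbf e_n$, use $\mathbf b(\mathbf y_u,\mathbf e_n,\mathbf e_n)=0$ and \eqref{pre:estimate}, and apply Gronwall, exactly as for $J_1$ and $J_4$ in the proof of Lemma \ref{lem:estimate:Z}. This gives
\[
\|\mathbf e_n\|_{L^\infty(I;\mathbb L^2(\Omega))}+\|\mathbf e_n\|_{L^2(I;\mathbb H^1(\Omega))}\le C\|\xi_n-\xi\|_{L^2(I;L^2(\Omega))}\to 0,
\]
and hence $\mathbf e_n\to 0$ in $L^4(I;\mathbb L^4(\Omega))$ by \eqref{Interpolation:conti:1}.

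Now split each trilinear term:
\begin{itemize}
\item The integrals $\int_{\Omega_T}(\mathbf e_n\cdot\nabla)\mathbf z_{u,v_n}\cdot\bm\mu_u$ and $\int_{\Omega_T}(\mathbf e_n\cdot\nabla)\xi_n\,\kappa_u$ are bounded by $\|\mathbf e_n\|_{L^4(I;\mathbb L^4(\Omega))}$ times bounded quantities, so they tend to zero.
\item The integrals $\int_{\Omega_T}(\mathbf z_{u,v}\cdot\nabla)\mathbf e_n\cdot\bm\mu_u$ and $\int_{\Omega_T}(\mathbf z_{u,v}\cdot\nabla)(\xi_n-\xi)\,\kappa_u$ tend to zero because $\mathbf z_{u,v}\otimes\bm\mu_u$ and $\mathbf z_{u,v}\kappa_u$ lie in $L^2(\Omega_T)$ and the gradients converge weakly there.
\end{itemize}
This gives $Q(v_n)\to Q(v)$, and your Step 4 then completes the proof.
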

\begin{proof}
A standard compactness argument suffices here; thus, we omit the proof.
\end{proof}

The second-order optimality condition plays an essential role in the following a priori error estimate. However, to derive second-order optimality conditions with minimal gap, it has to be written along the directions $v\in T_{\mathcal{U}_{ad}}(\bar u)$, where $T_{\mathcal{U}_{ad}}(\bar u)$ is the tangent cone at $\bar u$ to $\mathcal{U}_{ad}$ such that $ J^{\prime}(\bar u)v=(\eta\bar{\kappa}+\alpha \bar{u},v)_{L^2(I;L^2(\Gamma))}=0$ for any $v\in T_{\mathcal{U}_{ad}}(\bar u)$. 
In order to characterize these directions, we introduce the cone of critical directions as follows:
\begin{align}
&\mathcal{C}_{\bar{u}}=\big\{v\in L^2(I;L^2(\Gamma)): v \text{ satisfies \eqref{cone:cond:1}--\eqref{cone:cond:3}} \big\},\label{cone:cond:0}\\
&v\geq 0 \quad \text{ if } -\infty <u_a=\bar{u}\ \mbox{ and}\ \eta\bar{\kappa}+\alpha \bar{u}= 0,\label{cone:cond:1}\\
&v\leq 0 \quad \text{ if }  \bar{u}=u_b<+\infty \ \mbox{ and}\ \eta\bar{\kappa}+\alpha \bar{u}= 0,\label{cone:cond:2}\\
&v=0\quad \text{ if } \eta\bar{\kappa}+\alpha \bar{u}\neq 0,\label{cone:cond:3}
\end{align}
where $\bar\kappa$ satisfies the system \eqref{first:optimal:adjoint}.

\begin{theorem}  
If  $\bar{u}$ is a local solution to the problem $\mathrm{(P)}$, then $J''(\bar{u})v^2\geq 0$ for any $v\in \mathcal{C}_{\bar{u}}$.
Conversely, let $\bar{u}\in \mathcal{U}_{ad}$ satisfy the system \eqref{first:optimal:state}-\eqref{first_cond:varition_ineq}. Suppose that 
\begin{alignat}{2}
J''(\bar{u})v^2> 0 \label{second_con}
\end{alignat}
for any $v\in \mathcal{C}_{\bar{u}}\text{\textbackslash}\{0\}$. Then, there exist \( \rho > 0 \) and \( \vartheta> 0 \) such that
\begin{align}\label{scond:cond:ZengZhang}
J(\bar{u})+\frac{\vartheta}{2}\|u-\bar{u}\|^2_{L^2(I;L^2(\Gamma))}\leq J(u)\quad \forall u \in \mathcal B_\rho(\bar{u})\cap\mathcal{U}_{ad}.
\end{align}
\end{theorem}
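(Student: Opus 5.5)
The plan follows the Casas--Tröltzsch approach for second-order conditions, using the Lipschitz and Legendre properties in Lemmas \ref{lem:second_first_stab_Lip} and \ref{lem:conv:scond}.

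\emph{Necessary condition.} Fix $v\in\mathcal C_{\bar u}$ and truncate it. Set $v_k:=P_{[-k,k]}v$ on the set where $u_a+\frac1k\le\bar u\le u_b-\frac1k$ or where $v$ has the sign allowed by \eqref{cone:cond:1}--\eqref{cone:cond:2}, and $v_k:=0$ elsewhere. Then $v_k\in\mathcal C_{\bar u}$, $v_k\to v$ in $L^2(I;L^2(\Gamma))$, and $\bar u+\tau v_k\in\mathcal U_{ad}$ for $0<\tau\le k^{-2}$. By \eqref{cone:cond:3}, $J'(\bar u)v_k=0$. A Taylor expansion gives
\[
0\le J(\bar u+\tau v_k)-J(\bar u)=\tfrac{\tau^2}{2}J''(\bar u+\vartheta_\tau\tau v_k)v_k^2 .
\]
Dividing by $\tau^2$, letting $\tau\to0$, and using \eqref{J:estimate:second:Lip} yields $J''(\bar u)v_k^2\ge0$. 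Continuity of $J''(\bar u)$ on $L^2$ from \eqref{J:estimate:stab} then lets us pass to the limit in $k$.

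\emph{Sufficient condition.} Argue by contradiction. Suppose there are $u_k\in\mathcal U_{ad}$ with $\|u_k-\bar u\|=:\rho_k\to0$ and
\[
J(u_k)<J(\bar u)+\tfrac1{2k}\rho_k^2 .
\]
Put $v_k=(u_k-\bar u)/\rho_k$. Passing to a subsequence, $v_k\rightharpoonup v$. The proof then has three steps.

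First, show $v\in\mathcal C_{\bar u}$. The sign conditions \eqref{cone:cond:1}--\eqref{cone:cond:2} hold because the relevant set of $L^2$ functions is convex and closed, hence weakly closed. For \eqref{cone:cond:3}, the mean value theorem gives
\[
J'(\bar u)v_k+\tfrac{\rho_k}{2}J''(w_k)v_k^2<\tfrac{\rho_k}{2k},
\]
and \eqref{J:estimate:stab} bounds the second term. Hence $J'(\bar u)v=\lim J'(\bar u)v_k\le0$. By \eqref{first_cond:varition_ineq}, $(\eta\bar\kappa+\alpha\bar u)v\ge0$ pointwise, so $\int(\eta\bar\kappa+\alpha\bar u)v=0$. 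Therefore $v=0$ wherever $\eta\bar\kappa+\alpha\bar u\neq0$.

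Second, do a second-order expansion with $J'(\bar u)v_k\ge0$. This gives $\frac{\rho_k^2}{2}J''(w_k)v_k^2<\frac{\rho_k^2}{2k}$. By \eqref{J:estimate:second:Lip}, $J''(\bar u)v_k^2\le \frac1k+\widetilde C^2_\rho\rho_k$. Lemma \ref{lem:conv:scond} then gives
\[
J''(\bar u)v^2\le\liminf J''(\bar u)v_k^2\le0 .
\]
By \eqref{second_con}, this forces $v=0$.

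Third, derive the contradiction. With $v=0$ we get $\lim J''(\bar u)v_k^2=0=J''(\bar u)v^2$, so Lemma \ref{lem:conv:scond} gives $v_k\to0$ strongly. This contradicts $\|v_k\|=1$. The contradiction yields some $\vartheta=1/k$ and $\rho$ for which \eqref{scond:cond:ZengZhang} holds.

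The main obstacle is the first step of the sufficiency proof, namely showing $v\in\mathcal C_{\bar u}$. It requires combining the uniform bound \eqref{J:estimate:stab} with the pointwise sign of $(\eta\bar\kappa+\alpha\bar u)(u-\bar u)$, which follows from the projection formula \eqref{Proj:equation}. A second delicate point is the Legendre property used in the last step. There I would rely on the structure of \eqref{thm:J:diff2}: the $\alpha\|v\|^2$ term is the only part that is not compact, because $v\mapsto(\mathbf z_{\bar u,v},\xi_{\bar u,v})$ is compact into $L^2(I;\mathbb L^4)$ by the Aubin--Lions lemma.
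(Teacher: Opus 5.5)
Your proposal is correct and follows the same route as the paper. The paper simply defers to the standard Casas--Tr\"oltzsch arguments, built on exactly Lemma~\ref{lem:second_first_stab_Lip} and the Legendre property in Lemma~\ref{lem:conv:scond}: truncated critical directions for the necessary condition, and the contradiction argument with normalized directions $v_k=(u_k-\bar u)/\|u_k-\bar u\|_{L^2(I;L^2(\Gamma))}$ for sufficiency. You have written out in detail the steps the paper omits.
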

\begin{proof} 
Applying Lemma \ref{lem:second_first_stab_Lip} and Lemma \ref{lem:conv:scond}, together  with the standard arguments detailed in Theorem 2.4 (p. 177) and Theorem 2.6 (p. 178) of \cite{Casas_Troltzsch_2012}, the theorem can be proved.
\end{proof}
\begin{remark}The optimality condition \eqref{second_con} is equivalent to the following condition:
\begin{align}\label{second:equi:cond}
J''(\bar{u})v^2\geq \vartheta\|v\|^2_{L^2(I;L^2(\Gamma))}\quad \forall v\in \mathcal C_{\bar{u}}.
\end{align}
Indeed, let us observe that  \eqref{scond:cond:ZengZhang}  implies that  $\bar{u}$ is a local solution of the following problem
\begin{align*}
\min\limits_{u\in \mathcal U_{ad}\cap \mathcal B_\rho(\bar{u})} J_\vartheta(u)=J(u)-\frac{\vartheta}{2}\|u-\bar{u}\|^2_{L^2(I;L^2(\Gamma))}.
\end{align*}
Therefore, from the second-order necessary condition we obtain $J''_\vartheta(\bar{u})v^2\geq0$ for every $v\in \mathcal{C}_{\bar{u}}$. It  suffices to note that $J''_\vartheta(\bar{u})v^2=J''(\bar{u})v^2-\vartheta \|v\|^2_{L^2(I;L^2(\Gamma))}\geq 0$ implies \eqref{second:equi:cond}.
\end{remark}

\subsection{Regularity of the optimal solutions}\label{subsec:Regular}
When the data of the state equation \eqref{intr:Rb_state} and adjoint equation \eqref{adjoint:weak:solution}  have a certain smoothness, we can show that the corresponding solution possesses better regularity.

\begin{theorem}\label{lem:conL2:Regualrit:YTheta}
Let $(\mathbf y, p)$ be a unique weak solution pair of the problem \eqref{intr:Rb_state}. Assume that $f\in L^2(I;L^2(\Omega))$, $\mathbf h\in L^2(I;\mathbb L^2(\Omega))$, $\mathbf y_0\in \mathbb X$, $u \in L^2(I;L^2(\Gamma))$ and $\theta_0\in H^{1}(\Omega)$. Then we have $\mathbf y\in \mathbb V(I)$, 
$\theta\in L^2(I; {H}^{\frac{3}{2}}(\Omega))\cap H^{\frac{3}{4}}(I; L^2(\Omega))$ and there exists  a  unique $p\in L^2(I; H^1(\Omega))\cap L^2(I;L^2_0(\Omega))$ such that
\begin{equation}\label{Regualrit:equa:reform}
\begin{aligned}
&(\partial_t  \mathbf y, \mathbf v)+\nu \mathbf a(\mathbf y,\mathbf  v)+\mathbf b(\mathbf y,\mathbf y,\mathbf v)-(p, \nabla\cdot\mathbf v)+\beta(\theta \mathbf g,\mathbf v)= (\mathbf h,\mathbf v)\quad \forall\,  \mathbf v \in \mathbb H^1_0(\Omega),~a.a.~ t\in (0,T)
\end{aligned}
\end{equation}
and $\mathbf y(0)=\mathbf y_0$,  and the following estimate holds:
\begin{align*}
&\|\mathbf y\|_{\mathbb V(I)}+\|p\|_{L^2(I;H^1(\Omega))}+\|\mathbf y\|_{C(\bar I;\mathbb X)}+\|\theta\|_{L^2(I; H^{\frac{3}{2}}(\Omega))}+\|\theta\|_{H^{\frac{3}{4}}(I; L^2(\Omega))}\nonumber\\
&\leq C\big({\mbox{$\|\mathbf h\|_{L^2(I;\mathbb L^2(\Omega))},\|\mathbf y_0\|_{\mathbb H^1(\Omega)},\|\theta_0\|_{H^1(\Omega)},\|f\|_{L^2(I;L^2(\Omega))},\|u\|_{L^2(I;L^2(\Gamma))}$}}\big).
\end{align*}
If, in addition, \( u \in H^{\frac{1}{4}}(I; L^2(\Gamma)) \cap L^2(I; H^{\frac{1}{2}}(\Gamma)) \), then \( \theta \in V(I) \) and
\begin{align*}
\|\theta\|_{V(I)}+\|\theta\|_{C(\bar{I};H^1(\Omega))}\leq C&\big({\mbox{$\|\mathbf y\|_{L^\infty(I;\mathbb H^1(\Omega))}, \|\theta\|_{L^2(I;H^\frac{3}{2}(\Omega))},\|f\|_{L^2(I;L^2(\Omega))},\|u\|_{H^\frac{1}{4}(I;L^2(\Gamma))},\|u\|_{ L^2(I;H^{\frac{1}{2}}(\Gamma))}$}}\big).
\end{align*}
In addition, let \( (\bm \mu, \kappa) \) be the unique solution pair of problem \eqref{adjoint:weak:solution}. Assuming \( \mathbf y_d \in L^2(I; \mathbb L^2(\Omega)) \), $\theta_d\in L^2(I;L^2(\Omega))$, then we have \( (\bm \mu, \kappa) \in \mathbb V \). Moreover, there exists a unique \( \lambda \in L^2(I; H^1(\Omega)) \cap L^2(I; L^2_0(\Omega)) \) such that for any $\mathbf w \in \mathbb H^1_0(\Omega)$ and $a.a. \,t \in (0,T)$
\begin{equation}\label{Regualrit:equa:reform:adjoint}
\begin{aligned}
( -\partial_t \bm \mu,\mathbf w)+\nu \mathbf a(\bm \mu,\mathbf w)+\mathbf b(\mathbf w ,\mathbf y,\bm \mu)+\mathbf b(\mathbf y ,\mathbf w,\bm \mu)+b(\mathbf w,\theta,\kappa)&-(\lambda, \nabla\cdot \mathbf w)=(\mathbf y-\mathbf y_d,\mathbf w)
\end{aligned}
\end{equation}
and $\bm \mu(T)=\mathbf 0$, and the following estimate holds:
\begin{align}\label{regular_MuKappa:ineq}
&\|\bm  \mu\|_{\mathbb V(I)}+\|\lambda\|_{L^2(I;H^1(\Omega))}+\|\kappa\|_{V(I)}+\|\bm \mu\|_{C(\bar{I};\mathbb X)}+\|\kappa\|_{C(\bar{I};H^1(\Omega))}\nonumber\\
&\leq C \big({\mbox{$\|\mathbf y\|_{\mathbb V(I)}, \|\theta\|_{V(I)}, \|\mathbf y_d\|_{L^2(I;\mathbb L^2(\Omega))},\|\theta_d\|_{L^2(I;L^2(\Omega))}$}}\big).
\end{align}
\end{theorem}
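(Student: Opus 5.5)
We will argue by a bootstrapping procedure, treating the coupled system as a Navier–Stokes equation with a right-hand side depending on $\theta$, and a linear heat equation with Robin data whose convection field is $\mathbf y$. Lemma \ref{thm:exist:state:1} already gives $(\mathbf y,\theta)\in\mathbb W$. In particular $\theta\in L^\infty(I;L^2(\Omega))\cap L^2(I;H^1(\Omega))$, so $\mathbf h-\beta\theta\mathbf g\in L^2(I;\mathbb L^2(\Omega))$.

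\textbf{Step 1 (velocity).} We test the momentum equation with the Stokes operator $A\mathbf y$, i.e.\ $\mathbf v=-P\Delta\mathbf y$, using the Galerkin approximation. On the convex polygon the Stokes operator satisfies $\|\mathbf y\|_{\mathbb H^2}\le C\|A\mathbf y\|$. In 2D Ladyzhenskaya's inequality \eqref{Interpolation:conti:1} gives
\[
|\mathbf c(\mathbf y,\mathbf y,A\mathbf y)|\le C\|\mathbf y\|^{1/2}\|\nabla\mathbf y\|\|A\mathbf y\|^{3/2}.
\]
Young's inequality and Gronwall's inequality, with $\|\nabla\mathbf y\|^2\in L^1(I)$, then yield $\mathbf y\in L^\infty(I;\mathbb X)\cap L^2(I;\mathbb H^2(\Omega))$. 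Reading off $\partial_t\mathbf y\in L^2(I;\mathbb H)$ from the equation gives $\mathbf y\in\mathbb V(I)\hookrightarrow C(\bar I;\mathbb X)$. The pressure comes from de Rham's theorem / the inf-sup condition: the functional $\mathbf h-\partial_t\mathbf y+\nu\Delta\mathbf y-(\mathbf y\cdot\nabla)\mathbf y-\beta\theta\mathbf g\in L^2(I;\mathbb L^2)$ vanishes on $\mathbb X$. Hence it is a gradient $\nabla p$ with $p\in L^2(I;H^1\cap L^2_0)$, which gives \eqref{Regualrit:equa:reform}.

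\textbf{Step 2 (temperature).} Since $\mathbf y\in L^\infty(I;\mathbb H^1)\cap L^2(I;\mathbb H^2)\subset L^2(I;\mathbb L^\infty)$, the convection term satisfies $(\mathbf y\cdot\nabla)\theta\in L^2(I;L^2)$. This first needs $\theta\in L^\infty(I;H^1)$ or an $L^2(I;H^{3/2})$ bound. We view $\theta$ as the solution of the linear Robin heat equation
\[
\partial_t\theta-\chi\Delta\theta=f-(\mathbf y\cdot\nabla)\theta,\qquad \tfrac{\chi}{\eta}\partial_{\mathbf n}\theta+\gamma\theta=u,
\]
and apply the anisotropic parabolic regularity theory of Lions–Magenes. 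Boundary data in $L^2(I;L^2(\Gamma))$, interior data in $L^2(I;L^2)$ and $\theta_0\in H^1$ give $\theta\in H^{3/2,3/4}(\Omega_T)$. If the boundary data lie in $H^{1/2,1/4}(\Sigma_T)=L^2(I;H^{1/2}(\Gamma))\cap H^{1/4}(I;L^2(\Gamma))$, they give $\theta\in H^{2,1}(\Omega_T)=V(I)$. The compatibility issue at $t=0$ does not arise at these orders.

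To close the convective term we bound $\|(\mathbf y\cdot\nabla)\theta\|\le\|\mathbf y\|_{L^\infty}\|\nabla\theta\|$. Since $\|\mathbf y\|_{L^\infty}^2\in L^1(I)$, we first obtain an $L^\infty(I;H^1)$ estimate for $\theta$ by testing with $\partial_t\theta$. The Robin term becomes $\frac{d}{dt}$ of boundary quadratic terms plus $\eta(u,\partial_t\theta)_\Gamma$, and with only $u\in L^2(L^2(\Gamma))$ this is delicate. We therefore instead run a fixed-point/bootstrap. At the low level, $\theta\in L^2(H^1)\cap L^\infty(L^2)$ with $\mathbf y\in L^\infty(H^1)\cap L^2(H^2)$ gives $(\mathbf y\cdot\nabla)\theta\in L^{2}(I;L^{r})$ for some $r<2$, hence $\theta$ is in a slightly better space. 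Iterating, together with a Gronwall argument in the $H^{3/2,3/4}$ norm, gives the full regularity, and the $V(I)$ case follows identically. The continuity $\theta\in C(\bar I;H^1)$ follows from the embedding $V(I)\hookrightarrow C(\bar I;H^1)$.

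\textbf{Step 3 (adjoint).} The adjoint system \eqref{adjoint:weak:solution} is linear, backward in time, with zero terminal data and sources $\mathbf y-\mathbf y_d,\theta-\theta_d\in L^2(L^2)$. Its coefficients $\mathbf y\in\mathbb V(I)$, $\theta\in V(I)$ are now regular. We test with $A\bm\mu$ and $-\Delta\kappa$, the latter compatible with the homogeneous Robin condition $\chi\partial_{\mathbf n}\kappa+\eta\gamma\kappa=0$, via a Galerkin scheme. The terms $\mathbf b(\mathbf w,\mathbf y,\bm\mu)$ and $b(\mathbf w,\theta,\kappa)$, viewed as functionals in $\mathbf w$, are bounded by, e.g., $\|\nabla\mathbf y\|_{L^4}\|\bm\mu\|_{L^4}\le C\|\mathbf y\|_{H^2}\|\bm\mu\|^{1/2}\|\bm\mu\|_{H^1}^{1/2}$. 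Their time-integrability comes from $\mathbf y,\theta\in L^2(H^2)\cap L^\infty(H^1)$, so Gronwall applies backwards in time. Elliptic regularity (Stokes and Robin–Laplace on the convex polygon) then gives $(\bm\mu,\kappa)\in\mathbb V$ and estimate \eqref{regular_MuKappa:ineq}, and $\lambda$ is recovered exactly as $p$ in Step 1.

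The main obstacle is Step 2: the Robin boundary control only in $L^2(I;L^2(\Gamma))$ prevents the naive $\partial_t\theta$ energy test. The key is to combine the sharp $H^{3/2,3/4}$ parabolic theory with an absorption of the convection term, exploiting $\mathbf y\in L^2(I;\mathbb L^\infty)$ in 2D.
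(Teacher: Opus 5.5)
Your proposal follows essentially the same route as the paper: Navier--Stokes regularity with source $\mathbf h-\beta\theta\mathbf g\in L^2(I;\mathbb L^2(\Omega))$ and pressure recovery via de Rham/inf-sup. For the temperature, both use the Lions--Magenes (Malanowski) anisotropic theory for the Robin heat equation with the convection moved to the right-hand side, first reaching $L^2(I;H^{3/2}(\Omega))\cap H^{3/4}(I;L^2(\Omega))$ and then $V(I)$, and the adjoint is handled by Galerkin plus Stokes $H^2$-regularity. The iteration and ``Gronwall in the $H^{3/2,3/4}$ norm'' are superfluous: taking $r\ge 4/3$ in your bound $(\mathbf y\cdot\nabla)\theta\in L^2(I;L^r(\Omega))$ already puts the convection term in $L^2(I;H^{1/2}(\Omega)^*)\subset(L^2(I;H^{1/2}(\Omega))\cap H^{1/4}(I;L^2(\Omega)))'$ via $H^{1/2}(\Omega)\hookrightarrow L^4(\Omega)$, which is exactly how the paper reaches $H^{3/2,3/4}$ in one step, after which $\nabla\theta\in L^2(I;L^4(\Omega))$ gives $(\mathbf y\cdot\nabla)\theta\in L^2(I;L^2(\Omega))$ for the $V(I)$ stage.
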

\begin{proof}
The desired regularity follows by applying a standard boot-strapping argument. Since $L^2(I;H^1(\Omega)) \hookrightarrow L^2(I; L^2(\Omega))$, it follows that $\beta \theta \mathbf{g} - \mathbf{h} \in L^2(I; \mathbb{L}^2(\Omega))$.  Consider the following system:
\begin{align*}
\langle \partial_t \mathbf{y}, \mathbf{v} \rangle + \nu \mathbf{a}(\mathbf{y}, \mathbf{v}) + \mathbf{b}(\mathbf{y}, \mathbf{y}, \mathbf{v}) = (\mathbf{h}, \mathbf{v}) - \beta (\theta \mathbf{g}, \mathbf{v}) \quad \forall \, \mathbf{v} \in \mathbb{X}, \quad \mathbf{y}(0) = \mathbf{y}_0.
\end{align*}
Standard estimates (cf. \cite[p. 465, Theorem 3.12]{vexler_wagner_2024}) imply that the velocity $\mathbf{y} \in \mathbb{V}(I)$ and the pressure $p\in L^2(I;H^1(\Omega))\cap L^2(I;L^2_0(\Omega))$. 

Since $H^{\frac{1}{2}}(\Omega) \hookrightarrow L^4(\Omega)$ (cf. \cite{Adam_2005}), for any $v \in L^2(I; H^{\frac{1}{2}}(\Omega))$ we have 
\begin{align*}
\int_{\Omega_T} (\mathbf{y} \cdot \nabla) \theta v \, dx \, dt 
\leq C\| \mathbf{y} \|_{L^\infty(I; L^4(\Omega))} \| \nabla \theta \|_{L^2(I; L^2(\Omega))} \| v \|_{L^2(I; H^{\frac{1}{2}}(\Omega))}.
\end{align*}
That is,  $(\mathbf{y} \cdot \nabla) \theta \in \left( L^2(I; H^{\frac{1}{2}}(\Omega)) \cap H^{\frac{1}{4}}(I; L^2(\Omega)) \right)'$. Furthermore, the trace theorem implies $u - \gamma \theta \in L^2(I; L^2(\Gamma))$. Consider the following parabolic equation:
\begin{equation}
\begin{aligned}\label{regularity:equation1}
\partial_t \theta - \chi \Delta \theta &= f - (\mathbf{y} \cdot \nabla) \theta\quad \mbox{in}\ \Omega_T, \\
\frac{\chi}{\eta} \frac{\partial \theta}{\partial \mathbf{n}} &= u - \gamma \theta\quad\mbox{on}\ \Gamma_T.
\end{aligned}
\end{equation}
The standard regularity results give $\theta\in L^2(I; {H}^{\frac{3}{2}}(\Omega))\cap H^{\frac{3}{4}}(I; L^2(\Omega))$ (cf. \cite[p. 81, Lemma 2.2]{Malanowski_1982}), which in turn implies $f - (\mathbf{y} \cdot \nabla) \theta \in L^2(I;L^2(\Omega))$. If $u \in H^{\frac{1}{4}}(I; L^2(\Gamma)) \cap L^2(I; H^{\frac{1}{2}}(\Gamma))$, then $u - \gamma \theta \in H^{\frac{1}{4}}(I; L^2(\Gamma)) \cap L^2(I; H^{\frac{1}{2}}(\Gamma))$ by the trace theorem (cf. \cite[p. 80, Lemma 2.1]{Malanowski_1982}). Finally, it follows that $\theta \in V(I)$ (cf. \cite[p. 81, Lemma 2.2]{Malanowski_1982}).

Using the \( H^2 \) regularity of the Stokes operator in convex polygonal domains (see  \cite[p. 400, Theorem 2]{Kellogg_Osborn_1976}) and the standard Galerkin method, we can  obtain the desired regularity and estimate for \( (\boldsymbol{\mu}, \kappa) \). Moreover,  applying Propositions 1.1 and 1.2 together with Remark 1.4  in \cite[p. 14–15]{Temam_1970} and Lemma 2.1 in \cite[p. 22]{Girault_1986}, it follows that there exists a unique 
$\lambda \in L^2(I; H^1(\Omega)) \cap L^2(I; L^2_0(\Omega))$ that satisfies \eqref{Regualrit:equa:reform:adjoint} and \eqref{regular_MuKappa:ineq}.
\end{proof}

\begin{remark}
In the formulations of equations \eqref{Exist:weak:solution:a} and \eqref{adjoint:Exist:weak:solution:a}, divergence-free test functions are used, and thus the pressure term is not included. Under suitable regularity assumptions on the data, Theorem \ref{lem:conL2:Regualrit:YTheta} provides an alternative and equivalent formulation given by \eqref{Regualrit:equa:reform} and \eqref{Regualrit:equa:reform:adjoint}, which includes the pressure term when the test functions are not divergence-free.
\end{remark}

\begin{theorem} 
Assume that \( \bar{u} \) is a local solution of problem \( \mathrm{(P)} \),  \( (\bar{\mathbf{y}}, \bar{\theta}) \) and \( (\bar{\bm{\mu}}, \bar{\kappa}) \) are the corresponding state pair and adjoint pair, respectively. Furthermore, assume that \( f \in L^2(I; L^2(\Omega)) \), \( \mathbf{h} \in L^2(I; \mathbb{L}^2(\Omega)) \), \( \mathbf{y}_0 \in \mathbb{X} \),  \( \theta_0 \in H^1(\Omega) \), $\theta_d\in L^2(I;L^2(\Omega))$ and \( \mathbf{y}_d \in L^2(I; \mathbb{L}^2(\Omega)) \). Then \( (\bar{\mathbf{y}}, \bar{\theta}) \in \mathbb{V} \), \((\bar{\mathbf{\mu}}, \bar{\kappa}) \in \mathbb{V} \) and \( \bar{u} \in L^2(I; W^{1,p}(\Gamma)) \cap H^{\frac{3}{4}}(I; L^2(\Gamma)) \)  for some $1\leq p<\infty $.
\end{theorem}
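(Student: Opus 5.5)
The plan is a bootstrapping argument that alternates between Theorem \ref{lem:conL2:Regualrit:YTheta} and the projection formula \eqref{Proj:equation}.

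\emph{Step 1: first regularity of the state.} Since $\bar u\in\mathcal U_{ad}\subset L^2(I;L^2(\Gamma))$ and the data satisfy the hypotheses, the first part of Theorem \ref{lem:conL2:Regualrit:YTheta} gives
\[
\bar{\mathbf y}\in\mathbb V(I),\qquad \bar\theta\in L^2(I;H^{\frac32}(\Omega))\cap H^{\frac34}(I;L^2(\Omega)).
\]
In particular $\bar\theta\in W(I)$, and $\bar{\mathbf y}\in C(\bar I;\mathbb X)$.

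\emph{Step 2: regularity of the adjoint.} The adjoint part of Theorem \ref{lem:conL2:Regualrit:YTheta} needs $(\bar{\mathbf y},\bar\theta)\in\mathbb V$, which we do not yet have for $\bar\theta$. So I would first redo the adjoint bootstrap using only $\bar\theta\in L^2(I;H^{3/2}(\Omega))$.
\begin{itemize}
\item[(a)] The temperature adjoint is
\[
-\partial_t\bar\kappa-\chi\Delta\bar\kappa-\bar{\mathbf y}\cdot\nabla\bar\kappa=\bar\theta-\theta_d-\beta\,\mathbf g\cdot\bar{\bm\mu},\qquad \frac{\chi}{\eta}\partial_{\mathbf n}\bar\kappa+\gamma\bar\kappa=0 .
\]
The sign of the convection term comes from the antisymmetry of $b$ with $\operatorname{div}\bar{\mathbf y}=0$ and $\bar{\mathbf y}|_\Gamma=0$. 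Its coefficient $\bar{\mathbf y}\in L^\infty(I;\mathbb H^1)\cap L^2(I;\mathbb H^2)\subset L^2(I;\mathbb L^\infty)$. The right-hand side is in $L^2(I;L^2(\Omega))$ once $\bar{\bm\mu}\in L^2(I;\mathbb L^2)$, which Theorem \ref{thm:first:second} provides.
\item[(b)] The velocity adjoint has the coupling term $b(\mathbf w,\bar\theta,\bar\kappa)$, i.e. the source $\bar\kappa\nabla\bar\theta$ up to the antisymmetric part. With $\bar\kappa\in L^\infty(I;L^2)\cap L^2(I;H^1)\subset L^4(I;L^4)$ and $\nabla\bar\theta\in L^2(I;H^{1/2})\cap L^\infty(I;L^2)$, I expect this source to lie in $L^2(I;L^{2-\epsilon})$ at least.
\item[(c)] Iterating energy estimates (testing with $-\Delta\bar\kappa$ and with the Stokes operator applied to $\bar{\bm\mu}$, using Gronwall) together with Stokes $H^2$-regularity on convex polygons should upgrade this to $\bar\kappa\in V(I)$ and $\bar{\bm\mu}\in\mathbb V(I)$.
\end{itemize}

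\emph{Step 3: regularity of the control.} I expect this to be the main obstacle. Let $s\mapsto \mathrm{Proj}_{[u_a,u_b]}(s)$ denote the projection; it is Lipschitz. Both $W^{1,p}(\Gamma)$ and $H^{3/4}(I;L^2(\Gamma))$ are preserved under Lipschitz superposition, since the latter is a Sobolev–Slobodeckij space with fractional order below $1$.
\begin{itemize}
\item[(a)] \emph{Space.} From $\bar\kappa\in L^2(I;H^2(\Omega))$, the trace gives $\bar\kappa|_\Gamma\in L^2(I;H^{3/2}(\Gamma))$. On the one-dimensional polygonal boundary this embeds into $L^2(I;W^{1,p}(\Gamma))$ for every $p<\infty$ (edgewise, with continuity at the corners). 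Hence $\bar u\in L^2(I;W^{1,p}(\Gamma))$.
\item[(b)] \emph{Time.} $\bar\kappa\in H^1(I;L^2(\Omega))\cap L^2(I;H^2(\Omega))$ interpolates to $H^{3/4}(I;H^{1/2}(\Omega))$. By the trace result in \cite[Lemma 2.1]{Malanowski_1982}, $\bar\kappa|_\Gamma\in H^{3/4}(I;L^2(\Gamma))$. Hence $\bar u\in H^{3/4}(I;L^2(\Gamma))$.
\end{itemize}
The delicate points are this anisotropic trace and justifying the superposition through the Slobodeckij seminorm, which works because $|P(a)-P(b)|\le|a-b|$ pointwise.

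\emph{Step 4: closing the loop.} Since $H^{3/4}\subset H^{1/4}$ in time and $W^{1,p}(\Gamma)\subset H^{1/2}(\Gamma)$, we have $\bar u\in H^{1/4}(I;L^2(\Gamma))\cap L^2(I;H^{1/2}(\Gamma))$. The second part of Theorem \ref{lem:conL2:Regualrit:YTheta} then yields $\bar\theta\in V(I)$, so $(\bar{\mathbf y},\bar\theta)\in\mathbb V$. The adjoint estimate \eqref{regular_MuKappa:ineq} now applies directly and confirms $(\bar{\bm\mu},\bar\kappa)\in\mathbb V$.

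If Step 2 proves too delicate, the alternative order is: first obtain only $\bar\kappa\in L^2(I;H^{3/2})\cap H^{3/4}(I;L^2)$ by the same argument as for $\bar\theta$. This gives $\bar u\in H^{1/4}(I;L^2(\Gamma))\cap L^2(I;H^{1/2}(\Gamma))$ via the projection, then $\bar\theta\in V(I)$, then the full adjoint regularity from Theorem \ref{lem:conL2:Regualrit:YTheta}, and finally Step 3.
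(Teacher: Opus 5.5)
Your proposal is correct and is essentially the paper's argument. The paper's proof is a short bootstrap that combines Theorem~\ref{lem:conL2:Regualrit:YTheta}, the projection formula \eqref{Proj:equation} and the Lipschitz-superposition result \cite[Lemma 3.3]{Kunisch_Vexler_2007}. Your fallback ordering ($\bar u\in H^{\frac14}(I;L^2(\Gamma))\cap L^2(I;H^{\frac12}(\Gamma))$, then $\bar\theta\in V(I)$, then the adjoint, then the final control regularity) is exactly that chain.

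Two details need repair; neither affects the conclusion.

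\emph{Step 3(b).} The detour through $H^{\frac34}(I;H^{\frac12}(\Omega))$ fails, because the trace operator is not bounded on $H^{\frac12}(\Omega)$. Instead, apply the anisotropic trace theorem (the Malanowski lemma you cite) directly to $L^2(I;H^2(\Omega))\cap H^1(I;L^2(\Omega))$. This gives traces in $L^2(I;H^{\frac32}(\Gamma))\cap H^{\frac34}(I;L^2(\Gamma))$, understood edgewise on the polygon.

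\emph{Step 2(b).} You do not yet have $\nabla\bar\theta\in L^\infty(I;L^2(\Omega))$ at that stage. Moreover, a source only in $L^2(I;L^{2-\epsilon}(\Omega))$ would not yield Stokes $H^2$-regularity. However, Step 3 needs only $\bar\kappa\in V(I)$, and the temperature adjoint gives this on its own: test with the Robin Laplacian and apply Gronwall, using $\bar{\mathbf y}\in L^\infty(I;\mathbb H^1(\Omega))$ and $\bar{\bm\mu}\in L^2(I;\mathbb L^2(\Omega))$. Once you have it, $\bar\kappa\nabla\bar\theta\in L^\infty(I;L^4(\Omega))\cdot L^2(I;L^4(\Omega))\subset L^2(I;\mathbb L^2(\Omega))$. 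In any case, $\bar{\bm\mu}\in\mathbb V(I)$ is recovered in Step 4.
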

\begin{proof}Using \eqref{first:optimal:state}, \eqref{first:optimal:adjoint}, \eqref{Proj:equation}, Theorem \ref{lem:conL2:Regualrit:YTheta} and \cite[Lemma 3.3, p. 1735]{Kunisch_Vexler_2007}, and applying a standard bootstrapping argument, the desired result can be obtained.
\end{proof}

\section{Numerical approximation of the optimal control problem}\setcounter{equation}{0}
The purpose of this section is to introduce numerical approximations to solutions of the control problem $(\mathrm{P})$, and to analyze convergence and stability.  In the subsequent numerical analysis, we always assume that \( f \in L^2(I; L^2(\Omega)) \), \( \mathbf{h} \in L^2(I; \mathbb{L}^2(\Omega)) \), \( \mathbf{y}_0 \in \mathbb{X}\cap \mathbb H^2(\Omega)\), \( \theta_0 \in H^1(\Omega) \), $\theta_d\in L^2(I;L^2(\Omega))$ and \( \mathbf{y}_d \in L^2(I; \mathbb{L}^2(\Omega)) \).  

\subsection{Spatial discretization}
Let $\mathcal{T}_{h}$ be a family of shape-regular and quasi-uniform triangulations of $\Omega$ such that $\overline{{\Omega}} = \bigcup_{K \in \mathcal{T}_{h}} \overline{{K}}$. We denote by $h_{K}$  the diameter of the element $K$ and set $h: = \max_{K \in \mathcal{T}_{h}} h_{K}$. The collection of polynomials of degree less than or equal to $\ell$ ($\ell\geq1$) on the element $K$ is represented by $\mathbf{P}_{\ell}(K)$. Define
$$V_h:=\left\{\theta_h \in C(\bar{\Omega}): \theta_h|_{K}\in \mathbf{P}_{\ell}(K),\,\forall K\in \mathcal{T}_{h} \right\}$$
as the approximation space of the temperature. Let \( \mathbb{V}_h \subset \mathbb{H}_0^1(\Omega) \) and \( Q_h \subset L_0^2(\Omega) \) be the finite-dimensional approximation  spaces for the velocity and pressure, respectively. We assume that the spaces \( \mathbb V_h\) and \( Q_h \)  satisfy the following properties:

\textbf{(A1)} Inf-sup condition, i.e., 
\[
\inf_{q_h \in Q_h} \sup_{\mathbf v_h \in \mathbb V_h} \frac{\mathcal{B}(\mathbf v_h, q_h)}{\| \mathbf v_h \|_{\mathbb H^1(\Omega_h)} \| q_h \|_{L^2(\Omega)}} \geq C
\]
for some constant \( C > 0 \), where \( \mathcal{B} : \mathbb H^1(\Omega) \times L^2(\Omega) \to \mathbb{R} \) is defined by
$\mathcal{B}(\mathbf v, q) := -\int_{\Omega} q \, \text{div} \, \mathbf v \, dx$. 

\textbf{(A2)} Projection error estimates, i.e., for any $\mathbf v\in \mathbb H^2(\Omega)\cap \mathbb H^1_0(\Omega)$ and $p\in H^1(\Omega)\cap L^2_0(\Omega)$
\[\|\mathbf v-\mathbf I_h\mathbf v\|+h\|\mathbf v-\mathbf I_h \mathbf v\|_{\mathbb H^1(\Omega)}\leq C h^2\|v\|_{\mathbb H^2(\Omega)}, \quad \|p-I_h p\|\leq C h\|p\|_{H^1(\Omega)}
\]
for some constant $C>0$, where $\mathbf I_h: \mathbb H^2(\Omega)\cap \mathbb H^1_0(\Omega)\to \mathbb V_h$ and $I_h: L^2(\Omega)\to Q_h$ are some projection operators.

The above assumptions \textbf{(A1)}-\textbf{(A2)} are satisfied by several well-known finite elements, such as the Taylor-Hood and Mini elements (see \cite{Temam_1970}).  We define the (vector-valued) discrete  Laplacian \( \bm \Delta_h : \mathbb V_h \to \mathbb V_h \) 
\[
(-\bm \Delta_h \mathbf u_h,  \mathbf v_h)=(\nabla \mathbf u_h, \nabla \mathbf v_h) \quad \forall \mathbf v_h \in  \mathbb V_h,
\]
and the discrete divergence-free subspace of \( \mathbb{V}_h \):
\[
\mathbb X_h := \{ \mathbf y_h \in \mathbb V_h : \mathcal{B}(\mathbf y_h, q_h) = 0 \quad \forall q_h \in Q_h \}.
\]

In addition, we define the $\mathbb L^2$ projection  $\mathbf P_h:\mathbb L^2(\Omega) \to \mathbb X_h $ such that
$$(\mathbf P_h\mathbf  u, \mathbf v_h)=(\mathbf u,\mathbf v_h)\quad \forall \mathbf v_h \in \mathbb X_h.$$
Subsequently, the discrete Stokes operator is defined by \( \mathbf{A}_h : \mathbb{X}_h \to \mathbb{X}_h \) with \( \mathbf{A}_h = \mathbf P_h \bm \Delta_h|_{\mathbb{X}_h} \). The discrete Laplacian \( \bm \Delta_h \) can be bounded by the  discrete Stokes operator \( \mathbf A_h \) (cf. \cite[Corollary 4.4]{HeywoodRannacher}):
\[
\| \bm \Delta_h \mathbf v_h \|_{\mathbb L^2(\Omega)} \leq C \| \mathbf A_h \mathbf v_h \|_{\mathbb L^2(\Omega)} \quad \forall \, \mathbf v_h \in \mathbb X_h.
\]
There hold the following discrete interpolation inequality (cf. \cite{HeywoodRannacher, vexler_wagner_2024}):
\begin{align}
&\|\nabla \mathbf v_h\|_{\mathbb L^4(\Omega)}\leq C\|\nabla \mathbf v_h\|_{\mathbb L^2(\Omega)}^{\frac{1}{2}}\|\mathbf A_h \mathbf v_h\|_{\mathbb L^2(\Omega)}^{\frac{1}{2}}\quad \forall \mathbf v_h\in \mathbb X_h,\label{Interpolation:A:1}
\end{align}

The (scalar-valued)  discrete Laplacian \(  \Delta_h :  V_h \to  V_h \) is defined by
\begin{align}\label{definite_Delta_h}
(-\Delta_h u_h,v_h)=\chi(\nabla u_h, \nabla v_h )+\eta \gamma(u_h,v_h)_\Gamma \quad \forall v_h \in V_h.
\end{align}
It is easy to see that the inverse of $\Delta_h$ exists, and is denoted by $\Delta_h^{-1}$. There holds the following estimate:
\begin{align}\label{DeltaTheta_ineq1}
\|\nabla u_h\|_{L^4(\Omega)}\leq C\|u_h\|_{H^1(\Omega)}^\frac{1}{2}\|\Delta_h u_h\|_{ L^2(\Omega)}^\frac{1}{2}\quad \forall v_h \in V_h.
\end{align}

Let $R_h: H^1(\Omega) \to  V_h $ be the Ritz projection such that 
\begin{align}\label{definite_Rhhh}
\chi(\nabla R_h u,\nabla v_h)+\eta\gamma(R_h u, v_h)_\Gamma=\chi(\nabla u, \nabla v_h)+\eta\gamma(u, v_h)_\Gamma \quad \forall v_h\in V_h.
\end{align}
We denote by $P_h: L^2(\Omega)\to V_h$  the $L^2$ projection defined by 
\[(P_h u,v_h)=(u,v_h)\quad v_h\in V_h.\]
Furthermore, we define the Stokes-Ritz projection $(\mathbf R^S_h,R^{S,p}_h): \mathbb H^1_0(\Omega)\times L^2(\Omega)\to \mathbb V_h\times Q_h$ as follows:
\begin{equation}
\begin{aligned}\label{StokePro}
&\nu(\nabla(\mathbf u - \mathbf R^S_h(\mathbf u,p)), \nabla \mathbf v_h) - (p - R^{S,p}_h(\mathbf u, p), \nabla \cdot \mathbf v_h) = 0 \quad \forall \mathbf v_h \in \mathbb V_h,\\
&(\nabla \cdot (\mathbf u-\mathbf R^S_h(\mathbf u,p)), q_h) = 0 \quad \forall q_h \in Q_h.
\end{aligned}
\end{equation}
Some useful results on these projections can be found in the Appendix \ref{Appendix:A}.

In order to separate the estimate of the temporal and spatial errors, we introduce the spatial  semi-discrete state  equation: Given \( u \in L^2(I; L^2(\Gamma)) \),  find {$(\mathbf y_h(t), \theta_h(t))\in \mathbb X_h\times V_h$} satisfying 
\begin{equation}\label{semi_discrete:state}
\left \{\begin{aligned}
&(  \partial_t \mathbf y_{h},\mathbf v_h)+\nu \mathbf a(\mathbf y_h,\mathbf v_h)+\mathbf b(\mathbf y_h,\mathbf y_h,\mathbf v_h)+\beta(\theta_h \mathbf g,\mathbf v_h)= ( \mathbf h,\mathbf v_h),\\
&(  \partial_t \theta_{h}, \psi_h)+\chi a(\theta_h, \psi_h)+b(\mathbf y_h, \theta_h,\psi_h)+\eta \gamma(\theta_h, \psi_h)_\Gamma=( f,\psi_h)+\eta( u, \psi_h)_\Gamma,\\
&\mathbf y_h(0)=\mathbf y_{0h}:=\mathbf P_h \mathbf y_0,\quad\theta_h(0)=\theta_{0h}:=P_h \theta_0
\end{aligned}\right.
\end{equation}
for any $(v_h, \psi_h)\in \mathbb X_h\times V_h$ and a.a. $t\in (0,T]$, where $\mathbf y_{0h}\in \mathbb X_h$ and $\theta_{0h}\in V_h$ satisfy
\begin{align*}
   \|\mathbf  y_0 - \mathbf y_{0h} \|_{\mathbb L^2(\Omega)} \leq C h,\quad
\|  \theta_0 - \theta_{0h} \|_{ L^2(\Omega)} \leq C h \quad \text{ and }  \| \mathbf P_h \mathbf y_0 \|_{\mathbb H^1(\Omega)} \leq C ,\quad \| P_h \theta_0 \|_{ H^1(\Omega)} \leq C
\end{align*}
for some constant $C>0$. 

We also introduce the spatial semi-discrete adjoint problem:  Find {$(\bm \mu_h(t), \kappa_h(t))\in \mathbb X_h\times V_h$} satisfying
\begin{equation}\label{semi_discrete:adjoint}
\left\{\begin{aligned}
&( -\partial_t \bm  \mu_{h},\mathbf w_h)+\nu \mathbf a({\bm \mu_h},\mathbf w_h)+\mathbf b(\mathbf w_h ,{\mathbf y}_h,{\bm \mu}_h)+\mathbf b(\mathbf y_h ,\mathbf w_h,\bm \mu_h)+b(\mathbf w_h,\theta_h,\kappa_h)=(\mathbf y_h-\mathbf y_d,\mathbf w_h),\\
&(-\partial_t {\kappa}_{h}, \zeta_h)+\chi a({\kappa}_h, \zeta_h)+b(\mathbf y_h, \zeta_h,{\kappa}_h)+\beta(\mathbf g\cdot{\bm \mu}_h,\zeta_h)+\eta \gamma(\kappa_h, \zeta_h)_\Gamma=({\theta}_h-\theta_d,\zeta_h),\\
&\bm \mu_h(T)=\mathbf 0,\quad \kappa_h(T)=0
\end{aligned}\right.
\end{equation}
for any $(\mathbf w_h,\zeta_h) \in \mathbb X_h\times V_h$ and a.a. $t\in [0,T)$, where $\mathbf y_h$ and $\theta_h$ solve the scheme \eqref{semi_discrete:state}.

\begin{remark}
Obviously, \eqref{semi_discrete:state} and \eqref{semi_discrete:adjoint} are the semi-discretizations of the systems \eqref{Exist:weak:solution} and \eqref{adjoint:weak:solution}, respectively.
\end{remark}

\begin{lemma}\label{semi_discreYTheta:stab}
Given $u\in H^{\frac{1}{4}}(I;L^2(\Gamma))\cap L^2(I; H^{\frac{1}{2}}(\Gamma))$, let \((\mathbf{y}_h, \theta_h)\)  be the solutions of \eqref{semi_discrete:state}. Then there hold the following estimates:
\begin{align}
\|\mathbf y_h\|_{C(\bar I;\mathbb L^2(\Omega))}&+\|\theta_h\|_{C(\bar I;L^2(\Omega))}+\sqrt{\nu} \| \mathbf y_h\|_{L^2(I;\mathbb H^1(\Omega))}+\sqrt{\min\{\chi,\eta\gamma\}}\| \theta_h\|_{L^2(I;H^1(\Omega))}\nonumber\\
&\leq  C\big(\|\mathbf y_0\|+\|\theta_0\|+\|u\|_{L^2(I;L^2(\Gamma))}+\|\mathbf h\|_{L^2(I;\mathbb L^2(\Omega))}+\|f\|_{L^2(I;L^2(\Omega))}\big),\label{semi:Y_Theta:stab:ineq1}\\
\|\mathbf y_h\|_{L^\infty(I;\mathbb H^1(\Omega))}&+\sqrt{\nu}\|\mathbf A_h \mathbf y_h\|_{L^2(I;\mathbb L^2(\Omega))}\nonumber\\
&\leq C\big(\|\nabla\mathbf y_0\|+\|\theta_0\|+\|u\|_{L^2(I;L^2(\Gamma))}+\|\mathbf h\|_{L^2(I;\mathbb L^2(\Omega))}+\|f\|_{L^2(I;L^2(\Omega))}\big),\label{semi:Y_Theta:stab:ineq2}\\
\|\partial_t \mathbf y_h\|_{L^2(I;\mathbb L^2(\Omega))}&\leq C\big({\mbox{$ \|\nabla \mathbf y_{0h}\|, \|\theta_0\|, \|u\|_{L^2(I;L^2(\Gamma))}, \|\mathbf h\|_{L^2(I;\mathbb L^2(\Omega))}, \|f\|_{L^2(I;L^2(\Omega))}$}}\big).\label{semi:Y_Theta:stab:ineq3}
\end{align}
\end{lemma}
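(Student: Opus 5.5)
We argue by discrete energy estimates on the semi-discrete system \eqref{semi_discrete:state}, which is a finite-dimensional ODE system; local existence follows from Carathéodory/Picard, and the a priori bounds below extend the solution to all of $\bar I$. Throughout we use the skew-symmetry $\mathbf b(\mathbf y_h,\mathbf v_h,\mathbf v_h)=0$ and $b(\mathbf y_h,\psi_h,\psi_h)=0$, which holds for the antisymmetrized forms even though $\mathbf y_h$ is only discretely divergence-free. We also use the trace inequality $\|\psi\|_\Gamma\le C\|\psi\|^{1/2}\|\psi\|_{H^1(\Omega)}^{1/2}$ and the equivalence $\chi\|\nabla\psi\|^2+\eta\gamma\|\psi\|_\Gamma^2\ge c\min\{\chi,\eta\gamma\}\|\psi\|^2_{H^1(\Omega)}$, which is a Friedrichs-type inequality.

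\emph{Step 1: the bound \eqref{semi:Y_Theta:stab:ineq1}.} We test the temperature equation with $\psi_h=\theta_h$. The convective term vanishes, and the boundary term $\eta(u,\theta_h)_\Gamma\le \frac{\eta\gamma}{2}\|\theta_h\|_\Gamma^2+C\|u\|_\Gamma^2$ is absorbed; $(f,\theta_h)$ is handled by Young. Next we test the velocity equation with $\mathbf v_h=\mathbf y_h$; again the trilinear term vanishes, and $|\beta(\theta_h\mathbf g,\mathbf y_h)|\le C(\|\theta_h\|^2+\|\mathbf y_h\|^2)$. We add the two identities, integrate over $(0,t)$, and apply Gronwall, using $\|\mathbf P_h\mathbf y_0\|\le\|\mathbf y_0\|$ and $\|P_h\theta_0\|\le\|\theta_0\|$. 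This gives the $C(\bar I;L^2)$ bounds together with the weighted $L^2(I;H^1)$ bounds, where the weights $\sqrt\nu$ and $\sqrt{\min\{\chi,\eta\gamma\}}$ come directly from the dissipative terms.

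\emph{Step 2: the bound \eqref{semi:Y_Theta:stab:ineq2}, which is the main obstacle.} We test the velocity equation with $\mathbf v_h=-\mathbf A_h\mathbf y_h\in\mathbb X_h$. By definition of $\mathbf A_h$ this yields $\frac12\frac{d}{dt}\|\nabla\mathbf y_h\|^2+\nu\|\mathbf A_h\mathbf y_h\|^2$. The linear terms are controlled by $(\|\mathbf h\|+C\|\theta_h\|)\|\mathbf A_h\mathbf y_h\|$, and $\theta_h$ is already bounded in $C(\bar I;L^2)$ by Step 1. The difficulty is the nonlinear term $\mathbf b(\mathbf y_h,\mathbf y_h,\mathbf A_h\mathbf y_h)$. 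Expanding the antisymmetric form, we bound
\[
|\mathbf b(\mathbf y_h,\mathbf y_h,\mathbf A_h\mathbf y_h)|\le C\|\mathbf y_h\|_{\mathbb L^4}\|\nabla\mathbf y_h\|_{\mathbb L^4}\|\mathbf A_h\mathbf y_h\|+C\|\mathrm{div}\,\mathbf y_h\|_{L^4}\|\mathbf y_h\|_{\mathbb L^4}\|\mathbf A_h\mathbf y_h\|_{\mathbb L^2},
\]
where for the divergence part we use $|\mathrm{div}\,\mathbf y_h|\le C|\nabla \mathbf y_h|$. Applying \eqref{Interpolation:conti:1} and \eqref{Interpolation:A:1}, the product is bounded by $C\|\mathbf y_h\|^{1/2}\|\nabla\mathbf y_h\|\,\|\mathbf A_h\mathbf y_h\|^{3/2}$. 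Young's inequality with exponents $(4,4/3)$ then gives
\[
\frac{\nu}{4}\|\mathbf A_h\mathbf y_h\|^2+C\|\mathbf y_h\|^2\|\nabla\mathbf y_h\|^2\,\|\nabla\mathbf y_h\|^2.
\]
We apply Gronwall with the weight $\|\mathbf y_h\|^2\|\nabla\mathbf y_h\|^2$, which is integrable in time by Step 1. The initial term is handled by $\|\nabla\mathbf P_h\mathbf y_0\|\le C$, which is assumed and ultimately rests on the $\mathbb H^2$ regularity of $\mathbf y_0$.

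\emph{Step 3: the bound \eqref{semi:Y_Theta:stab:ineq3}.} We test the velocity equation with $\mathbf v_h=\partial_t\mathbf y_h\in\mathbb X_h$. The viscous term becomes $\frac\nu2\frac{d}{dt}\|\nabla\mathbf y_h\|^2$, and the nonlinear term is bounded by $C\|\mathbf y_h\|_{\mathbb L^4}\|\nabla\mathbf y_h\|_{\mathbb L^4}\|\partial_t\mathbf y_h\|$, which is square-integrable in time by Step 2. The remaining linear terms are handled by Young's inequality, and integrating in time completes the estimate.

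Note that only $u\in L^2(I;L^2(\Gamma))$ actually enters these bounds. The stronger assumed regularity $u\in H^{\frac14}(I;L^2(\Gamma))\cap L^2(I;H^{\frac12}(\Gamma))$ is not needed here and is presumably reserved for the estimates of $\theta_h$ in stronger norms later in the paper.
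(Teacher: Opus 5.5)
Your proposal is correct and follows the paper's proof essentially step by step. First comes the basic energy estimate, which the paper omits. Then you test with $-\mathbf A_h\mathbf y_h$, bound the convective term through \eqref{Interpolation:conti:1}, \eqref{Interpolation:A:1} and Young by $\frac{\nu}{4}\|\mathbf A_h\mathbf y_h\|^2+C\nu^{-3}\|\mathbf y_h\|^2\|\nabla\mathbf y_h\|^4$, and close with Gronwall using the time-integrable weight $\|\mathbf y_h\|^2\|\nabla\mathbf y_h\|^2$. Finally you test with $\partial_t\mathbf y_h$.

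You explicitly handle the extra term $\frac12\int_\Omega(\mathrm{div}\,\mathbf y_h)\,\mathbf y_h\cdot\mathbf A_h\mathbf y_h\,dx$, which appears because $\mathbf y_h$ is only discretely divergence-free; the paper passes over this. In Step 3 you write the viscous term as $\frac{\nu}{2}\frac{d}{dt}\|\nabla\mathbf y_h\|^2$ rather than bounding it by $\nu\|\mathbf A_h\mathbf y_h\|\,\|\partial_t\mathbf y_h\|$ as the paper does; this is an equally valid minor variation.
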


\begin{proof} The proof of the estimate \eqref{semi:Y_Theta:stab:ineq1} is standard, we omit it here.

$\bullet$ Estimate of \eqref{semi:Y_Theta:stab:ineq2}. Choosing $\mathbf{v}_h = -\mathbf{A}_h \mathbf y_h$ in \eqref{semi_discrete:state} and integrating from $0$ to $t$ give
\begin{align}
&\frac{1}{2}\|\nabla \mathbf y_h(t)\|^2-\frac{1}{2}\|\nabla \mathbf y_{0h}\|^2+\nu \int_0^t\|\mathbf A_h \mathbf y_h\|^2ds=\int_0^t [-(\mathbf h, \mathbf A_h\mathbf y_h)+(\beta\theta_h\mathbf{g}, \mathbf A_h\mathbf y_h)+\mathbf b(\mathbf y_h,\mathbf y_h, \mathbf A_h \mathbf y_h)]ds\nonumber\\
&\leq \frac{\nu}{4}\int_0^t \|\mathbf A_h \mathbf y_h\|^2ds +\frac{2}{\nu}\int_0^t \|\mathbf h\|^2ds+\frac{2|\mathbf{g}|^2\beta^2}{\nu}\int_0^t \|\theta_h\|^2 ds+
\frac{C}{\nu^3}\int_0^t\|\mathbf y_h\|^2\|\nabla \mathbf y_h\|^4 ds+\frac{\nu}{4}\int_0^t\|\mathbf A_h \mathbf y_h\|^2 ds,\nonumber
\end{align}
where we have used Young's inequality, \eqref{Interpolation:A:1} and \eqref{Interpolation:conti:1}. Absorbing the right-hand term by left and letting $J(t):=\frac{2C}{\nu^3}\|\mathbf y_h\|^2_{L^\infty(I;\mathbb L^2(\Omega))}\|\nabla \mathbf y_h(t)\|^2$, we obtain 
\begin{align*}
\|\nabla \mathbf y_h(t)\|^2+\nu \int_0^t\|\mathbf A_h \mathbf y_h\|^2ds&\leq \|\nabla \mathbf y_{0h}\|^2+\int_0^tJ \|\nabla \mathbf y_h\|^2 ds+\frac{4}{\nu}\int_I \|\mathbf h\|^2ds+\frac{4|\mathbf{g}|^2\beta^2}{\nu}\int_I \|\theta_h\|^2 ds.
\end{align*}
Note that $\int_0^T J(t)dt\leq C$, applying the Gronwall inequality and \eqref{semi:Y_Theta:stab:ineq1}, we obtain \eqref{semi:Y_Theta:stab:ineq2}.

$\bullet$ Estimate of \eqref{semi:Y_Theta:stab:ineq3}.  Taking  $\mathbf{v}_h = \partial_t \mathbf y_h$ in the first equation of \eqref{semi_discrete:state}, integrating from $0$ to $T$ and using \eqref{Interpolation:conti:1}, \eqref{Interpolation:A:1}, we have
\begin{align*}
&\int_I \|\partial_t \mathbf y_h \|^2dt=-\nu\int_I \mathbf a(\mathbf y_h, \partial_t \mathbf y_h)dt-\int_I\mathbf b(\mathbf y_h,\mathbf y_h,\partial_t \mathbf y_h) dt -\int_I\beta(\theta_h\mathbf{g},\partial_t \mathbf y_h)dt+\int_I(\mathbf h, \partial_t \mathbf y_h) dt\\
&\leq \nu\int_I \|\mathbf A_h \mathbf y_h\|\|\partial_t \mathbf y_h\| dt+ C\int_I\|\mathbf y_h\|^{\frac{1}{2}}\|\nabla \mathbf y_h\|\,\|\mathbf A_h \mathbf y_h\|^{\frac{1}{2}}\|\partial_t\mathbf y_h\|dt+\beta |\mathbf{g}|\int_I\|\partial_t \mathbf y_h\|\|\theta_h\|dt+\int_I \|\partial_t \mathbf y_h\|\,\|\mathbf h\| dt\\
&\leq \frac{1}{2}\int_I\|\partial_t \mathbf y_h\|^2dt+ C\Big(\int_I \Big(\|\mathbf A_h \mathbf y_h\|^2+\|\mathbf y_h\|^2\|\nabla \mathbf y_h\|^4\Big)dt+\int_I \|\theta_h\|^2 dt+ \int_I \|\mathbf h\|^2 dt\Big).
\end{align*}
Using equations \eqref{semi:Y_Theta:stab:ineq1} and \eqref{semi:Y_Theta:stab:ineq2}, we can obtain \eqref{semi:Y_Theta:stab:ineq3}. This finishes the proof.
\end{proof}

The next lemma specifies the error estimate for the spatial approximation of the state equation \eqref{Exist:weak:solution}.
\begin{lemma}\label{Lem:semi:state:err}
Given $u\in H^{\frac{1}{4}}(I;L^2(\Gamma))\cap L^2(I; H^{\frac{1}{2}}(\Gamma))$, let \((\mathbf y,\theta)\in \mathbb V\) be the solution of \eqref{Exist:weak:solution} and \((\mathbf y_h,\theta_h) \) be the solution of  the semi-discrete state equation \eqref{semi_discrete:state}. Then we have the following estimate:
\begin{equation}\label{Lem:semi:state:err:ineq}
\begin{aligned}
\|\mathbf y-\mathbf y_h\|_{L^\infty(I;\mathbb L^2(\Omega))}&+\|\mathbf y-\mathbf y_h\|_{L^2(I;\mathbb H^1(\Omega))}+\|\theta-\theta_h\|_{L^\infty(I; L^2(\Omega))}\\
&+\|\theta-\theta_h\|_{L^2(I; H^1(\Omega))}\leq  \widetilde{C}_1 h,
\end{aligned}
\end{equation}
where 
$\widetilde{C}_1=C\big({\mbox{$ \|\nabla \mathbf y_{0h}\|, \|\theta_0\|_{H^1(\Omega)}, \|u\|_{H^\frac{1}{4}(I;L^2(\Gamma))},\|u\|_{L^2(I;H^\frac{1}{2}(\Gamma))}, \|\mathbf h\|_{L^2(I;\mathbb L^2(\Omega))}, \|f\|_{L^2(I;L^2(\Omega))}$}}\big).$
\end{lemma}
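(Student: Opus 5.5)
We split each error using projections and then run a coupled energy argument with Gronwall. For the velocity we write
\[
\mathbf y-\mathbf y_h=(\mathbf y-\mathbf R^S_h(\mathbf y,p))+(\mathbf R^S_h(\mathbf y,p)-\mathbf y_h)=:\bm\rho_{\mathbf y}+\mathbf e_{\mathbf y},
\]
and for the temperature
\[
\theta-\theta_h=(\theta-R_h\theta)+(R_h\theta-\theta_h)=:\rho_\theta+e_\theta .
\]
Since $\nabla\cdot$ of the Stokes-Ritz projection is discretely zero, $\mathbf e_{\mathbf y}\in\mathbb X_h$. By Theorem~\ref{lem:conL2:Regualrit:YTheta}, the hypotheses on $u$ give $(\mathbf y,\theta)\in\mathbb V$ and $p\in L^2(I;H^1(\Omega))$. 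The Appendix bounds for $\mathbf R^S_h$ and $R_h$ (convexity and (A2)) then yield
\[
\|\bm\rho_{\mathbf y}\|_{L^\infty(I;\mathbb L^2)}+\|\bm\rho_{\mathbf y}\|_{L^2(I;\mathbb H^1)}+\|\rho_\theta\|_{L^\infty(I;L^2)}+\|\rho_\theta\|_{L^2(I;H^1)}\le Ch .
\]
The $L^\infty(L^2)$ parts use $\mathbf y\in C(\bar I;\mathbb X)$, $\theta\in C(\bar I;H^1)$ and first-order approximation. The $L^2(L^2)$ bounds on $\partial_t\bm\rho_{\mathbf y}$ and $\partial_t\rho_\theta$ are of order $h$, since the projections commute with $\partial_t$ and $\partial_t\mathbf y,\partial_t\theta\in L^2(L^2)$ are projected stably.

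The error equations come from subtracting \eqref{semi_discrete:state} from \eqref{Regualrit:equa:reform} and the temperature equation, tested with $(\mathbf v_h,\psi_h)\in\mathbb X_h\times V_h$. By the defining relations \eqref{StokePro} and \eqref{definite_Rhhh}, the terms $\nu\mathbf a(\bm\rho_{\mathbf y},\cdot)-(p-R^{S,p}_h,\nabla\cdot)$ and $\chi a(\rho_\theta,\cdot)+\eta\gamma(\rho_\theta,\cdot)_\Gamma$ drop out. The control terms $\eta(u,\psi_h)_\Gamma$ cancel exactly. We obtain
\[
(\partial_t\mathbf e_{\mathbf y},\mathbf v_h)+\nu\mathbf a(\mathbf e_{\mathbf y},\mathbf v_h)=-(\partial_t\bm\rho_{\mathbf y},\mathbf v_h)-[\mathbf b(\mathbf y,\mathbf y,\mathbf v_h)-\mathbf b(\mathbf y_h,\mathbf y_h,\mathbf v_h)]-\beta(( \rho_\theta+e_\theta)\mathbf g,\mathbf v_h),
\]
and analogously for $e_\theta$, with the convection difference $b(\mathbf y,\theta,\psi_h)-b(\mathbf y_h,\theta_h,\psi_h)$. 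We test with $\mathbf v_h=\mathbf e_{\mathbf y}$ and $\psi_h=e_\theta$ and integrate over $(0,t)$. The initial errors $\mathbf e_{\mathbf y}(0)=\mathbf R^S_h\mathbf y_0-\mathbf P_h\mathbf y_0$ and $e_\theta(0)=R_h\theta_0-P_h\theta_0$ are $O(h)$ in $L^2$, using $\mathbf y_0\in\mathbb H^2$ and $\theta_0\in H^1$.

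The main obstacle is the nonlinear terms. We decompose
\[
\mathbf b(\mathbf y,\mathbf y,\mathbf e)-\mathbf b(\mathbf y_h,\mathbf y_h,\mathbf e)=\mathbf b(\bm\rho_{\mathbf y}+\mathbf e_{\mathbf y},\mathbf y,\mathbf e)+\mathbf b(\mathbf y_h,\bm\rho_{\mathbf y}+\mathbf e_{\mathbf y},\mathbf e).
\]
The term $\mathbf b(\mathbf y_h,\mathbf e_{\mathbf y},\mathbf e_{\mathbf y})$ vanishes by skew-symmetry. The remaining terms are handled with \eqref{b_estimate}, \eqref{pre:estimate} and \eqref{Interpolation:conti:1}; for example,
\[
|\mathbf b(\mathbf e_{\mathbf y},\mathbf y,\mathbf e_{\mathbf y})|\le C\|\mathbf e_{\mathbf y}\|\|\nabla\mathbf e_{\mathbf y}\|\|\nabla\mathbf y\|\le\tfrac{\nu}{8}\|\nabla\mathbf e_{\mathbf y}\|^2+C\|\nabla\mathbf y\|^2\|\mathbf e_{\mathbf y}\|^2,
\]
and
\[
|\mathbf b(\mathbf y_h,\bm\rho_{\mathbf y},\mathbf e_{\mathbf y})|\le C\|\mathbf y_h\|_{L^4}\|\nabla\bm\rho_{\mathbf y}\|\|\mathbf e_{\mathbf y}\|_{L^4}.
\]
The latter is bounded uniformly in $h$ by Lemma~\ref{semi_discreYTheta:stab}, which gives $\mathbf y_h\in L^\infty(\mathbb H^1)$. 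The temperature convection $b(\mathbf y,\theta,e_\theta)-b(\mathbf y_h,\theta_h,e_\theta)$ is treated the same way, splitting into $b(\bm\rho_{\mathbf y}+\mathbf e_{\mathbf y},\theta,e_\theta)+b(\mathbf y_h,\rho_\theta+e_\theta,e_\theta)$. The skew part $b(\mathbf y_h,e_\theta,e_\theta)$ vanishes. The coupling term $b(\mathbf e_{\mathbf y},\theta,e_\theta)$ is absorbed as in the estimate of $J_5$ in Lemma~\ref{lem:estimate:Z}, using $\|\nabla\theta\|\in L^\infty$ (from $\theta\in C(\bar I;H^1)$) and coercivity in $\|\nabla e_\theta\|^2+\|e_\theta\|_\Gamma^2$. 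The buoyancy term is bounded by $C(\|e_\theta\|^2+\|\mathbf e_{\mathbf y}\|^2+\|\rho_\theta\|^2)$.

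Collecting terms, we get
\[
\|\mathbf e_{\mathbf y}(t)\|^2+\|e_\theta(t)\|^2+\int_0^t(\|\nabla\mathbf e_{\mathbf y}\|^2+\|e_\theta\|_{H^1}^2)\le Ch^2+\int_0^t\phi(s)(\|\mathbf e_{\mathbf y}\|^2+\|e_\theta\|^2)\,ds,
\]
where $\phi=C(1+\|\nabla\mathbf y\|^2+\|\nabla\theta\|^2+\|\mathbf y_h\|_{H^1}^2)\in L^1(I)$. Gronwall's inequality gives the $O(h)$ bound for $\mathbf e_{\mathbf y}$ and $e_\theta$. The triangle inequality with the projection estimates then yields \eqref{Lem:semi:state:err:ineq}, with the constant depending on the data as listed in $\widetilde C_1$ through Theorem~\ref{lem:conL2:Regualrit:YTheta} and Lemma~\ref{semi_discreYTheta:stab}.
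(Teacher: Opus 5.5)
\textbf{There is a genuine gap: the time derivative of your projection error.} Because you split with the Ritz-type projections $\mathbf R^S_h$ and $R_h$, the elliptic and pressure terms do drop out. The price is that $\int_0^t(\partial_t\bm\rho_{\mathbf y},\mathbf e_{\mathbf y})\,ds$ and $\int_0^t(\partial_t\rho_\theta,e_\theta)\,ds$ survive. Your argument then needs $\|\partial_t\bm\rho_{\mathbf y}\|_{L^2(I;\mathbb L^2(\Omega))}+\|\partial_t\rho_\theta\|_{L^2(I;L^2(\Omega))}\le Ch$.

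You justify this by saying the projections commute with $\partial_t$ and that $\partial_t\mathbf y,\partial_t\theta\in L^2(I;L^2)$ are ``projected stably''. That step fails. $R_h$ is defined on $H^1(\Omega)$ and $\mathbf R^S_h$ on $\mathbb H^1_0(\Omega)\times L^2(\Omega)$, and neither is uniformly $L^2$-stable. So $R_h\partial_t\theta$ and $\mathbf R^S_h(\partial_t\mathbf y,\partial_t p)$ are not even defined when you only know $\partial_t\theta\in L^2(I;L^2(\Omega))$ and $\partial_t\mathbf y\in L^2(I;\mathbb H)$, with nothing known about $\partial_t p$.

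An $O(h)$ bound of this kind requires $\partial_t\mathbf y\in L^2(I;\mathbb H^1(\Omega))$, $\partial_t p\in L^2(I;L^2(\Omega))$ and $\partial_t\theta\in L^2(I;H^1(\Omega))$. That is a full extra order of time regularity beyond what $(\mathbf y,\theta)\in\mathbb V$ and the stated hypotheses on $u,f,\mathbf h$ provide. Measuring $\partial_t\rho_\theta$ in a weaker dual norm does not help, because any such bound still applies $R_h$ to $\partial_t\theta$.

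A smaller, related problem concerns the pressure. The pointwise-in-time bound $\|\mathbf y-\mathbf R^S_h(\mathbf y,p)\|\le Ch(\|\mathbf y\|_{\mathbb H^1(\Omega)}+\|p\|)$ is what you use for the $L^\infty(I;\mathbb L^2(\Omega))$ part and for $\mathbf e_{\mathbf y}(0)$. It needs $p\in L^\infty(I;L^2(\Omega))$ and a meaning for $p(0)$, but only $p\in L^2(I;H^1(\Omega))$ is available.

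\textbf{How the paper avoids this.} It splits with the $L^2$ projections instead:
$\mathbf y-\mathbf y_h=(\mathbf y-\mathbf P_h\mathbf y)+(\mathbf P_h\mathbf y-\mathbf y_h)$ and $\theta-\theta_h=(\theta-P_h\theta)+(P_h\theta-\theta_h)$. Then $(\partial_t(\mathbf y-\mathbf P_h\mathbf y),\mathbf v_h)=0$ for all $\mathbf v_h\in\mathbb X_h$ and $(\partial_t(\theta-P_h\theta),\psi_h)=0$ for all $\psi_h\in V_h$, so no time derivative of a projection error ever appears. The discrete initial errors also vanish, since $\mathbf y_{0h}=\mathbf P_h\mathbf y_0$ and $\theta_{0h}=P_h\theta_0$.

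The cost is that the elliptic terms now remain, but they are harmless:
\begin{itemize}
\item they are bounded by $\|\nabla(\mathbf y-\mathbf P_h\mathbf y)\|\,\|\nabla(\mathbf P_h\mathbf y-\mathbf y_h)\|$, by the analogous $H^1$ and boundary terms for $\theta$, and by $\|p-I_hp\|\,\|\nabla(\mathbf P_h\mathbf y-\mathbf y_h)\|$;
\item the pressure bound comes from subtracting $I_hp\in Q_h$, which is orthogonal to $\nabla\cdot\mathbf v_h$ for $\mathbf v_h\in\mathbb X_h$;
\item all of these are $O(h)$ in $L^2(I)$, using only $\mathbf y\in L^2(I;\mathbb H^2(\Omega))$, $\theta\in L^2(I;H^2(\Omega))$ and $p\in L^2(I;H^1(\Omega))$.
\end{itemize}
With the $H^1$-stability of $\mathbf P_h$ and $P_h$ on quasi-uniform meshes, your treatment of the trilinear terms, the buoyancy coupling and the Gronwall step then goes through essentially unchanged.
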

\begin{proof}
We decompose the errors into the following parts:
\begin{equation}\label{decomp:YTheta_1}
\begin{aligned}
&\mathbf y-\mathbf y_h=\mathbf y-\mathbf P_h \mathbf y+\mathbf P_h \mathbf y-\mathbf y_h=\bm \zeta^{\mathbf y}_h+\bm \eta^{\mathbf y}_h,\quad \theta-\theta_h=\theta-P_h\theta+P_h\theta- \theta_h=\zeta^{\theta}_h+\eta^{\theta}_h.
\end{aligned}
\end{equation}
Subtracting \eqref{semi_discrete:state} from \eqref{Regualrit:equa:reform}, \eqref{Exist:weak:solution:c} and taking \((\mathbf v,\psi)=(\bm \eta^{\mathbf y}_h, \eta^\theta_h ) \), using the decomposition \eqref{decomp:YTheta_1}, and integrating from $0$ to $t$  yield
\begin{align}
&\frac{1}{2}\|\bm \eta^{\mathbf y}_h(t)\|^2+\frac{1}{2}\| \eta^{\theta}_h(t)\|^2+\nu \int_0^t\|\nabla \bm \eta^{\mathbf y}_h\|^2 ds+\chi\int_0^t\|\nabla \eta^\theta_h\|^2 ds+\eta\gamma\int_0^t\|\eta^\theta_h\|^2_{\Gamma}ds=\nonumber\\
&-\int_0^t\big[\nu \mathbf a(\bm \zeta^{\mathbf y}_h,\bm \eta^{\mathbf y}_h)-(p, \nabla \cdot \bm \eta_h^{\mathbf y})\big]ds-\int_0^t\big[ \chi a(\zeta^{\theta}_h, \eta^{\theta}_h)+\eta\gamma(\zeta^{\theta}_h, \eta^{\theta}_h)_\Gamma+\beta(\mathbf g \zeta^\theta_h, \bm \eta^{\mathbf y}_h)+\beta(\mathbf g \eta^\theta_h, \bm \eta^{\mathbf y}_h)\big]ds\nonumber\\
&+\int_0^t\Big[\mathbf b(\mathbf y_h, \mathbf y_h, \bm \eta_h^{\mathbf y})-\mathbf b(\mathbf y, \mathbf y, \bm \eta_h^{\mathbf y})\Big]ds+\int_0^t\Big[b(\mathbf y_h, \theta_h, \eta_h^{\theta})-b(\mathbf y, \theta, \eta_h^{\theta})\Big]ds\nonumber\\
&=J_1+J_2+J_3+J_4.\label{semi:Ytheta:err:13}
\end{align}
Using the  fact that $\int_0^t (I_h p, \nabla \cdot \bm{\eta}_h^{\mathbf{y}}) \, dt = 0$, we deduce the following estimate for $J_1$:
\begin{align*}
J_1&=-\int_0^t[\nu\mathbf a(\bm \zeta_h^{\mathbf y},\bm \eta_h^{\mathbf y})ds-(p-I_hp, \nabla \cdot\bm \eta^{\mathbf y}_h)]ds\leq \frac{\nu}{8}\int_0^t\|\nabla  \bm \eta_h^{\mathbf y}\|^2ds+ 4\nu\int_0^t\|\nabla \bm \zeta_h^{\mathbf y}\|^2ds+\frac{C}{\nu}\int_0^t\|p-I_hp\|^2ds.
\end{align*}
With the help of Young's inequality,  we can estimate $J_2$ as follows:
\begin{align*}
  J_2\leq& \frac{\nu}{8}\int_0^t\|\nabla \bm \eta_h^{\mathbf y}\|^2ds+\frac{\chi}{4}\int_0^t\|\nabla \eta_h^\theta\|^2ds+2\chi\int_0^t\|\nabla\zeta_h^\theta\|^2ds+2\eta\gamma\int_0^t\|\zeta_h^\theta\|^2_{\Gamma} ds+\frac{\eta\gamma}{4}\int_0^t\|\eta_h^\theta\|^2_{\Gamma} ds\\
  &+\frac{\beta^2|\mathbf g|^2}{2\nu}\int_0^t\|\eta_h^\theta\|^2 ds+\frac{\beta^2|\mathbf g|^2}{2\nu}\int_0^t\|\zeta_h^\theta\|^2ds.
\end{align*}
For the estimate of $J_3$, we have the decomposition
\begin{align*}
  J_3&=-\int_0^t\mathbf b(\bm \zeta_h^{\mathbf y},\mathbf y, \bm \eta^{\mathbf y}_h)ds-\int_0^t\mathbf b(\mathbf P_h\mathbf y,\bm \zeta_h^{\mathbf y}, \bm \eta^{\mathbf y}_h)ds-\int_0^t \mathbf b(\bm \eta^{\mathbf y}_h,\mathbf P_h \mathbf y, \bm \eta^{\mathbf y}_h)ds.
\end{align*}
We use \eqref{Interpolation:conti:1},  the fact $\mathbf y\in L^\infty(I;\mathbb H^1(\Omega))$ and the stability of $\mathbf P_h$ in $\mathbb H^1(\Omega)$ (see \cite[p. 15]{Vexler_Leykekhman_2024}) to obtain 
\begin{align*}
J_3\leq\frac{\nu}{8}\int_0^t\|\nabla \bm \eta^{\mathbf y}_h\|^2 ds+ C\Big(\frac{(\nu^2+1)}{\nu^3}\int_0^t\|\bm \eta_h^{\mathbf y}\|^2ds+\frac{1}{\nu}\int_0^t\|\nabla \bm\zeta^{\mathbf y}_h\|^2ds\Big)G
\end{align*}
with $G:=\|\mathbf y\|^2_{L^\infty(I; \mathbb H^1(\Omega))}+\|\mathbf y\|^4_{L^\infty(I; \mathbb H^1(\Omega))}$. Using integration by parts and similar ideas as above yields
\begin{align*}
&J_4=-\int_0^tb(\bm \zeta_h^{\mathbf y}, \theta, \eta_h^\theta)ds-\int_0^tb(\mathbf P_h\mathbf y, \zeta_h^\theta, \eta_h^\theta)ds-\int_0^tb(\bm \eta^{\mathbf y}_h, P_h\theta, \eta_h^\theta)\\
&\leq \frac{\nu}{8}\int_0^t\|\nabla \bm \eta_h^{\mathbf y}\|^2 ds+\frac{\chi}{4}\int_0^t\|\nabla \eta^\theta_h\|^2 ds+\frac{\eta\gamma}{4}\int_0^t\|\eta^\theta_h\|_{\Gamma}^2ds+C\Big(\frac{1}{\min\{\chi,\eta\gamma\}}\Big(\int_0^t\|\nabla \bm \zeta_h^{\mathbf y}\|^2ds+\int_0^t\|\zeta_h^\theta\|^2_{H^1(\Omega)}ds\Big)\\
&+\frac{1+\chi^2}{\chi^2\nu}\int_0^t\|\bm \eta_h^{\mathbf y}\|^2ds+\frac{1}{\min\{\chi, \eta\gamma\}}\int_0^t\|\eta_h^\theta\|^2 ds\Big)H
\end{align*}
with $H:=\|\theta\|^2_{L^\infty(I;H^1(\Omega))}+\|\theta\|^4_{L^\infty(I;H^1(\Omega))}+\|\mathbf y\|^2_{L^\infty(I;\mathbb H^1(\Omega))}$. Combining the estimates of the terms $J_1$, $J_2$, $J_3$,  $J_4$ and using  \eqref{semi:Ytheta:err:13}, we obtain
\begin{align*}
&\|\bm \eta^{\mathbf y}_h(t)\|^2+\| \eta^{\theta}_h(t)\|^2+\nu \int_0^t\|\nabla \bm \eta^{\mathbf y}_h\|^2 ds+\chi\int_0^t\|\nabla \eta^\theta_h\|^2 ds+\eta\gamma\int_0^t\|\eta^\theta_h\|^2_{\Gamma}ds\leq C\Big(J\Big(\int_0^t\|\bm \eta^{\mathbf y}_h\|^2ds\\
&+\int_0^t\|\eta_h^\theta\|^2 ds\Big)+K\Big(\int_0^t\|\nabla \bm \zeta_h^{\mathbf y}\|^2ds+\int_0^t\|\zeta_h^\theta\|^2_{H^1(\Omega)}ds+\int_0^t\|p-I_hp\|^2ds\Big)\Big)
\end{align*}
with $J:=\max\big\{G\frac{(\nu^2+1)}{\nu^3}
+H\frac{1+\chi^2}{\chi^2\nu}, \frac{\beta^2|\mathbf g|^2}{2\nu}+H\frac{1}{\min\{\chi, \eta\gamma\}}\big\}$,  $K:=\max\big\{2\nu+\frac{G}{\nu}+\frac{H}{\min\{\chi, \eta\gamma\}},\max\{\chi, \eta\chi\}+\frac{H}{\min\{\chi, \eta\gamma\}}+\frac{\beta^2|\mathbf g|^2}{2\nu},\frac{1}{\nu}\big\}$. Then, using the Gronwall inequality, assumption ($\mathbf A2$), Lemma \ref{Lem:L2:err}, and the triangle inequality, we can derive \eqref{Lem:semi:state:err:ineq}. This finishes the proof.
\end{proof}


\begin{lemma}\label{semi_discreYTheta:High_regular:stab}
Given $u\in H^{\frac{1}{4}}(I;L^2(\Gamma))\cap L^2(I; H^{\frac{1}{2}}(\Gamma))$, let \((\mathbf{y}_h, \theta_h)\) be the solutions of \eqref{semi_discrete:state}. Then the following estimates hold:
\begin{equation}
\begin{aligned}
\|\partial_t\theta_h\|_{L^2(I;L^2(\Omega))}+\|\theta_h\|_{L^\infty(I;H^1(\Omega))}+\|\theta_h\|_{H^\frac{1}{2}(I;H^1(\Omega))}
+\|\mathbf y_h\|_{H^\frac{1}{2}(I;\mathbb H^1(\Omega))}
+\|\theta_h\|_{L^4(I;W^{1,4}(\Omega))}\leq \widetilde{C}_1\label{semi:Y_Theta:stab:bu:ineq8}.
\end{aligned}
\end{equation}
\end{lemma}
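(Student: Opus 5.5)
The plan has four steps: an $H^1$-in-space energy estimate for $\theta_h$, a separate handling of the $H^{1/4}$-in-time boundary term, fractional time regularity by interpolation, and the $L^4(I;W^{1,4})$ bound from \eqref{DeltaTheta_ineq1}.

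\emph{Step 1: energy estimate with $\partial_t\theta_h$.} Test the temperature equation in \eqref{semi_discrete:state} with $\psi_h=\partial_t\theta_h$. The terms $\chi a(\theta_h,\partial_t\theta_h)+\eta\gamma(\theta_h,\partial_t\theta_h)_\Gamma$ form the exact time derivative of $\frac12(\chi\|\nabla\theta_h\|^2+\eta\gamma\|\theta_h\|_\Gamma^2)$, a norm equivalent to $\|\theta_h\|_{H^1(\Omega)}^2$. The convection term is bounded by
\[
|b(\mathbf y_h,\theta_h,\partial_t\theta_h)|\le C\|\mathbf y_h\|_{\mathbb L^\infty}\|\nabla\theta_h\|\,\|\partial_t\theta_h\|+\dots
\]
Since $\mathbf y_h$ is discretely divergence free and not exactly so, I would instead use the skew form together with \eqref{Interpolation:conti:1}, \eqref{DeltaTheta_ineq1} and \eqref{Interpolation:A:1}. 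This gives the bound $C\|\mathbf y_h\|_{\mathbb L^4}\|\nabla\theta_h\|_{L^4}\|\partial_t\theta_h\|$ plus a similar term. Then $\|\nabla\theta_h\|_{L^4}\le C\|\theta_h\|_{H^1}^{1/2}\|\Delta_h\theta_h\|^{1/2}$, and $\|\Delta_h\theta_h\|$ is controlled from the equation itself by $\|\partial_t\theta_h\|+\|P_hf\|+\|\text{conv}\|+\|\eta P_h(\text{boundary }u)\|$.

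\emph{Step 2: the boundary term (main obstacle).} The term $\eta(u,\partial_t\theta_h)_\Gamma$ cannot be absorbed directly, because $\partial_t\theta_h$ is not controlled on $\Gamma$. I would write it as $\frac{d}{dt}\eta(u,\theta_h)_\Gamma-\eta(\partial_t u,\theta_h)_\Gamma$, but $u$ is only in $H^{1/4}$ in time. So the plan is to use the fractional duality
\[
\int_0^t(u,\partial_t\theta_h)_\Gamma\le \|u\|_{H^{1/4}(I;L^2(\Gamma))}\|\partial_t\theta_h\|_{H^{-1/4}(I;L^2(\Gamma))},
\]
and to bound $\|\partial_t\theta_h\|_{H^{-1/4}(L^2(\Gamma))}\lesssim\|\theta_h\|_{H^{3/4}(I;L^2(\Gamma))}\lesssim\|\theta_h\|_{H^{1/2}(I;H^1)}^{a}\|\theta_h\|_{H^1(I;L^2)}^{1-a}$ via trace and interpolation. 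A cleaner alternative, which I would try first, is to mirror the continuous proof of Theorem~\ref{lem:conL2:Regualrit:YTheta}. Let $w_h$ be the discrete solution of the linear Robin heat problem with boundary data $u$ (Malanowski-type $V(I)$ regularity transferred to $V_h$ through $R_h$). Then $\theta_h-w_h$ has homogeneous boundary data and a right-hand side in $L^2(I;L^2)$, so Step~1 applies to it cleanly. The discrete counterpart of \cite[Lemma 2.2]{Malanowski_1982} for $w_h$ is the delicate point. After Young's inequality, absorption of $\frac12\|\partial_t\theta_h\|^2$, and Gronwall with the integrable weight $\|\mathbf y_h\|_{\mathbb H^1}^2\|\mathbf A_h\mathbf y_h\|^{\dots}$ (via Lemma~\ref{semi_discreYTheta:stab}), this yields the bounds on $\|\partial_t\theta_h\|_{L^2(L^2)}$, $\|\theta_h\|_{L^\infty(H^1)}$ and $\|\Delta_h\theta_h\|_{L^2(L^2)}$.

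\emph{Step 3: fractional time norms.} Obtain $\|\theta_h\|_{H^{1/2}(I;H^1)}$ by interpolating between $H^1(I;L^2)$ and $L^2(I;H^2_h)$, where the discrete $H^2$ norm is given by $\|\Delta_h\cdot\|$ and the interpolation space is the discrete $H^1$ norm. Treat $\mathbf y_h$ the same way, using \eqref{semi:Y_Theta:stab:ineq2}, \eqref{semi:Y_Theta:stab:ineq3} and $\mathbf A_h$ for the discrete Stokes operator.

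\emph{Step 4: the $L^4(I;W^{1,4})$ bound.} Apply \eqref{DeltaTheta_ineq1} pointwise in time:
\[
\int_I\|\nabla\theta_h\|_{L^4}^4\le C\|\theta_h\|^2_{L^\infty(H^1)}\|\Delta_h\theta_h\|^2_{L^2(L^2)}.
\]
This is bounded by Step~1, and all constants depend only on the quantities entering $\widetilde C_1$.
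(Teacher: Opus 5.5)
\textbf{There is a genuine gap. Your plan never controls the part of $\theta_h$ driven by the Robin data $u$ uniformly in $h$.}

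You correctly flag the boundary term $\eta(u,\partial_t\theta_h)_\Gamma$ as the obstacle, but neither route resolves it.
\begin{itemize}
\item In route (a), the chain $\|\theta_h\|_{H^{3/4}(I;L^2(\Gamma))}\lesssim\|\theta_h\|^{a}_{H^{1/2}(I;H^1)}\|\theta_h\|^{1-a}_{H^1(I;L^2)}$ fails. Interpolating those two spaces up to time order $3/4$ lands in $H^{3/4}(I;H^{1/2}(\Omega))$, which has no bounded $L^2(\Gamma)$ trace. The sharp Lions--Magenes trace needs genuine $H^2$ in space, which $V_h$-functions do not have. The argument is also circular, since it feeds the Step~3 output back into Step~1.
\item Route (b) simply postulates a discrete analogue of Malanowski's lemma, which is essentially the content of the statement. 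Moreover, ``transferring through $R_h$'' would need $\partial_t w\in L^2(I;H^1(\Omega))$, which $V(I)$ does not give.
\end{itemize}

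More seriously, Steps 1, 3 and 4 for $\theta_h$ rest on an $h$-uniform bound for $\|\Delta_h\theta_h\|_{L^2(I;L^2(\Omega))}$, and this is false whenever $u\neq 0$. The operator $\Delta_h$ in \eqref{definite_Delta_h} builds in the homogeneous Robin condition. The equation therefore gives $-\Delta_h\theta_h=\eta\Gamma_h u+P_hf-\partial_t\theta_h-(\text{convection})$, where $(\Gamma_h u,\psi_h)=(u,\psi_h)_\Gamma$ for all $\psi_h\in V_h$. Testing with a function supported in the boundary strip of elements shows $\|\Gamma_h u\|\ge c\,h^{-1/2}\|u\|_\Gamma$ for nonzero smooth $u$. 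Consequently:
\begin{itemize}
\item Step~4 yields only $\|\nabla\theta_h\|_{L^4(I;L^4)}\lesssim h^{-1/4}$.
\item Step~3's interpolation through $L^2(I;D(\Delta_h))$ degenerates in the same way.
\item Splitting off $w_h$ does not help. The convection term still needs $h$-uniform $L^\infty(H^1)$ or $W^{1,4}$ bounds on $w_h$, and these cannot come from $\Delta_h w_h$.
\end{itemize}
Your $\mathbf y_h$ part of Step~3 is fine, because for the Dirichlet problem $\|\mathbf A_h\mathbf y_h\|_{L^2(I;\mathbb L^2(\Omega))}$ is bounded by \eqref{semi:Y_Theta:stab:ineq2}.

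\textbf{The missing idea (the paper's route) is to transfer regularity from the continuous solution rather than derive it from the discrete equation.}
\begin{itemize}
\item Since $u\in H^{1/4}(I;L^2(\Gamma))\cap L^2(I;H^{1/2}(\Gamma))$, Theorem~\ref{lem:conL2:Regualrit:YTheta} gives $\theta\in V(I)$. Hence $\theta\in L^\infty(I;H^1)\cap H^{1/2}(I;H^1)\cap L^4(I;W^{1,4})$.
\item Lemma~\ref{Lem:semi:state:err} gives $\|\theta-\theta_h\|_{L^\infty(I;L^2)}+\|\theta-\theta_h\|_{L^2(I;H^1)}\le \widetilde C_1 h$.
\item Write $\theta_h=P_h\theta+(\theta_h-P_h\theta)$. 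The first piece is handled by the $H^1$- and $W^{1,4}$-stability of $P_h$.
\item The second piece is $O(h)$ in the weak norms, and inverse estimates lift it to $O(1)$. For example, $\|\theta_h-P_h\theta\|_{L^\infty(I;H^1)}\le Ch^{-1}\|\theta_h-\theta\|_{L^\infty(I;L^2)}\le C$. Likewise $\|\theta_h-P_h\theta\|_{L^4(I;W^{1,4})}\le Ch^{-1/2}\|\theta_h-\theta\|^{1/2}_{L^2(I;H^1)}\|\theta_h-\theta\|^{1/2}_{L^\infty(I;H^1)}\le C$. The $H^{1/2}(I;H^1)$ bound follows the same way by interpolation.
\item For $\partial_t\theta_h$, test the difference of the continuous and semi-discrete temperature equations with $\zeta P_h v$. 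The boundary data $u$ and the source $f$ cancel identically. The remaining bilinear and trilinear error terms are $O(h)\cdot O(h^{-1})$ by the error estimate and the inverse estimate.
\end{itemize}
This sidesteps both the boundary term and any discrete maximal-regularity result for Robin data.
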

\begin{proof}
We prove the expected estimates one by one. 

$\bullet$ Estimate of $\|\partial_t\theta_h\|_{L^2(I;L^2(\Omega))}$. For any \(v \in L^2(\Omega)\) and  \(\zeta\in C^{\infty}_0(I)\), setting \( v_h = P_h v \), taking $\psi_h = \zeta v_h$ and $\psi = \zeta v_h$ in the third equation of both \eqref{semi_discrete:state} and \eqref{Exist:weak:solution}, integrating from $0$ to $T$ yields
\begin{align*}
&\int_I(\partial_t \theta_{h}, \zeta v_h)dt=-\chi \int_I(\nabla\theta_h,\zeta \nabla v_h)dt-\eta \gamma\int_I(\theta_h, \zeta v_h)_\Gamma dt-\int_Ib(\mathbf y_h, \theta_h,\zeta v_h)dt+\int_I( f, \zeta v_h)dt+\int_I\eta( u, \zeta v_h)_\Gamma dt\nonumber\\
&=\int_I(\partial_t\theta, \zeta v_h)dt+\int_I \big[b(\mathbf y,\theta, \zeta v_h)-b(\mathbf y_h, \theta_h, \zeta v_h)\big]dt
+\int_I\big[\chi(\nabla(\theta-\theta_h), \zeta\nabla v_h) +\eta\gamma (\theta-\theta_h,\zeta v_h)_\Gamma\big]dt\nonumber\\
&\leq C\big(\|\partial_t \theta \|_{L^2(I;L^2(\Omega))}+\|\theta\|_{L^\infty(I;H^1(\Omega))}+\|\mathbf y_h\|_{L^\infty(0,T;\mathbb H^1(\Omega))}+ \max\{\chi, \eta\gamma\}\big)\|\zeta v\|_{L^2(I;L^2(\Omega))},
\end{align*}
where we have used the Cauchy-Schwarz inequality, \eqref{Lem:semi:state:err:ineq}  and the inverse estimate. Then a density argument gives the estimate.

$\bullet$ Estimate of $\|\theta_h\|_{L^\infty(I;H^1(\Omega))}$. By the inverse estimate, the $H^1$-stability of $L^2$-projection (see \cite{Crouzeix1987}), and Lemma \ref{Lem:semi:state:err}, we can derive
\begin{align*}
\|\theta_h\|_{L^\infty(I;H^1(\Omega))}&\leq \|\theta_h-P_h\theta\|_{L^\infty(I;H^1(\Omega))}+\|P_h \theta\|_{L^\infty(I;H^1(\Omega))}\\
&\leq Ch^{-1}\|\theta_h-P_h\theta\|_{L^\infty(I;L^2(\Omega))}+C\|\theta\|_{L^\infty(I;H^1(\Omega))}\\
&\leq  Ch^{-1}\|\theta_h-\theta\|_{L^\infty(I;L^2(\Omega))}+C\|\theta\|_{L^\infty(I;H^1(\Omega))}\\
&\leq C\big( 1+\|\theta\|_{L^\infty(I;H^1(\Omega))}\big).
\end{align*}

$\bullet$ Estimate of $\|\theta_h\|_{H^\frac{1}{2}(I;H^1(\Omega))}$.  By the inverse estimate and Lemma \ref{Lem:semi:state:err}, we have 
\begin{align*}
  \|\theta_h\|_{H^\frac{1}{2}(I;H^1(\Omega))}&\leq \|\theta_h-P_h\theta\|_{H^\frac{1}{2}(I;H^1(\Omega))}+\|P_h\theta\|_{H^\frac{1}{2}(I;H^1(\Omega))}\nonumber\\
  &\leq C\|\theta_h-P_h\theta\|^\frac{1}{2}_{L^2(I;H^1(\Omega))}\|\theta_h-P_h\theta\|^\frac{1}{2}_{H^1(I,H^1(\Omega))}+\|P_h\theta\|_{H^\frac{1}{2}(I;H^1(\Omega))}\nonumber\\
  &\leq C \big( h^{-\frac{1}{2}}\|\theta_h-\theta\|^\frac{1}{2}_{L^2(I;H^1(\Omega))}\|\theta_h-\theta\|^\frac{1}{2}_{H^1(I,L^2(\Omega))}+\|\theta\|_{H^\frac{1}{2}(I;H^1(\Omega))}\big)\nonumber\\
  &\leq C\big(\big(\|\theta_h\|_{H^1(I;L^2(\Omega))}+\|\theta\|_{H^1(I;L^2(\Omega))}\big)^\frac{1}{2}+\|\theta\|_{H^\frac{1}{2}(I;H^1(\Omega))}\big).
\end{align*}
Similarly to the above procedure, we can obtain the estimate for $\|\mathbf y_h\|_{H^\frac{1}{2}(I;\mathbb H^1(\Omega))}$.

$\bullet$ Estimate of $\|\theta_h\|_{L^4(I;W^{1,4}(\Omega))}$. Using the inverse estimate, the interpolation inequality, the $W^{1,p}$-stability of the $L^2$ projection (see \cite{Crouzeix1987}), and Lemma \ref{Lem:semi:state:err}, we deduce 
\begin{align*}
  \|\theta_h\|_{L^4(I;W^{1,4}(\Omega))}&\leq \|\theta_h-P_h\theta\|_{L^4(I;W^{1,4}(\Omega))}+\|P_h\theta\|_{L^4(I;W^{1,4}(\Omega))}\nonumber\\
  &\leq C\|\theta_h-P_h\theta\|^\frac{1}{2}_{L^2(I;W^{1,4}(\Omega))}\|\theta_h-P_h\theta\|^\frac{1}{2}_{L^\infty(I,W^{1,4}(\Omega))}+\|P_h\theta\|_{L^4(I;W^{1,4}(\Omega))}\nonumber\\
  &\leq C \big( h^{-\frac{1}{2}}\|\theta_h-\theta\|^\frac{1}{2}_{L^2(I;H^{1}(\Omega))}\|\theta_h-\theta\|^\frac{1}{2}_{L^\infty(I,H^{1}(\Omega))}+\|\theta\|_{L^4(I;W^{1,4}(\Omega))}\big)\nonumber\\
  &\leq C\big(\big(\|\theta_h\|_{L^\infty(I;H^1(\Omega))}+\|\theta\|_{L^\infty(I;H^1(\Omega))}\big)^\frac{1}{2}+\|\theta\|_{L^4(I;W^{1,4}(\Omega))}\big).
\end{align*}
This finishes the proof.
\end{proof}

The next lemma specifies the error estimate for the spatial  approximation of the adjoint equation \eqref{adjoint:weak:solution}. The proof is omitted because it is analogous to that of Lemma \ref{Lem:semi:state:err}.
\begin{lemma}\label{Lem:semi:adjoint:err}  Let \((\bm \mu,\kappa)\in \mathbb V\) be the solution of \eqref{adjoint:weak:solution} and \((\bm \mu_h,\kappa_h) \) be the solution of  the semi-discrete adjoint equation \eqref{semi_discrete:adjoint}. Then, under the assumptions of Lemma \ref{Lem:semi:state:err} there hold the following estimates:
\begin{equation*}
\begin{aligned}
\|\bm \mu-\bm \mu_h\|_{L^\infty(I;\mathbb L^2(\Omega))}+\|\bm \mu-\bm \mu_h\|_{L^2(I;\mathbb H^1(\Omega))}+\|\kappa-\kappa_h\|_{L^\infty(I; L^2(\Omega))}+\|\kappa-\kappa_h\|_{L^2(I; H^1(\Omega))}\leq  \widetilde{C}_2 h,
\end{aligned}
\end{equation*}
where 
\[ \widetilde{C}_2=C\big({\scriptsize\mbox{$ \|\nabla \mathbf y_{0h}\|, \|\theta_0\|_{H^1(\Omega)}, \|u\|_{H^\frac{1}{4}(I;L^2(\Gamma))\cap L^2(I;H^\frac{1}{2}(\Gamma))}, \|\mathbf h\|_{L^2(I;\mathbb L^2(\Omega))}, \|f\|_{L^2(I;L^2(\Omega))}, \|\mathbf y_d\|_{L^2(I;\mathbb L^2(\Omega))}, \|\theta_d\|_{L^2(I; L^2(\Omega))}$}}\big).\]
\end{lemma}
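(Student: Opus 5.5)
The plan mirrors the proof of Lemma \ref{Lem:semi:state:err}, run backward in time. I would split the errors as
\[
\bm\mu-\bm\mu_h=(\bm\mu-\mathbf P_h\bm\mu)+(\mathbf P_h\bm\mu-\bm\mu_h)=:\bm\zeta^{\bm\mu}_h+\bm\eta^{\bm\mu}_h,\qquad \kappa-\kappa_h=(\kappa-P_h\kappa)+(P_h\kappa-\kappa_h)=:\zeta^\kappa_h+\eta^\kappa_h .
\]
Then I would subtract \eqref{semi_discrete:adjoint} from the pressure formulation \eqref{Regualrit:equa:reform:adjoint} and from \eqref{adjoint:eq:d:311}, the latter tested with discrete functions. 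Next I take $(\mathbf w_h,\zeta_h)=(\bm\eta^{\bm\mu}_h,\eta^\kappa_h)$, note that $(\partial_t\bm\zeta^{\bm\mu}_h,\bm\eta^{\bm\mu}_h)=0$ and $(\partial_t\zeta^\kappa_h,\eta^\kappa_h)=0$, and integrate from $t$ to $T$. Since $\bm\eta^{\bm\mu}_h(T)=0$ and $\eta^\kappa_h(T)=0$, no initial error enters. This gives an energy identity with
\[
\tfrac12\|\bm\eta^{\bm\mu}_h(t)\|^2+\tfrac12\|\eta^\kappa_h(t)\|^2+\nu\!\int_t^T\!\|\nabla\bm\eta^{\bm\mu}_h\|^2+\chi\!\int_t^T\!\|\nabla\eta^\kappa_h\|^2+\eta\gamma\!\int_t^T\!\|\eta^\kappa_h\|_\Gamma^2
\]
on the left-hand side.

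The right-hand side contains three groups of terms.
\begin{enumerate}
\item Linear consistency terms: $\nu\mathbf a(\bm\zeta^{\bm\mu}_h,\cdot)$; the pressure term $(\lambda-I_h\lambda,\nabla\cdot\bm\eta^{\bm\mu}_h)$, which uses that $\bm\eta^{\bm\mu}_h\in\mathbb X_h$; $\chi a(\zeta^\kappa_h,\cdot)$ together with its boundary term; the coupling $\beta(\mathbf g\cdot(\bm\zeta^{\bm\mu}_h+\bm\eta^{\bm\mu}_h),\eta^\kappa_h)$; and the sources $(\mathbf y-\mathbf y_h,\bm\eta^{\bm\mu}_h)$ and $(\theta-\theta_h,\eta^\kappa_h)$. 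All of these are handled by Young's inequality. The sources are $O(h)$ by Lemma \ref{Lem:semi:state:err}.
\item Trilinear terms with the exact adjoint and the discrete state: for example $\mathbf b(\cdot,\mathbf y,\bm\mu)-\mathbf b(\cdot,\mathbf y_h,\bm\mu_h)$. I would rewrite each as a sum of differences, e.g.
\[
\mathbf b(\bm\eta^{\bm\mu}_h,\mathbf y-\mathbf y_h,\bm\mu)+\mathbf b(\bm\eta^{\bm\mu}_h,\mathbf y_h,\bm\zeta^{\bm\mu}_h+\bm\eta^{\bm\mu}_h),
\]
and analogously for $\mathbf b(\mathbf y,\cdot,\bm\mu)$, $b(\cdot,\theta,\kappa)$ and $b(\mathbf y,\cdot,\kappa)$.
\item The pieces that multiply the state error $\mathbf y-\mathbf y_h$ or $\theta-\theta_h$. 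These are bounded using \eqref{b_estimate}, the regularity $\bm\mu\in C(\bar I;\mathbb X)$ and $\kappa\in C(\bar I;H^1(\Omega))$ from \eqref{regular_MuKappa:ineq}, and the $L^2(I;H^1)$ state error bound $\widetilde C_1h$. This yields contributions $C\widetilde C_1^2h^2$ plus absorbable $\epsilon\|\nabla\bm\eta\|^2$.
\end{enumerate}

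The pieces quadratic in $\bm\eta$ against the discrete state, such as $\mathbf b(\bm\eta^{\bm\mu}_h,\mathbf y_h,\bm\eta^{\bm\mu}_h)$ and $b(\bm\eta^{\bm\mu}_h,\theta_h,\eta^\kappa_h)$, are the main obstacle. They are not skew-symmetric, so no cancellation occurs. I would bound them with \eqref{pre:estimate}, which gives for instance
\[
C\|\bm\eta\|\,\|\nabla\bm\eta\|\,\|\nabla\mathbf y_h\|\le\epsilon\|\nabla\bm\eta\|^2+C_\epsilon\|\nabla\mathbf y_h\|^2\|\bm\eta\|^2 .
\]
The Gronwall weight is then $\|\nabla\mathbf y_h\|^2+\|\theta_h\|_{H^1}^2$, and it suffices that this lies in $L^1(I)$. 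That follows from the uniform bounds \eqref{semi:Y_Theta:stab:ineq2} and \eqref{semi:Y_Theta:stab:bu:ineq8}, which hold because $u$ has the regularity of Lemma \ref{Lem:semi:state:err}. The mixed term $b(\bm\eta^{\bm\mu}_h,\theta_h,\eta^\kappa_h)$ needs care because $\eta^\kappa_h$ is not zero on $\Gamma$. For it I use \eqref{Interpolation:conti:1} with the full $H^1$ norm, controlled by $\chi\|\nabla\eta^\kappa_h\|^2+\eta\gamma\|\eta^\kappa_h\|_\Gamma^2$, and the weight $\|\theta_h\|_{L^\infty(I;H^1)}$. Alternatively one can use $\|\theta_h\|_{L^4(I;W^{1,4})}$ from Lemma \ref{semi_discreYTheta:High_regular:stab}.

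The projection terms involving $\bm\zeta^{\bm\mu}_h$ and $\zeta^\kappa_h$ are bounded using the $H^1$-stability of $\mathbf P_h$ and $P_h$ together with \eqref{b_estimate}. They give $C\big(\|\nabla\bm\zeta^{\bm\mu}_h\|^2_{L^2(I;\mathbb L^2)}+\|\zeta^\kappa_h\|^2_{L^2(I;H^1)}\big)$. Applying Gronwall's inequality backward from $T$ then yields
\[
\|\bm\eta^{\bm\mu}_h\|_{L^\infty(I;\mathbb L^2)}+\|\bm\eta^{\bm\mu}_h\|_{L^2(I;\mathbb H^1)}+\|\eta^\kappa_h\|_{L^\infty(I;L^2)}+\|\eta^\kappa_h\|_{L^2(I;H^1)}\le Ch .
\]
Here I use the $\mathbb V$-regularity of $(\bm\mu,\kappa)$, assumption (A2) for $\lambda$, and the projection estimates of Appendix \ref{Appendix:A} (Lemma \ref{Lem:L2:err}), which give $\|\bm\zeta\|_{L^\infty L^2}+\|\bm\zeta\|_{L^2H^1}\le Ch$. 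The triangle inequality finishes the proof. The constant depends on $\widetilde C_1$ and on the norms in \eqref{regular_MuKappa:ineq}, which is consistent with the stated form of $\widetilde C_2$.
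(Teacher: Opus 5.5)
Your proposal is correct and follows the route the paper intends: the paper omits this proof as analogous to Lemma \ref{Lem:semi:state:err}, namely the $L^2$-projection splitting, testing with the discrete error, the pressure handled via $I_h\lambda$, and Gronwall, here run backward from $t=T$. One small point is that $\bm\eta^{\bm\mu}_h$ is only discretely divergence-free, so \eqref{pre:estimate} does not apply verbatim to $\mathbf b(\bm\eta^{\bm\mu}_h,\mathbf y_h,\bm\eta^{\bm\mu}_h)$ or $b(\bm\eta^{\bm\mu}_h,\theta_h,\eta^\kappa_h)$; bounding both halves of the skew-symmetrized forms gives a Gronwall weight like $\|\mathbf y_h\|_{H^1(\Omega)}^4+\|\theta_h\|_{H^1(\Omega)}^4$, which is still integrable by the $L^\infty(I;H^1)$ bounds you cite.
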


\begin{lemma}\label{semi_discreMukappa:stab}
Given $u\in H^{\frac{1}{4}}(I;L^2(\Gamma))\cap L^2(I; H^{\frac{1}{2}}(\Gamma))$ and let  \((\bm{\mu}_h, \kappa_h)\) be the solutions of  problem \eqref{semi_discrete:adjoint}. Then the following estimates hold:
\begin{align}
&\|\bm \mu_h\|_{C(\bar I;\mathbb L^2(\Omega))}+\|\kappa_h\|_{C(\bar I; L^2(\Omega))}+\sqrt{\nu}\|\bm \mu_h\|_{L^2(I;\mathbb H^1(\Omega))}+\sqrt{\min\{\chi,\eta\gamma\}}\| \kappa_h\|_{L^2(I; H^1(\Omega))}\nonumber\\
&\leq C\big({\mbox{$ \| \mathbf y_{0h}\|, \|\theta_0\|, \|u\|_{L^2(I;L^2(\Gamma))}, \|\mathbf h\|_{L^2(I;\mathbb L^2(\Omega))}, \|f\|_{L^2(I;L^2(\Omega))},\|\mathbf y_d\|_{L^2(I;\mathbb L^2(\Omega))}, \|\theta_d\|_{L^2(I; L^2(\Omega))}$}}\big),\label{semi:Y_Theta:stab:ineq6}\\
&\sqrt{\min\{\chi,\eta\gamma\}}\|\kappa_h\|_{L^\infty(I; H^1(\Omega))}+\|\Delta_h \kappa_h\|_{L^2(I;L^2(\Omega))}+\|\bm \mu_h\|_{L^\infty(I;\mathbb H^1(\Omega))}+\sqrt{\nu}\|\mathbf A_h \bm \mu_h\|_{L^2(I;\mathbb L^2(\Omega))}\leq \widetilde{C}_2,\nonumber\\
&\|\partial_t\kappa_h\|_{L^2(I; L^2(\Omega))}+\|\partial_t\bm\mu_h\|_{L^2(I;\mathbb L^2(\Omega))}+\|\bm \mu_h\|_{H^\frac{1}{2}(I;\mathbb H^1(\Omega))}+\|\kappa_h\|_{H^\frac{1}{2}(I; H^1(\Omega))}\leq \widetilde{C}_2.\nonumber
\end{align}
\end{lemma}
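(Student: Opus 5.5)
The plan mirrors the proofs of Lemma \ref{semi_discreYTheta:stab} and Lemma \ref{semi_discreYTheta:High_regular:stab}, now applied to the backward linear system \eqref{semi_discrete:adjoint} with coefficients $(\mathbf y_h,\theta_h)$. Those coefficients are already controlled by \eqref{semi:Y_Theta:stab:ineq1}, \eqref{semi:Y_Theta:stab:ineq2} and \eqref{semi:Y_Theta:stab:bu:ineq8}. Throughout I reverse time via $s=T-t$, so every Gronwall argument runs from the terminal condition $\bm\mu_h(T)=\mathbf 0$, $\kappa_h(T)=0$.

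\emph{Basic energy bound \eqref{semi:Y_Theta:stab:ineq6}.} Test with $(\mathbf w_h,\zeta_h)=(\bm\mu_h,\kappa_h)$ and integrate over $(t,T)$. By the antisymmetry of $\mathbf b$ and $b$, the terms $\mathbf b(\mathbf y_h,\bm\mu_h,\bm\mu_h)$ and $b(\mathbf y_h,\kappa_h,\kappa_h)$ vanish. The term $\mathbf b(\bm\mu_h,\mathbf y_h,\bm\mu_h)$ is bounded by \eqref{pre:estimate} as $\frac\nu4\|\nabla\bm\mu_h\|^2+C\|\nabla\mathbf y_h\|^2\|\bm\mu_h\|^2$. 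The coupling term $b(\bm\mu_h,\theta_h,\kappa_h)$ is bounded by $C\|\bm\mu_h\|^{1/2}\|\nabla\bm\mu_h\|^{1/2}\|\nabla\theta_h\|\,\|\kappa_h\|^{1/2}\|\kappa_h\|_{H^1}^{1/2}$. After Young's inequality this leaves the Gronwall weight $\|\nabla\theta_h\|^2+\|\nabla\mathbf y_h\|^2$, which lies in $L^1(I)$ by \eqref{semi:Y_Theta:stab:ineq1}. The $\kappa_h$-coercivity uses $\chi\|\nabla\kappa_h\|^2+\eta\gamma\|\kappa_h\|_\Gamma^2\ge c\min\{\chi,\eta\gamma\}\|\kappa_h\|_{H^1}^2$. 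The sources $\mathbf y_h-\mathbf y_d$, $\theta_h-\theta_d$ and $\beta\mathbf g\cdot\bm\mu_h$ are handled by Cauchy--Schwarz. A backward Gronwall argument then gives \eqref{semi:Y_Theta:stab:ineq6}.

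\emph{Higher-order bounds.} Test the $\bm\mu_h$-equation with $-\mathbf A_h\bm\mu_h$ and the $\kappa_h$-equation with $-\Delta_h\kappa_h$, using definition \eqref{definite_Delta_h}. This produces $\frac12\frac{d}{dt}$ of $\|\nabla\bm\mu_h\|^2$ and of $\chi\|\nabla\kappa_h\|^2+\eta\gamma\|\kappa_h\|_\Gamma^2$ with the correct sign in backward time, together with the dissipation terms $\nu\|\mathbf A_h\bm\mu_h\|^2$ and $\|\Delta_h\kappa_h\|^2$. I bound the convective terms with \eqref{Interpolation:conti:1}, \eqref{Interpolation:A:1} and \eqref{DeltaTheta_ineq1}:
\begin{align*}
|b(\mathbf y_h,\Delta_h\kappa_h,\kappa_h)|&=|b(\mathbf y_h,\kappa_h,\Delta_h\kappa_h)|\le C\|\mathbf y_h\|_{\mathbb L^4}\|\nabla\kappa_h\|_{L^4}\|\Delta_h\kappa_h\|\\
&\le \tfrac14\|\Delta_h\kappa_h\|^2+C\|\mathbf y_h\|_{L^\infty(\mathbb H^1)}^4\|\kappa_h\|_{H^1}^2 .
\end{align*}
The antisymmetric form produces one extra term, which I estimate the same way. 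The terms $\mathbf b(\mathbf A_h\bm\mu_h,\mathbf y_h,\bm\mu_h)$ and $\mathbf b(\mathbf y_h,\mathbf A_h\bm\mu_h,\bm\mu_h)$ are treated analogously, using $\|\mathbf y_h\|_{L^\infty(\mathbb H^1)}$ and $\|\mathbf A_h\mathbf y_h\|_{L^2(\mathbb L^2)}$ from \eqref{semi:Y_Theta:stab:ineq2}.

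\emph{Main obstacle: the coupling term.} The delicate term is $b(\mathbf A_h\bm\mu_h,\theta_h,\kappa_h)$. I bound it by $\|\mathbf A_h\bm\mu_h\|\,\|\nabla\theta_h\|_{L^4}\|\kappa_h\|_{L^4}$ for its first half and by $\|\mathbf A_h\bm\mu_h\|\,\|\theta_h\|_{L^4}\|\nabla\kappa_h\|_{L^4}$ for its second half. The second half then needs the $\Delta_h\kappa_h$ dissipation through \eqref{DeltaTheta_ineq1}. The first half produces the weight $\|\theta_h\|_{W^{1,4}}^2\|\kappa_h\|_{H^1}^2$, which is integrable because $\theta_h\in L^4(I;W^{1,4}(\Omega))$ by \eqref{semi:Y_Theta:stab:bu:ineq8}; this is exactly where that lemma is needed. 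A backward Gronwall argument then yields the second estimate.

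\emph{Time-derivative and fractional bounds.} For $\partial_t\bm\mu_h$ and $\partial_t\kappa_h$, I test with $\partial_t\bm\mu_h$ and $\partial_t\kappa_h$, or equivalently read the time derivatives off the equations. Each term is then bounded in $L^2(I;L^2)$ using the previous step, as in the proof of \eqref{semi:Y_Theta:stab:ineq3}. The $H^{1/2}(I;H^1)$ bounds follow by interpolation between $L^2(I;H^2_h)$ and $H^1(I;L^2)$, measured in discrete norms via $\mathbf A_h$ and $\Delta_h$. Alternatively, I would argue as in Lemma \ref{semi_discreYTheta:High_regular:stab}, comparing with $\mathbf P_h\bm\mu$ and $P_h\kappa$ and using the inverse estimate together with Lemma \ref{Lem:semi:adjoint:err} and Theorem \ref{lem:conL2:Regualrit:YTheta}.
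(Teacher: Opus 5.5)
Your proposal is correct and is essentially the argument the paper has in mind when it omits this proof as analogous to Lemmas \ref{semi_discreYTheta:stab} and \ref{semi_discreYTheta:High_regular:stab}. The steps are backward-in-time energy estimates tested first with $(\bm\mu_h,\kappa_h)$ and then with $(-\mathbf A_h\bm\mu_h,-\Delta_h\kappa_h)$, with Gronwall weights controlled by the semi-discrete state bounds (including $\theta_h\in L^4(I;W^{1,4}(\Omega))$). The time-regularity bounds then follow either by direct testing or by your alternatives: discrete interpolation, or the paper's comparison-with-projection plus inverse-estimate argument.

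One detail should be made explicit. Since $\mathbf y_h$ is only discretely divergence-free, the extra skew-symmetric half $c(\mathbf y_h,\Delta_h\kappa_h,\kappa_h)$ cannot be estimated ``the same way'': it must be integrated by parts, and the same applies to its analogue with $\mathbf A_h\bm\mu_h$. This leaves the term $\tfrac12((\mathrm{div}\,\mathbf y_h)\kappa_h,\Delta_h\kappa_h)$, which is controlled via \eqref{Interpolation:A:1} and $\|\mathbf A_h\mathbf y_h\|_{L^2(I;\mathbb L^2(\Omega))}$.
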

\begin{proof}
The proof is omitted because it is analogous to that of Lemmas \ref{semi_discreYTheta:stab} and \ref{semi_discreYTheta:High_regular:stab}.
\end{proof}

\begin{remark}
Given \( u \in L^2(I; L^2(\Gamma)) \), using the inequalities \eqref{semi:Y_Theta:stab:ineq1} and \eqref{semi:Y_Theta:stab:ineq6} along with the Picard-Lindel\"of theorem, it is well-known that the semi-discrete equations \eqref{semi_discrete:state} and \eqref{semi_discrete:adjoint} have at least one solution. Uniqueness can be obtained through standard methods.
\end{remark}

Using a suitable duality argument, we can deduce improved error estimates for the state and adjoint variables  under the norm $L^2(I;\mathbb L^2(\Omega))$ (or $L^2(I;L^2(\Omega))$).
\begin{lemma}\label{semi_err:YTheta:L2} 
Let $(\mathbf y, \theta)\in\mathbb{V}$ be the solution to \eqref{Exist:weak:solution:a}-\eqref{Exist:weak:solution:c} with $u\in  H^{\frac{1}{4}}(I;L^2(\Gamma))\cap L^2(I; H^{\frac{1}{2}}(\Gamma)) $, and \((\mathbf y_h,\theta_h)\in L^2({I};\mathbb X_h)\times L^2({I};V_h)\) be the solution to \eqref{semi_discrete:state}. Then there exists a constant $C>0$ such that
\begin{align}
&\|\mathbf y-\mathbf y_h\|_{L^2(I;\mathbb L^2(\Omega))}+\|\theta-\theta_h\|_{L^2(I; L^2(\Omega))}\leq C h^2.
\end{align}
\end{lemma}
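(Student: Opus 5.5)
We prove the $L^2(I;L^2)$ estimate by a parabolic Aubin--Nitsche duality argument built on the linearized Boussinesq operator.

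\textbf{Dual problem.} Set $\mathbf e_{\mathbf y}:=\mathbf y-\mathbf y_h$ and $e_\theta:=\theta-\theta_h$. Let $(\bm\varphi,\phi)$ solve the backward linear system with the structure of \eqref{adjoint:weak:solution}, linearized at $(\mathbf y,\theta)$, with right-hand sides $(\mathbf e_{\mathbf y},e_\theta)$ and zero terminal data. Since $\mathbf e_{\mathbf y}$ is not divergence free, we put $\mathbf P\mathbf e_{\mathbf y}$ (or $\mathbf e_{\mathbf y}$ itself) in the pressure formulation \eqref{Regualrit:equa:reform:adjoint}, with a dual pressure $\lambda$. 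Theorem \ref{lem:conL2:Regualrit:YTheta} then gives
\[
\|\bm\varphi\|_{\mathbb V(I)}+\|\lambda\|_{L^2(I;H^1)}+\|\phi\|_{V(I)}\le C\bigl(\|\mathbf e_{\mathbf y}\|_{L^2(I;\mathbb L^2)}+\|e_\theta\|_{L^2(I;L^2)}\bigr).
\]
The constant depends only on $\|\mathbf y\|_{\mathbb V(I)}$ and $\|\theta\|_{V(I)}$; the latter is finite because $u\in H^{1/4}(I;L^2(\Gamma))\cap L^2(I;H^{1/2}(\Gamma))$.

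\textbf{Error representation.} Test the dual system with $(\mathbf e_{\mathbf y},e_\theta)$ and integrate by parts in time. This uses $\mathbf e_{\mathbf y}(0)=\mathbf y_0-\mathbf P_h\mathbf y_0$, $e_\theta(0)=\theta_0-P_h\theta_0$, and zero terminal data. We obtain $\|\mathbf e_{\mathbf y}\|^2_{L^2L^2}+\|e_\theta\|^2_{L^2L^2}$ as a sum of four groups of terms:
\begin{itemize}
\item the initial terms $(\mathbf e_{\mathbf y}(0),\bm\varphi(0))+(e_\theta(0),\phi(0))$;
\item the linear error equation tested with $(\bm\varphi,\phi)$;
\item pressure terms;
\item a nonlinear remainder.
\end{itemize}
The remainder comes from the difference between $\mathbf b(\mathbf y,\mathbf y,\cdot)-\mathbf b(\mathbf y_h,\mathbf y_h,\cdot)$ and its linearization. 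It is quadratic: $\mathbf b(\mathbf e_{\mathbf y},\mathbf e_{\mathbf y},\bm\varphi)$ and $b(\mathbf e_{\mathbf y},e_\theta,\phi)$.

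Galerkin orthogonality of \eqref{semi_discrete:state} against \eqref{Regualrit:equa:reform}, \eqref{Exist:weak:solution:c} lets us subtract discrete test functions. We use the Stokes--Ritz projection $\mathbf R^S_h(\bm\varphi,\lambda)$, which lies in $\mathbb X_h$, and the Ritz projection $R_h\phi$. We also insert $I_hp$ and $R^{S,p}_h$ for the pressures.

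\textbf{Bounding the terms.}
\begin{itemize}
\item \emph{Initial terms.} Because $\mathbf P_h$ and $P_h$ are $L^2$ projections, $(\mathbf e_{\mathbf y}(0),\bm\varphi(0))=(\mathbf e_{\mathbf y}(0),\bm\varphi(0)-\mathbf P_h\bm\varphi(0))\le Ch\cdot Ch\|\bm\varphi(0)\|_{H^1}$. The temperature term is handled the same way.
\item \emph{Time-derivative terms.} These are $(\partial_t\mathbf e_{\mathbf y},\bm\varphi-\mathbf R_h^S\bm\varphi)$. We integrate by parts in time to move the derivative onto $\bm\varphi-\mathbf R^S_h\bm\varphi$, which is $O(h^2)$ in $L^2L^2$ with $\partial_t\bm\varphi\in L^2L^2$. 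The bound is then $\|\mathbf e_{\mathbf y}\|_{L^2L^2}\cdot Ch\|\bm\varphi\|_{\mathbb V}$ or better. Alternatively, we bound it directly by $\|\partial_t\mathbf e_{\mathbf y}\|_{L^2L^2}$ (uniformly bounded by Lemma \ref{semi_discreYTheta:stab}) times $Ch^2$.
\item \emph{Elliptic terms.} By definition of the projections they reduce to $\nu\mathbf a(\mathbf e_{\mathbf y},\bm\varphi-\mathbf R^S_h\bm\varphi)$ and pressure mismatch terms. Each is bounded by $\|\mathbf e_{\mathbf y}\|_{L^2H^1}\cdot Ch\|\bm\varphi\|_{L^2H^2}+Ch\|p\|_{L^2H^1}\cdot Ch\|\bm\varphi\|_{L^2H^2}$. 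Lemma \ref{Lem:semi:state:err} gives $\|\mathbf e_{\mathbf y}\|_{L^2H^1}\le Ch$, so these terms are $O(h^2)$. The Robin boundary terms are absorbed into $R_h$ by \eqref{definite_Rhhh}.
\item \emph{Convective terms evaluated at $\bm\varphi-\mathbf R_h^S\bm\varphi$.} These use \eqref{b_estimate}, the uniform bounds of Lemmas \ref{semi_discreYTheta:stab}--\ref{semi_discreYTheta:High_regular:stab}, and $\|\bm\varphi-\mathbf R^S_h\bm\varphi\|_{L^2H^1}\le Ch$. The product $h\cdot h$ again gives $O(h^2)$.
\end{itemize}

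\textbf{Main obstacle: the quadratic remainder.} We bound
\[
|\mathbf b(\mathbf e_{\mathbf y},\mathbf e_{\mathbf y},\bm\varphi)|\le\|\mathbf e_{\mathbf y}\|_{L^4}\|\nabla\mathbf e_{\mathbf y}\|\|\bm\varphi\|_{L^4},
\]
then integrate in time using $\bm\varphi\in L^\infty(I;\mathbb H^1)$. This gives $C\|\mathbf e_{\mathbf y}\|^2_{L^2H^1}\|\bm\varphi\|_{C(\bar I;\mathbb X)}\le Ch^2\|\bm\varphi\|_{\mathbb V}$. The term $b(\mathbf e_{\mathbf y},e_\theta,\phi)$ is handled identically, using $\phi\in C(\bar I;H^1)$.

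Collecting all contributions gives
\[
\|\mathbf e_{\mathbf y}\|^2+\|e_\theta\|^2\le Ch^2\bigl(\|\mathbf e_{\mathbf y}\|+\|e_\theta\|\bigr)+Ch^4,
\]
with norms in $L^2(I;L^2)$. Dividing through yields the claimed $O(h^2)$ bound.

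The delicate points are twofold. First, the time-derivative terms must be checked carefully, since $\mathbf R^S_h$ commutes with $\partial_t$ while $\mathbf P_h$ does not. Second, we must verify that the dual regularity from Theorem \ref{lem:conL2:Regualrit:YTheta} holds for the non-solenoidal right-hand side $\mathbf e_{\mathbf y}$; we would first try replacing it by its Leray projection.
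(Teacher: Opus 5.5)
Your argument is correct, but it is organized differently from the paper's proof.

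\textbf{Your route.} You pose a \emph{continuous} dual problem, linearized at the exact solution $(\mathbf y,\theta)$, with data $(\mathbf e_{\mathbf y},e_\theta)$. Its regularity then comes directly from Theorem \ref{lem:conL2:Regualrit:YTheta}. The price is the quadratic remainder $\mathbf b(\mathbf e_{\mathbf y},\mathbf e_{\mathbf y},\bm\varphi)+b(\mathbf e_{\mathbf y},e_\theta,\phi)$. You control it by the square of the $O(h)$ energy error from Lemma \ref{Lem:semi:state:err}. You must also treat the initial, primal/dual pressure and time-derivative terms explicitly.

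\textbf{The paper's route.} The paper splits $\mathbf y-\mathbf y_h=\bm\zeta^{\mathbf y}_h+\bm\eta^{\mathbf y}_h$ using the $L^2$ projection $\mathbf P_h$. It poses a \emph{semi-discrete} dual problem in $\mathbb X_h\times V_h$ with mixed coefficients: $\mathbf b(\mathbf w_h,\mathbf y_h,\bm\varphi_h)+\mathbf b(\mathbf y,\mathbf w_h,\bm\varphi_h)$, $b(\mathbf w_h,\theta_h,z_h)$ and $b(\mathbf y,\zeta_h,z_h)$. These are chosen so that $\mathbf b(\mathbf y,\mathbf y,\cdot)-\mathbf b(\mathbf y_h,\mathbf y_h,\cdot)=\mathbf b(\mathbf e_{\mathbf y},\mathbf y_h,\cdot)+\mathbf b(\mathbf y,\mathbf e_{\mathbf y},\cdot)$ splits exactly, with no remainder. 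The time-derivative and initial terms then vanish identically, because $\mathbf P_h$ is $L^2$-orthogonal and $\bm\eta^{\mathbf y}_h(0)=0$. Every remaining term is an $L^2(I;L^2)$ projection error ($\bm\zeta^{\mathbf y}_h$, $\mathbf y-\mathbf R^S_h(\mathbf y,p)$, $\theta-R_h\theta$) times discrete $H^2$-type norms $\|\mathbf A_h\bm\varphi_h\|$, $\|\Delta_h z_h\|$, handled with the discrete interpolation inequalities.

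\textbf{Trade-off.} In the paper, the $h^2$ comes from approximation errors alone rather than from squaring the energy rate. However, it relies on discrete maximal regularity of the discrete dual, which is only asserted by analogy with Lemma \ref{semi_discreYTheta:stab}. Your route uses only continuous regularity that is already proved. Its cost is dependence on the energy estimate and on the bounds for $\partial_t\mathbf y_h$ and $\partial_t\theta_h$ in $L^2(I;L^2)$ (Lemmas \ref{semi_discreYTheta:stab} and \ref{semi_discreYTheta:High_regular:stab}).

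\textbf{Corrections to your "delicate points."}
\begin{enumerate}
\item Your claim that $\mathbf P_h$ does not commute with $\partial_t$ is wrong. It is a fixed linear operator, and this is precisely why the paper's time-derivative term disappears.
\item Your first option for the time-derivative term does not work. Integrating by parts onto $\bm\varphi-\mathbf R^S_h(\bm\varphi,\lambda)$ requires $\partial_t\lambda$, which is not controlled, and $\partial_t\bm\varphi\in L^2(I;\mathbb H)$ gives no smallness. Use your second option, $\|\partial_t\mathbf e_{\mathbf y}\|_{L^2(I;\mathbb L^2(\Omega))}\,Ch^2\bigl(\|\bm\varphi\|_{L^2(I;\mathbb H^2(\Omega))}+\|\lambda\|_{L^2(I;H^1(\Omega))}\bigr)$, and the analogue with $R_h\phi$ for the temperature.
\item For the non-solenoidal data, put $\mathbf e_{\mathbf y}$ itself into the pressure formulation. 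Its Leray projection would only give $\|\mathbf P\mathbf e_{\mathbf y}\|^2$ on the left-hand side. The gradient part of $\mathbf e_{\mathbf y}$ is absorbed into $\lambda$ with the same $L^2(I;H^1)$ bound. The extra term is $(\lambda,\nabla\cdot\mathbf e_{\mathbf y})=(\lambda-I_h\lambda,\nabla\cdot\mathbf e_{\mathbf y})$, which is $O(h^2)\|\lambda\|_{L^2(I;H^1(\Omega))}$.
\end{enumerate}
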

\begin{proof} Using \eqref{decomp:YTheta_1} and  Lemma \ref{Lem:L2:err}, it follows that
\begin{align*}
\|\mathbf y-\mathbf y_h\|_{L^2(I;\mathbb L^2(\Omega))}+\|\theta-\theta_h\|_{L^2(I;L^2(\Omega))}\leq C h^2+\|\bm \eta^{\mathbf y}_h\|_{L^2(I;\mathbb L^2(\Omega))}+\|\eta^\theta_h\|_{L^2(I; L^2(\Omega))}.
\end{align*}
Now, we can estimate  $\|\bm \eta^{\mathbf y}_h\|_{L^2(I;\mathbb L^2(\Omega))} + \|\eta^\theta_h\|_{L^2(I; L^2(\Omega))}$ using a duality argument. Define the following dual problem: Find $(\bm  \varphi_{h},z_h)\in L^2(I;\mathbb X_h)\times L^2(I;V_h)$, such that for any $(\mathbf w_h,\zeta_h)\in \mathbb X_h\times V_h$ and a.a. $t\in (0,T)$,
\begin{equation}\label{dual_adjoint:semi_discre}
\left\{\begin{aligned}
&( -\partial_t \bm  \varphi_{h},\mathbf w_h)+\nu \mathbf a({\bm \varphi_h},\mathbf w_h)+\mathbf b(\mathbf w_h ,{\mathbf y}_h,{\bm \varphi}_h)+\mathbf b(\mathbf y,\mathbf w_h,\bm \varphi_h)+b(\mathbf w_h,\theta_h,z_h)=(\bm \eta_h^{\mathbf y},\mathbf w_h),\\
&(-\partial_t {z}_{h}, \zeta_h)+\chi a({z}_h, \zeta_h)+b(\mathbf y, \zeta_h,{z}_h)+\beta(\mathbf g\cdot{\bm \varphi}_h,\zeta_h)+\eta \gamma(z_h, \zeta_h)_\Gamma=(\eta_h^{\theta},\zeta_h),\\
&\bm \varphi_h(T)=\mathbf 0,\quad z_h(T)=0.
\end{aligned}\right.
\end{equation}

The above equation is similar to the semi-discrete adjoint equation \eqref{semi_discrete:adjoint}. Therefore, similar to the proof of Lemma \ref{semi_discreYTheta:stab}, we know that \eqref{dual_adjoint:semi_discre} has a unique solution $(\bm \varphi_h, z_h)\in L^\infty({I};\mathbb X_h)\times L^\infty({I};V_h)$, and the following stability estimates are valid:
\begin{align}\label{dual_adjoint_est:stab}
&\|\bm \varphi_h\|_{L^\infty(I;\mathbb H^1(\Omega))}+\|\mathbf A_h \bm \varphi_h\|_{L^2(I;\mathbb L^2(\Omega))}+\|z_h\|_{L^\infty(I; H^1(\Omega))}+\| \Delta_h z_h\|_{L^2(I; L^2(\Omega))}\nonumber\\
&\leq C\big(\|\bm \eta_h^{\mathbf y}\|_{L^2(I;\mathbb L^2(\Omega))}
+\|\eta_h^{\theta}\|_{L^2(I;L^2(\Omega))}\big).
\end{align}
Setting $(\mathbf w_h,\zeta_h)=(\bm \eta^{\mathbf y}_h,\eta_h^{\theta})$ in \eqref{dual_adjoint:semi_discre}, and integrating from $0$ to $T$, we obtain
\begin{align}\label{L2err:YTheta:1}
&\|\bm \eta^{\mathbf y}_h\|^2_{L^2(I;\mathbb L^2(\Omega))}+\|\eta^{\theta}_h\|^2_{L^2(I;L^2(\Omega))}=\int_I(-\partial_t\bm\varphi_h,\bm \eta_h^{\mathbf y})dt+\int_I(-\partial_t z_h,\eta_h^\theta)dt+\nu \int_I \mathbf a(\bm\varphi_h,\bm \eta^{\mathbf y}_h)dt\nonumber\\
&+\int_I\big[\mathbf b(\bm \eta_h^{\mathbf y},\mathbf y_h, \bm \varphi_h)+\mathbf b(\mathbf y, \bm\eta_h^{\mathbf y},\bm \varphi_h)\big]dt+\int_I\big[b(\bm \eta_h^{\mathbf y},\theta_h,z_h)+b(\mathbf y,\eta_h^\theta,z_h)\big]dt+\chi \int_I a(z_h, \eta_h^\theta)dt\nonumber\\
&+\beta\int_I(\mathbf{g}\cdot \bm \varphi_h,\eta_h^\theta)dt+\eta\gamma\int_I(z_h, \eta_h^{\theta})_\Gamma dt.
\end{align}
Subtracting \eqref{semi_discrete:state} from \eqref{Regualrit:equa:reform}, \eqref{Exist:weak:solution:c} with \((\mathbf v,\psi)=(\mathbf v_h,\psi_h)\equiv(\bm \varphi_h, z_h ) \), using the decomposition \eqref{decomp:YTheta_1}, and integrating from $0$ to $T$ yield
\begin{align}\label{L2err:YTheta:2}
&-\int_I(\bm \eta_h^{\mathbf y},\partial_t \bm \varphi_h)dt+\nu \int_I\mathbf a(\bm \eta_h^{\mathbf y},\bm \varphi_h)dt+\beta\int_I(\eta_h^\theta\mathbf{g},\bm \varphi_h)dt\nonumber=-\nu\int_I \mathbf a(\bm \zeta_h^{\mathbf y},\bm \varphi_h)dt+\int_I(p,\nabla\cdot \bm \varphi_h)dt-\beta\int_I(\zeta_h^\theta\mathbf{g}, \bm\varphi_h)dt\nonumber\\
&-\int_I\mathbf b(\bm \eta_h^{\mathbf y}, \mathbf y_h,\bm \varphi_h)dt-\int_I\mathbf b(\bm \zeta_h^{\mathbf y},\mathbf y_h, \bm \varphi_h)dt-\int_I\mathbf b(\mathbf y, \bm \eta_h^{\mathbf y}, \bm \varphi_h)dt-\int_I\mathbf b(\mathbf y, \bm \zeta_h^{\mathbf y}, \bm \varphi_h)dt
\end{align}
and 
\begin{align}\label{L2err:YTheta:3}
-\int_I(\eta_h^\theta, \partial_t z_h)dt &+\chi\int_Ia(\eta_h^\theta,z_h)dt+\eta\gamma\int_I(\eta^\theta_h,z_h)_\Gamma dt=-\chi\int_I a(\zeta^\theta_h,z_h)dt -\eta\gamma\int_I(\zeta_h^\theta,z_h)_\Gamma dt\nonumber\\
&-\int_I b(\bm \eta_h^{\mathbf y}, \theta_h, z_h) dt-\int_I b(\bm \zeta_h^{\mathbf y},\theta_h, z_h)dt -\int_Ib(\mathbf y, \eta^\theta_h, z_h)dt-\int_I b(\mathbf y, \zeta_h^\theta,z_h)dt.
\end{align}
Combining now \eqref{L2err:YTheta:1}, \eqref{L2err:YTheta:2} and \eqref{L2err:YTheta:3}, we obtain
\begin{align}\label{L2err:YTheta:4}
\|\bm \eta^{\mathbf y}_h\|^2_{L^2(I;\mathbb L^2(\Omega))}&+\|\eta^{\theta}_h\|^2_{L^2(I;L^2(\Omega))}= \int_I  \big[-\nu \mathbf a(\bm \zeta_h^{\mathbf y},\bm \varphi_h)+(p,\nabla\cdot \bm \varphi_h)\big]dt\nonumber\\
&-\int_I  \big[\chi a(\zeta^\theta_h,z_h)+\eta\gamma(\zeta_h^\theta,z_h)_\Gamma\big] dt-\int_I\big[\mathbf b(\bm \zeta_h^{\mathbf y},\mathbf y_h,\bm \varphi_h)+\mathbf b(\mathbf y,\bm \zeta_h^{\mathbf y},\bm \varphi_h)]dt\nonumber\\
&-\int_I\big[ b(\bm \zeta_h^{\mathbf y},\theta_h, z_h)+b(\mathbf y,\zeta_h^\theta,z_h)\big]dt-\beta\int_I(\zeta_h^\theta\mathbf{g}, \bm\varphi_h)dt=J_1+J_2+J_3+J_4+J_5.
\end{align}
We can argue using the projection $\mathbf R_h^S$ defined in \eqref{StokePro} and the definition of the discrete Stokes operator $\mathbf A_h$ to obtain
\begin{align}\label{L2err:YTheta:Bu:4}
J_1&=\int_I\big[-\nu\mathbf a(\mathbf R_h^S(\mathbf y,p),\bm \varphi_h)+(R_h^{S,p}({\mathbf y},p),\nabla\cdot \bm \varphi_h)+\nu\mathbf  a(\mathbf P_h\mathbf y, \bm \varphi_h)\big]dt=-\nu\int_I\mathbf a\big(\mathbf R_h^S(\mathbf y,p)-\mathbf P_h\mathbf y,\bm\varphi_h\big)dt\nonumber\\
&\leq C\nu\big(\| \bm \zeta_h^{\mathbf y}\|_{L^2(I;\mathbb L^2(\Omega))}+\|\mathbf y- \mathbf R_h^S(\mathbf y, p)\|_{L^2(I;L^2(\Omega))}\big)\,\|\mathbf A_h\bm \varphi_h\|_{L^2(I;\mathbb L^2(\Omega))},
\end{align}
where we have used the fact that $\bm \varphi_h$ and $\bm \zeta_h^{\mathbf y} $ are discretely divergence-free so that $\mathbf R_h^S(\bm \zeta_h^{\mathbf y},p)\in \mathbb X_h$.  To treat the term $J_2$, we use the projection $R_h$ defined in \eqref{definite_Rhhh} to obtain the following
\begin{align*}
J_2=-\int_I  \big[\chi a(R_h\zeta^\theta_h,z_h)&+\eta\gamma(R_h\zeta_h^\theta,z_h)_\Gamma\big] dt= \int_I(R_h\zeta^\theta_h,\Delta_h z_h)dt\nonumber\\
&\leq C{\big(\|\zeta^\theta_h\|_{L^2(I;L^2(\Omega))}+\|\theta- R_h \theta\|_{L^2(I; L^2(\Omega))}\big)}\|\Delta_h z_h\|_{L^2(I;L^2(\Omega))}.
\end{align*}
Applying Lemma \ref{lem:b:xingzhi} and \eqref{Interpolation:A:1}, we have
\begin{align}\label{dual:est:10}
J_3&\leq C\|\bm \zeta_h^{\mathbf y}\|_{L^2(I;\mathbb L^2(\Omega))}\Big(\|\mathbf y_h\|^{\frac{1}{2}}_{L^2(I;\mathbb H^1(\Omega))}\|\mathbf A_h\mathbf y_h\|^\frac{1}{2}_{L^2(I;\mathbb L^2(\Omega))}\|\bm \varphi_h\|_{L^\infty(I;\mathbb H^1(\Omega))}\nonumber\\
&+\big(\|\mathbf y_h\|_{L^\infty(I;\mathbb H^1(\Omega))}+\|\mathbf y\|_{L^\infty(I; \mathbb H^1(\Omega))}\big)\|\bm \varphi_h\|^\frac{1}{2}_{L^2(I;\mathbb H^1(\Omega))}\|\mathbf A_h\bm \varphi_h\|^\frac{1}{2}_{L^2(I;\mathbb L^2(\Omega))}\Big).
\end{align}
Using the same arguments as in \eqref{dual:est:10}, we conclude from \eqref{DeltaTheta_ineq1} that
\begin{align*}
J_4\leq C\|\bm \zeta_h^{\mathbf y}\|_{L^2(I;\mathbb L^2(\Omega))}&\big(\|\nabla\theta\|_{L^2(I;L^4(\Omega))}\| z_h\|_{L^\infty(I; H^1(\Omega))}+\| z_h\|^\frac{1}{2}_{L^2(I; H^1(\Omega))}\|\Delta_h z_h\|^\frac{1}{2}_{L^2(I;L^2(\Omega))}\|\theta_h\|_{L^\infty(I;H^1(\Omega))}\big)\nonumber\\
&+C\| \zeta_h^\theta\|_{L^2(I;L^2(\Omega))}\big(\|\mathbf y\|_{L^\infty(I;\mathbb H^1(\Omega))}\| z_h\|^\frac{1}{2}_{L^2(I; H^1(\Omega))}\|\Delta_h z_h\|^\frac{1}{2}_{L^2(I;L^2(\Omega))}\\
&+\|\mathbf y_h\|^{\frac{1}{2}}_{L^2(I;\mathbb H^1(\Omega))}\|\mathbf A_h\mathbf y_h\|^\frac{1}{2}_{L^2(I;\mathbb L^2(\Omega))}\|z_h\|_{L^\infty(I;H^1(\Omega))}\big).
\end{align*}
Applying the Cauchy-Schwarz inequality, we have
\begin{align*}
J_5\leq \beta|\mathbf{g}|\,\|\zeta_h^\theta\|_{L^2(I; L^2(\Omega))}\|\bm\varphi_h\|_{L^2(I;\mathbb L^2(\Omega))}.
\end{align*}
Combining the estimates for $J_1$, $J_2$, $J_3$, $J_4$ and $J_5$, using  \eqref{L2err:YTheta:4}, \eqref{dual_adjoint_est:stab} and Theorem \ref{semi_discreYTheta:stab} we obtain
\begin{align*}
\|\bm \eta^{\mathbf y}_h\|_{L^2(I;\mathbb L^2(\Omega))}+\|\eta^{\theta}_h\|_{L^2(I;L^2(\Omega))}\leq C&\big(\|\bm \zeta_h^{\mathbf y}\|_{L^2(I;\mathbb L^2(\Omega))}+\| \zeta_h^{\theta}\|_{L^2(I; L^2(\Omega))}+\|\theta- R_h \theta\|_{L^2(I; L^2(\Omega))}\\
&+ \|\mathbf y- \mathbf R_h^S(\mathbf y, p)\|_{L^2(I;L^2(\Omega))}\big).
\end{align*}
Finally,  applying Lemmas \ref{Lem:stokes:err} and \ref{Lem:L2:err},  the proof is complete.
\end{proof}

\begin{lemma}\label{semi_err:MuKappa:L2} Under the assumptions of Lemma \ref{Lem:semi:adjoint:err} , the following estimate holds:
\begin{align}\label{semi_err:MuKappa:L2:ineq}
&\|\bm \mu-\bm \mu_h\|_{L^2(I;\mathbb L^2(\Omega))}+\|\kappa-\kappa_h\|_{L^2(I; L^2(\Omega))}\leq C h^2.
\end{align}
\end{lemma}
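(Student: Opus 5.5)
We mirror the duality argument of Lemma \ref{semi_err:YTheta:L2}, with time running backward and an extra source from the state error. Decompose
\[
\bm\mu-\bm\mu_h=(\bm\mu-\mathbf P_h\bm\mu)+(\mathbf P_h\bm\mu-\bm\mu_h)=:\bm\zeta^{\bm\mu}_h+\bm\eta^{\bm\mu}_h,\qquad
\kappa-\kappa_h=(\kappa-P_h\kappa)+(P_h\kappa-\kappa_h)=:\zeta^\kappa_h+\eta^\kappa_h .
\]
By Lemma \ref{Lem:L2:err} and $(\bm\mu,\kappa)\in\mathbb V$ (Theorem \ref{lem:conL2:Regualrit:YTheta}), the projection parts are $O(h^2)$ in $L^2(I;L^2(\Omega))$. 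It therefore suffices to bound $\|\bm\eta^{\bm\mu}_h\|_{L^2(I;\mathbb L^2(\Omega))}+\|\eta^\kappa_h\|_{L^2(I;L^2(\Omega))}$.

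We introduce a forward discrete dual problem, i.e., the formal adjoint of the linearized adjoint operator, which is a linearized Boussinesq system. Find $(\bm\psi_h,w_h)$ with $\bm\psi_h(0)=\mathbf 0$, $w_h(0)=0$ such that
\[
(\partial_t\bm\psi_h,\mathbf v_h)+\nu\mathbf a(\bm\psi_h,\mathbf v_h)+\mathbf b(\bm\psi_h,\mathbf y_h,\mathbf v_h)+\mathbf b(\mathbf y,\bm\psi_h,\mathbf v_h)+\beta(w_h\mathbf g,\mathbf v_h)=(\bm\eta^{\bm\mu}_h,\mathbf v_h),
\]
and analogously for $w_h$, with convective terms $b(\bm\psi_h,\theta_h,\cdot)+b(\mathbf y,w_h,\cdot)$, the Robin term, and right-hand side $\eta^\kappa_h$. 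The coefficients are chosen so that testing with $(\bm\eta^{\bm\mu}_h,\eta^\kappa_h)$ reproduces exactly the operator appearing in the adjoint error equation. As in Lemma \ref{semi_discreYTheta:stab} (test with $-\mathbf A_h\bm\psi_h$ and $-\Delta_h w_h$, then apply Gronwall using the stability of $\mathbf y_h,\theta_h$ from Lemmas \ref{semi_discreYTheta:stab} and \ref{semi_discreYTheta:High_regular:stab}), we obtain
\[
\|\bm\psi_h\|_{L^\infty(I;\mathbb H^1(\Omega))}+\|\mathbf A_h\bm\psi_h\|_{L^2(I;\mathbb L^2(\Omega))}+\|w_h\|_{L^\infty(I;H^1(\Omega))}+\|\Delta_hw_h\|_{L^2(I;L^2(\Omega))}\le C\big(\|\bm\eta^{\bm\mu}_h\|_{L^2(I;\mathbb L^2(\Omega))}+\|\eta^\kappa_h\|_{L^2(I;L^2(\Omega))}\big).
\]

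Next, we test the dual problem with $(\bm\eta^{\bm\mu}_h,\eta^\kappa_h)$ and integrate in time. Since $\bm\eta^{\bm\mu}_h(T)=0$ and $\bm\psi_h(0)=0$, integration by parts of the time derivative produces no boundary terms. We then subtract \eqref{semi_discrete:adjoint} from \eqref{Regualrit:equa:reform:adjoint} and \eqref{adjoint:eq:d:311}, tested with $(\bm\psi_h,w_h)$. The squared norms $\|\bm\eta^{\bm\mu}_h\|^2+\|\eta^\kappa_h\|^2$ are then expressed by the following groups of terms:
\begin{itemize}
\item[(i)] Elliptic projection terms $\nu\mathbf a(\bm\zeta^{\bm\mu}_h,\bm\psi_h)-(\lambda,\nabla\cdot\bm\psi_h)$ and $\chi a(\zeta^\kappa_h,w_h)+\eta\gamma(\zeta^\kappa_h,w_h)_\Gamma$. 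These are handled exactly as $J_1,J_2$ of Lemma \ref{semi_err:YTheta:L2}, via $\mathbf R_h^S(\bm\mu,\lambda)$, $R_h\kappa$, $\mathbf A_h\bm\psi_h$ and $\Delta_hw_h$, and are bounded by $Ch^2\|(\bm\psi_h,w_h)\|$ through Lemmas \ref{Lem:stokes:err} and \ref{Lem:L2:err}.
\item[(ii)] Trilinear terms containing $\bm\zeta^{\bm\mu}_h$ or $\zeta^\kappa_h$. These are treated like $J_3,J_4$ using Lemma \ref{lem:b:xingzhi}, \eqref{Interpolation:A:1} and \eqref{DeltaTheta_ineq1}, moving derivatives off the projection error onto $\bm\psi_h,w_h$.
\item[(iii)] New coupling terms from the state mismatch. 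Examples are $\mathbf b(\bm\psi_h,\mathbf y-\mathbf y_h,\bm\mu)$, $\mathbf b(\mathbf y-\mathbf y_h,\bm\psi_h,\bm\mu)$, $b(\bm\psi_h,\theta-\theta_h,\kappa)$, $b(\mathbf y-\mathbf y_h,w_h,\kappa)$, and the data terms $(\mathbf y-\mathbf y_h,\bm\psi_h)$, $(\theta-\theta_h,w_h)$.
\end{itemize}

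The main obstacle is group (iii), since Lemma \ref{semi_err:YTheta:L2} only provides $\mathbf y-\mathbf y_h,\theta-\theta_h=O(h^2)$ in $L^2(I;L^2(\Omega))$, while these terms appear with derivatives. The data terms are immediate. For the trilinear ones, we use the skew-symmetric form of $\mathbf b,b$ to place the gradient on $\bm\mu,\kappa$ or on $\bm\psi_h,w_h$. The bounds are then $\|\mathbf y-\mathbf y_h\|_{L^2(I;L^2)}\|\nabla\bm\mu\|_{L^\infty(I;L^4)}\|\bm\psi_h\|_{L^2(I;L^\infty)}$ or $\|\mathbf y-\mathbf y_h\|_{L^2(I;L^2)}\|\bm\mu\|_{L^\infty(I;L^\infty)}\|\nabla\bm\psi_h\|_{L^\infty(I;L^2)}$. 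The required norms of $\bm\mu,\kappa$ follow from $\mathbb V\hookrightarrow C(\bar I;H^1)$ together with $L^2(I;H^2)$, and those of $(\bm\psi_h,w_h)$ from the discrete $H^2$-type stability via discrete Sobolev (Agmon-type) bounds with $\mathbf A_h,\Delta_h$. Where a factor $\|\nabla(\mathbf y-\mathbf y_h)\|$ cannot be avoided, we pair it with $\mathbf R_h^S$-type superconvergence or absorb it using $O(h)\cdot O(h)$ products from Lemmas \ref{Lem:semi:state:err} and \ref{Lem:semi:adjoint:err}.

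Collecting all three groups gives $\|\bm\eta^{\bm\mu}_h\|^2+\|\eta^\kappa_h\|^2\le Ch^2\big(\|\bm\eta^{\bm\mu}_h\|+\|\eta^\kappa_h\|\big)$, with norms in $L^2(I;L^2(\Omega))$. Dividing by the right-hand factor and applying the triangle inequality yields \eqref{semi_err:MuKappa:L2:ineq}.
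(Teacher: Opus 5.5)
Your proposal is correct and follows the paper's route: the $L^2$-projection splitting, a forward linearized Boussinesq system with zero initial data as the discrete dual problem together with its $\mathbf A_h$/$\Delta_h$ stability bound, and then an argument like that of Lemma~\ref{semi_err:YTheta:L2} that feeds in that lemma's $O(h^2)$ state error (the paper puts $\mathbf y_h,\theta_h$ in every convective slot of the dual problem, which is a purely cosmetic difference). One small repair is needed: $\|\nabla\bm\mu\|_{L^\infty(I;\mathbb L^4(\Omega))}$ and $\|\bm\mu\|_{L^\infty(I;\mathbb L^\infty(\Omega))}$ do not follow from $\bm\mu\in\mathbb V$, so put the $L^\infty$-in-time norm on the dual solution instead, e.g.\ bound by $\|\mathbf y-\mathbf y_h\|_{L^2(I;\mathbb L^2(\Omega))}\big(\|\bm\mu\|_{L^2(I;\mathbb L^\infty(\Omega))}\|\nabla\bm\psi_h\|_{L^\infty(I;\mathbb L^2(\Omega))}+\|\nabla\bm\mu\|_{L^2(I;\mathbb L^4(\Omega))}\|\bm\psi_h\|_{L^\infty(I;\mathbb L^4(\Omega))}\big)$ after integrating by parts (using $\bm\psi_h\in\mathbb H^1_0(\Omega)$), which also makes your superconvergence/$O(h)\cdot O(h)$ fallback unnecessary.
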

\begin{proof}
We use the  decomposition
\begin{equation*}
\begin{aligned}
\bm  \mu-\bm \mu_h=\bm \mu-\mathbf P_h \bm \mu+\mathbf P_h \bm \mu-\bm \mu_h=\bm \zeta^{\bm \mu}_h+\bm \eta^{\bm \mu}_h,\quad \kappa-\kappa_h=\kappa- P_h\kappa+P_h\kappa- \kappa_h=\zeta^{\kappa}_h+\eta^{\kappa}_h.
\end{aligned}
\end{equation*}
From the error estimate of the $L^2$ projection given by Lemma \ref{Lem:L2:err}, we can derive
\begin{align*}
\|\bm \mu-\bm \mu_h\|_{L^2(I;\mathbb L^2(\Omega))}+\|\kappa-\kappa_h\|_{L^2(I;L^2(\Omega))}\leq C h^2+\|\bm \eta^{\bm \mu }_h|_{L^2(I;\mathbb L^2(\Omega))}+\|\eta^\kappa_h\|_{L^2(I; L^2(\Omega))}.
\end{align*}
We now use a duality argument to provide an estimate for $\|\bm \eta^{\bm \mu}_h\|_{L^2(I;\mathbb L^2(\Omega))} + \|\eta^\kappa_h\|_{L^2(I; L^2(\Omega))}$.  Define the following dual problem: Find $(\bm  z_{h},\xi_h)\in L^2(I;\mathbb X_h)\times L^2(I;V_h)$ such that for any $(\mathbf v_h,\psi_h)\in \mathbb X_h\times V_h$
\begin{equation}\label{dual_LinState:semi_discre}
\left\{\begin{aligned}
&(\partial_t  \mathbf z_h,\mathbf v_h)+\nu \mathbf a(\mathbf z_h ,\mathbf v_h)+\mathbf b(\mathbf z_h,\mathbf y_h,\mathbf v_h)+\mathbf b(\mathbf y_h,\mathbf z_h,\mathbf v_h)+\beta(\xi_h \mathbf g,\mathbf v_h)=(\bm \eta^{\bm \mu}_h,\mathbf v_h),\\
&( \partial_t  \xi_h, \psi_h)+\chi a(\xi_h, \psi_h)+b(\mathbf z_h, \theta_h,\psi_h)+b(\mathbf y_h, \xi_h,\psi_h)+\eta \gamma( \xi_h, \psi_h)_\Gamma=(\eta^\kappa_h, \psi_h),\\
&\mathbf  z_h(0)=0,\quad \xi_h(0)=0.
\end{aligned}\right.
\end{equation}
Note that the above equation is similar to the linearized state equation \eqref{Diff:first:deri}.  Similarly, as in the proof of Lemma \ref{semi_discreYTheta:stab}, it is known that \eqref{dual_LinState:semi_discre} has a unique solution \((\mathbf z_h, \xi_h) \in L^2({I};\mathbb X_h)\times L^2({I};V_h)\) with the following stability estimate:
\begin{equation*}
\begin{aligned}
&\|\mathbf  z_h\|_{L^\infty({I};\mathbb H^1(\Omega))}+\|\mathbf A_h\mathbf  z_h\|_{L^2(I;\mathbb L^2(\Omega))}+\|\xi_h\|_{L^\infty({I};H^1(\Omega))}+\| \Delta_h \xi_h|_{L^2(I; L^2(\Omega))}\leq C(\|\bm \eta_h^{\bm  \mu}\|_{L^2(I;\mathbb L^2(\Omega))}+\|\eta_h^{\kappa}\|_{L^2(I;L^2(\Omega))}).
\end{aligned}
\end{equation*}
Following an analysis similar to that of Lemma \ref{semi_err:YTheta:L2}, and using its result, we thus obtain \eqref{semi_err:MuKappa:L2:ineq}.
\end{proof}

\subsection{The fully-discrete  optimal control problem}
Furthermore, we consider the temporal discretization. Let \( 0 = t_0 < t_1 < \cdots < t_{N_\tau} = T \) represent a partition of the interval \( [0, T] \). We define \( \tau_n := t_n - t_{n-1} \),  $I^r_n:=(t_{n-1}, t_{n}]$ and $I^l_n:=[t_{n-1},t_n)$ for $n=1, \cdots, N_{\tau}$. We make the following assumption on the time step size
\[(\mathbf B)\qquad\exists\, \epsilon_0, \epsilon_1> 0  \text{ such that } \tau = \max_{1 \leq n \leq N_\tau}\tau_n < \epsilon_0  \tau_n {\text{ and } |\tau_n-\tau_{n-1}|\leq \epsilon_1\tau^2}   \quad \forall  1 \leq n \leq N_\tau \text{ and }\forall \tau > 0.\]
Moreover, we set $\sigma=(\tau, h)$.  Finally, we define the spaces
\begin{align*}
\mathbb X_\sigma&:=\big\{\mathbf v_\sigma \in L^2(I;\mathbb X_h): \mathbf v_\sigma=\sum\limits_{n=1}^{N_{\tau}}\mathbf v_{n,h}\mathds{1}_{I^r_n},~ \mathbf v_{n,h}\in \mathbb X_h, ~n=1,\cdots, N_\tau\big\},\\
V_\sigma&:=\big\{v_\sigma \in L^2(I; V_h):  v_\sigma=\sum\limits_{n=1}^{N_{\tau}} v_{n,h}\mathds{1}_{I^r_n},~ v_{n,h}\in V_h, ~n=1,\cdots, N_\tau\big\},
\end{align*}
where $\mathds{1}_{I^r_n}$ denotes the indicator function of the interval $I^r_n$.   

To treat the temporal error estimation for fully discrete state and adjoint equations, we need the following two interpolation operators in time. Define the interpolation operators $\Pi^r_\tau: C(\bar{I}; X) \to X^r_\tau$ and $\Pi^l_\tau: C(\bar{I}; X) \to X^l_\tau$ such that for any $v \in C(\bar{I}; X)$, $\Pi^r_\tau v$ and $\Pi^l_\tau v$ satisfy
\begin{align*}
\Pi^r_\tau v \big|_{I^r_n} = v(t_n), \quad \Pi^l_\tau v \big|_{I^l_n} = v(t_{n-1}), \quad n = 1, \dots, N_\tau,
\end{align*}
with $(\Pi^r_\tau v)(0) = v(0)$ and $(\Pi_\tau^l v)(T) = v(T)$, where $X$ is a given Banach space and $X^r_\tau := \big\{ v_\tau \in L^2(I; X) : v_\tau = \sum\limits_{n=1}^{N_{\tau}} v_n \mathds{1}_{I^r_n}, ~ v_n \in X, ~ n = 1, \dots, N_\tau \big\}$, and a similar definition for $X^l_\tau$ by replacing $I^r_n$ with $I^l_n$.  Note that the above interpolations are defined by taking the endpoint values on each subinterval, and the interpolation error estimate is provided in Appendix \ref{Appendix:A}, Lemma \ref{int:L2:time:err}.   In addition, define the $L^2$-projection $P_\tau: L^2(I;X)\to X^r_\tau$, such that $P_\tau w$ satisfies
\begin{align*}
P_\tau w\big|_{I^r_n}=\frac{1}{\tau_n}\int_{t_{n-1}}^{t_n}w \,dt\quad n=1,\cdots,N_\tau,\quad \forall w\in L^2(I;X).
\end{align*}
For simplicity we write $P^n_\tau w=P_\tau w\big|_{I^r_n}$, and the $L^2$-projection error estimate is provided in Appendix \ref{Appendix:A}, Lemma \ref{Proj:L2:time:err}.

To define the fully discrete optimal control problem, we need to consider the temporal discretization of the state equation \eqref{semi_discrete:state}. To achieve this goal, we use an implicit-explicit time discretization scheme. Given $u\in L^2(I;L^2(\Gamma))$, we seek a discrete solution pair $(\mathbf y_{n,h}, \theta_{n,h})\in \mathbb X_h\times V_h$, satisfying, for $n=1, \cdots, N_{\tau}$
\begin{equation}
\begin{aligned}\label{Full_discrete:state}
&\Big(\frac{\mathbf y_{n,h}-\mathbf y_{n-1,h}}{\tau_n}, \mathbf v_h\Big)+\nu \mathbf a(\mathbf y_{n,h}, \mathbf v_h)+\mathbf b(\mathbf y_{n-1,h},\mathbf y_{n,h},\mathbf v_h)+\beta(\theta_{n-1,h}\mathbf g, \mathbf v_h )=({\mathbf h}^n, \mathbf v_h),\\
&\Big(\frac{\theta_{n,h}-\theta_{n-1,h}}{\tau_n}, \psi_h\Big)+\chi a(\theta_{n,h},\psi_h)+b(\mathbf y_{n-1,h}, \theta_{n,h},\psi_{h})+\eta\gamma(\theta_{n,h}, \psi_h)_\Gamma=({f}^n, \psi_h)+\eta({u}^n, \psi_h)_\Gamma,\\
&\mathbf y_{0,h}=\mathbf y_{0h},\quad \theta_{0,h}=\theta_{0h}
\end{aligned}
\end{equation}
for any $(\mathbf v_h,\psi_h)\in \mathbb X_h\times V_h$, where $\mathbf{h}^n:=\frac{1}{\tau_n}\int_{t_{n-1}}^{t_{n}}  \mathbf h(t) dt$.  Moreover, for $(\mathbf y_\sigma, \theta_\sigma)\in \mathbb X_\sigma\times V_\sigma$ we set 
\begin{align}\label{Define:YthetaSigma}
(\mathbf y_\sigma, \theta_\sigma)\big|_{(0,T]}=\Big(\sum\limits_{n=1}^{N_{\tau}}\mathbf y_{n,h}\mathds{1}_{I^r_n},\sum\limits_{n=1}^{N_{\tau}} \theta_{n,h}\mathds{1}_{I^r_n}\Big) \text{ with } \mathbf y_\sigma(0)=\mathbf y_{0h},\   \theta_\sigma(0)=\theta_{0h}.
\end{align}

Since the state equation is a coupled system, we use the implicit-explicit scheme to decouple the computation of the velocity and temperature fields. Specifically, from the discrete scheme \eqref{Full_discrete:state}, at each time step, we separately solve the Navier-Stokes equations to obtain the velocity field \( \mathbf{y}_{n,h} \), and the heat equation to determine the temperature field \( \theta_{n,h} \). This approach significantly reduces computational cost compared to the discontinuous Galerkin time-stepping  method (cf. \cite{Thomee_1997}). Moreover, it is easy to show that the equation \eqref{Full_discrete:state} has a unique solution. 

\subsubsection{Analysis of the fully-discrete state equation}
In the following, we present the stability analysis of the numerical scheme \eqref{Full_discrete:state}.
\begin{lemma}\label{lem:Full:YTheta:bu} 
Given \( u \in L^2(I; L^2(\Gamma)) \), let $\big\{(\mathbf y_{n,h}, \theta_{n,h})\big\}_{n=1}^{N_\tau}$  be the solutions of \eqref{Full_discrete:state}. Then the following estimates hold:
\begin{align}
&\|\mathbf y_\sigma\|_{L^\infty(I;\mathbb L^2(\Omega))}+\nu^{\frac{1}{2}}\|\mathbf y_\sigma\|_{L^2(I;\mathbb H^1(\Omega))}+\|\theta_\sigma\|_{L^\infty(I;L^2(\Omega))}+\min\{\chi,\eta\gamma\}^{\frac{1}{2}}\|\theta_\sigma\|_{L^2(I;H^1(\Omega))}\leq \mathcal C_1\big(\|\mathbf y_{0h}\|\nonumber\\
&+\big(1+\beta|\mathbf g|\nu^{-\frac{1}{2}}\big)\|\theta_{0h}\|+\nu^{-\frac{1}{2}}\|\mathbf h\|_{L^2(I;\mathbb L^2(\Omega))}+\min\{\chi,\eta\gamma\}^{-\frac{1}{2}}\|f\|_{L^2(I;L^2(\Omega))}\nonumber\\
&+\gamma^{-\frac{1}{2}}\eta^{\frac{1}{2}}\|u\|_{L^2(I;L^2(\Gamma))}\big)=: \widetilde G_1,\label{Full:stab:state:ineq1}\\
&\|\mathbf y_\sigma\|_{L^\infty(I;\mathbb H^1(\Omega))}+\nu^\frac{1}{2}\|\mathbf A_h\mathbf y_\sigma\|_{L^2(I;\mathbb L^2(\Omega))}\leq \mathcal C_2\big(\big(1+\nu^{-2}\widetilde G_1^2\big)\|\nabla \mathbf y_{0h}\|+\tau_1^{\frac{1}{2}}\|\Delta \mathbf y_0\|+\tau_1^\frac{1}{2}\nu^{-\frac{1}{2}}\|\theta_{0h}\|\nonumber\\
&+\nu^{-\frac{1}{2}}\widetilde G_1+\|\mathbf h\|_{L^2(I;\mathbb L^2(\Omega))}\big)=:\widetilde G_2,\label{Full:stab:state:ineq2}\\
&\Big(\sum_{n=1}^{N_\tau}\frac{\|\mathbf y_{n,h}-\mathbf y_{n-1,h}\|^2}{\tau_n}\Big)^\frac{1}{2}\leq C\big(\big(\nu^{-\frac{1}{2}}+1\big)\widetilde G_1^2+\big(\nu^{-\frac{1}{2}}+1\big)\widetilde G_2^2+ \widetilde G_1\big),\label{Full:stab:state:ineq3}
\end{align}
where
$\mathcal C_1:=C\exp\big(C\nu^{-\frac{1}{2}}\beta|\mathbf g|\sqrt{T}\big),\quad \mathcal C_2:=C\exp\big(C\nu^{-2}\widetilde G_1^2\big)$.
\end{lemma}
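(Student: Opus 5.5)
We prove the three estimates in order, each by testing \eqref{Full_discrete:state} with a suitable discrete function, summing over $n$, and closing with a discrete Gronwall inequality. Throughout we use the identity $2(a-b,a)=\|a\|^2-\|b\|^2+\|a-b\|^2$ and the skew-symmetry $\mathbf b(\mathbf w,\mathbf v,\mathbf v)=0$, $b(\mathbf w,\psi,\psi)=0$.

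\textbf{Estimate \eqref{Full:stab:state:ineq1}.} Test the velocity equation with $\mathbf v_h=\tau_n\mathbf y_{n,h}$ and the temperature equation with $\psi_h=\tau_n\theta_{n,h}$. The convective terms vanish by skew-symmetry; this is exactly where the IMEX choice $\mathbf b(\mathbf y_{n-1,h},\cdot,\cdot)$ pays off. Next treat the right-hand sides:
\begin{itemize}
\item bound $(\mathbf h^n,\mathbf y_{n,h})$ by Poincar\'e and Young, absorbing $\nu\|\nabla\mathbf y_{n,h}\|^2/2$, and use $\sum_n\tau_n\|\mathbf h^n\|^2\le\|\mathbf h\|^2_{L^2(I;\mathbb L^2)}$;
\item treat $(f^n,\theta_{n,h})$ similarly, using the norm $\chi\|\nabla\cdot\|^2+\eta\gamma\|\cdot\|_\Gamma^2\gtrsim \min\{\chi,\eta\gamma\}\|\cdot\|_{H^1}^2$;
\item bound the boundary term by $\eta(u^n,\theta_{n,h})_\Gamma\le \frac{\eta\gamma}{2}\|\theta_{n,h}\|^2_\Gamma+\frac{\eta}{2\gamma}\|u^n\|^2_\Gamma$.
\end{itemize}
The coupling term $\beta\tau_n(\theta_{n-1,h}\mathbf g,\mathbf y_{n,h})\le \frac{\nu\tau_n}{4}\|\nabla\mathbf y_{n,h}\|^2+C\beta^2|\mathbf g|^2\nu^{-1}\tau_n\|\theta_{n-1,h}\|^2$ involves only the previous level. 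Summing from $1$ to $m$ gives a recursive inequality for $\|\mathbf y_{m,h}\|^2+\|\theta_{m,h}\|^2$, and the explicit discrete Gronwall lemma (no step restriction needed, since the term is lagged) yields the factor $\exp(C\nu^{-1/2}\beta|\mathbf g|\sqrt T)$ after balancing the Young weights. The lagged $n=1$ term produces the extra $\beta|\mathbf g|\nu^{-1/2}\|\theta_{0h}\|$.

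\textbf{Estimate \eqref{Full:stab:state:ineq2}.} Test the velocity equation with $\mathbf v_h=-\tau_n\mathbf A_h\mathbf y_{n,h}$, which gives $\frac12(\|\nabla\mathbf y_{n,h}\|^2-\|\nabla\mathbf y_{n-1,h}\|^2+\|\nabla(\mathbf y_{n,h}-\mathbf y_{n-1,h})\|^2)+\nu\tau_n\|\mathbf A_h\mathbf y_{n,h}\|^2$. The $\mathbf h^n$ and $\theta_{n-1,h}$ terms are handled by Young's inequality together with \eqref{Full:stab:state:ineq1}. The nonlinear term is the main obstacle, because its first argument is lagged. We bound it by \eqref{Interpolation:conti:1} and \eqref{Interpolation:A:1}:
\[
|\mathbf b(\mathbf y_{n-1,h},\mathbf y_{n,h},\mathbf A_h\mathbf y_{n,h})|\le C\|\mathbf y_{n-1,h}\|^{1/2}\|\nabla\mathbf y_{n-1,h}\|^{1/2}\|\nabla\mathbf y_{n,h}\|^{1/2}\|\mathbf A_h\mathbf y_{n,h}\|^{3/2}.
\]
Young's inequality then gives $\frac\nu4\|\mathbf A_h\mathbf y_{n,h}\|^2+C\nu^{-3}\widetilde G_1^2\|\nabla\mathbf y_{n-1,h}\|^2\|\nabla\mathbf y_{n,h}\|^2$. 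To avoid a step-size restriction, we write $\|\nabla\mathbf y_{n,h}\|^2\le 2\|\nabla\mathbf y_{n-1,h}\|^2+2\|\nabla(\mathbf y_{n,h}-\mathbf y_{n-1,h})\|^2$, or alternatively view $J_n:=C\nu^{-3}\widetilde G_1^2\|\nabla\mathbf y_{n-1,h}\|^2$ as a Gronwall weight. Since $\sum_n\tau_nJ_n\le C\nu^{-2}\widetilde G_1^2$ by \eqref{Full:stab:state:ineq1}, the implicit discrete Gronwall inequality yields $\mathcal C_2=C\exp(C\nu^{-2}\widetilde G_1^2)$. If the implicit Gronwall inequality requires $\tau_nJ_n<1$, we control $\tau_n\|\nabla\mathbf y_{n-1,h}\|^2$ by an inverse-free argument that bounds the first step separately. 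The first step is where $\tau_1^{1/2}\|\Delta\mathbf y_0\|$ and $\tau_1^{1/2}\nu^{-1/2}\|\theta_{0h}\|$ enter, via $\|\mathbf A_h\mathbf P_h\mathbf y_0\|\le C\|\Delta\mathbf y_0\|$ for $\mathbf y_0\in\mathbb H^2\cap\mathbb X$.

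\textbf{Estimate \eqref{Full:stab:state:ineq3}.} Test with $\mathbf v_h=\mathbf y_{n,h}-\mathbf y_{n-1,h}$, giving $\|\mathbf y_{n,h}-\mathbf y_{n-1,h}\|^2/\tau_n$ on the left. Bound each term on the right by $\|\mathbf y_{n,h}-\mathbf y_{n-1,h}\|$ times respectively
\begin{itemize}
\item $\nu\|\mathbf A_h\mathbf y_{n,h}\|$;
\item $C\|\mathbf y_{n-1,h}\|^{1/2}\|\nabla\mathbf y_{n-1,h}\|^{1/2}\|\nabla\mathbf y_{n,h}\|^{1/2}\|\mathbf A_h\mathbf y_{n,h}\|^{1/2}$, using \eqref{Interpolation:A:1};
\item $\beta|\mathbf g|\|\theta_{n-1,h}\|$;
\item $\|\mathbf h^n\|$.
\end{itemize}
Dividing by $\tau_n^{1/2}$, applying Young's inequality and summing, the right-hand side is controlled by $\sum\tau_n(\|\mathbf A_h\mathbf y_{n,h}\|^2+\widetilde G_1\widetilde G_2^2\|\mathbf A_h\mathbf y_{n,h}\|+\dots)$. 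This is bounded using \eqref{Full:stab:state:ineq1} and \eqref{Full:stab:state:ineq2}, which gives the stated combination of $\widetilde G_1^2$, $\widetilde G_2^2$ and $\widetilde G_1$.
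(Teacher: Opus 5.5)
Your estimates \eqref{Full:stab:state:ineq1} and \eqref{Full:stab:state:ineq3} follow the paper's route: the same test functions, and an explicit Gronwall argument for the lagged coupling $\beta(\theta_{n-1,h}\mathbf g,\cdot)$. The argument for \eqref{Full:stab:state:ineq2}, however, does not close.

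\textbf{The Gronwall weight is on the wrong level.} You write the nonlinear contribution as $\tau_nJ_n\|\nabla\mathbf y_{n,h}\|^2$ with $J_n\sim\nu^{-3}\widetilde G_1^2\|\nabla\mathbf y_{n-1,h}\|^2$. That is a weight times the \emph{current} unknown, which forces an implicit discrete Gronwall inequality and hence the condition $\tau_nJ_n<1$. Nothing available gives this.
\begin{itemize}
\item Estimate \eqref{Full:stab:state:ineq1} only yields $\tau_{n-1}\|\nabla\mathbf y_{n-1,h}\|^2\le\nu^{-1}\widetilde G_1^2$, which is bounded but not small.
\item Making it small would need either an inverse estimate (a coupling such as $\tau\lesssim h^2$) or a bootstrap on the very $L^\infty(I;\mathbb H^1(\Omega))$ bound being proved. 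Either way you get a data-dependent step-size restriction that the lemma does not have.
\item The ``inverse-free argument that bounds the first step separately'' is never supplied.
\item Your alternative, $\|\nabla\mathbf y_{n,h}\|^2\le2\|\nabla\mathbf y_{n-1,h}\|^2+2\|\nabla(\mathbf y_{n,h}-\mathbf y_{n-1,h})\|^2$, only moves the coefficient $2\tau_nJ_n$ onto the increment. The term $\frac12\|\nabla(\mathbf y_{n,h}-\mathbf y_{n-1,h})\|^2$ absorbs it only if $\tau_nJ_n\le\frac14$, which is the same condition.
\end{itemize}
The missing observation is that the product $\tau_n\|\nabla\mathbf y_{n-1,h}\|^2\|\nabla\mathbf y_{n,h}\|^2$ is symmetric. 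Put the current-level factor into the weight, $G_n:=C\nu^{-3}\tau_n\|\mathbf y_{n-1,h}\|^2\|\nabla\mathbf y_{n,h}\|^2$, and let it multiply the \emph{lagged} unknown $\|\nabla\mathbf y_{n-1,h}\|^2$. Then $\sum_nG_n$ is bounded a priori by \eqref{Full:stab:state:ineq1}, and the explicit discrete Gronwall lemma applies with no condition on $\tau$. This is what the paper does.

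\textbf{A divergence term is missing.} Your bound $|\mathbf b(\mathbf y_{n-1,h},\mathbf y_{n,h},\mathbf A_h\mathbf y_{n,h})|\le C\|\mathbf y_{n-1,h}\|^{1/2}\|\nabla\mathbf y_{n-1,h}\|^{1/2}\|\nabla\mathbf y_{n,h}\|^{1/2}\|\mathbf A_h\mathbf y_{n,h}\|^{3/2}$ holds for $\mathbf c$, not for the skew-symmetrized $\mathbf b$.
\begin{itemize}
\item For Taylor--Hood or Mini elements, $\mathbb X_h$ is only discretely divergence-free, whereas Lemma \ref{lem:b:xingzhi} requires $\mathbf u\in\mathbb X$.
\item The half $\mathbf c(\mathbf y_{n-1,h},\mathbf A_h\mathbf y_{n,h},\mathbf y_{n,h})$ cannot be bounded directly. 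Integrating it by parts gives $\mathbf b(\mathbf u,\mathbf v,\mathbf w)=\mathbf c(\mathbf u,\mathbf v,\mathbf w)+\frac12((\mathrm{div}\,\mathbf u)\mathbf v,\mathbf w)$.
\item The paper bounds the extra term by $C\|\nabla\mathbf y_{n-1,h}\|^{1/2}\|\mathbf A_h\mathbf y_{n-1,h}\|^{1/2}\|\mathbf y_{n,h}\|^{1/2}\|\nabla\mathbf y_{n,h}\|^{1/2}\|\mathbf A_h\mathbf y_{n,h}\|$, using \eqref{Interpolation:conti:1} and \eqref{Interpolation:A:1}.
\item It then absorbs the lagged $\|\mathbf A_h\mathbf y_{n-1,h}\|$ by splitting off $\frac{\nu\tau_{n-1}}{4}\|\mathbf A_h\mathbf y_{n-1,h}\|^2$ and telescoping, with $\tau_0:=\tau_1$ and $\tau_n\le\epsilon_0\tau_{n-1}$ from assumption $(\mathbf B)$.
\end{itemize}
The uncancelled $n=1$ term $\frac{\nu\tau_1}{4}\|\mathbf A_h\mathbf y_{0h}\|^2\le C\tau_1\|\Delta\mathbf y_0\|^2$ is exactly where $\tau_1^{1/2}\|\Delta\mathbf y_0\|$ in $\widetilde G_2$ comes from. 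In your scheme $\|\mathbf A_h\mathbf y_{0h}\|$ never appears, so your account of that term is unsupported. The same divergence term must also be added in your proof of \eqref{Full:stab:state:ineq3}, where it is controlled by \eqref{Full:stab:state:ineq2} together with the initial datum.
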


\begin{proof}
We prove the estimate one by one.

$\bullet$ Estimate of \eqref{Full:stab:state:ineq1}.  Choosing $(\mathbf{v}_h,\psi_h) = \tau_n(\mathbf y_{n,h}, \theta_{n,h})$ in \eqref{Full_discrete:state}, we deduce that
\begin{align}
&\frac{1}{2}\|\mathbf y_{n,h}\|^2-\frac{1}{2}\|\mathbf y_{n-1,h}\|^2+\frac{1}{2}\|\mathbf y_{n,h}-\mathbf y_{n-1,h}\|^2+\nu\tau_n\|\nabla \mathbf y_{n,h}\|^2
+\frac{1}{2}\| \theta_{n,h}\|^2-\frac{1}{2}\|\theta_{n-1,h}\|^2\nonumber\\
&+\frac{1}{2}\|\theta_{n,h}-\theta_{n-1,h}\|^2+\chi\tau_n\|\nabla \theta_{n,h}\|^2+\eta\gamma\tau_n\|\theta_{n,h}\|^2_\Gamma\nonumber=-\beta\tau_n(\theta_{n-1,h}\mathbf g, \mathbf y_{n,h})+\tau_n({\mathbf h}^n, \mathbf y_{n,h})\nonumber\\
&+\eta\tau_n({u}^n, \theta_{n,h})_\Gamma+\tau_n({f}^n,\theta_{n,h})\nonumber\\
&\leq \frac{\nu\tau_n}{2}\|\nabla \mathbf y_{n,h}\|+\frac{\eta\gamma\tau_n}{2}\|\theta_{n,h}\|_\Gamma+\frac{\chi \tau_n}{2}\|\nabla \theta_{n,h}\|+\frac{C\beta^2|\mathbf g|^2\tau_n}{\nu}\|\theta_{n-1,h}\|^2+\frac{C}{\nu}\int_{t_{n-1}}^{t_n}\|\mathbf h\|^2dt\nonumber\\
&+\frac{C}{\min\{\chi,\eta\gamma\}}\int_{t_{n-1}}^{t_{n}}\|f\|^2 dt+\frac{\eta}{\gamma}\int_{t_{n-1}}^{t_{n}} \|u\|^2_\Gamma dt.\nonumber
\end{align}
Hence, summing over $n$ from $1$ to $k$ for $1 \leq k \leq N_{\tau}$, we have
\begin{align*}
&\|\mathbf y_{k,h}\|^2+\|\theta_{k,h}\|^2+\nu\sum_{n=1}^{k}\tau_n\|\nabla \mathbf y_{n,h}\|^2+\chi\sum_{n=1}^{k}\tau_n\|\nabla \theta_{n,h}\|+\eta\gamma\sum_{n=1}^k\tau_n\|\theta_{n,h}\|^2_\Gamma\nonumber\\
&\leq C\big(\|\mathbf y_{0h}\|^2+\|\theta_{0h}\|^2+\sum_{n=1}^{k} \frac{\beta^2|\mathbf g|^2\tau_{n}}{\nu}\|\theta_{n-1,h}\|^2+\frac{1}{\nu}\int_I\|\mathbf h\|^2dt+\frac{1}{\min\{\chi,\eta\gamma\}}\int_I\|f\|^2 dt+\frac{\eta}{\gamma}\int_I \|u\|^2_\Gamma dt\big).
\end{align*}
Then, using the discrete Gronwall inequality we can obtain the result \eqref{Full:stab:state:ineq1}.

$\bullet$ Estimate of \eqref{Full:stab:state:ineq2}. Taking $\mathbf{v}_h = -\tau_n\mathbf A_h\mathbf y_{n,h}$ in the first equation of \eqref{Full_discrete:state},  using \eqref{Interpolation:conti:1} and \eqref{Interpolation:A:1} we have
\begin{align}\label{Full:stab:process:1}
&\frac{1}{2}\|\nabla \mathbf y_{n,h}\|^2-\frac{1}{2}\|\nabla \mathbf y_{n-1,h}\|^2+\frac{1}{2}\|\nabla(\mathbf y_{n,h}-\mathbf y_{n-1,h})\|^2+\nu\tau_n\|\mathbf A_h\mathbf y_{n,h}\|^2=\tau_n\mathbf b(\mathbf y_{n-1,h},\mathbf y_{n,h},\mathbf A_h\mathbf y_{n,h})\nonumber\\
&+\tau_n\beta(\theta_{n-1,h}\mathbf g,\mathbf A_h\mathbf y_{n,h})-\tau_n({\mathbf h}^n, \mathbf A_h \mathbf y_{n,h})\nonumber\\
&\leq C\Big(\tau_n\|\mathbf y_{n-1,h}\|^\frac{1}{2}\|\nabla \mathbf y_{n-1,h}\|^\frac{1}{2}\|\nabla \mathbf y_{n,h}\|^\frac{1}{2}\|\mathbf A_h\mathbf  y_{n,h}\|^\frac{3}{2}+\tau_n\|\nabla \mathbf y_{n-1,h}\|^\frac{1}{2}\|\mathbf A_h\mathbf y_{n-1,h}\|^\frac{1}{2}\|\mathbf y_{n,h}\|^\frac{1}{2}\|\nabla \mathbf y_{n,h}\|^\frac{1}{2}\|\mathbf A_h\mathbf y_{n,h}\|\nonumber\\
&+\tau_n \|\theta_{n-1,h}\|\,\|\mathbf A_h\mathbf y_{n,h}\|+\big(\int_{t_{n-1}}^{t_n}\|\mathbf h\|^2 \,dt\big)^\frac{1}{2}\|\mathbf A_h\mathbf y_{n,h}\|\tau_n^\frac{1}{2}\Big)\nonumber\\
&\leq\frac{\nu\tau_n}{2}\|\mathbf A_h \mathbf y_{n,h}\|^2+\frac{\nu\tau_{n-1}}{4}\|\mathbf A_h \mathbf y_{n-1,h}\|^2+\big(G_n+H_n\big)\|\nabla \mathbf y_{n-1,h}\|^2+C\Big(\frac{\tau_n}{\nu}\|\theta_{n-1,h}\|^2+\frac{\tau_n}{\nu}\int_{t_{n-1}}^{t_n}\|\mathbf h\|^2 \,dt\Big),
\end{align}
with $\tau_0=\tau_1$, $G_n=\frac{C\tau_n}{\nu^3}\|\mathbf y_{n-1,h}\|^2\|\nabla \mathbf y_{n,h}\|^2$, $H_n=\frac{C\tau_n}{\nu^3}\|\nabla \mathbf y_{n,h}\|^2\|\mathbf y_{n,h}\|^2$. Hence, using the equality
\begin{align*}
&\frac{\nu\tau_n}{2}\|\mathbf A_h \mathbf y_{n,h}\|^2 = \frac{\nu\tau_n}{4}\|\mathbf A_h\mathbf y_{n,h}\|^2 + \frac{\nu\tau_n}{4}\|\mathbf A_h\mathbf y_{n,h}\|^2 - \frac{\nu\tau_{n-1}}{4}\|\mathbf A_h\mathbf y_{n-1,h}\|^2 + \frac{\nu\tau_{n-1}}{4}\|\mathbf A_h\mathbf y_{n-1,h}\|^2,
\end{align*}
and rearranging \eqref{Full:stab:process:1}, summing over $n$ from $1$ to $k$ for $1 \leq k \leq N_{\tau}$, we obtain
\begin{align}\label{Full:stab:process:bu:1}
&\|\nabla \mathbf y_{k,h}\|^2+\nu\sum_{n=1}^k\tau_n \|\mathbf A_h\mathbf y_{n,h}\|^2\leq \|\nabla \mathbf y_{0h}\|^2+\tau_1C\|\Delta \mathbf y_{0}\|^2+\big(G_1+H_1\big)\|\nabla \mathbf y_{0h}\|^2\nonumber\\
&+\sum_{n=1}^{k-1}\big(G_{n+1}+H_{n+1}\big)\|\nabla \mathbf y_{n,h}\|^2+C\big(\sum_{n=1}^{k}\frac{\tau_n}{\nu}\|\theta_{n-1,h}\|^2+\int_I\,\|\mathbf h\|^2dt\big),
\end{align}
where we have used the inequality $\|\mathbf A_h\mathbf y_{0,h}\|\leq C\|\Delta \mathbf y_0\|$. Then, using the discrete Gronwall inequality  we can obtain  \eqref{Full:stab:state:ineq2}.

$\bullet$ Estimate of \eqref{Full:stab:state:ineq3}. Taking  $\mathbf{v}_h = {\mathbf y_{n,h}-\mathbf y_{n-1,h}}$ in the first equation of \eqref{Full_discrete:state},  using \eqref{Interpolation:conti:1} and \eqref{Interpolation:A:1} we have
\begin{align*}
&\frac{1}{\tau_n}\|\mathbf y_{n,h}-\mathbf y_{n-1,h}\|^2+\frac{\nu}{2}\|\nabla \mathbf y_{n,h}\|^2-\frac{\nu}{2}\|\nabla \mathbf y_{n-1,h}\|^2+\frac{\nu}{2}\|\nabla(\mathbf y_{n-1,h}-\mathbf y_{n,h})\|^2=\\
&-\mathbf b(\mathbf y_{n-1,h},\mathbf y_{n,h},\mathbf y_{n,h}-\mathbf y_{n-1,h})-\beta(\theta_{n-1,h}\mathbf g, \mathbf y_{n,h}-\mathbf y_{n-1,h})+({\mathbf h}^n, \mathbf y_{n,h}-\mathbf y_{n-1,h})\\
&\leq \frac{\|\mathbf y_{n,h}-\mathbf y_{n-1,h}\|^2}{2\tau_n}+C\big(\tau_n\|\mathbf y_{n-1,h}\|\,\|\nabla \mathbf y_{n-1,h}\|\,\|\nabla \mathbf y_{n,h}\|\,\|\mathbf A_h \mathbf y_{n,h}\|\\
&+\tau_n\|\nabla \mathbf y_{n-1,h}\|\,\|\mathbf A_h \mathbf y_{n-1,h}\|\,\|\mathbf y_{n,h}\|\,\|\nabla \mathbf y_{n,h}\|+\tau_n\|\theta_{n-1,h}\|^2+\int_{t_{n-1}}^{t_n}\|\mathbf h\|^2 dt\big)\\
&\leq \frac{\|\mathbf y_{n,h}-\mathbf y_{n-1,h}\|^2}{2\tau_n}+C\big(\tau_n\|\mathbf y_{n-1,h}\|^2\|\nabla \mathbf y_{n,h}\|^2+\tau_n\|\nabla \mathbf y_{n-1,h}\|^2\|\mathbf A_h \mathbf y_{n,h}\|^2\\
&+\tau_n\|\nabla \mathbf y_{n-1,h}\|^2\|\mathbf A_h \mathbf y_{n-1,h}\|^2+\tau_n\|\mathbf y_{n,h}\|^2\|\nabla \mathbf y_{n,h}\|^2+\tau_n\|\theta_{n-1,h}\|^2+\int_{t_{n-1}}^{t_n}\|\mathbf h\|^2 dt\big).
\end{align*}
Summing over $n$ from $1$ to $N_\tau$ we obtain
\begin{align*}
&\sum_{n=1}^{N_\tau}\frac{1}{\tau_n}\|\mathbf y_{n,h}-\mathbf y_{n-1,h}\|^2\leq C\big(\sum_{n=1}^{N_\tau}\tau_n\|\mathbf y_{n-1,h}\|^2\|\nabla \mathbf y_{n,h}\|^2+\sum_{n=1}^{N_\tau}\tau_n\|\nabla \mathbf y_{n-1,h}\|^2\|\mathbf A_h \mathbf y_{n,h}\|^2\\
&+\sum_{n=1}^{N_\tau}\tau_n\|\nabla \mathbf y_{n-1,h}\|^2\|\mathbf A_h \mathbf y_{n-1,h}\|^2+\sum_{n=1}^{N_\tau}\tau_n\|\mathbf y_{n,h}\|^2\|\nabla \mathbf y_{n,h}\|^2+\sum_{n=1}^{N_\tau} \tau_n\|\theta_{n-1,h}\|^2+\int_I\|\mathbf h\|^2dt+\nu\|\nabla y_0\|^2\big).
\end{align*}
Finally, combining \eqref{Full:stab:state:ineq1} and \eqref{Full:stab:state:ineq2} yields \eqref{Full:stab:state:ineq3}.
\end{proof}
\begin{remark} 
If \( \mathbf{y}_0 \in \mathbb{X} \), we conclude from an inverse estimate that \( \|\mathbf{A}_h \mathbf{y}_{0 h}\| \leq Ch^{-1} \|\nabla \mathbf{y}_0\| \). Therefore, in view of \eqref{Full:stab:process:bu:1} we need to assume that \( \tau \leq C h^2 \) to obtain a higher regularity of  \((\mathbf y_\sigma,\theta_\sigma)\).
\end{remark}

\begin{lemma}\label{lem:Full:YTheta:time:err}  Given $u\in H^{\frac{1}{4}}(I;L^2(\Gamma))\cap L^2(I; H^{\frac{1}{2}}(\Gamma))$, let \((\mathbf y_h,\theta_h)\in L^2({I};\mathbb X_h)\times L^2({I};V_h)\) be the solution of \eqref{semi_discrete:state} and let $\big\{(\mathbf y_{n,h}, \theta_{n,h})\big\}_{n=1}^{N_\tau}$  be the solution of  the fully-discrete state equation \eqref{Full_discrete:state}. Then the following estimates hold:
\begin{equation}\label{Full:YTheta:time:err}
\begin{aligned}
\|\mathbf y_h-\mathbf y_\sigma\|_{L^\infty(I;\mathbb L^2(\Omega))}+\|\mathbf y_h-\mathbf y_\sigma\|_{L^2(I;\mathbb H^1(\Omega))}+\|\theta_h-\theta_\sigma\|_{L^\infty(I; L^2(\Omega))}+\|\theta_h-\theta_\sigma\|_{L^2(I; H^1(\Omega))}\leq C \tau^\frac{1}{2}.
\end{aligned}
\end{equation}
\end{lemma}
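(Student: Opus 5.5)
We compare the IMEX solution with the semi-discrete solution at the time nodes. Write
\[
\mathbf e_n:=\mathbf y_h(t_n)-\mathbf y_{n,h}\in\mathbb X_h,\qquad e_n:=\theta_h(t_n)-\theta_{n,h}\in V_h,
\]
with $\mathbf e_0=0$ and $e_0=0$, since both schemes start from $\mathbf P_h\mathbf y_0$ and $P_h\theta_0$.

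\textbf{Error equation.} Integrate \eqref{semi_discrete:state} over $I_n^r$, divide by $\tau_n$, and subtract \eqref{Full_discrete:state}. This gives, for all $(\mathbf v_h,\psi_h)$,
\begin{align*}
&\Big(\tfrac{\mathbf e_n-\mathbf e_{n-1}}{\tau_n},\mathbf v_h\Big)+\nu\mathbf a(\mathbf e_n,\mathbf v_h)+\beta(e_{n-1}\mathbf g,\mathbf v_h)+\big[\mathbf b(\mathbf y_h(t_{n-1}),\mathbf y_h(t_n),\mathbf v_h)-\mathbf b(\mathbf y_{n-1,h},\mathbf y_{n,h},\mathbf v_h)\big]=\langle \mathcal R^{\mathbf y}_n,\mathbf v_h\rangle .
\end{align*}
The analogous identity holds for $e_n$; the data terms $\mathbf h^n$, $f^n$, $u^n$ cancel exactly. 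The consistency residual $\mathcal R^{\mathbf y}_n$ collects three pieces:
\[
\nu\,\mathbf a\big(\mathbf y_h(t_n)-P^n_\tau\mathbf y_h,\cdot\big),\qquad \beta\big((\theta_h(t_{n-1})-P^n_\tau\theta_h)\mathbf g,\cdot\big),\qquad \tfrac1{\tau_n}\int_{I_n}\big[\mathbf b(\mathbf y_h(t_{n-1}),\mathbf y_h(t_n),\cdot)-\mathbf b(\mathbf y_h,\mathbf y_h,\cdot)\big]dt .
\]
The residual for the temperature equation is analogous, with the boundary term $\eta\gamma(\theta_h(t_n)-P^n_\tau\theta_h,\cdot)_\Gamma$ included.

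The nonlinear difference splits as
\[
\mathbf b(\mathbf e_{n-1},\mathbf y_h(t_n),\cdot)+\mathbf b(\mathbf y_{n-1,h},\mathbf e_n,\cdot).
\]
Testing with $\tau_n\mathbf e_n$ makes the second term vanish by skew-symmetry. The first term is bounded via \eqref{pre:estimate} by $C\|\mathbf e_{n-1}\|^{1/2}\|\nabla\mathbf e_{n-1}\|^{1/2}\|\nabla\mathbf y_h(t_n)\|\|\mathbf e_n\|^{1/2}\|\nabla\mathbf e_n\|^{1/2}$. Since $\|\nabla\mathbf y_h\|_{L^\infty(I;\mathbb L^2)}$ is bounded by Lemma \ref{semi_discreYTheta:stab}, Young's inequality absorbs the gradient parts into the dissipation at levels $n$ and $n-1$ (the shift is telescoped as in the proof of Lemma \ref{lem:Full:YTheta:bu}). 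The leftover $C\tau_n(\|\mathbf e_n\|^2+\|\mathbf e_{n-1}\|^2)$ is then handled by Gronwall. For $b(\mathbf e_{n-1},\theta_h(t_n),e_n)$ I would use $\|\nabla\theta_h\|_{L^\infty(I;L^2)}$ from Lemma \ref{semi_discreYTheta:High_regular:stab} in the same way. The coupling $\beta(e_{n-1}\mathbf g,\mathbf e_n)$ is harmless.

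\textbf{Residual bounds.} The residuals are controlled in $\tau_n$-weighted dual norms. Testing with $\tau_n\mathbf e_n$ and applying Young yields $\frac{\nu\tau_n}{8}\|\nabla \mathbf e_n\|^2$ plus terms such as $C\int_{I_n}\|\nabla(\mathbf y_h(t_n)-\mathbf y_h(t))\|^2dt$. Summing over $n$ gives $\|\mathbf y_h-\Pi^r_\tau\mathbf y_h\|^2_{L^2(I;\mathbb H^1)}$, which I would bound by $C\tau\|\mathbf y_h\|^2_{H^{1/2}(I;\mathbb H^1)}$ using the $H^{1/2}$-in-time bounds of Lemma \ref{semi_discreYTheta:High_regular:stab}. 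The pointwise-in-time interpolant is not $H^{1/2}$-stable, so I would instead split $\mathbf y_h(t_n)-\mathbf y_h(t)=\int_t^{t_n}\partial_t\mathbf y_h$ and test the $\nu\mathbf a$ term against $\mathbf A_h$. Concretely,
\[
\nu\,\mathbf a(\mathbf y_h(t_n)-\mathbf y_h,\mathbf e_n)=\nu(\mathbf y_h(t_n)-\mathbf y_h,-\mathbf A_h\mathbf e_n),
\]
and the right-hand side is controlled by $\tau_n^{1/2}\|\partial_t\mathbf y_h\|_{L^2(I_n;\mathbb L^2)}$ from \eqref{semi:Y_Theta:stab:ineq3}. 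I expect the cleaner route is to apply Lemma \ref{int:L2:time:err}, whose first-order interpolation estimate gives $\|\mathbf y_h-\Pi^r_\tau\mathbf y_h\|_{L^2(I;\mathbb L^2)}\le C\tau\|\partial_t\mathbf y_h\|_{L^2(I;\mathbb L^2)}$. Combined with the $H^1$ residual $\le C\tau^{1/2}$ obtained by interpolating between $L^2(I;\mathbb L^2)$ (order $\tau$) and the bounded $L^2(I;\mathbb H^2)$-type quantity $\|\mathbf A_h\mathbf y_h\|_{L^2(I;\mathbb L^2)}$, this yields the $\tau^{1/2}$ rate. For the temperature I would use $\partial_t\theta_h\in L^2(I;L^2)$, $\|\Delta_h\theta_h\|_{L^2(I;L^2)}$, and Lemma \ref{semi_discreYTheta:High_regular:stab}. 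The boundary term is handled by the trace inequality together with $\|\theta_h\|_{H^{1/2}(I;H^1)}$.

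The nonlinear consistency term splits as
\[
\mathbf b(\mathbf y_h(t_{n-1})-\mathbf y_h,\mathbf y_h(t_n),\cdot)+\mathbf b(\mathbf y_h,\mathbf y_h(t_n)-\mathbf y_h,\cdot).
\]
Its pieces are bounded via \eqref{b_estimate} and \eqref{Interpolation:conti:1} by $C\,\|\mathbf y_h\|_{L^\infty(I;\mathbb H^1)}\,\|\mathbf y_h(t_{n-1})-\mathbf y_h\|_{\mathbb L^2}^{1/2}\|\cdot\|_{\mathbb H^1}^{1/2}$-type products. After summation these give $O(\tau^{1/2})$, using $\|\mathbf y_h(s)-\mathbf y_h(t)\|\le|s-t|^{1/2}\|\partial_t\mathbf y_h\|_{L^2(I_n)}$. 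The term $b(\cdot,\theta_h,\cdot)$ uses $\|\theta_h\|_{L^4(I;W^{1,4})}$.

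\textbf{Conclusion and main obstacle.} Summing, the discrete Gronwall inequality gives $\max_n(\|\mathbf e_n\|^2+\|e_n\|^2)+\sum_n\tau_n(\|\nabla\mathbf e_n\|^2+\|e_n\|^2_{H^1})\le C\tau$. To pass from node errors to the stated norms, I would write $\mathbf y_h-\mathbf y_\sigma=(\mathbf y_h-\Pi^r_\tau\mathbf y_h)+\sum_n\mathbf e_n\mathds 1_{I^r_n}$ and bound the first part via the interpolation bounds above. For $L^\infty(I;\mathbb L^2)$ the interpolation part needs $\|\mathbf y_h(t)-\mathbf y_h(t_n)\|\le\tau^{1/2}\|\partial_t\mathbf y_h\|_{L^2(I;\mathbb L^2)}$, which holds for velocity and temperature alike. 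The main obstacle is the explicit convection and coupling terms evaluated at $t_{n-1}$: absorbing $\|\nabla\mathbf e_{n-1}\|$ into the dissipation at the shifted level with variable steps needs assumption $(\mathbf B)$ for $\tau_{n-1}/\tau_n\le\epsilon_0$. It also needs uniform-in-$h$ bounds on $\|\nabla\mathbf y_h\|_{L^\infty}$, $\|\nabla\theta_h\|_{L^\infty}$ and $\|\theta_h\|_{L^4(W^{1,4})}$, all of which come from Lemmas \ref{semi_discreYTheta:stab}--\ref{semi_discreYTheta:High_regular:stab}.
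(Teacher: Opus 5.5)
Your skeleton is the paper's. Your $\mathbf e_n$ is its $\bm\eta^{\mathbf y}_{n,\tau}$, and the integrated error equation, skew-symmetry, absorption of the shifted gradients and discrete Gronwall step all match. Everything reduces to $\tau\|\partial_t\mathbf y_h\|^2_{L^2(I;\mathbb L^2(\Omega))}+\tau\|\partial_t\theta_h\|^2_{L^2(I;L^2(\Omega))}$ plus the $H^1$ residuals $\sum_n\tau_n\|\nabla(\mathbf y_h(t_n)-P^n_\tau\mathbf y_h)\|^2$ and $\sum_n\tau_n\|\theta_h(t_n)-P^n_\tau\theta_h\|^2_{H^1(\Omega)}$, together with the corresponding $\Pi^r_\tau$-errors in $L^2(I;H^1)$. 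The convection and coupling terms you call the main obstacle are routine. The real issue is bounding these residuals by $C\tau$ uniformly in $h$.

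The paper does this with Lemma \ref{int:L2:time:err} at $s=\frac12$ and the $H^{\frac12}(I;H^1)$ bounds of Lemma \ref{semi_discreYTheta:High_regular:stab}. Your objection to that is correct: $H^{\frac12}(I)$ does not embed into $L^\infty(I)$, so continuous functions with bounded $H^{\frac12}$-norm can have arbitrarily large nodal values. Hence no bound $\|w-\Pi^r_\tau w\|_{L^2(I)}\le C\tau^{1/2}\|w\|_{H^{1/2}(I)}$ can hold.

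However, none of your replacements closes the step, so as written the proposal does not prove the lemma.

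(i) Testing against $\mathbf A_h\mathbf e_n$ needs $\sum_n\tau_n\|\mathbf A_h\mathbf e_n\|^2$. The energy test with $\mathbf e_n$ does not produce this, and bounding it requires $\sum_n\tau_n\|\mathbf A_h\mathbf y_h(t_n)\|^2$.

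(ii) The interpolation $\|\nabla z\|^2_{L^2(I;\mathbb L^2)}\le\|z\|_{L^2(I;\mathbb L^2)}\|\mathbf A_h z\|_{L^2(I;\mathbb L^2)}$ with $z=\mathbf y_h-\Pi^r_\tau\mathbf y_h$ needs $\|\mathbf A_h\Pi^r_\tau\mathbf y_h\|^2_{L^2(I;\mathbb L^2)}=\sum_n\tau_n\|\mathbf A_h\mathbf y_h(t_n)\|^2$. This is the same nodal-sampling problem one derivative higher, and $\|\mathbf A_h\mathbf y_h\|_{L^2(I;\mathbb L^2)}$ does not control it.

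(iii) For the temperature, $\|\Delta_h\theta_h\|_{L^2(I;L^2)}$ is not bounded uniformly in $h$. By \eqref{definite_Delta_h}, the semi-discrete equation gives $-\Delta_h\theta_h=P_hf+\eta\,\gamma_h^*u-\partial_t\theta_h-(\text{convection})$, where $(\gamma_h^*u,\psi_h)=(u,\psi_h)_\Gamma$. For generic boundary data, $\|\gamma_h^*u\|$ blows up like $h^{-1/2}$. Your boundary term also falls back on $\|\theta_h\|_{H^{1/2}(I;H^1)}$, which is the route you discarded.

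What is missing is a local-in-time trace inequality on each $I_n$, and your $P^n_\tau$-form of the residual is exactly suited to it. Let $v:=\mathbf y_h-P^n_\tau\mathbf y_h$ on $I_n$. Then $\frac{d}{dt}\|\nabla v\|^2=-2(\partial_t\mathbf y_h,\mathbf A_h v)$. Also $\|\mathbf A_h v\|_{L^2(I_n;\mathbb L^2)}\le\|\mathbf A_h\mathbf y_h\|_{L^2(I_n;\mathbb L^2)}$, because time averaging commutes with $\mathbf A_h$. Integrate from $s$ to $t_n$ and average over $s\in I_n$. Then use $\|\nabla v\|^2_{L^2(I_n;\mathbb L^2)}\le\|v\|_{L^2(I_n;\mathbb L^2)}\|\mathbf A_h v\|_{L^2(I_n;\mathbb L^2)}$ with the temporal Poincar\'e inequality. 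This gives
\[
\|\nabla(\mathbf y_h(t_n)-P^n_\tau\mathbf y_h)\|^2\le 3\,\|\partial_t\mathbf y_h\|_{L^2(I_n;\mathbb L^2(\Omega))}\,\|\mathbf A_h\mathbf y_h\|_{L^2(I_n;\mathbb L^2(\Omega))}.
\]
Hence $\sum_n\tau_n\|\nabla(\mathbf y_h(t_n)-P^n_\tau\mathbf y_h)\|^2\le 3\tau\|\partial_t\mathbf y_h\|_{L^2(I;\mathbb L^2)}\|\mathbf A_h\mathbf y_h\|_{L^2(I;\mathbb L^2)}\le C\tau$ by \eqref{semi:Y_Theta:stab:ineq2}--\eqref{semi:Y_Theta:stab:ineq3}. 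The mean part $\mathbf y_h-P_\tau\mathbf y_h$ is then handled by your interpolation idea, which is legitimate there.

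For the temperature, run the same argument, with $-\Delta_h$ and the norm $\chi\|\nabla\cdot\|^2+\eta\gamma\|\cdot\|^2_\Gamma$, on $\vartheta_h:=P_h\theta-\theta_h$ instead of $\theta_h$. In its equation $u$ cancels. The remaining sources are the convection difference and $(\Delta_h(P_h\theta-R_h\theta),\cdot)$. Both are bounded in $L^2(I;L^2(\Omega))$ uniformly in $h$: the latter by an inverse estimate, Lemma \ref{Lem:L2:err} and $\theta\in V(I)$. Testing with $-\Delta_h\vartheta_h$, using $\vartheta_h(0)=0$, therefore bounds $\partial_t\vartheta_h$ and $\Delta_h\vartheta_h$ in $L^2(I;L^2(\Omega))$. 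The part $P_h\theta$ is covered by the continuous analogue of the trace inequality for $\theta\in H^1(I;L^2)\cap L^2(I;H^2)$ and the $H^1$-stability of $P_h$.
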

\begin{proof} 
We begin with the following decomposition:
\begin{equation}\label{FUll:decomp:YTheta_1}
\begin{aligned}
&\mathbf y_h-\mathbf y_{\sigma}=\mathbf y_h -\Pi^r_\tau \mathbf y_h+\Pi^r_\tau \mathbf y_h-\mathbf y_\sigma=\bm \zeta^{\mathbf y}_\tau+\bm \eta^{\mathbf y}_\tau,\ \ \theta_h-\theta_{\sigma}=\theta_h- \Pi^r_\tau \theta_h +\Pi^r_\tau\theta_h- \theta_\sigma=\zeta^{\theta}_\tau+\eta^{\theta}_\tau
\end{aligned}
\end{equation}
and set $\bm \eta^{\mathbf y }_{n, \tau}=\bm \eta^{\mathbf y}_{\tau}(t_n)$. Integrating from $t_{n-1}$ to $t_n$ for \eqref{semi_discrete:state}, subtracting \eqref{Full_discrete:state} from it with $(\mathbf v_h,\psi_h)= (\bm \eta^{\mathbf y}_{n, \tau}, \eta^\theta_{n, \tau})$ and using the decomposition \eqref{FUll:decomp:YTheta_1},  we obtain
\begin{align}
&\frac{1}{2}\|\bm \eta_{n,\tau}^{\mathbf y}\|^2-\frac{1}{2}\|\bm \eta_{n-1,\tau}^{\mathbf y}\|^2+\frac{1}{2}\| \eta_{n,\tau}^{\theta}\|^2-\frac{1}{2}\|\eta_{n-1,\tau}^{\theta}\|^2+\nu\tau_n\|\nabla \bm \eta_{n,\tau}^{\mathbf y}\|^2+\chi\tau_n\|\nabla \eta_{n, \tau}^\theta\|^2+\eta\gamma\tau_n\|\eta_{n,\tau}^\theta\|^2_\Gamma\nonumber\\
&\leq \int_{t_{n-1}}^{t_n}\big[-\nu \mathbf a(\bm \zeta^{\mathbf y}_\tau, \bm \eta_{n,\tau}^{\mathbf y})-\chi a( \zeta^{\theta}_\tau,  \eta_{n,\tau}^{\theta})-\eta\gamma(\zeta^{\theta}_\tau,\eta_{n,\tau}^{\theta})_\Gamma-\beta(\zeta_\tau^\theta+\theta_h(t_n)-\theta_h(t_{n-1})+\eta_{n-1,\tau}^\theta, \bm \eta_{n,\tau}^{\mathbf y}\cdot \mathbf g)\big] dt\nonumber\\
&+\int_{t_{n-1}}^{t_n}\big[\mathbf b(\mathbf y_{n-1,h},\mathbf y_{n,h}, \bm \eta_{n,\tau}^{\mathbf y})-\mathbf b(\mathbf y_h,\mathbf y_h, \bm\eta_{n,\tau}^{\mathbf y})\big]dt+\int_{t_{n-1}}^{t_n}\big[ b(\mathbf y_{n-1,h}, \theta_{n,h},  \eta_{n,\tau}^{\theta})- b(\mathbf y_h, \theta_h, \eta_{n,\tau}^{\theta})\big]dt\nonumber\\
&=J_1+J_2+J_3.\label{Full:err:YTheta:1}
\end{align}
By the Cauchy-Schwarz inequality, Young's inequality, and the trace inequality,  the term $J_1$ satisfies
\begin{align*}
J_1&\leq \frac{\nu}{4}\tau_n\|\nabla \bm \eta_{n,\tau}^{\mathbf y} \|+ \frac{\chi}{4}\tau_n\|\nabla\eta_{n,\tau}^{\theta} \|^2+ \frac{\eta\gamma}{4}\tau_n\|\eta_{n,\tau}^{\theta} \|^2_\Gamma+\frac{3\nu}{2}\int_{t_{n-1}}^{t_n}\|\nabla \bm \zeta^{\mathbf y}_\tau \|^2dt
+C\Big(\max\{\chi,\eta\gamma\}\int_{t_{n-1}}^{t_n}\|\zeta_{\tau}^\theta\|^2_{H^1(\Omega)}dt\\
&+\frac{\beta^2|\mathbf g|^2}{\nu}\big(\int_{t_{n-1}}^{t_n}\|\zeta_{\tau}^\theta\|^2dt+\tau_n\|\theta_h(t_n)-\theta_h(t_{n-1})\|^2+\tau_n\|\eta_{n-1,\tau}^\theta\|^2\big)\Big).
\end{align*}
Using Lemma \ref{semi_discreYTheta:stab}, \eqref{Interpolation:conti:1} and Young's inequality, we can estimate the trilinear term $J_2$ as follows:
\begin{align}
J_2&=\int_{t_{n-1}}^{t_n}-\mathbf b(\bm \zeta_\tau^{\mathbf y}, \mathbf y_h, \bm \eta_{n,\tau}^{\mathbf y})dt+\int_{t_{n-1}}^{t_n}-\mathbf b(\Pi^r_\tau \mathbf y_h, \bm \zeta_\tau^{\mathbf y}, \bm \eta_{n,\tau}^{\mathbf y})dt+\int_{t_{n-1}}^{t_n}-\mathbf b(\bm \eta_{n-1 ,\tau}^{\mathbf y},\Pi^r_\tau \mathbf y_h, \bm \eta_{n,\tau}^{\mathbf y})dt\nonumber\\
&+\int_{t_{n-1}}^{t_n}-\mathbf b(\mathbf y_h(t_n)-\mathbf y_{h}(t_{n-1}),\Pi^r_\tau \mathbf y_h, \bm \eta_{n,\tau}^{\mathbf y})dt\nonumber\\
&\leq\frac{\nu}{4}\tau_n\|\nabla \bm \eta^{\mathbf y}_{n,\tau}\|^2+\frac{\nu}{8}\tau_{n-1}\|\nabla \bm \eta^{\mathbf y}_{n-1,\tau}\|^2+C\Big(\frac{1}{\nu}\int_{t_{n-1}}^{t_n}\|\nabla \bm \zeta^{\mathbf y}_{\tau}\|^2dt+\frac{1}{\nu^3}\tau_{n-1}\|\bm \eta_{n-1,\tau}^{\mathbf y}\|^2\nonumber\\
&+\tau_n\frac{1}{\nu}\int_{t_{n-1}}^{t_{n}}\|\partial_t\mathbf y_{h}\|dt \,\|\nabla (\mathbf y_h(t_n)-\mathbf y_h(t_{n-1}))\|\Big).\label{Full:err:YTheta:bu:1}
\end{align}
Using a similar argument, we have the estimate for $J_3$
\begin{align*}
J_3&\leq\frac{\nu}{8}\tau_{n-1}\|\nabla \bm \eta_{n-1,\tau}^{\mathbf y}\|^2+ \frac{\chi}{4}\tau_n\|\nabla\eta_{n,\tau}^{\theta} \|^2+ \frac{\eta\gamma}{4}\tau_n\|\eta_{n,\tau}^{\theta} \|^2_\Gamma+C\Big(\frac{1}{\min\{\chi,\eta\gamma\}}\int_{t_{n-1}}^{t_n}\|\nabla\bm \zeta_\tau^{\mathbf y}\|^2 dt\\
&+\int_{t_{n-1}}^{t_n}\| \zeta_\tau^{\theta}\|^2_{H^1(\Omega)} dt+\frac{1}{\nu\min\{\chi,\eta\gamma\}^2}\tau_{n-1}\| \bm \eta_{n-1,\tau}^{\mathbf y}\|^2+\tau_n\frac{1}{\min\{\chi,\eta\gamma\}}\int_{t_{n-1}}^{t_{n}}\|\partial_t\mathbf y_{h}\|dt \,\|\nabla (\mathbf y_h(t_n)-\mathbf y_h(t_{n-1}))\|\Big).
\end{align*}
Thus, combining the above estimates with \eqref{Full:err:YTheta:1}  and using the equality $\frac{1}{2}a^2=\frac{1}{4}a^2+\frac{1}{4}a^2-\frac{1}{4}b^2+\frac{1}{4}b^2$,  we reorganize and sum over $n$ from $1$ to $k$ for $1 \leq k \leq N_{\tau}$ to \eqref{Full:err:YTheta:1}, obtaining
\begin{align*}
&\|\bm \eta_{k,\tau}^{\mathbf y}\|^2+\| \eta_{k,\tau}^{\theta}\|^2+\nu\sum_{n=1}^k\tau_n\|\nabla \bm \eta_{n,\tau}^{\mathbf y}\|^2+\chi\sum_{n=1}^k\tau_n\|\nabla \eta_{n, \tau}^\theta\|^2+\eta\gamma\sum_{n=1}^k\tau_n\|\eta_{n,\tau}^\theta\|^2_\Gamma\nonumber\\
&\leq C  \Big(\int_{0}^{t_k}\big[\|\nabla \bm \zeta^{\mathbf y}_\tau \|^2
+\|\zeta_{\tau}^\theta\|^2_{H^1(\Omega)}\big]dt+\tau\int_0^{t_k}\big[\|\partial_t \theta_h\|^2+\|\partial_t \mathbf y_h\|^2\big]dt+\tau\sum_{n=1}^k\tau_n\|\nabla (\mathbf y_h(t_n)-\mathbf y_h(t_{n-1}))\|^2\\
&+\sum_{n=1}^{k-1}\tau_{n}\big(\|\eta_{n,\tau}^\theta\|^2+\|\bm \eta_{n,\tau}^{\mathbf y}\|^2\big)\Big),
\end{align*}
where we set  $\tau_0:=\tau_1$ and used the fact $\bm \eta_{0,h}^{\mathbf y}=\eta_{0,h}^{\theta}=0$. The discrete Gronwall inequality implies
\begin{align*}
&\max_{1\leq n\leq N_\tau}\Big(\|\bm \eta_{n,\tau}^{\mathbf y}\|^2+\| \eta_{n,\tau}^{\theta}\|^2\Big)+\sum_{n=1}^{N_\tau}\tau_n\Big(\nu\|\nabla \bm \eta_{n,\tau}^{\mathbf y}\|^2+\min\{\chi,\eta\gamma\}\|\eta_{n, \tau}^\theta\|^2_{H^1(\Omega)}\Big)\\
&\leq C\Big(\|\nabla \bm \zeta^{\mathbf y}_\tau \|^2_{L^2(I;\mathbb L^2(\Omega))}+\|\zeta_{\tau}^\theta\|^2_{L^2(I;H^1(\Omega))}+\tau\big(\|\partial_t \theta_h\|^2_{L^2(I;L^2(\Omega))}+\|\partial_t \mathbf y_h\|_{L^2(I;\mathbf L^2(\Omega))}^2\big)\Big).
\end{align*}
This inequality, together with Lemmas \ref{int:L2:time:err}, \ref{semi_discreYTheta:stab} and \ref{semi_discreYTheta:High_regular:stab}, and the triangle inequality, proves \eqref{Full:YTheta:time:err}.
\end{proof}
\begin{remark} 
A similar error estimate for \( u \in L^2(I; L^2(\Gamma)) \) that is needed for the subsequent stability analysis of the adjoint equation (see Lemma \ref{lem:stab:Full:adjoint:bu}), is provided in Appendix \ref{Appendix:B}.
\end{remark}
\begin{remark}\label{remark:conv:Theta}
Using Lemmas 4.17 and 4.18 from \cite[p. 476]{vexler_wagner_2024} and the above lemma, we have
\begin{align}\label{err:conv:theta:bu}
\sup_{1\leq n\leq N_\tau}\tau_n \|\theta_{n,h}\|^2_{H^1(\Omega)}\overset{\tau\rightarrow 0}{\xrightarrow{\hspace{1cm}}} 0.
\end{align}
\end{remark}

We now present the continuous dependence of \( (\mathbf{y}_\sigma, \theta_\sigma) \) on the control $u$.

\begin{lemma}\label{Full:lem:Lip:YTheta} Given  \( \max\big\{ \|u\|_{H^{\frac{1}{4}}(I; L^2(\Gamma))}, \|u\|_{L^2(I; H^{\frac{1}{2}}(\Gamma))}, \|v\|_{L^2(I; L^2(\Gamma))} \big\} \leq M \), where \( M \) is a positive constant. Then for a sufficiently small $\tau$,  it follows that
\begin{align*}
\|\mathbf{y}_\sigma(u) - \mathbf{y}_\sigma(v)\|_{L^\infty(I; \mathbb{L}^2(\Omega))} &+ \|\theta_\sigma(u) - \theta_\sigma(v)\|_{L^\infty(I; L^2(\Omega))}+\|\mathbf{y}_\sigma(u) - \mathbf{y}_\sigma(v)\|_{L^2(I;\mathbb H^1(\Omega))}\\
&+\|\theta_\sigma(u)-\theta_\sigma(v)\|_{L^2(I;H^1(\Omega))} \leq C_{M} \|u - v\|_{L^2(I; L^2(\Gamma))},
\end{align*}
where \(  C_{M}\) is a positive constant depending on \( M \).
\end{lemma}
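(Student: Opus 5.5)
We would prove the estimate by a discrete energy argument for the difference of the two fully discrete solutions, closed with the discrete Gronwall inequality. Set $\mathbf e_n:=\mathbf y_{n,h}(u)-\mathbf y_{n,h}(v)$ and $e_n:=\theta_{n,h}(u)-\theta_{n,h}(v)$, with $\mathbf e_0=0$ and $e_0=0$. Subtracting the two instances of \eqref{Full_discrete:state} gives, for all $(\mathbf v_h,\psi_h)\in\mathbb X_h\times V_h$,
\begin{align*}
&\Big(\frac{\mathbf e_n-\mathbf e_{n-1}}{\tau_n},\mathbf v_h\Big)+\nu\mathbf a(\mathbf e_n,\mathbf v_h)+\mathbf b(\mathbf e_{n-1},\mathbf y_{n,h}(u),\mathbf v_h)+\mathbf b(\mathbf y_{n-1,h}(v),\mathbf e_n,\mathbf v_h)+\beta(e_{n-1}\mathbf g,\mathbf v_h)=0,\\
&\Big(\frac{e_n-e_{n-1}}{\tau_n},\psi_h\Big)+\chi a(e_n,\psi_h)+b(\mathbf e_{n-1},\theta_{n,h}(u),\psi_h)+b(\mathbf y_{n-1,h}(v),e_n,\psi_h)+\eta\gamma(e_n,\psi_h)_\Gamma=\eta(u^n-v^n,\psi_h)_\Gamma .
\end{align*}
We test with $\tau_n(\mathbf e_n,e_n)$. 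The terms $\mathbf b(\mathbf y_{n-1,h}(v),\mathbf e_n,\mathbf e_n)$ and $b(\mathbf y_{n-1,h}(v),e_n,e_n)$ vanish by skew-symmetry. The $\beta$-term and the boundary source are handled by Young's inequality; the source contributes $\frac{\eta}{\gamma}\int_{t_{n-1}}^{t_n}\|u-v\|_\Gamma^2dt$ because $\|u^n-v^n\|^2_\Gamma\tau_n\le\int_{I_n}\|u-v\|^2_\Gamma$.

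The two remaining convective terms are estimated with the interpolation inequality \eqref{Interpolation:conti:1}:
\[
|\mathbf b(\mathbf e_{n-1},\mathbf y_{n,h}(u),\mathbf e_n)|\le C\|\mathbf e_{n-1}\|^{1/2}\|\nabla\mathbf e_{n-1}\|^{1/2}\|\nabla\mathbf y_{n,h}(u)\|\,\|\mathbf e_n\|^{1/2}\|\nabla \mathbf e_n\|^{1/2}.
\]
We use $\|\mathbf y_\sigma(u)\|_{L^\infty(I;\mathbb H^1)}\le\widetilde G_2$ from \eqref{Full:stab:state:ineq2}, which is where the bound $M$ on $u$ enters. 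Young's inequality then gives $\frac{\nu}{8}\tau_n\|\nabla\mathbf e_n\|^2+\frac{\nu}{8}\tau_{n-1}\|\nabla\mathbf e_{n-1}\|^2+C\tau_n(\|\mathbf e_n\|^2+\|\mathbf e_{n-1}\|^2)$. The step ratio $\tau_n/\tau_{n-1}$ is bounded by assumption $(\mathbf B)$, and we use the same shift trick $\frac{\nu}{4}a^2=\frac{\nu}{8}a^2+(\frac{\nu}{8}a^2-\frac{\nu}{8}b^2)+\frac{\nu}{8}b^2$ as in Lemma~\ref{lem:Full:YTheta:time:err}.

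The temperature coupling term $b(\mathbf e_{n-1},\theta_{n,h}(u),e_n)$ is the main obstacle. Here we only control $\theta_\sigma(u)$ in $L^2(I;H^1)$ from \eqref{Full:stab:state:ineq1}, not in $L^\infty(I;H^1)$. We would bound it by $C\|\mathbf e_{n-1}\|^{1/2}\|\nabla\mathbf e_{n-1}\|^{1/2}\|\nabla\theta_{n,h}(u)\|\,\|e_n\|^{1/2}\|e_n\|^{1/2}_{H^1}$. Young's inequality then produces a Gronwall weight $C\tau_n\|\nabla\theta_{n,h}(u)\|^2(\|\mathbf e_{n-1}\|^2+\|e_n\|^2)$. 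The coefficient $\tau_n\|\nabla\theta_{n,h}(u)\|^2$ is summable, since its sum is bounded by $\widetilde G_1^2$.

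The weight multiplies the \emph{implicit} quantity $\|e_n\|^2$. To absorb it into the left-hand side we need $C\tau_n\|\theta_{n,h}(u)\|_{H^1}^2\le\frac14$, and this is exactly what Remark~\ref{remark:conv:Theta}, i.e.\ \eqref{err:conv:theta:bu}, provides for sufficiently small $\tau$. This also explains the smallness hypothesis in the statement; the same applies to the $\|\mathbf e_n\|^2$ weight coming from the velocity term. If that route fails, we would instead use the higher-regularity bounds in Lemma~\ref{semi_discreYTheta:High_regular:stab} transferred through Lemma~\ref{lem:Full:YTheta:time:err}.

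After absorption, we sum over $n=1,\dots,k$. The discrete Gronwall inequality with summable weights then yields
\[
\max_n(\|\mathbf e_n\|^2+\|e_n\|^2)+\sum_n\tau_n(\nu\|\nabla\mathbf e_n\|^2+\min\{\chi,\eta\gamma\}\|e_n\|_{H^1}^2)\le C_M\|u-v\|^2_{L^2(I;L^2(\Gamma))},
\]
which is the claim. Here $C_M$ depends on $M$ through $\widetilde G_1$ and $\widetilde G_2$.
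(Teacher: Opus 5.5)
Your proposal is correct and follows essentially the paper's own proof. The shared steps are: subtract the two fully discrete systems, test with $\tau_n(\mathbf e_n,e_n)$, bound the cross convective terms via \eqref{Interpolation:conti:1} and Lemma~\ref{lem:Full:YTheta:bu}, use Remark~\ref{remark:conv:Theta} to make the implicit weight $C\tau_n\|\theta_{n,h}(u)\|^2_{H^1(\Omega)}\le\frac12$, and close with the discrete Gronwall inequality.

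One minor correction: $\mathbb X_h$ is only discretely divergence-free, so the skew-symmetrized forms also contain $\mathbf c(\mathbf e_{n-1},\mathbf e_n,\mathbf y_{n,h}(u))$ and $c(\mathbf e_{n-1},e_n,\theta_{n,h}(u))$. These must be bounded through $\|\mathbf y_{n,h}(u)\|_{\mathbb L^4(\Omega)}$ and $\|\theta_{n,h}(u)\|_{L^4(\Omega)}$, not by your displayed estimate. They only add summable weights $\tau_n\|\nabla\mathbf y_{n,h}(u)\|^4$ and $\tau_n\|\theta_{n,h}(u)\|^2\|\theta_{n,h}(u)\|^2_{H^1(\Omega)}$ on the explicit term $\|\mathbf e_{n-1}\|^2$, as in the paper's $G_n$ and $H_n$, so the argument is unaffected.
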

\begin{proof}
Let $\big(\mathbf e^{\mathbf y}_{n,h},e^{\theta}_{n,h}\big)=\big(\mathbf y_{n,h}(u)-\mathbf y_{n,h}(v),\theta_{n,h}(u)-\theta_{n,h}(v)\big)$. For any $n=1,\cdots, N_\tau$ it follows from \eqref{Full_discrete:state} that
\begin{equation}
\begin{aligned}
& \big(\mathbf e^{\mathbf y}_{n,h}-\mathbf e^{\mathbf y}_{n-1,h}, \mathbf v_h\big)+\tau_n\nu\nu \mathbf a(\mathbf e^{\mathbf y}_{n,h}, \mathbf v_h)+\tau_n\mathbf b(\mathbf e^{\mathbf y}_{n-1,h}, \mathbf y_{n,h}(u), \mathbf v_h)
+\tau_n\mathbf b(\mathbf y_{n-1,h}(v), \mathbf e^{\mathbf y}_{n,h},\mathbf v_h)\\
&+\tau_n\beta(e^{\theta}_{n-1,h}\mathbf g, \mathbf v_h)=0,\\
&(e^{\theta}_{n,h}-e^{\theta}_{n-1,h}, \psi_h)+\chi\tau_n a(e^{\theta}_{n-1,h}, \psi_h)+\tau_n b(\mathbf e^{\mathbf y}_{n-1,h}, \theta_{n,h}(u), \psi)+\tau_n b(\mathbf y_{n-1,h}(v), e^{\theta}_{n,h}, \psi_h)\\
&+\eta\gamma(e^{\theta}_{n,h}, \psi_h)= \tau_n \eta ({u}^n-{v}^n, \psi_h)_\Gamma,\\
&\mathbf e_{0,h}^{\mathbf y}=0,\quad \ e_{0,h}^\theta=0.
\end{aligned}
\end{equation}
Taking $(\mathbf v_h,\psi_h)=(\mathbf e_{n,h}^{\mathbf y}, e_{n,h}^\theta)$, we obtain
\begin{align}
&\frac{1}{2}\|\mathbf e_{n,h}^{\mathbf y}\|^2-\frac{1}{2}\|\mathbf e^{\mathbf y}_{n-1,h}\|^2+\frac{1}{2}\| e_{n,h}^{ \theta}\|^2-\frac{1}{2}\| e_{n-1,h}^{\theta}\|^2+\nu\tau_n\|\nabla \mathbf e_{n,h}^{\mathbf y}\|^2+\chi\tau_n\|\nabla e^{\theta}_{n,h}\|^2+\eta\gamma \tau_n\|e^{\theta}_{n,h}\|_\Gamma^2\nonumber\\
&\leq -\tau_n\mathbf b(\mathbf e_{n-1,h}^{\mathbf y}, \mathbf y_{n,h}(u), \mathbf e_{n,h}^{\mathbf y})-\tau_n\mathbf b(\mathbf e^{\mathbf y}_{n-1,h}, \theta_{n,h}(u),e_{n,h}^{\theta})+\big[\tau_n\eta(u^n-v^n, e_{n,h}^\theta)_\Gamma-\tau_n\beta ( e_{n-1,h}^{\theta}\mathbf g, \mathbf e_{n,h}^{\mathbf y})\big]\nonumber\\
&=J_1+J_2+J_3.\label{Full:YTheta:Lip}
\end{align}
Using \eqref{Interpolation:conti:1} and \eqref{Interpolation:A:1}, we can deduce that
\begin{align*}
&J_1\leq G_n\|\mathbf e_{n-1,h}^{\mathbf y}\|^2+\frac{\nu}{4}\tau_n\|\nabla \mathbf e_{n,h}^{\mathbf y}\|^2+\frac{\nu}{8}\tau_{n-1}\|\nabla \mathbf e^{\mathbf y}_{n-1,h}\|^2
\end{align*}
with $G_n=\tau_nC\big(\frac{1}{\nu} \big(\|\nabla \mathbf y_{n-1,h}\|^2+\|\mathbf A_h\mathbf y_{n-1,h}\|^2\big)+\frac{1}{\nu^3}\|\nabla\mathbf y_{n,h}(u)\|^4\big)$ and $\tau_0:=\tau_1$. From inequality \eqref{Interpolation:conti:1} we deduce that
\begin{align*}
J_2&\leq C\tau_n\|\mathbf e^{\mathbf y}_{n-1,h}\|^\frac{1}{2}\|\nabla \mathbf e^{\mathbf y}_{n-1,h}\|^\frac{1}{2}\|e_{n,h}^{\theta}\|^\frac{1}{2}\| e_{n,h}^{\theta}\|^\frac{1}{2}_{H^1(\Omega)}\|\theta_{n,h}(u)\|_{H^1(\Omega)}\nonumber\\
&+C\tau_n\|\mathbf e^{\mathbf y}_{n-1,h}\|^\frac{1}{2}\|\nabla \mathbf e^{\mathbf y}_{n-1,h}\|^\frac{1}{2}\|\nabla e_{n,h}^\theta\|\,\|\theta_{n,h}(u)\|^\frac{1}{2}\|\theta_{n,h}(u)\|^\frac{1}{2}_{H^1(\Omega)}.
\end{align*}
We can apply Young's inequality to obtain
\begin{align*}
J_2\leq \frac{\nu}{8}\tau_{n-1}\|\nabla \mathbf e_{n-1,h}^{\mathbf y}\|^2
&+\frac{\chi}{2}\tau_n\|\nabla e_{n,h}^\theta\|^2+\frac{\eta\gamma}{4}\tau_n\| e_{n,h}^\theta\|^2_\Gamma+C\Big(H_n\|\mathbf e^{\mathbf y}_{n-1,h}\|^2+\tau_n \|\theta_{n,h}(u)\|_{H^1(\Omega)}^2\|e^\theta_{n,h}\|^2\Big)
\end{align*}
with $H_n=\tau_n\|\theta_{n,h}(u)\|^2_{H^1(\Omega)}+\tau_n\|\theta_{n,h}(u)\|^2\|\theta_{n,h}(u)\|^2_{H^1(\Omega)} $. By the Cauchy-Schwarz, Young's and Sobolev inequalities we have
\begin{align*}
J_3\leq \frac{\nu}{4}\tau_n\|\nabla \mathbf e_{n,h}^{\mathbf y}\|^2+\frac{\eta\gamma}{4}\tau_n\| e_{n,h}^\theta\|^2_\Gamma+C\tau_n \Big(\|e_{n-1,h}^{\theta}\|^2+\|u^n-v^n\|^2_\Gamma\Big).
\end{align*}
Thus, combining the above estimates with \eqref{Full:YTheta:Lip} and summing over $n = 1, \dots, k$ for any $1 \leq k \leq N_\tau$, we obtain the following bound
\begin{align*}
\|\mathbf e_{k,h}^{\mathbf y}\|^2&+\| e_{k,h}^{ \theta}\|^2+\nu\tau_k\|\nabla \mathbf e_{k,h}^{\mathbf y}\|^2+\chi\tau_k\|\nabla e^{\theta}_{k,h}\|^2+\eta\gamma \tau_k\|e^{\theta}_{k,h}\|_\Gamma^2\leq C\Big(\sum_{n=1}^k\Big( (G_n+H_n)\|\mathbf e_{n-1,h}^{\mathbf y}\|^2\\
&+\tau_n\|e_{n-1,h}^{\theta}\|^2\Big)+\sum_{n=1}^k\tau_n\|\theta_{n,h}(u)\|^2_{H^1(\Omega)}\|
e_{n,h}^\theta\|^2+\int_0^{t_k}\|u-v\|_\Gamma^2dt\Big).
\end{align*}
Remark \ref{remark:conv:Theta} implies that $\tau_n\|\theta_{n,h}(u)\|^2_{H^1(\Omega)} \to 0$ uniformly in $n$ as $\tau \to 0$. Hence, we can choose the temporal mesh size fine enough so that $C  \tau_n \|\theta_{n,h}(u)\|^2_{H^1(\Omega)} \leq \frac{1}{2}$ holds for all $n$. This together with Lemma \ref{lem:Full:YTheta:bu} and the Gronwall inequality finishes the proof.
\end{proof}

\begin{remark} 
In Lemma \ref{Full:lem:Lip:YTheta}, one can use \( \max\big\{ \|u\|_{L^2(I; L^2(\Gamma))}, \|v\|_{L^2(I; L^2(\Gamma))} \big\} \leq M \) to replace the original hypothesis \( \max\big\{ \|u\|_{H^{\frac{1}{4}}(I; L^2(\Gamma))}, \|u\|_{L^2(I; H^{\frac{1}{2}}(\Gamma))}, \|v\|_{L^2(I; L^2(\Gamma))} \big\} \leq M \). Then, for a sufficiently small \( \sigma \), the conclusion of Lemma \ref{Full:lem:Lip:YTheta} can be derived similarly from Lemma \ref{appB:Lem:full:state:err}.
\end{remark}

Similarly, using a suitable duality argument we can deduce the improved temporal convergence order for the fully discrete state variable under the norm $L^2(I;\mathbb L^2(\Omega))$ ( or $L^2(I;L^2(\Omega))$).
\begin{lemma}\label{Full_err:YTetha:time:L2} Under the assumptions of Lemma \ref{lem:Full:YTheta:time:err} , the following estimate holds:
\begin{equation}\label{Full:YThetaL2:time:err}
\begin{aligned}
\|\mathbf y_h-\mathbf y_\sigma\|_{L^2(I;\mathbb L^2(\Omega))}+\|\theta_h-\theta_\sigma\|_{L^2(I; L^2(\Omega))}\leq C \tau.
\end{aligned}
\end{equation}
\end{lemma}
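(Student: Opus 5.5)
We mimic the proof of Lemma \ref{semi_err:YTheta:L2}, now in time. First split as in \eqref{FUll:decomp:YTheta_1}:
\[
\mathbf y_h-\mathbf y_\sigma=\bm\zeta^{\mathbf y}_\tau+\bm\eta^{\mathbf y}_\tau,\qquad
\theta_h-\theta_\sigma=\zeta^\theta_\tau+\eta^\theta_\tau .
\]
For the interpolation parts, Lemma \ref{int:L2:time:err} together with $\|\partial_t\mathbf y_h\|_{L^2(I;\mathbb L^2(\Omega))}+\|\partial_t\theta_h\|_{L^2(I;L^2(\Omega))}\le C$ (Lemmas \ref{semi_discreYTheta:stab} and \ref{semi_discreYTheta:High_regular:stab}) gives $\|\bm\zeta^{\mathbf y}_\tau\|_{L^2(I;\mathbb L^2)}+\|\zeta^\theta_\tau\|_{L^2(I;L^2)}\le C\tau$. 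It therefore remains to show
\[
\Big(\sum_{n=1}^{N_\tau}\tau_n\big(\|\bm\eta^{\mathbf y}_{n,\tau}\|^2+\|\eta^\theta_{n,\tau}\|^2\big)\Big)^{1/2}\le C\tau .
\]

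\textbf{Discrete backward dual problem.} Introduce a fully discrete, backward-in-time adjoint $(\bm\varphi_{n,h},z_{n,h})\in\mathbb X_h\times V_h$, for $n=N_\tau,\dots,1$, with $\bm\varphi_{N_\tau+1,h}=0$ and $z_{N_\tau+1,h}=0$. Its form is the discrete transpose of the IMEX operator in \eqref{Full_discrete:state}, linearized around $(\mathbf y_{n-1,h},\theta_{n,h})$, with right-hand sides $(\bm\eta^{\mathbf y}_{n,\tau},\eta^\theta_{n,\tau})$. Because the scheme lags $\mathbf y_{n-1,h}$ and $\theta_{n-1,h}$, the transposed terms appear on the shifted grid. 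Examples are $\tau_{n+1}\mathbf b(\mathbf w_h,\mathbf y_{n+1,h},\bm\varphi_{n+1,h})$ and $\beta\tau_{n+1}(\mathbf g\cdot\bm\varphi_{n+1,h},\zeta_h)$. Following \eqref{Full:stab:state:ineq2} (testing with $-\mathbf A_h\bm\varphi_{n,h}$ and $-\Delta_h z_{n,h}$, then discrete Gronwall backward), we obtain the dual stability bound
\[
\max_n\big(\|\nabla\bm\varphi_{n,h}\|+\|z_{n,h}\|_{H^1(\Omega)}\big)
+\Big(\sum_n\tau_n\big(\|\mathbf A_h\bm\varphi_{n,h}\|^2+\|\Delta_h z_{n,h}\|^2\big)\Big)^{1/2}
+\Big(\sum_n\frac{\|\bm\varphi_{n,h}-\bm\varphi_{n+1,h}\|^2+\|z_{n,h}-z_{n+1,h}\|^2}{\tau_n}\Big)^{1/2}
\le C\|(\bm\eta^{\mathbf y}_\tau,\eta^\theta_\tau)\|_{L^2(I;L^2)} .
\]
Assumption $(\mathbf B)$, i.e. $|\tau_n-\tau_{n-1}|\le\epsilon_1\tau^2$, is used here to control the mismatch of weights $\tau_n$ versus $\tau_{n+1}$ that the shifted coefficients create.

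\textbf{Error identity.} Test the dual system with $(\bm\eta^{\mathbf y}_{n,\tau},\eta^\theta_{n,\tau})$ and sum by parts in time, using $\bm\eta_{0,\tau}=0$. Then insert the error equation from the proof of Lemma \ref{lem:Full:YTheta:time:err}, tested with $(\bm\varphi_{n,h},z_{n,h})$. The principal parts cancel, leaving $\|\bm\eta_\tau\|^2_{L^2(L^2)}$ equal to consistency terms of three kinds.
\begin{itemize}
\item Linear terms in $\bm\zeta_\tau$ and $\zeta_\tau$. Move the gradient onto the dual: $\int\nu\mathbf a(\bm\zeta^{\mathbf y}_\tau,\bm\varphi_{n,h})=-\nu\int(\bm\zeta^{\mathbf y}_\tau,\mathbf A_h\bm\varphi_{n,h})$, and analogously with $\Delta_h$ together with the Robin term via \eqref{definite_Delta_h}. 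These are bounded by $\|\bm\zeta_\tau\|_{L^2(L^2)}\|\mathbf A_h\bm\varphi\|\le C\tau\,\|\bm\eta\|$.
\item Lag terms, such as $\beta(\theta_h(t_n)-\theta_h(t_{n-1}),\mathbf g\cdot\bm\varphi_{n,h})$ and $\mathbf b(\mathbf y_h(t_n)-\mathbf y_h(t_{n-1}),\cdot,\cdot)$. These are $\tau_n^{1/2}\big(\int_{I_n}\|\partial_t\cdot\|^2\big)^{1/2}$ times dual norms, which gives $O(\tau)$ after Cauchy--Schwarz.
\item Trilinear terms with a $\zeta$ in the first or second slot. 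Use Lemma \ref{lem:b:xingzhi}, \eqref{Interpolation:A:1} and \eqref{DeltaTheta_ineq1} so that the $\zeta$ factor appears only in $L^2$, and put $\|\mathbf A_h\bm\varphi\|^{1/2}$ and $\|\Delta_h z\|^{1/2}$ on the dual, exactly as in the estimates of $J_3$ and $J_4$ in Lemma \ref{semi_err:YTheta:L2}. The bounds $\mathbf y_h\in L^\infty(\mathbb H^1)$, $\mathbf A_h\mathbf y_h\in L^2$ and $\theta_h\in L^\infty(H^1)\cap L^4(W^{1,4})$ are available from Lemma \ref{semi_discreYTheta:High_regular:stab}.
\end{itemize}
Dividing by $\|(\bm\eta^{\mathbf y}_\tau,\eta^\theta_\tau)\|_{L^2(I;L^2)}$ gives the claim.

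\textbf{Main obstacle.} The hardest step is the dual stability under the shifted IMEX structure. The dual needs $\theta_{n,h}$ in $H^1$ at the discrete level, which I would handle as in Lemma \ref{Full:lem:Lip:YTheta}, using Remark \ref{remark:conv:Theta} with $\tau$ small. It also needs the remaining $\mathbf b(\bm\eta_{n-1},\mathbf y_{n,h},\bm\varphi_n)$-type terms to be absorbed by Gronwall rather than left as consistency terms. I would try grouping those $\bm\eta$-terms into the dual operator itself, so that they cancel exactly, and only treat the leftover weight differences $\tau_n-\tau_{n+1}$ via $(\mathbf B)$.
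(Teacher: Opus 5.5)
Your proposal is correct and follows the paper's proof essentially step by step. It uses the same $\Pi^r_\tau$ splitting, a backward fully discrete dual problem carrying the shifted $\frac{\tau_{n+1}}{\tau_n}$-weighted terms with its $\mathbf A_h/\Delta_h$ stability bound, and the same three groups of consistency terms. The regrouping you propose in your last paragraph is exactly how the paper builds its dual: it uses the mixed coefficients $\mathbf y_{n+1,h}$, $\mathbf y_h(t_{n-1})$, $\theta_h(t_n)$, $\mathbf y_{n-1,h}$, so that every $\bm\eta$-term cancels. Because the temperature coefficient there is the semi-discrete $\theta_h(t_n)$, which is bounded in $H^1$ by Lemma \ref{semi_discreYTheta:High_regular:stab}, the paper never needs Remark \ref{remark:conv:Theta} at this step.
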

\begin{proof}
We use the same decomposition as in \eqref{FUll:decomp:YTheta_1}. We can derive from the error estimate of the interpolation $\Pi^r_\tau$ given by Lemma \ref{int:L2:time:err} that
\begin{align*}
\|\mathbf y_h-\mathbf y_\sigma\|_{L^2(I;\mathbb L^2(\Omega))}+\|\theta_h-\theta_\sigma\|_{L^2(I;L^2(\Omega))}\leq C \tau+\|\bm \eta^{\mathbf y}_\tau\|_{L^2(I;\mathbb L^2(\Omega))}+\|\eta^\theta_\tau\|_{L^2(I; L^2(\Omega))}.
\end{align*}
We now use a duality argument to provide an estimate for $\|\bm \eta^{\mathbf y}_\tau\|_{L^2(I;\mathbb L^2(\Omega))} + \|\eta^\theta_\tau\|_{L^2(I; L^2(\Omega))}$.  We begin by defining a discrete dual problem: seek a discrete solution pair $(\bm \varphi _{n,h}, z_{n,h})\in \mathbb X_h\times V_h$, satisfying, for $n=N_{\tau}, \cdots, 1$ and any $(\mathbf w_h,\zeta_h)\in \mathbb X_h\times V_h$,
\begin{align}
&\Big(\frac{\bm \varphi_{n,h}-\bm \varphi_{n+1,h}}{\tau_n}, \mathbf w_h\Big)+\nu \mathbf a(\bm \varphi_{n,h}, \mathbf w_h)+\frac{\tau_{n+1}}{\tau_{n}}\mathbf b(\mathbf w_h,\mathbf y_{n+1,h},\bm \varphi_{n+1,h})+\mathbf b(\mathbf y_h(t_{n-1}),\mathbf w_h, \bm \varphi_{n,h})\nonumber\\
&+ b(\mathbf w_h, \theta_h(t_{n}), z_{n,h})=({\bm \eta}^{\mathbf y}_{n,\tau}, \mathbf w_h),\nonumber\\
&\Big(\frac{z_{n,h}-z_{n+1,h}}{\tau_n}, \zeta_h\Big)+\chi a(z_{n,h},\zeta_h)+b(\mathbf y_{n-1,h}, \zeta_h,z_{n,h})+\eta\gamma(z_{n,h}, \zeta_h)_\Gamma+\frac{\tau_{n+1}}{\tau_{n}}\beta(\bm \varphi_{n+1,h}, \zeta_h \mathbf g)=({\eta_{n,\tau}^{\theta}}, \zeta_h),\nonumber\\
&\varphi_{N_{\tau}+1,h}=0,\ \mathbf y_{N_{\tau}+1, h}:=\mathbf y_{N_{\tau}, h}, \quad z_{N_{\tau}+1,h}=0,\ \tau_{N_\tau+1}:=\tau_{N_\tau}.\label{dual:Ytheta:Full_discrete:state}
\end{align}
Similar to the proof of Lemmas \ref{lem:Full:YTheta:bu} and \ref{lem:stab:Full:adjoint:bu}, we know that \eqref{dual:Ytheta:Full_discrete:state} has a unique solution $\big\{(\bm \varphi_{n,h},z_{n,h})\big\}_{n=1}^{N_\tau}$ and the following stability estimate holds:
 \begin{align}\label{Full_err:YTetha:time:L2:bu:3}
\max_{1\leq n\leq N_\tau}\Big(\|z_{n,h}\|^2_{H^1(\Omega)}+\|\nabla \bm \varphi_{n,h}\|^2\Big)+&\sum_{n=1}^{N_\tau}\tau_n\Big(\|\nabla \bm \varphi_{n,h}\|^2+\|z_{n,h}\|^2_{H^1(\Omega)}\Big)+\sum_{n=1}^{N_\tau}\tau_n\Big(\|\mathbf A_h \bm \varphi_{n,h}\|^2+\|\Delta_hz_{n,h}\|^2\Big)\nonumber\\
&\leq C\Big(\|\bm \eta_\tau^{\mathbf y}\|^2_{L^2(I;\mathbb L^2(\Omega))}+\|\eta_\tau^\theta\|^2_{L^2(I;L^2(\Omega))}\Big).
\end{align}
Setting $(\mathbf w_h,\zeta_h)=\tau_n(\bm \eta^{\mathbf y}_{n,\tau},\eta_{n,\tau}^{\theta})$ in \eqref{dual:Ytheta:Full_discrete:state},  and summing  over $N_\tau$ to $1$, we obtain
\begin{align}\label{Full_err:YTetha:time:L2:1}
&\|\bm \eta^{\mathbf y}_\tau\|^2_{L^2(I;\mathbb L^2(\Omega))}+\| \eta^\theta_\tau\|^2_{L^2(I;L^2(\Omega))}=\sum_{n=1}^{N_\tau}\big(\bm \varphi_{n,h}-\bm \varphi_{n+1,h},\bm \eta_{n,\tau}^{\mathbf y}\big)+\nu\sum_{n=1}^{N_\tau} \tau_n\mathbf a(\bm \varphi_{n,h}, \bm \eta_{n,\tau}^{\mathbf y})+\sum_{n=1}^{N_\tau}\tau_{n+1}\mathbf b(\bm \eta_{n,\tau}^{\mathbf y},\mathbf y_{n+1,h},\bm \varphi_{n+1,h})\nonumber\\
&+\sum_{n=1}^{N_\tau}\tau_n\mathbf b(\mathbf y_h(t_{n-1}),\bm \eta_{n,\tau}^{\mathbf y}, \bm \varphi_{n,h})+\sum_{n=1}^{N_\tau}\tau_n b(\bm \eta_{n,\tau}^{\mathbf y}, \theta_{h}(t_n), z_{n,h})+\sum_{n=1}^{N_\tau}\Big(z_{n,h}-z_{n+1,h}, \eta^\theta_{n,\tau}\Big)\nonumber\\
&+\chi\sum_{n=1}^{N_\tau}\tau_n a(z_{n,h},\eta^\theta_{n,\tau})+\sum_{n=1}^{N_\tau}\tau_nb(\mathbf y_{n-1,h}, \eta^\theta_{n,\tau},z_{n,h})+\eta\gamma\sum_{n=1}^{N_\tau}\tau_n(z_{n,h}, \eta^\theta_{n,\tau})_\Gamma+\beta\sum_{n=1}^{N_\tau}\tau_{n+1}(\bm \varphi_{n+1,h}, \eta^\theta_{n,\tau} \mathbf g).
\end{align}

For \eqref{semi_discrete:state} integrating from $t_{n-1}$ to $t_n$, and  subtracting in \eqref{Full_discrete:state} multiple $\tau_n$, from it with \((\mathbf v_h,\psi_h)=(\bm \varphi_{n,h}, z_{n,h}) \)  yields
\begin{align*}
&\nu\tau_n\mathbf a(\bm \eta_{n,\tau}^{\mathbf y},\bm \varphi_{n,h})=-(\bm \eta_{n,\tau}^{\mathbf y}-\bm \eta_{n-1,\tau}^{\mathbf y},\bm \varphi_{n,h})-\nu\int_{t_{n-1}}^{t_n}\mathbf a(\bm \zeta^{\mathbf y}_\tau, \bm \varphi_{n,h})dt-\tau_n \beta(\eta_{n-1,\tau}^\theta\mathbf g, \bm\varphi_{n,h})\\
&-\tau_n\beta (\theta_h(t_n)-\theta_h(t_{n-1}),\bm \varphi_{n,h}\cdot \mathbf g)-\beta\int_{t_{n-1}}^{t_n}(\zeta^\theta_{\tau}\mathbf g, \bm \varphi_{n,h})dt-\tau_n \mathbf b(\bm \eta^{\mathbf y}_{n-1,\tau}, \mathbf y_{n,h}, \bm \varphi_{n,h})-\tau_n \mathbf b(\mathbf y_h(t_{n-1}),\bm \eta^{\mathbf y}_{n,\tau}, \bm \varphi_{n,h})\\
&-\tau_n\mathbf b(\mathbf y_h(t_n)-\mathbf y_h(t_{n-1}),\mathbf y_h(t_n),\bm \varphi_{n,h})- \int_{t_{n-1}}^{t_n}\mathbf b(\mathbf y_h(t_n), \bm \zeta_\tau^{\mathbf y},\bm \varphi_{n,h})-\int_{t_{n-1}}^{t_n}\mathbf b(\bm \zeta_\tau^{\mathbf y}, \mathbf y_h, \bm \varphi_{n,h})dt,\\
&\chi\tau_n a(\eta_{n,\tau}^\theta, z_{n,h})+\tau_n\eta\gamma(\eta^\theta_{n,\tau}, z_{n,h})_\Gamma=-(\eta_{n,\tau}^\theta-\eta_{n-1, \tau}^\theta, z_{n,h})-\chi \int_{t_{n-1}}^{t_n} a(\zeta_\tau^\theta, z_{n,h})dt -\eta\gamma\int_{t_{n-1}}^{t_n}(\zeta_\tau^\theta, z_{n,h})_\Gamma dt\\
&-\tau_n b(\mathbf y_{n-1,h}, \eta_{n,\tau}^\theta,z_{n,h})-\tau_n b(\mathbf y_{n,h}-\mathbf y_{n-1,h},\theta_h(t_n), z_{n,h})-\tau_n b(\bm \eta^{\mathbf y}_{n,\tau}, \theta_{h}(t_n),z_{n,h})- \int_{t_{n-1}}^{t_n}b(\mathbf y_h(t_n),\zeta^\theta_\tau, z_{n,h})dt\\
&- \int_{t_{n-1}}^{t_n}b(\bm \zeta^{\mathbf y }_\tau,\theta_h, z_{n,h})dt.
\end{align*}
We substitute the above equation into \eqref{Full_err:YTetha:time:L2:1} and perform the necessary simplifications to obtain
\begin{align}
&\|\bm \eta^{\mathbf y}_\tau\|^2_{L^2(I;\mathbb L^2(\Omega))}+\| \eta^\theta_\tau\|^2_{L^2(I;L^2(\Omega))}=-\nu\sum_{n=1}^{N_\tau}\int_{t_{n-1}}^{t_n}\mathbf a(\bm \zeta^{\mathbf y}_\tau, \bm \varphi_{n,h})dt+\sum_{n=1}^{N_\tau} \Big(-\tau_n(\theta_h(t_n)-\theta_h(t_{n-1}),\bm \varphi_{n,h}\cdot \mathbf g)\nonumber\\
&-\beta\int_{t_{n-1}}^{t_n}(\zeta^\theta_{\tau}\mathbf g, \bm \varphi_{n,h})dt\Big)+\sum_{n=1}^{N_\tau}\Big(-\tau_n\mathbf b(\mathbf y_h(t_n)-\mathbf y_h(t_{n-1}),\mathbf y_h(t_n),\bm \varphi_{n,h})- \int_{t_{n-1}}^{t_n}\mathbf b(\mathbf y_h(t_n), \bm \zeta_\tau^{\mathbf y},\bm \varphi_{n,h})\nonumber\\
&-\int_{t_{n-1}}^{t_n}\mathbf b(\bm \zeta_\tau^{\mathbf y}, \mathbf y_h, \bm \varphi_{n,h})dt\Big)\nonumber+\sum_{n=1}^{N_\tau}\Big(-\chi \int_{t_{n-1}}^{t_n} a(\zeta_\tau^\theta, z_{n,h})dt -\eta\gamma\int_{t_{n-1}}^{t_n}(\zeta_\tau^\theta, z_{n,h})_\Gamma dt\Big)\nonumber\\
&+\sum_{n=1}^{N_\tau}\Big(-\tau_n b(\mathbf y_{n,h}-\mathbf y_{n-1,h},\theta_h(t_n), z_{n,h})- \int_{t_{n-1}}^{t_n}b(\mathbf y_h(t_n),\zeta^\theta_\tau, z_{n,h})dt- \int_{t_{n-1}}^{t_n}b(\bm \zeta^{\mathbf y }_\tau,\theta_h, z_{n,h})dt\Big)\nonumber\\
&=J_1+J_2+J_3+J_4+J_5.\label{Full_err:YTetha:time:L2:2}
\end{align}
For $J_1$ and $J_2$,  using the definition of the operator $\mathbf A_h$ and the Cauchy-Schwarz inequality, we have
\begin{align*}
J_1&\leq \nu \|\bm \zeta^{\mathbf y}_\tau\|_{L^2(I;\mathbb L^2(\Omega))}\big(\sum_{k=1}^{N_\tau}\tau_n\|\mathbf A_h\bm \varphi_{n,h}\|^2\big)^\frac{1}{2},\\
J_2&\leq C|\mathbf g|\big(\tau\|\partial_t\theta_{h}\|_{L^2(I;L^2(\Omega))}+\|\zeta^\theta_\tau\|_{L^2(I;L^2(\Omega))}\big)\big(\sum_{n=1}^{N_\tau}\tau_n\|\bm \varphi_{n,h}\|^2\big)^\frac{1}{2}.
\end{align*}
Using \eqref{Interpolation:conti:1}, \eqref{Interpolation:A:1} and Lemma \ref{semi_discreYTheta:stab}, we deduce
\begin{align*}
J_3\leq C\Big(\tau\|\partial_t\mathbf y_h\|_{L^2(I;\mathbb L^2(\Omega))}+\|\bm \zeta^{\mathbf y}_\tau\|_{L^2(I;\mathbb L^2(\Omega))}\Big)&\Big(\Big(\max_{1\leq n\leq N_\tau}\|\bm \varphi_{n,h}\|\Big)^\frac{1}{2}\Big(\sum_{n=1}^{N_\tau}\tau_n\|\nabla \bm \varphi_{n,h}\|^2\Big)^\frac{1}{4}\\
&+\Big(\max_{1\leq n\leq N_\tau}\|\nabla \bm \varphi_{n,h}\|\Big)^\frac{1}{2}\Big(\sum_{n=1}^{N_\tau}\tau_n\|\mathbf A_h \bm \varphi_{n,h}\|^2\Big)^\frac{1}{4}\Big).
\end{align*}
To treat the term $J_4$, we use the operator $\Delta_h$ defined in \eqref{definite_Delta_h} to obtain
\begin{align*}
  J_4=\sum_{n=1}^{N_\tau}\int_{t_{n-1}}^{t_n}(\zeta^{\theta}_\tau, \Delta_hz_{n,h})\leq \|\zeta^{\theta}_\tau\|_{L^2(I; L^2(\Omega))}\Big(\sum_{k=1}^{N_\tau}\tau_n\| \Delta_h z_{n,h}\|^2\Big)^\frac{1}{2}.
\end{align*}
Using \eqref{DeltaTheta_ineq1}, \eqref{semi:Y_Theta:stab:bu:ineq8}, Lemmas \ref{semi_discreYTheta:stab} and \ref{lem:Full:YTheta:bu} to $J_5$, we can derive
\begin{align*}
  J_5&\leq C\Big(\tau+\|\zeta^\theta_\tau\|_{L^2(I;L^2(\Omega))}+\|\bm \zeta^{\mathbf y}_\tau\|_{L^2(I;\mathbb L^2(\Omega))}\Big)\Big(\big(\sum_{k=1}^{N_\tau}\tau_n\|z_{n,h}\|^2_{H^1(\Omega)}\big)^\frac{1}{4}\big(\sum_{k=1}^{N_\tau}\tau_n\| \Delta_h z_{n,h}\|^2\big)^\frac{1}{4}\\
  &+\max_{1\leq n\leq N_\tau}\| z_{n,h}\|_{H^1(\Omega)}\Big).
\end{align*}
Hence, combining the estimates for $J_1$--$J_5$ with \eqref{Full_err:YTetha:time:L2:2} and \eqref{Full_err:YTetha:time:L2:bu:3}, we have the bound
\begin{align*}
\|\bm \eta^{\mathbf y}_\tau\|_{L^2(I;\mathbb L^2(\Omega))}+\| \eta^\theta_\tau\|_{L^2(I;L^2(\Omega))}\leq C\big(\tau+\|\bm \zeta^{\mathbf y}_\tau\|_{L^2(I;\mathbb L^2(\Omega))}+\|\zeta^\theta_\tau\|_{L^2(I;L^2(\Omega))}\big).
\end{align*}
Applying Lemma \ref{int:L2:time:err}, Lemmas \ref{semi_discreYTheta:stab} and \ref{semi_discreYTheta:High_regular:stab} to above estimates yields the final estimate \eqref{Full:YThetaL2:time:err}.
\end{proof}

\subsubsection{Analysis of the discrete adjoint equation}
Let us define the discrete control-to-state operator $S_\sigma: L^2(I;L^2(\Gamma))\to \mathbb X_\sigma\times V_\sigma $ by $(\mathbf y_\sigma(u), \theta_\sigma(u))=\mathcal S_\sigma(u)$, where $(\mathbf y_\sigma(u), \theta_\sigma(u))$ is given in \eqref{Define:YthetaSigma} with the control $u\in L^2(I;L^2(\Gamma))$. Then, associated to the discrete state equation \eqref{Full_discrete:state}, the discrete control problem reads as follows:
\begin{equation*}
\mathrm{(P_\sigma)}\quad\begin{aligned}
		\min\limits_{u\in \mathcal{U}_{ad}}\quad J_\sigma(u)=\frac{1}{2}\int_I\int_{\Omega} |\mathbf{y}_\sigma(u)-\mathbf{y}_d|^2dx dt&+\frac{1}{2}\int_I\int_{\Omega}|\theta_{\sigma}(u)- \theta_d|^2dx dt+\frac{\alpha}{2}\int_I\int_{\Gamma}| u|^2ds dt.
	\end{aligned}
\end{equation*}
Here, the control variables are not explicitly discretized. Instead, a variational discretization method is used (see \cite{Hinze_2005}). It is easy to verify that the discrete problem $\mathrm{(P_\sigma)}$ has at least one solution.

We can  see that the map $\mathcal S_\sigma$ is infinitely often Fre\'chet-differentiable. Hence,  applying the chain rule, we see that $J_\sigma :L^2(I;L^2(\Gamma))\to \mathbb R$ is of  class $C^\infty$. Moreover, given a direction \( v \in L^2(I; L^2(\Gamma)) \), the first order derivative is expressed as follows:
\begin{align*}
J'_\sigma(u)v = \int_I (\mathbf{y}_\sigma(u) - \mathbf{y}_d, \mathbf{z}_\sigma) \, dt + \int_I (\theta_\sigma(u) - \theta_d, \xi_\sigma) \, dt + \alpha\int_I (u, v)_\Gamma \, dt,
\end{align*}
where \( (\mathbf{z}_\sigma, \xi_\sigma) = S'_\sigma(u)v \) and $(\mathbf{z}_\sigma, \xi_\sigma)\big|_{(0,T]} = \big( \sum\limits_{n=1}^{N_{\tau}} \mathbf{z}_{n,h} \mathds{1}_{I^r_n}, \sum\limits_{n=1}^{N_{\tau}} \xi_{n,h} \mathds{1}_{I^r_n} \big)  \text{ with } \mathbf{z}_\sigma(0) =0, \, \theta_\sigma(0) = 0,$ where \( \{ (\mathbf{z}_{n,h}, \xi_{n,h}) \}_{n=1}^{N_\tau} \) is the solution of the following system of equations:
\begin{align}\label{Full_discrete:Linzied:state}
&\Big(\frac{\mathbf z_{n,h}-\mathbf z_{n-1,h}}{\tau_n}, \mathbf v_h\Big)+\nu \mathbf a(\mathbf z_{n,h}, \mathbf v_h)+\mathbf b(\mathbf z_{n-1,h},\mathbf y_{n,h},\mathbf v_h)+\mathbf b(\mathbf y_{n-1,h},\mathbf z_{n,h},\mathbf v_h)+\beta(\xi_{n-1,h}\mathbf g, \mathbf v_h )=\mathbf 0,\nonumber\\
&\Big(\frac{\xi_{n,h}-\xi_{n-1,h}}{\tau_n}, \psi_h\Big)+\chi a(\xi_{n,h},\psi_h)+b(\mathbf z_{n-1,h}, \theta_{n,h},\psi_{h})+b(\mathbf y_{n-1,h}, \xi_{n,h},\psi_{h})+\eta\gamma(\xi_{n,h}, \psi_h)_\Gamma=\eta({v}^n, \psi_h)_\Gamma,\nonumber\\
&\mathbf z_{0,h}=0,\quad \xi_{0,h}=0
\end{align}
for $n=1, \cdots, N_{\tau}$ and any $(\mathbf v_h, \psi_h)\in \mathbb X_h\times V_h$, where $\big\{(\mathbf y_{n,h}, \theta_{n,h})\big\}_{n=1}^{N_\tau}$ is the solution of the fully discrete state equation \eqref{Full_discrete:state}. As is common in control theory (see \cite{Troltzsh_2010}), the adjoint state is usually introduced to simplify the expression of this derivative. To this end, we consider the discrete adjoint equation: seek a discrete solution pair $(\bm \mu_{n,h}, \kappa_{n,h})$, satisfying, for $n=N_\tau, \cdots, 1$ and any $(\mathbf w_h,\zeta_h)\in \mathbb X_h\times V_h$,
\begin{align}\label{MuKappa:Full_discrete:adjoint}
&\Big(\frac{\bm \mu_{n,h}-\bm \mu_{n+1,h}}{\tau_n}, \mathbf w_h\Big)+\nu \mathbf a(\bm \mu_{n,h}, \mathbf w_h)+\frac{\tau_{n+1}}{\tau_n}\mathbf b(\mathbf w_h,\mathbf y_{n+1,h},\bm \mu_{n+1,h})+\mathbf b(\mathbf y_{n-1,h},\mathbf w_h, \bm \mu_{n,h})\nonumber\\
&+\frac{\tau_{n+1}}{\tau_n}b(\mathbf w_h, \theta_{n+1,h}, \kappa_{n+1,h})=(\mathbf y_{n,h}-\mathbf y_d ^n, \mathbf w_h),\nonumber\\
&\Big(\frac{\kappa_{n,h}-\kappa_{n+1,h}}{\tau_n}, \zeta_h\Big)+\chi a(\kappa_{n,h},\zeta_h)+b(\mathbf y_{n-1,h}, \zeta_h,\kappa_{n,h})+\eta\gamma(\kappa_{n,h}, \zeta_h)_\Gamma+\frac{\tau_{n+1}}{\tau_n}\beta(\bm \mu_{n+1,h}, \zeta_h \mathbf g)=(\theta_{n,h}-\theta_d^n, \zeta_h),\nonumber\\
&\mu_{N_{\tau}+1,h}:=0,\ \mathbf y_{N_{\tau}+1, h}:=\mathbf y_{N_{\tau}, h},\ \theta_{N_{\tau}+1}:=\theta_{N_\tau}, \quad\kappa_{N_{\tau}+1,h}:=0,
\end{align}
where $\big\{(\mathbf y_{n,h}, \theta_{n,h})\big\}_{n=1}^{N_\tau}$ is the solution of the fully discrete state equation \eqref{Full_discrete:state} and $\tau_{N_\tau+1}:=\tau_{N_\tau}$. 
It is easy to show that the equation \eqref{MuKappa:Full_discrete:adjoint} has a unique solution. Using \eqref{Full_discrete:Linzied:state} and \eqref{MuKappa:Full_discrete:adjoint}, we have
\begin{align*}
&\int_I (\mathbf{y}_\sigma(u) - \mathbf{y}_d, \mathbf{z}_\sigma) \, dt + \int_I (\theta_\sigma(u) - \theta_d, \xi_\sigma) \, dt=\sum_{n=1}^{N_\tau}\int_{t_{n-1}}^{t_n}\big[(\mathbf y_{n,h}-\mathbf y_d, \mathbf z_{n,h})+(\theta_{n,h}-\theta_d, \xi_{n,h})\big]dt\\
&=\sum_{n=1}^{N_\tau}\big(\bm \mu_{n,h}-\bm \mu_{n+1,h}, \mathbf z_{n,h} \big)+\sum_{n=1}^{N_\tau}\big[ \tau_n \nu \mathbf a(\bm \mu_{n,h}, \mathbf z_{n,h})+ \tau_{n+1}\mathbf b(\mathbf z_{n,h},\mathbf y_{n+1,h},\bm \mu_{n+1,h})+\tau_n\mathbf b(\mathbf y_{n-1,h},\mathbf z_{n,h}, \bm \mu_{n,h})\big]\\
&+\sum_{n=1}^{N_\tau}\big(\kappa_{n,h}-\kappa_{n+1,h}, \xi_{n,h})+\sum_{n=1}^{N_\tau}\big[\tau_n \chi a(\kappa_{n,h},\xi_{n,h})+\tau_{n+1} b(\mathbf z_{n,h}, \theta_{n+1,h}, \kappa_{n+1,h})+\tau_n b(\mathbf y_{n-1,h}, \xi_{n,h},\kappa_{n,h})\\
&+\eta\gamma\tau_n(\kappa_{n,h}, \xi_{n,h})_\Gamma+\tau_{n+1}\beta(\bm \mu_{n+1,h}, \xi_{n,h} \mathbf g)\big]\\
&=\sum_{n=1}^{N_\tau}\big(\mathbf z_{n,h}-\mathbf z_{n-1,h}, \bm  \mu_{n,h} \big)+\sum_{n=1}^{N_\tau}\tau_{n}\big[  \nu \mathbf a(\bm \mu_{n,h}, \mathbf z_{n,h})+ \mathbf b(\mathbf z_{n-1,h},\mathbf y_{n,h},\bm \mu_{n,h})+\mathbf b(\mathbf y_{n-1,h},\mathbf z_{n,h}, \bm \mu_{n,h})+\beta( \xi_{n-1,h} \mathbf g, \bm \mu_{n,h})\big]\\
&+\sum_{n=1}^{N_\tau}\big(\xi_{n,h}-\xi_{n-1,h}, \kappa_{n,h})+\sum_{n=1}^{N_\tau}\tau_n \big[\chi a(\kappa_{n,h},\xi_{n,h})+ b(\mathbf z_{n-1,h}, \theta_{n,h}, \kappa_{n,h})+ b(\mathbf y_{n-1,h}, \xi_{n,h},\kappa_{n,h})+\eta\gamma(\kappa_{n,h}, \xi_{n,h})_\Gamma\big]\\
&=\sum_{n=1}^{N_\tau}\tau_n \eta({v}^n, \kappa_{n,h})_\Gamma=\int_I\eta(\kappa_\sigma, v)_\Gamma dt,
\end{align*}
where $(\bm {\mu}_\sigma, \kappa_\sigma)\big|_{[0,T)}= \big( \sum\limits_{n=1}^{N_{\tau}} \bm \mu_{n,h} \mathds{1}_{I^l_n}, \sum\limits_{n=1}^{N_{\tau}} \kappa_{n,h} \mathds{1}_{I^l_n} \big)  \text{ with } \mu_\sigma(T) = 0, \, \kappa_\sigma(T) = 0$. Hence, from the obtained identity,  we conclude that
\begin{align}\label{discrete:J:diff1}
J'_\sigma(u)v = \int_I (\eta\kappa_\sigma+\alpha u, v)_\Gamma \, dt.
\end{align}

Similar to Theorem \ref{Theorem:fisrt_order_condtion}, the next theorem establishes the first-order necessary optimality condition satisfied for any local minimum of $(\mathrm{P_\sigma})$.
\begin{theorem} \label{Theorem:dicrete:fisrt_order_condtion}Let \( \bar{u}_\sigma \) be a local solution of problem $\mathrm{(P_\sigma)}$. Then there exist \( \big\{(\bar{\mathbf y}_{n,h},\bar{\theta}_{n,h})\big\}_{n=1}^{N_\tau}\) and \( \big\{(\bar{\bm \mu}_{n,h},\bar{\kappa}_{n,h})\big\}_{n=1}^{N_\tau}\),  such that
\begin{align}\label{Full_discrete:first_order:state}
&\Big(\frac{\bar{\mathbf y}_{n,h}-\bar{\mathbf y}_{n-1,h}}{\tau_n}, \mathbf v_h\Big)+\nu \mathbf a(\bar{\mathbf y}_{n,h}, \mathbf v_h)+\mathbf b(\bar{\mathbf y}_{n-1,h},\bar{\mathbf y}_{n,h},\mathbf v_h)+\beta(\bar{\theta}_{n-1,h}\mathbf g, \mathbf v_h )=({\mathbf h}^n, \mathbf v_h) \quad \forall \mathbf v_h\in \mathbb X_h,\nonumber\\
&\Big(\frac{\bar{\theta}_{n,h}-\bar{\theta}_{n-1,h}}{\tau_n}, \psi_h\Big)+\chi a(\bar{\theta}_{n,h},\psi_h)+b(\bar{\mathbf y}_{n-1,h}, \bar{\theta}_{n,h},\psi_{h})+\eta\gamma(\bar{\theta}_{n,h}, \psi_h)_\Gamma=({f}^n, \psi_h)+\eta({\bar{u}_\sigma}^n, \psi_h)_\Gamma\quad \forall\psi_h\in V_h,\nonumber\\
&\bar{\mathbf y}_{0,h}=\mathbf y_{0h},\quad \bar{\theta}_{0,h}=\theta_{0h}, 
\end{align}
\begin{align}\label{MuKappa:fisrt_order:Full_discrete:adjoint}
&\Big(\frac{\bar{\bm \mu}_{n,h}-\bar{\bm \mu}_{n+1,h}}{\tau_n}, \mathbf w_h\Big)+\nu \mathbf a(\bar{\bm \mu}_{n,h}, \mathbf w_h)+\frac{\tau_{n+1}}{\tau_n}\mathbf b(\mathbf w_h,\bar{\mathbf y}_{n+1,h},\bar{\bm \mu}_{n+1,h})+\mathbf b(\bar{\mathbf y}_{n-1,h},\mathbf w_h, \bar{\bm \mu}_{n,h})\nonumber\\
&+\frac{\tau_{n+1}}{\tau_n}b(\mathbf w_h, \bar{\theta}_{n+1,h}, \bar{\kappa}_{n+1,h})=(\bar{\mathbf y}_{n,h}-\mathbf y_d ^n, \mathbf w_h)\quad \forall \mathbf w_h\in \mathbb X_h,\nonumber\\
&\Big(\frac{\bar{\kappa}_{n,h}-\bar{\kappa}_{n+1,h}}{\tau_n}, \zeta_h\Big)+\chi a(\bar{\kappa}_{n,h},\zeta_h)+b(\bar{\mathbf y}_{n-1,h}, \zeta_h,\bar{\kappa}_{n,h})+\eta\gamma(\bar{\kappa}_{n,h}, \zeta_h)_\Gamma+\frac{\tau_{n+1}}{\tau_n}\beta(\bar{\bm \mu}_{n+1,h}, \zeta_h \mathbf g)\nonumber\\
&=(\bar{\theta}_{n,h}-\theta_d^n, \zeta_h) \quad \forall\zeta_h\in V_h,\nonumber\\
&\bar{\mu}_{N_{\tau}+1,h}:=0,\ \bar{\mathbf y}_{N_{\tau}+1, h}:=\bar{\mathbf y}_{N_{\tau}, h},\ \bar{\theta}_{N_{\tau}+1}:=\bar{\theta}_{N_\tau}, \quad\bar{\kappa}_{N_{\tau}+1,h}:=0, 
\end{align}
\begin{align}
\int_I (\eta\bar{\kappa}_\sigma+\alpha \bar{u}_\sigma, v-\bar{u}_\sigma)_\Gamma \geq 0\quad\forall v\in\mathcal{U}_{ad}.\label{first_cond:discrete:varition_ineq}
\end{align}
\end{theorem}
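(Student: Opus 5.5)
The statement follows the usual pattern for first-order conditions of a finite-dimensional-in-state, reduced optimization problem, and almost all of the work is already contained in the derivative identity \eqref{discrete:J:diff1}. The plan has four steps.

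First, set $(\bar{\mathbf y}_{n,h},\bar\theta_{n,h})$ to be the solution of \eqref{Full_discrete:state} with control $u=\bar u_\sigma$. This gives \eqref{Full_discrete:first_order:state} directly. Existence and uniqueness of this solution for any $u\in L^2(I;L^2(\Gamma))$ has already been stated. At each step the scheme is linear in the unknowns $(\mathbf y_{n,h},\theta_{n,h})$ once the previous iterate is known. Its coercivity follows from $\mathbf b(\mathbf y_{n-1,h},\mathbf v,\mathbf v)=0$ and $b(\mathbf y_{n-1,h},\psi,\psi)=0$, together with the positivity of $\chi a(\cdot,\cdot)+\eta\gamma(\cdot,\cdot)_\Gamma$ plus the mass term. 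Lax--Milgram on the finite-dimensional spaces $\mathbb X_h\times V_h$ therefore gives unique solvability step by step.

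Second, define $(\bar{\bm\mu}_{n,h},\bar\kappa_{n,h})$ by the backward recursion \eqref{MuKappa:Full_discrete:adjoint} with the state $\{(\bar{\mathbf y}_{n,h},\bar\theta_{n,h})\}$; this is exactly \eqref{MuKappa:fisrt_order:Full_discrete:adjoint}. Unique solvability again holds step by step, going backward from $n=N_\tau$. At step $n$ the unknowns $(\bar{\bm\mu}_{n,h},\bar\kappa_{n,h})$ enter only through the mass term, the terms $\nu\mathbf a$, $\chi a$, $\eta\gamma(\cdot,\cdot)_\Gamma$, and the skew-symmetric convection terms $\mathbf b(\bar{\mathbf y}_{n-1,h},\mathbf w_h,\bar{\bm\mu}_{n,h})$ and $b(\bar{\mathbf y}_{n-1,h},\zeta_h,\bar\kappa_{n,h})$. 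The coupling terms involve only the already computed index $n+1$. The bilinear form for the unknowns is therefore coercive, and Lax--Milgram applies.

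Third, use the derivative identity. The map $\mathcal S_\sigma$ is $C^\infty$ and the computation preceding the theorem establishes \eqref{discrete:J:diff1}: $J_\sigma'(u)v=\int_I(\eta\kappa_\sigma+\alpha u,v)_\Gamma\,dt$, with $\kappa_\sigma$ the piecewise-constant (left-continuous) extension of the adjoint.

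That computation was a discrete summation-by-parts, and it should be rechecked for $u=\bar u_\sigma$. One uses $\mathbf z_{0,h}=0$, $\xi_{0,h}=0$ and the terminal conventions $\bar{\bm\mu}_{N_\tau+1,h}=0$, $\bar\kappa_{N_\tau+1,h}=0$. One also needs the index shift: the terms with weights $\tau_{n+1}\mathbf b(\mathbf z_{n,h},\mathbf y_{n+1,h},\bm\mu_{n+1,h})$ and $\tau_{n+1}\beta(\bm\mu_{n+1,h},\xi_{n,h}\mathbf g)$ must match the explicit terms $\tau_n\mathbf b(\mathbf z_{n-1,h},\mathbf y_{n,h},\bm\mu_{n,h})$ and $\tau_n\beta(\xi_{n-1,h}\mathbf g,\bm\mu_{n,h})$ of \eqref{Full_discrete:Linzied:state}, up to boundary terms that vanish. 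This bookkeeping is the only real point of care, and I expect it to be the main obstacle.

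Fourth, derive the variational inequality. $\mathcal U_{ad}$ is convex, so for any $v\in\mathcal U_{ad}$ and small $s\in(0,1]$ the point $\bar u_\sigma+s(v-\bar u_\sigma)$ lies in $\mathcal U_{ad}\cap\mathcal B_\rho(\bar u_\sigma)$. Local optimality then gives $J_\sigma(\bar u_\sigma+s(v-\bar u_\sigma))-J_\sigma(\bar u_\sigma)\ge0$. Dividing by $s$ and letting $s\to0^+$, using Fréchet differentiability of $J_\sigma$, yields $J'_\sigma(\bar u_\sigma)(v-\bar u_\sigma)\ge0$. Inserting \eqref{discrete:J:diff1} gives \eqref{first_cond:discrete:varition_ineq}.
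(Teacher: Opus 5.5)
Your proposal is correct and is essentially the paper's argument: the paper gives no separate proof but relies on the summation-by-parts identity \eqref{discrete:J:diff1} computed just before the theorem, combined with the standard convexity argument of your fourth step. In that identity the shifted terms $\tau_{n+1}\mathbf b(\mathbf z_{n,h},\mathbf y_{n+1,h},\bm\mu_{n+1,h})$, $\tau_{n+1}b(\mathbf z_{n,h},\theta_{n+1,h},\kappa_{n+1,h})$ and $\tau_{n+1}\beta(\bm\mu_{n+1,h},\xi_{n,h}\mathbf g)$ reindex exactly onto the explicit terms of \eqref{Full_discrete:Linzied:state}, as you anticipate, because $\mathbf z_{0,h}=\xi_{0,h}=0$ and $\bm\mu_{N_\tau+1,h}=\kappa_{N_\tau+1,h}=0$; your step-by-step Lax--Milgram solvability arguments usefully fill in what the paper only asserts.
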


\begin{remark} We simply observe that \eqref{first_cond:discrete:varition_ineq} implies that
\begin{align}\label{Pro:discrete:adj}
\bar{u}_\sigma(t,x)=\mathrm{Proj}_{[u_a,u_b]}\big(-\frac{\eta}{\alpha}\bar{\kappa}_\sigma(t,x)\big)\quad \text{for a.e. } (t, x)\in \Sigma_T.
\end{align}
\end{remark}

Below, we present the stability analysis of the discrete adjoint equation \eqref{MuKappa:Full_discrete:adjoint}.
\begin{lemma}\label{lem:stab:Full:adjoint:bu} Given that \( u \in L^2(I; L^2(\Gamma)) \), let \(\big\{(\mathbf{y}_{n,h}, \theta_{n,h})\big\}_{n=1}^{N_\tau}\) be the solution of the fully discrete state equation \eqref{Full_discrete:state}, while \(\big\{(\bm \mu_{n,h}, \kappa_{n,h})\big\}_{n=1}^{N_\tau}\) represents the solution of the fully discrete adjoint equation \eqref{MuKappa:Full_discrete:adjoint}. Then for $\sigma$ small enough, the following estimates hold:
\begin{align}
&\|\bm \mu_{\sigma}\|_{L^\infty(I;\mathbb L^2(\Omega))}+\|\kappa_{\sigma}\|_{L^\infty(I;L^2(\Omega))}+\nu^\frac{1}{2}\|\bm \mu_\sigma\|_{L^2(I;\mathbb H^1(\Omega))}+\min\{\chi, \eta\gamma\}^\frac{1}{2}\|\kappa_{\sigma}\|_{L^2(I;H^1(\Omega))}\leq \mathcal C_3\big(\nu^{-\frac{1}{2}}\|\mathbf y_d\|_{L^2(I;L^2(\Omega))}\nonumber\\
&+\min\{\chi,\eta\gamma\}^{-\frac{1}{2}}\|\theta_d\|_{L^2(I;L^2(\Omega))}+\widetilde{G}_1\big)=:\widetilde G_3,\label{Full:stab:adjoint:ineq1}\\
&\min\{\chi,\eta\gamma\}^{\frac{1}{2}}\Big(\| \kappa_\sigma\|_{L^\infty(I;H^1(\Omega))}+\Big(\sum_{n=1}^{N_\tau}\|\kappa_{n,h}-\kappa_{n+1,h}\|^2_{H^1(\Omega)}\Big)^\frac{1}{2}\Big)+\|\Delta_h\kappa_\sigma\|_{L^2(I;L^2(\Omega))}\nonumber\\
&\leq \mathcal C_4\Big((\beta|\mathbf g|+1)\widetilde G_3+\nu^{-\frac{1}{2}}\widetilde G_2^2\Big)=:\widetilde G_4,\label{Full:stab:adjoint:ineq2}\\
&\Big(\sum_{n=1}^{N_\tau}\frac{\|\kappa_{n,h}-\kappa_{n+1,h}\|^2}{\tau_n}\Big)^\frac{1}{2}\leq C\Big(\widetilde G_2 \widetilde G_4\min\{\chi,\eta\gamma\}^{-\frac{1}{4}}+\widetilde G_2^2\widetilde G_3\min\{\chi,\eta\gamma\}^{-\frac{1}{2}}+\widetilde G_4\Big),\label{Full:stab:adjoint:ineq3}
\end{align}
where
\begin{align*}
&\mathcal C_3:=C\exp{\Big(C\big((\nu^{-\frac{1}{2}}+\min\{\chi,\eta\gamma\}^{-\frac{1}{2}})\widetilde{G}_1+\min\{\chi,\eta\gamma\}^{-\frac{1}{2}}\beta|\mathbf g|\sqrt{T}+\nu^{-\frac{3}{2}}\widetilde{G}_1\widetilde{G}_2+\nu^{-1}\min\{\chi,\eta\gamma\}^{-\frac{1}{2}}\widetilde{G}_2^2\big)\Big)},\\
&\mathcal C_4:=C\exp{\Big(C\big(\nu^{-\frac{1}{2}}\widetilde G^2_1+\widetilde G_3\big)\Big)}.
\end{align*}
\end{lemma}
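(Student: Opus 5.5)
We prove the three estimates in order, by discrete energy arguments run backward in time from $n=N_\tau$ to $n=1$, always using the state bounds of Lemma \ref{lem:Full:YTheta:bu} ($\widetilde G_1$, $\widetilde G_2$) as known coefficients.

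\emph{Estimate \eqref{Full:stab:adjoint:ineq1}.} We test \eqref{MuKappa:Full_discrete:adjoint} with $\tau_n(\bm\mu_{n,h},\kappa_{n,h})$. The identity $(a-b,a)=\frac12\|a\|^2-\frac12\|b\|^2+\frac12\|a-b\|^2$ produces telescoping terms. The term $\mathbf b(\mathbf y_{n-1,h},\bm\mu_{n,h},\bm\mu_{n,h})$ vanishes by skew-symmetry, and so does $b(\mathbf y_{n-1,h},\kappa_{n,h},\kappa_{n,h})$. The remaining couplings are the shifted terms
\[
\tau_{n+1}\mathbf b(\bm\mu_{n,h},\mathbf y_{n+1,h},\bm\mu_{n+1,h}),\qquad \tau_{n+1}b(\bm\mu_{n,h},\theta_{n+1,h},\kappa_{n+1,h}),\qquad \tau_{n+1}\beta(\bm\mu_{n+1,h},\kappa_{n,h}\mathbf g).
\]
For the first we use \eqref{pre:estimate} together with \eqref{Interpolation:A:1}, bounding it by
\[
C\tau_{n+1}\|\bm\mu_{n,h}\|^{1/2}\|\nabla\bm\mu_{n,h}\|^{1/2}\|\nabla\mathbf y_{n+1,h}\|\,\|\bm\mu_{n+1,h}\|^{1/2}\|\nabla\bm\mu_{n+1,h}\|^{1/2}.
\]
We then use Young's inequality to absorb $\frac{\nu}{8}\tau_n\|\nabla\bm\mu_{n,h}\|^2$ and $\frac{\nu}{8}\tau_{n+1}\|\nabla\bm\mu_{n+1,h}\|^2$. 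Assumption $(\mathbf B)$ gives $\tau_{n+1}\le C\tau_n$, and the shifted index is absorbed by the same splitting trick used in Lemma \ref{lem:Full:YTheta:bu}. What is left is a Gronwall weight of the form $\tau_{n+1}\nu^{-3}\|\nabla\mathbf y_{n+1,h}\|^4$ (or $\|\nabla\mathbf y\|\,\|\mathbf A_h\mathbf y\|$), whose sum is controlled by $\widetilde G_1\widetilde G_2$.

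The $\theta$-coupling term is handled similarly, via $\|\mathbf w\|_{L^4}\|\nabla\theta_{n+1,h}\|\|\kappa_{n+1,h}\|_{L^4}$. Its weight $\tau_{n+1}\|\theta_{n+1,h}\|^2_{H^1}$ is not uniformly small, so here we invoke Remark \ref{remark:conv:Theta} (in its $L^2$-control version from Appendix \ref{Appendix:B}) to make $C\tau_n\|\theta_{n,h}\|_{H^1}^2\le\frac12$ for $\sigma$ small. This is exactly why the lemma requires ``$\sigma$ small enough''. The right-hand sides $(\mathbf y_{n,h}-\mathbf y_d^n,\bm\mu_{n,h})$ are bounded using $\widetilde G_1$. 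A backward discrete Gronwall inequality then yields $\widetilde G_3$ with the stated exponential constant $\mathcal C_3$.

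\emph{Estimate \eqref{Full:stab:adjoint:ineq2}.} We test the $\kappa$-equation with $-\tau_n\Delta_h\kappa_{n,h}$. By the definition \eqref{definite_Delta_h}, the difference quotient gives telescoping in the energy norm $\chi\|\nabla\cdot\|^2+\eta\gamma\|\cdot\|_\Gamma^2$, which is equivalent to the $H^1$ norm. This telescoping also yields the increment sum $\sum\|\kappa_{n,h}-\kappa_{n+1,h}\|_{H^1}^2$. The convection term is bounded by
\[
b(\mathbf y_{n-1,h},\Delta_h\kappa_{n,h},\kappa_{n,h})\lesssim\|\mathbf y_{n-1,h}\|_{L^4}\|\nabla\kappa_{n,h}\|_{L^4}\|\Delta_h\kappa_{n,h}\|,
\]
and then \eqref{DeltaTheta_ineq1} and Young's inequality leave the weight $\tau_n\|\mathbf y_{n-1,h}\|_{H^1}^4\|\kappa_{n,h}\|^2_{H^1}$. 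The $\beta$-term and the source term are bounded by $(\beta|\mathbf g|+1)\widetilde G_3$ from the first step. Gronwall then gives $\widetilde G_4$. The main obstacle is the skew form $b(\mathbf y,\zeta,\kappa)$ with test function $\zeta=\Delta_h\kappa$: its antisymmetrized part $c(\mathbf y,\kappa,\Delta_h\kappa)$ requires the $L^4$ bound on $\nabla\kappa$ from \eqref{DeltaTheta_ineq1} rather than direct integration by parts, and the boundary terms from the Robin operator must be checked to be consistent. The term producing $\nu^{-1/2}\widetilde G_2^2$ comes from $\|\mathbf A_h\mathbf y\|$ appearing there.

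\emph{Estimate \eqref{Full:stab:adjoint:ineq3}.} We test with $\kappa_{n,h}-\kappa_{n+1,h}$. The term $\frac1{\tau_n}\|\kappa_{n,h}-\kappa_{n+1,h}\|^2$ appears on the left, and the right-hand side is estimated by $\|\Delta_h\kappa_{n,h}\|$, by the convection term via $\|\mathbf y\|_{L^\infty}\lesssim\|\mathbf y\|_{H^1}^{1/2}\|\mathbf A_h\mathbf y\|^{1/2}$ combined with the previous bounds, and by the $\bm\mu$ and source terms. Summing over $n$ and inserting the bounds $\widetilde G_2$, $\widetilde G_3$, $\widetilde G_4$ gives the claimed combination.
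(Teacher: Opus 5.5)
Your overall route is the paper's: test with $\tau_n(\bm\mu_{n,h},\kappa_{n,h})$, remove the unshifted convection terms by skew-symmetry, and bound the shifted couplings. The smallness of $\tau_n\|\theta_{n,h}\|_{H^1(\Omega)}^2$ comes from Lemma~\ref{appB:Lem:full:state:err} as in Remark~\ref{remark:conv:Theta}, and a backward discrete Gronwall closes the first estimate. You then mimic \eqref{Full:stab:state:ineq2}--\eqref{Full:stab:state:ineq3} with the test functions $-\tau_n\Delta_h\kappa_{n,h}$ and $\kappa_{n,h}-\kappa_{n+1,h}$. Your first step is fine.

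\textbf{The gap.} It sits in the step you yourself call the main obstacle. The displayed bound $b(\mathbf y_{n-1,h},\Delta_h\kappa_{n,h},\kappa_{n,h})\lesssim\|\mathbf y_{n-1,h}\|_{L^4}\|\nabla\kappa_{n,h}\|_{L^4}\|\Delta_h\kappa_{n,h}\|$ is false in general, because $\mathbf y_{n-1,h}\in\mathbb X_h$ is only discretely divergence free. Skew-symmetry of $b$ in its last two arguments gives $b(\mathbf y,\Delta_h\kappa,\kappa)=-b(\mathbf y,\kappa,\Delta_h\kappa)$. However, $b(\mathbf y,\kappa,\Delta_h\kappa)$ still contains $c(\mathbf y,\Delta_h\kappa,\kappa)$, which involves $\nabla\Delta_h\kappa$. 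That term is not controlled by $\|\Delta_h\kappa\|$ except through an inverse estimate costing $h^{-1}$.

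So integration by parts is unavoidable, not something to avoid. The boundary integral vanishes because $\mathbf y_{n-1,h}\in\mathbb H^1_0(\Omega)$; the Robin condition plays no role. One obtains
\[
b(\mathbf y_{n-1,h},\kappa_{n,h},\Delta_h\kappa_{n,h})=c(\mathbf y_{n-1,h},\kappa_{n,h},\Delta_h\kappa_{n,h})+\tfrac12\big((\nabla\cdot\mathbf y_{n-1,h})\,\kappa_{n,h},\Delta_h\kappa_{n,h}\big).
\]
Only the first term obeys your bound. The second does not vanish in general (e.g.\ for Taylor--Hood or Mini elements). It must be estimated with \eqref{Interpolation:A:1} and the Sobolev embedding:
\[
\big|\big((\nabla\cdot\mathbf y_{n-1,h})\,\kappa_{n,h},\Delta_h\kappa_{n,h}\big)\big|\le C\|\nabla\mathbf y_{n-1,h}\|^{1/2}\|\mathbf A_h\mathbf y_{n-1,h}\|^{1/2}\|\kappa_{n,h}\|_{H^1(\Omega)}\|\Delta_h\kappa_{n,h}\|.
\]

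\textbf{The fix.} After Young's inequality, this leaves the Gronwall weight $\tau_n\|\nabla\mathbf y_{n-1,h}\|\,\|\mathbf A_h\mathbf y_{n-1,h}\|$ on the implicit quantity $\|\kappa_{n,h}\|^2_{H^1(\Omega)}$. By \eqref{Full:stab:state:ineq2} and $(\mathbf B)$, this weight is $O(\tau^{1/2}\nu^{-1/2}\widetilde G_2^2)$ uniformly in $n$, so it can be absorbed for small $\tau$. Its sum is bounded by $C\sqrt{T}\,\nu^{-1/2}\widetilde G_2^2$. This is the exact analogue of the second term in \eqref{Full:stab:process:1} in the paper's state estimate. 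It, rather than the $L^4$ bound on $\nabla\kappa_{n,h}$ from \eqref{DeltaTheta_ineq1}, is where $\|\mathbf A_h\mathbf y\|$ and the factor $\nu^{-1/2}\widetilde G_2^2$ actually enter. Your closing sentence points there, but your displayed estimate cannot produce it.

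The same divergence term appears in your third step when you test with $\kappa_{n,h}-\kappa_{n+1,h}$, and it is handled identically. With this correction the argument goes through.
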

\begin{proof}We prove the estimate one by one. 

$\bullet$ Estimate of \eqref{Full:stab:adjoint:ineq1}.  Choosing $(\mathbf{w}_h,\zeta_h) =\tau_n (\bm \mu_{n,h}, \kappa_{n,h})$ in \eqref{MuKappa:Full_discrete:adjoint}, we deduce
\begin{align}
&\frac{1}{2}\big(\|\bm \mu_{n,h}\|^2+\|\kappa_{n,h}\|^2\big)-\frac{1}{2}\big(\|\bm \mu_{n+1,h}\|^2+\|\kappa_{n+1,h}\|^2\big)+\nu\tau_n\|\nabla \bm \mu_{n,h}\|^2+\chi\tau_n\|\nabla \kappa_{n,h}\|^2+\eta\gamma\tau_n\|\kappa_{n,h}\|^2_\Gamma\nonumber\\
&\leq \Big[\int_{t_{n-1}}^{t_n}\big[(\mathbf y_\sigma-\mathbf y_d, \bm \mu_{n,h})+(\theta_\sigma-{\theta_d}, \kappa_{n,h})\big]dt-\tau_{n+1} \beta(\kappa_{n,h}\mathbf g, \bm \mu_{n+1,h})\Big]-\tau_{n+1}\mathbf b(\bm \mu_{n,h}, \mathbf y_{n+1, h}, \bm \mu_{n+1,h})\nonumber\\
&-\tau_{n+1}\mathbf b(\bm \mu_{n,h},\theta_{n+1,h}, \kappa_{n+1,h})=J_1+J_2+J_3.\label{stab:Full_discrete:adjoint:bu:1}
\end{align}
By the Cauchy-Schwarz and Young's inequalities,  the  term $J_1$ satisfies
\begin{align*}
J_1\leq  \frac{\tau_n\nu}{6}\|\nabla \bm \mu_{n,h}\|^2&+\frac{\tau_n}{2}\big(\chi\|\nabla \kappa_{n,h}\|^2+\eta\gamma\|\kappa_{n,h}\|^2_\Gamma\big)+C\Big(\nu^{-1}\int_{t_{n-1}}^{t_n}\|\mathbf y_\sigma-\mathbf y_d\|^2 dt\\
&+\min\{\chi, \eta\gamma\}^{-1}\int_{t_{n-1}}^{t_n}\|\theta_\sigma-\theta_d\|^2 dt+F_n\|\bm \mu_{n+1,h}\|^2\Big)
\end{align*}
with $F_n=\min\{\chi,\eta\gamma\}^{-1}\beta^2|\mathbf g|^2\tau_n$. Using \eqref{Interpolation:conti:1}, we deduce
\begin{align*}
J_2&\leq C \tau_{n+1} \|\bm \mu_{n,h}\|^\frac{1}{2}\|\nabla\bm \mu_{n,h}\|^\frac{1}{2}\,\|\nabla \mathbf y_{n+1,h}\|\,\|\bm \mu_{n+1,h}\|^\frac{1}{2}\|\nabla \bm \mu_{n+1,h}\|^\frac{1}{2}\\
&+C\tau_{n+1} \|\nabla \bm \mu_{n,h}\|\,\|\bm \mu_{n+1,h}\|^\frac{1}{2}\|\nabla \bm \mu_{n+1,h}\|^\frac{1}{2}\|\nabla \mathbf y_{n+1,h}\|\\
&\leq \frac{\nu\tau_n}{6}\|\nabla \bm \mu_{n,h}\|^2+\frac{\nu\tau_{n+1}}{4}\|\nabla \bm \mu_{n+1,h}\|^2+C\big(G_n\|\bm \mu_{n,h}\|^2+H_n\|\bm \mu_{n+1,h}\|^2\big)
\end{align*}
with $G_n=\nu^{-1}\tau_{n+1}\|\nabla \mathbf y_{n+1,h}\|^2$, $H_n=\tau_{n+1}\big(\nu^{-1}\|\nabla \mathbf y_{n+1,h}\|^2+\nu^{-3}\|\nabla \mathbf y_{n+1,h}\|^4\big)$. Using a similar argument, we have the estimate for $J_3$
\begin{align*}
J_3&\leq \frac{\nu\tau_n}{6}\|\nabla\bm \mu_{n,h}\|^2+\frac{\tau_{n+1}}{4}\big(\chi\|\nabla \kappa_{n+1,h}\|^2+\eta\gamma\|\kappa_{n+1,h}\|^2_\Gamma\big)+C\big(K_n\| \bm \mu_{n,h} \|^2+M_n\|\kappa_{n+1,h}\|^2\big)
\end{align*}
with $K_n=\tau_{n+1}\nu^{-1}\|\nabla\theta_{n+1,h}\|^2$, $M_n=\tau_{n+1}\min\{\chi,\eta\gamma\}^{-1}\|\nabla\theta_{n+1,h}\|^2+\tau_{n+1}\nu^{-2}\min\{\chi,\eta\gamma\}^{-1}\|\theta_{n+1,h}\|^2\|\theta_{n+1,h}\|^2_{H^1(\Omega)}$. Hence, using the equality $\frac{1}{2}a^2=\frac{1}{4}a^2+\frac{1}{4}a^2-\frac{1}{4}b^2+\frac{1}{4}b^2$,  and combining the above estimates with \eqref{stab:Full_discrete:adjoint:bu:1} and summing over $n = N_\tau, \dots, k$ for any $1 \leq k \leq N_\tau$, we obtain the following bound
\begin{align*}
&\|\bm \mu_{n,h}\|^2+\|\kappa_{n,h}\|^2+\sum_{n=k}^{N_\tau}\tau_n\big[\nu\|\nabla \bm\mu_{n,h}\|^2+\min\{\chi,\eta\gamma\}\big(\|\nabla \kappa_{n,h}\|^2+\|\kappa_{n,h}\|^2_{\Gamma}\big)\big]\\
&\leq C\Big(\sum_{n=k}^{N_\tau}\Big((G_n+K_n)\|\bm \mu_{n,h}\|^2+(F_n+H_n+M_n)\big(\|\bm \mu_{n+1,h}\|^2+\|\kappa_{n+1,h}\|^2\big)\Big)+\int_{t_{k-1}}^T\nu^{-1}\|\mathbf y_\sigma-\mathbf y_d\|^2dt\\
&+\min\{\chi,\eta\gamma\}^{-1}\int_{t_{k-1}}^T\|\theta_\sigma-\mathbf \theta_d\|^2dt\Big).
\end{align*}
Similar to Remark \ref{remark:conv:Theta}, Lemma \ref{appB:Lem:full:state:err} implies that $\tau_n\|\theta_{n,h}(u)\|^2_{H^1(\Omega)} \to 0$ uniformly in $n$ as $\sigma \to 0$. 
Hence, together with Lemma \ref{lem:Full:YTheta:bu}, bounding $\|\mathbf y_\sigma\|_{L^\infty(I;\mathbb H^1(\Omega))}$,  we can choose the temporal and space mesh sizes fine enough so that $C(G_n+K_n)\leq \frac{1}{2}$ holds for all $n$. Together with Lemma \ref{lem:Full:YTheta:bu} and the Gronwall inequality, the proof follows.

The estimates for \eqref{Full:stab:adjoint:ineq2} and \eqref{Full:stab:adjoint:ineq3} are analogous to those for \eqref{Full:stab:state:ineq2} and \eqref{Full:stab:state:ineq3}.
\end{proof}

The next theorem states the temporal error estimates for the approximation of the adjoint equation.  The proof is omitted because it is analogous to that of Lemma \ref{lem:Full:YTheta:time:err}.
\begin{lemma}\label{lem:err:Full:adjoint}Let \((\bm  \mu_h,\kappa_h)\in L^2({I};\mathbb X_h)\times L^2({I};V_h)\) be the solution of \eqref{semi_discrete:adjoint} and $\big\{(\bm \mu_{n,h}, \kappa_{n,h})\big\}_{n=1}^{N_\tau}$  be the solution of  the discrete adjoint equation \eqref{MuKappa:Full_discrete:adjoint}. Then, under the assumptions of Lemma \ref{lem:Full:YTheta:time:err}  the following estimates hold:
\begin{equation*}
\begin{aligned}
\|\bm \mu_h-\bm \mu_\sigma\|_{L^\infty(I;\mathbb L^2(\Omega))}+\|\bm \mu_h- \bm \mu_\sigma\|_{L^2(I;\mathbb H^1(\Omega))}
+\|\kappa_h-\kappa_\sigma\|_{L^\infty(I; L^2(\Omega))}+\|\kappa_h-\kappa_\sigma\|_{L^2(I; H^1(\Omega))}\leq C \tau^\frac{1}{2}.
\end{aligned}
\end{equation*}
\end{lemma}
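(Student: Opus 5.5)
The proof mirrors that of Lemma \ref{lem:Full:YTheta:time:err}, now run backward in time. There is one new ingredient: the fully discrete adjoint \eqref{MuKappa:Full_discrete:adjoint} uses shifted indices ($n+1$) in the terms coupled to the state. It also uses the fully discrete state $(\mathbf y_{n,h},\theta_{n,h})$, whereas the semi-discrete adjoint \eqref{semi_discrete:adjoint} uses $(\mathbf y_h,\theta_h)$.

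\textbf{Error splitting.} Since $\bm\mu_\sigma,\kappa_\sigma$ are piecewise constant on $I^l_n$ with value at $t_{n-1}$, I split with the left interpolant:
\[
\bm\mu_h-\bm\mu_\sigma=(\bm\mu_h-\Pi^l_\tau\bm\mu_h)+(\Pi^l_\tau\bm\mu_h-\bm\mu_\sigma)=:\bm\zeta^{\bm\mu}_\tau+\bm\eta^{\bm\mu}_\tau,
\]
and analogously $\kappa_h-\kappa_\sigma=\zeta^\kappa_\tau+\eta^\kappa_\tau$. The interpolation parts are $O(\tau^{1/2})$ in $L^\infty(L^2)$ and $L^2(H^1)$. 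This follows from Lemma \ref{int:L2:time:err} together with the bounds $\partial_t\bm\mu_h,\partial_t\kappa_h\in L^2(L^2)$ and $\bm\mu_h,\kappa_h\in H^{1/2}(I;H^1)$ from Lemma \ref{semi_discreMukappa:stab}.

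\textbf{Error equation and energy estimate.} Integrate \eqref{semi_discrete:adjoint} over $(t_{n-1},t_n)$, subtract $\tau_n$ times \eqref{MuKappa:Full_discrete:adjoint}, and test with $(\bm\eta^{\bm\mu}_{n,\tau},\eta^\kappa_{n,\tau})$, where $\bm\eta^{\bm\mu}_{n,\tau}:=\bm\mu_h(t_{n-1})-\bm\mu_{n,h}$. This gives a backward energy identity: $\tfrac12\|\bm\eta^{\bm\mu}_{n,\tau}\|^2-\tfrac12\|\bm\eta^{\bm\mu}_{n+1,\tau}\|^2$ plus the dissipation terms is bounded by a right-hand side with four groups.
\begin{enumerate}
\item[(a)] Linear consistency terms in $\bm\zeta^{\bm\mu}_\tau,\zeta^\kappa_\tau$ and the Robin boundary term. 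These are handled with Young's inequality, exactly as $J_1$ before.
\item[(b)] The source mismatch $\int_{t_{n-1}}^{t_n}(\mathbf y_h-\mathbf y_{n,h},\cdot)$, and similarly for $\theta$. This is bounded by $\|\mathbf y_h-\mathbf y_\sigma\|_{L^2(L^2)}+\|\theta_h-\theta_\sigma\|_{L^2(L^2)}\le C\tau$ via Lemma \ref{Full_err:YTetha:time:L2}.
\item[(c)] Trilinear terms in which the state differs. Examples are $\mathbf b(\mathbf w,\mathbf y_h-\mathbf y_{n+1,h},\bm\mu_h)$ and $b(\mathbf w,\theta_h-\theta_{n+1,h},\kappa_h)$. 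Using \eqref{pre:estimate} and \eqref{Interpolation:conti:1}, they are controlled by $\|\mathbf y_h-\mathbf y_\sigma\|_{L^2(H^1)}+\|\theta_h-\theta_\sigma\|_{L^2(H^1)}\le C\tau^{1/2}$, from Lemma \ref{lem:Full:YTheta:time:err}, plus the increments $\|\mathbf y_h(t_{n+1})-\mathbf y_h(t_n)\|$. Here I need the $L^\infty(H^1)$ bounds on $\bm\mu_h,\kappa_h$ from Lemma \ref{semi_discreMukappa:stab}.
\item[(d)] Terms of the form $\mathbf b(\bm\eta^{\bm\mu}_{n,\tau},\mathbf y_{n+1,h},\bm\eta^{\bm\mu}_{n+1,\tau})$ and $b(\bm\eta^{\bm\mu}_{n,\tau},\theta_{n+1,h},\eta^\kappa_{n+1,\tau})$. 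These are absorbed by splitting the dissipation between levels $n$ and $n+1$, using $\frac12a^2=\frac14a^2+\frac14a^2-\frac14b^2+\frac14b^2$ as in the state proof, and leaving Gronwall coefficients $\tau_{n+1}(\|\nabla\mathbf y_{n+1,h}\|^2+\|\nabla\mathbf y_{n+1,h}\|^4+\|\theta_{n+1,h}\|_{H^1}^2)$.
\end{enumerate}

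\textbf{Main obstacle.} The hardest part is the shifted-index structure. The factors $\tau_{n+1}/\tau_n$ multiply terms at level $n+1$, while the continuous problem has them at the same time. The mismatch $(\tau_{n+1}-\tau_n)/\tau_n=O(\tau)$ is handled by assumption $(\mathbf B)$, since $|\tau_{n+1}-\tau_n|\le\epsilon_1\tau^2$ and $\tau\le\epsilon_0\tau_n$. The remaining level shift $\bm\mu_h(t_{n-1})$ versus $\bm\mu_{n+1,h}$ is handled by a further temporal increment bounded through $\|\partial_t\bm\mu_h\|_{L^2(L^2)}$ and $\|\partial_t\kappa_h\|_{L^2(L^2)}$, which costs only $\tau^{1/2}$.

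The Gronwall coefficients involve $\tau_{n+1}\|\theta_{n+1,h}\|^2_{H^1}$. For these I use the smallness from Remark \ref{remark:conv:Theta} and the bound on $\mathbf y_\sigma$ in $L^\infty(H^1)$ from Lemma \ref{lem:Full:YTheta:bu}, taking $\tau$ small so that the level-$n$ coefficients are at most $\tfrac12$. A backward discrete Gronwall inequality then bounds $\max_n\|\bm\eta^{\bm\mu}_{n,\tau}\|^2+\sum_n\tau_n\|\nabla\bm\eta^{\bm\mu}_{n,\tau}\|^2$, together with the corresponding $\kappa$ terms, by $C\tau$. The triangle inequality with the interpolation bounds concludes the proof.
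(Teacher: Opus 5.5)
Your plan is the route the paper intends. The paper omits this proof as analogous to Lemma \ref{lem:Full:YTheta:time:err}, and its proof of Lemma \ref{Full_err:Mukappa:time:L2} uses the same $\Pi^l_\tau$ splitting and the same appeal to $(\mathbf B)$ for the factors $\tau_{n+1}/\tau_n$. One step, however, fails as you have arranged it.

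In group (c) you pair the state difference with the \emph{semi-discrete} adjoint, $b(\mathbf w,\theta_h-\theta_{n+1,h},\kappa_h)$. The complementary level-shift term in the $\bm\mu$-equation is then
\[
\int_{t_{n-1}}^{t_n} b\big(\bm\eta^{\bm\mu}_{n,\tau},\theta_{n+1,h},\kappa_h-\kappa_h(t_n)\big)\,dt ,
\]
and its coefficient is the \emph{fully discrete} temperature. For the velocity this causes no trouble, because $\mathbf y_\sigma$ is bounded in $L^\infty(I;\mathbb H^1(\Omega))$ by \eqref{Full:stab:state:ineq2}. But $\theta_\sigma$ is only bounded in $L^\infty(I;L^2(\Omega))\cap L^2(I;H^1(\Omega))$, together with the qualitative Remark \ref{remark:conv:Theta}. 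Using $|b(\bm\eta,\theta,w)|\le C\|\bm\eta\|_{H^1}\|\theta\|_{H^1}\|w\|_{L^4}$ and $\|w\|_{L^4}^2\le C\tau_n^{1/2}\|\partial_t\kappa_h\|_{L^2(t_{n-1},t_n;L^2(\Omega))}$, Young's inequality leaves the source
\[
C\tau^{1/2}\sum_n\tau_n\|\theta_{n+1,h}\|_{H^1}^2\,\|\partial_t\kappa_h\|_{L^2(t_{n-1},t_n;L^2(\Omega))}\le C\tau^{1/2}.
\]
The Remark only improves this to $o(\tau^{1/2})$. Moving the derivative onto $w$ and using $\|\kappa_h-\Pi^l_\tau\kappa_h\|_{L^2(I;H^1(\Omega))}=O(\tau^{1/2})$ gives the same order. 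So this term contributes order $\tau^{1/2}$ to the squared energy, which yields only a $\tau^{1/4}$ rate. Your claim that the level shift ``costs only $\tau^{1/2}$'' holds for the velocity coupling but not here.

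The fix is to never let $\theta_{n+1,h}$ multiply a consistency term, by writing $\theta_{n+1,h}=\theta_h(t_{n+1})-\eta^\theta_{n+1,\tau}$. There are two ways to do this.
\begin{itemize}
\item Split $\int b(\mathbf w,\theta_h,\kappa_h)-\tau_{n+1}b(\mathbf w,\theta_{n+1,h},\kappa_{n+1,h})$ as $\int b(\mathbf w,\theta_h,\kappa_h-\kappa_{n+1,h})+\int b(\mathbf w,\theta_h-\theta_{n+1,h},\kappa_{n+1,h})$, plus a $(\tau_n-\tau_{n+1})$ remainder controlled by $(\mathbf B)$. The level shift and the Gronwall term then carry $\theta_h\in L^\infty(I;H^1(\Omega))$ from Lemma \ref{semi_discreYTheta:High_regular:stab}. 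The state error is paired with the fully discrete $\kappa_{n+1,h}$, which is bounded in $L^\infty(I;H^1(\Omega))$ by \eqref{Full:stab:adjoint:ineq2}. This is how the paper treats $\tau_{n+1}b(\mathbf z_{n,h},\theta_h(t_{n+1})-\theta_{n+1,h},\kappa_{n+1,h})$ in the proof of Lemma \ref{Full_err:Mukappa:time:L2}.
\item Keep your split and estimate the extra piece directly:
\[
\int_{t_{n-1}}^{t_n}b\big(\bm\eta^{\bm\mu}_{n,\tau},\eta^\theta_{n+1,\tau},\kappa_h-\kappa_h(t_n)\big)\,dt\le\epsilon\tau_n\|\bm\eta^{\bm\mu}_{n,\tau}\|_{H^1}^2+C\tau_n\|\eta^\theta_{n+1,\tau}\|_{H^1}^2 .
\]
This sums to $O(\tau)$ by $(\mathbf B)$ and the nodal bound $\sum_n\tau_n\|\eta^\theta_{n,\tau}\|_{H^1}^2\le C\tau$ obtained in the proof of Lemma \ref{lem:Full:YTheta:time:err}.
\end{itemize}
With either repair, the rest of your argument goes through.
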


We now present the continuous dependence of the solution \( (\bm \mu_\sigma, \kappa_\sigma) \) on the control $u$.
\begin{lemma}\label{Full:lem:Lip:MuKappa}
Given that  \( \max\big\{ \|u\|_{H^{\frac{1}{4}}(I; L^2(\Gamma))}, \|u\|_{L^2(I; H^{\frac{1}{2}}(\Gamma))}, \|v\|_{L^2(I; L^2(\Gamma))} \big\} \leq M \), where \(M\) is a positive constant. Then for a sufficiently small $\sigma$,  it follows that
\begin{align*}
\|\bm \mu_\sigma(u) - \bm \mu_\sigma(v)\|_{L^\infty(I; \mathbb{L}^2(\Omega))} &+ \|\kappa_\sigma(u) - \kappa_\sigma(v)\|_{L^\infty(I; L^2(\Omega))}+\|\bm \mu_\sigma(u) - \bm \mu_\sigma(v)\|_{L^2(I;\mathbb H^1(\Omega))}\\
&+\|\kappa_\sigma(u) - \kappa_\sigma(v)\|_{L^2(I; H^1(\Omega))}\leq C_{M} \|u - v\|_{L^2(I; L^2(\Gamma))},
\end{align*}
where \(  C_{ M}\) is a positive constant depending on \( M \).
\end{lemma}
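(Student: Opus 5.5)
We argue as in Lemma \ref{Full:lem:Lip:YTheta}, now for the backward recursion \eqref{MuKappa:Full_discrete:adjoint}. Write $(\mathbf y_{n,h}^u,\theta_{n,h}^u)$, $(\mathbf y_{n,h}^v,\theta_{n,h}^v)$ for the two discrete states and set
\[
\mathbf e^{\mu}_{n,h}:=\bm\mu_{n,h}(u)-\bm\mu_{n,h}(v),\qquad e^{\kappa}_{n,h}:=\kappa_{n,h}(u)-\kappa_{n,h}(v).
\]
Subtracting the two adjoint schemes and adding and subtracting mixed terms, the differences satisfy a linear backward scheme with the same structure as \eqref{MuKappa:Full_discrete:adjoint}:
\begin{itemize}
\item in the operator part, the coefficients are the $u$-state, i.e.\ $\mathbf b(\mathbf w_h,\mathbf y^u_{n+1,h},\mathbf e^\mu_{n+1,h})$, $\mathbf b(\mathbf y^u_{n-1,h},\mathbf w_h,\mathbf e^\mu_{n,h})$, $b(\mathbf w_h,\theta^u_{n+1,h},e^\kappa_{n+1,h})$, $b(\mathbf y^u_{n-1,h},\zeta_h,e^\kappa_{n,h})$;
\item the right-hand side consists of $(\mathbf y^u_{n,h}-\mathbf y^v_{n,h},\mathbf w_h)$ and $(\theta^u_{n,h}-\theta^v_{n,h},\zeta_h)$;
\item it also contains the cross terms $\frac{\tau_{n+1}}{\tau_n}\mathbf b(\mathbf w_h,\mathbf y^u_{n+1,h}-\mathbf y^v_{n+1,h},\bm\mu_{n+1,h}(v))$, $\mathbf b(\mathbf y^u_{n-1,h}-\mathbf y^v_{n-1,h},\mathbf w_h,\bm\mu_{n,h}(v))$, $\frac{\tau_{n+1}}{\tau_n}b(\mathbf w_h,\theta^u_{n+1,h}-\theta^v_{n+1,h},\kappa_{n+1,h}(v))$ and $b(\mathbf y^u_{n-1,h}-\mathbf y^v_{n-1,h},\zeta_h,\kappa_{n,h}(v))$.
\end{itemize}
The end data are zero.

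We test with $\tau_n(\mathbf e^\mu_{n,h},e^\kappa_{n,h})$ and sum from $N_\tau$ down to $k$. The homogeneous operator part is handled exactly as $J_1$--$J_3$ in the proof of \eqref{Full:stab:adjoint:ineq1}. This gives Gronwall weights $G_n,H_n,K_n,M_n$ built from the $u$-state, which are uniformly controlled by Lemma \ref{lem:Full:YTheta:bu}. For $\sigma$ small we can again absorb the diagonal terms $C(G_n+K_n)\|\mathbf e^\mu_{n,h}\|^2\le \tfrac12\|\mathbf e^\mu_{n,h}\|^2$, using $\sup_n\tau_n\|\theta_{n,h}\|_{H^1(\Omega)}^2\to0$ (Remark \ref{remark:conv:Theta} and Lemma \ref{appB:Lem:full:state:err}). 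The linear source terms are bounded by Young's inequality by $C\|\mathbf y_\sigma(u)-\mathbf y_\sigma(v)\|^2_{L^2(I;\mathbb L^2(\Omega))}+C\|\theta_\sigma(u)-\theta_\sigma(v)\|^2_{L^2(I;L^2(\Omega))}$. By Lemma \ref{Full:lem:Lip:YTheta} these are at most $C_M\|u-v\|^2_{L^2(I;L^2(\Gamma))}$.

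\textbf{The cross terms are the main obstacle.} Their coefficients $\bm\mu_{n,h}(v),\kappa_{n,h}(v)$ depend on $v$, which is only bounded in $L^2(I;L^2(\Gamma))$, so only the weak stability \eqref{Full:stab:adjoint:ineq1} is available for them. We bound each cross term with \eqref{b_estimate} and \eqref{Interpolation:conti:1}. For example,
\[
\tau_{n+1}\big|\mathbf b(\mathbf e^\mu_{n,h},\mathbf y^u_{n+1,h}-\mathbf y^v_{n+1,h},\bm\mu_{n+1,h}(v))\big|
\le C\tau_{n+1}\|\mathbf e^\mu_{n,h}\|^{1/2}\|\nabla\mathbf e^\mu_{n,h}\|^{1/2}\,\|\nabla(\mathbf y^u-\mathbf y^v)_{n+1,h}\|\,\|\bm\mu_{n+1,h}(v)\|^{1/2}\|\nabla\bm\mu_{n+1,h}(v)\|^{1/2}.
\]
Using $\|\mathbf e^\mu_{n,h}\|\le C\|\nabla\mathbf e^\mu_{n,h}\|$ and $\|\bm\mu_\sigma(v)\|_{L^\infty(I;\mathbb L^2(\Omega))}\le\widetilde G_3$, this is at most $\tfrac{\nu\tau_n}{8}\|\nabla \mathbf e^\mu_{n,h}\|^2+C\widetilde G_3\,\tau_{n+1}\|\nabla(\mathbf y^u-\mathbf y^v)_{n+1,h}\|^2\|\nabla\bm\mu_{n+1,h}(v)\|$. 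The product $\|\nabla(\mathbf y^u-\mathbf y^v)\|^2\|\nabla\bm\mu(v)\|$ is not directly summable, because only $L^2$-in-time bounds hold for both factors. We plan to remove this difficulty by bounding $\|\nabla\bm\mu_{n+1,h}(v)\|$ and $\|\kappa_{n+1,h}(v)\|_{H^1(\Omega)}$ uniformly in $n$. This follows from the higher stability \eqref{Full:stab:adjoint:ineq2} (and its $\bm\mu$ analogue), applied to $v$ and with $M$ entering through $\widetilde G_1,\widetilde G_2$. Once $\bm\mu_\sigma(v)\in L^\infty(I;\mathbb H^1(\Omega))$ uniformly, every cross term is bounded by $\epsilon\tau_n\|\nabla\mathbf e^\mu_{n,h}\|^2+\epsilon\tau_n\|e^\kappa_{n,h}\|^2_{H^1(\Omega)}$ plus $C_M\big(\|\mathbf y^u_\sigma-\mathbf y^v_\sigma\|^2_{L^2(I;\mathbb H^1(\Omega))}+\|\theta^u_\sigma-\theta^v_\sigma\|^2_{L^2(I;H^1(\Omega))}\big)$. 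By Lemma \ref{Full:lem:Lip:YTheta}, the latter is at most $C_M\|u-v\|^2_{L^2(I;L^2(\Gamma))}$.

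If for $v\in L^2$ only the weaker bounds are available, we would try the opposite split instead: put $\bm\mu(v)$ in $L^2(I;\mathbb H^1(\Omega))$ and use $\|\mathbf y^u-\mathbf y^v\|_{L^\infty(I;\mathbb L^2(\Omega))}$ with $\mathbb L^4$ interpolation, $\|\mathbf y^u-\mathbf y^v\|_{\mathbb L^4}\le C\|\mathbf y^u-\mathbf y^v\|^{1/2}\|\nabla(\mathbf y^u-\mathbf y^v)\|^{1/2}$. In either case the $L^\infty(I;L^2)$ and $L^2(I;H^1)$ Lipschitz bounds of Lemma \ref{Full:lem:Lip:YTheta} both enter. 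Collecting all estimates, absorbing the $\epsilon$-terms and the shifted $\tfrac{\nu\tau_{n+1}}{4}\|\nabla\mathbf e^\mu_{n+1,h}\|^2$-type terms with the splitting identity already used in the proof of Lemma \ref{lem:stab:Full:adjoint:bu}, and applying the backward discrete Gronwall inequality yields the asserted $L^\infty(I;L^2)\cap L^2(I;H^1)$ bound with constant $C_M$.
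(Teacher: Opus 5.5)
\textbf{There is a genuine gap: the cross terms whose coefficient is $\bm\mu_\sigma(v)$.}

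Your main route needs $\|\bm\mu_\sigma(v)\|_{L^\infty(I;\mathbb H^1(\Omega))}\le C_M$ for a control $v$ that is only bounded in $L^2(I;L^2(\Gamma))$. Nothing in the paper provides this. The bound \eqref{Full:stab:adjoint:ineq2} controls only $\kappa_\sigma$. The only $L^\infty(I;\mathbb H^1)$ bound for a velocity adjoint (Lemma \ref{semi_discreMukappa:stab}) has a constant depending on $\|u\|_{H^{1/4}(I;L^2(\Gamma))}$ and $\|u\|_{L^2(I;H^{1/2}(\Gamma))}$. It is also not a routine analogue of \eqref{Full:stab:adjoint:ineq2}. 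Testing with $-\tau_n\mathbf A_h\bm\mu_{n,h}$ produces $\tau_{n+1}b(\mathbf A_h\bm\mu_{n,h},\theta_{n+1,h}(v),\kappa_{n+1,h}(v))$. Its part $(\mathbf A_h\bm\mu_{n,h}\cdot\nabla\theta_{n+1,h}(v),\kappa_{n+1,h}(v))$ needs either $\kappa_\sigma(v)$ bounded in $L^\infty(I;L^\infty(\Omega))$ or $\theta_\sigma(v)$ bounded in a space better than $L^2(I;H^1(\Omega))$. You have neither for rough boundary data.

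Your fallback names the right norms, but carried out as you describe (pointwise Young, then backward Gronwall) it leaves the term $C\tau_{n+1}\|\nabla\bm\mu_{n+1,h}(v)\|^2\|\mathbf e^\mu_{n,h}\|^2$. This term sits at the same index as the new value in the backward recursion. Discrete Gronwall then requires $C\tau_{n+1}\|\nabla\bm\mu_{n+1,h}(v)\|^2\le\frac12$ for all $n$, uniformly over the $L^2$-ball, because the lemma is later applied with $v=\bar u_\sigma$. The paper obtains such uniform smallness only for regular controls, through the time-error estimates.

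\textbf{The missing idea is to choose which factor carries $u$ and which carries $v$.} Write $\mathbf e^{\mathbf y}_{n,h}:=\mathbf y_{n,h}(u)-\mathbf y_{n,h}(v)$ and $e^\theta_{n,h}:=\theta_{n,h}(u)-\theta_{n,h}(v)$. As in the paper, split
$\mathbf b(\mathbf w_h,\mathbf y_{n+1,h}(u),\bm\mu_{n+1,h}(u))-\mathbf b(\mathbf w_h,\mathbf y_{n+1,h}(v),\bm\mu_{n+1,h}(v))=\mathbf b(\mathbf w_h,\mathbf e^{\mathbf y}_{n+1,h},\bm\mu_{n+1,h}(u))+\mathbf b(\mathbf w_h,\mathbf y_{n+1,h}(v),\mathbf e^{\mu}_{n+1,h})$.
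Likewise, take $\bm\mu_{n,h}(u)$ and $\kappa_{n,h}(u)$ as the cross coefficients in the $\mathbf b(\mathbf y_{n-1,h},\mathbf w_h,\bm\mu_{n,h})$ and $b(\mathbf y_{n-1,h},\zeta_h,\kappa_{n,h})$ terms. For the coupling term use $b(\mathbf w_h,\theta_{n+1,h}(u),e^\kappa_{n+1,h})+b(\mathbf w_h,e^\theta_{n+1,h},\kappa_{n+1,h}(v))$.

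With this splitting the same-index weights become:
\begin{itemize}
\item $\tau\|\nabla\mathbf y_{n+1,h}(v)\|^2$, which is small by \eqref{Full:stab:state:ineq2}, valid for $L^2$ controls;
\item $\tau\|\nabla\bm\mu_{n+1,h}(u)\|^2$ and $\tau\|\nabla\theta_{n+1,h}(u)\|^2$, which are uniformly small by Lemmas \ref{lem:Full:YTheta:time:err} and \ref{lem:err:Full:adjoint}, since $u$ is the regular control.
\end{itemize}
The only $v$-dependent cross coefficient is then $\kappa_\sigma(v)$, whose $L^\infty(I;H^1(\Omega))$ bound \eqref{Full:stab:adjoint:ineq2} holds for $L^2$ controls. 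The cross terms then reduce to $C_M\|u-v\|^2_{L^2(I;L^2(\Gamma))}$ via Lemma \ref{Full:lem:Lip:YTheta}.

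Alternatively, you can keep your splitting if you replace pointwise Young by H\"older in time over the whole sum, with exponents $(4,2,4)$:
\begin{align*}
&\sum_n\tau_n\|\mathbf e^\mu_{n,h}\|^{\frac12}\|\nabla\mathbf e^\mu_{n,h}\|^{\frac12}\|\nabla\bm\mu_{n+1,h}(v)\|\,\|\mathbf e^{\mathbf y}_{n+1,h}\|^{\frac12}\|\nabla\mathbf e^{\mathbf y}_{n+1,h}\|^{\frac12}\\
&\quad\le C\max_n\|\mathbf e^\mu_{n,h}\|^{\frac12}\Big(\sum_n\tau_n\|\nabla\mathbf e^\mu_{n,h}\|^2\Big)^{\frac14}\|\bm\mu_\sigma(v)\|_{L^2(I;\mathbb H^1(\Omega))}\|\mathbf y_\sigma(u)-\mathbf y_\sigma(v)\|^{\frac12}_{L^\infty(I;\mathbb L^2(\Omega))}\|\mathbf y_\sigma(u)-\mathbf y_\sigma(v)\|^{\frac12}_{L^2(I;\mathbb H^1(\Omega))}.
\end{align*}
Then absorb $\epsilon\big(\max_n\|\mathbf e^\mu_{n,h}\|^2+\sum_n\tau_n\|\nabla\mathbf e^\mu_{n,h}\|^2\big)$ after taking the maximum over the starting index of the backward sum. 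This uses only \eqref{Full:stab:adjoint:ineq1} for $v$, but it is a different argument from the step-by-step Gronwall you outline.
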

\begin{proof}
Let $\big( \mathbf{e}^{\bm{\mu}}_{n,h}, e^{\kappa}_{n,h} \big) = \big( \bm{\mu}_{n,h}(u) - \bm{\mu}_{n,h}(v), \kappa_{n,h}(u) - \kappa_{n,h}(v) \big)$. From \eqref{MuKappa:Full_discrete:adjoint} it follows that
\begin{equation*}
\begin{aligned}
& \big(\mathbf e^{\bm \mu}_{n,h}-\mathbf e^{\bm  \mu}_{n+1,h}, \mathbf w_h\big)+\tau_n \nu\mathbf a(\mathbf e^{\bm \mu}_{n,h}, \mathbf w_h)+\tau_{n+1}\mathbf b(\mathbf w_h, \mathbf e^{\mathbf y}_{n+1,h}, \bm \mu_{n+1,h}(u))
+\tau_{n+1}\mathbf b(\mathbf w_h, \mathbf y_{n+1,h}(v), \mathbf e^{\bm \mu}_{n+1,h})\\
&+\tau_n\mathbf b(\mathbf e^{\mathbf y}_{n-1,h},\mathbf w_h, \bm \mu_{n,h}(u))+\tau_n\mathbf b(\mathbf y_{n-1,h}(v),\mathbf w_h, \mathbf e^{\bm \mu}_{n,h})+\tau_{n+1}b(\mathbf w_h,\theta_{n+1,h}(u), e^\kappa_{n+1,h})\\
&+\tau_{n+1}b(\mathbf w_h, e^\theta_{n+1,h}, \kappa_{n+1,h}(v))=\tau_n(\mathbf e_{n,h}^{\mathbf y}, \mathbf w_h),\\
&(e^{\kappa}_{n,h}-e^{\kappa}_{n+1,h}, \zeta_h)+\chi\tau_n a(e^{\kappa}_{n,h}, \zeta_h)+\tau_n b(\mathbf e^{\mathbf y}_{n-1,h}, \zeta_h, \kappa_{n,h}(u))+\tau_n b(\mathbf y_{n-1,h}(v), \zeta_h, e^\kappa_{n,h})\\
&+\eta\gamma\tau_n(e^{\kappa}_{n,h}, \zeta_h)_\Gamma+\tau_{n+1}\beta(\mathbf e^{\bm \mu}_{n+1,h}, \zeta_h\mathbf g)= \tau_n (e^\theta_{n,h}, \zeta_h),\\
&\mathbf e^{\bm \mu}_{N_\tau+1,h}=0,\quad  e_{N_\tau+1,h}^\kappa=0,
\end{aligned}
\end{equation*}
where $\big( \mathbf{e}^{\mathbf{y}}_{n,h}, e^{\theta}_{n,h} \big) := \big( \mathbf{y}_{n,h}(u) - \mathbf{y}_{n,h}(v), \theta_{n,h}(u) - \theta_{n,h}(v) \big)$. Taking $(\mathbf w_h,\zeta_h)=(\mathbf e_{n,h}^{\bm \mu}, e_{n,h}^\kappa)$, we obtain
\begin{align}\label{Full:adjoint:Lip}
&\frac{1}{2}\big(\|\mathbf e^{\bm \mu}_{n,h}\|^2+\|e^\kappa_{n,h}\|^2\big)-\frac{1}{2}\big(\|\mathbf e^{\bm \mu}_{n+1,h}\|^2+\|e^\kappa_{n+1,h}\|^2\big)+\tau_n\nu\|\nabla \mathbf e^{\bm \mu}_{n,h}\|^2+\tau_n\chi\|\nabla e^\kappa_{n,h}\|^2 +\tau_n\eta\gamma\|e^\kappa_{n,h}\|^2_\Gamma\nonumber\\
&\leq -\big[\tau_{n+1}\mathbf b(\mathbf e^{\bm \mu}_{n,h}, \mathbf e^{\mathbf y }_{n+1,h}, \bm \mu_{n+1,h}(u))+\tau_{n+1}\mathbf b(\mathbf e^{\bm \mu}_{n,h}, \mathbf y_{n+1,h}(v), \mathbf e^{\bm \mu}_{n+1,h})+\tau_n\mathbf b(\mathbf e^{\mathbf y}_{n-1,h}, \mathbf e^{\bm \mu}_{n,h}, \bm \mu_{n,h}(u))\big]\nonumber\\
&-\big[\tau_{n+1} b(\mathbf e^{\bm \mu}_{n,h}, \theta_{n+1,h}(u), e^\kappa_{n+1,h})+\tau_{n+1}b(\mathbf e^{\bm \mu}_{n,h},e^\theta_{n+1,h}, \kappa_{n+1}(v))+\tau_nb(\mathbf e^{\mathbf y}_{n-1,h}, e^\kappa_{n,h}, \kappa_{n,h}(u))\big]\nonumber\\
&-\big[\tau_{n+1}\beta(\mathbf e^{\bm \mu}_{n+1,h}, e^\kappa_{n,h}\mathbf g)
-\tau_n(\mathbf e^{\mathbf y}_{n,h}, \mathbf e^{\bm \mu}_{n,h})-\tau_n(e^\theta_{n,h}, e^\kappa_{n,h})\big]=J_1+J_2+J_3
\end{align}
with $ \mathbf{e}^{\mathbf{y}}_{N_\tau+1,h}: = \mathbf{0} $ and $ e^\theta_{N_\tau+1,h} := 0 $. Based on Lemmas \ref{lem:Full:YTheta:bu} and \ref{lem:stab:Full:adjoint:bu}, \eqref{Full:adjoint:Lip} can be estimated as follows:
\begin{align*}
J_1&\leq C \tau_{n+1}\|\mathbf e^{\bm \mu}_{n,h}\|_{\mathbb L^4(\Omega)}\|\mathbf e^{\mathbf y}_{n+1,h}\|_{\mathbb L^4(\Omega)}\|\nabla \bm \mu_{n+1,h}(u)\|+ C\tau_{n+1} \|\mathbf e^{\bm \mu}_{n,h}\|_{\mathbb L^4(\Omega)}\|\bm \mu_{n+1,h}(u)\|_{\mathbb L^4(\Omega)}\|\nabla \mathbf e^{\mathbf y}_{n+1,h}\|\\
&+C\tau_{n+1}\|\mathbf e^{\bm \mu}_{n,h}\|_{\mathbb L^4(\Omega)}\|\nabla \mathbf y_{n+1,h}(v)\|\|\mathbf e^{\bm \mu}_{n+1,h}\|_{\mathbb L^4(\Omega)}+C\tau_{n+1} \|\mathbf e^{\bm \mu}_{n+1,h}\|_{\mathbb L^4(\Omega)}\|\mathbf  y_{n+1,h}(v)\|_{\mathbb L^4(\Omega)}\|\nabla \mathbf e^{\bm \mu}_{n,h}\|\\
&+ C \tau_{n}\|\mathbf e^{\bm \mu}_{n,h}\|_{\mathbb L^4(\Omega)}\|\mathbf e^{\mathbf y}_{n-1,h}\|_{\mathbb L^4(\Omega)}\|\nabla \bm \mu_{n,h}(u)\|+ C \tau_{n}\|\mathbf e^{\bm \mu}_{n,h}\|_{\mathbb L^4(\Omega)}\|\bm \mu_{n,h}(u)\|_{\mathbb L^4(\Omega)}\|\nabla \mathbf e^{\mathbf y}_{n-1,h}\|\\
&\leq \frac{\nu}{6}\tau_{n}\|\nabla \mathbf e^{\bm \mu}_{n,h} \|^2+ \frac{\nu}{4}\tau_{n+1}\|\nabla \mathbf e^{\bm \mu}_{n+1,h}\|^2+C\Big(G_n\|\mathbf e^{\bm \mu}_{n,h}\|^2+H_n\|\mathbf e^{\bm \mu}_{n+1,h}\|^2+ \tau_{n+1}\|\nabla \bm \mu_{n+1,h}(u)\|\|\mathbf e^{\mathbf y}_{n+1,h}\|\|\nabla \mathbf e^{\mathbf y}_{n+1,h}\|\\
&+\tau_{n+1}\|\nabla \mathbf e^{\mathbf y}_{n+1,h}\|^2+\tau_{n}\|\nabla \bm \mu_{n,h}(u)\|\|\mathbf e^{\mathbf y}_{n-1,h}\|\|\nabla \mathbf e^{\mathbf y}_{n-1,h}\|+\tau_n\|\nabla \mathbf e^{\mathbf y}_{n-1,h}\|^2\Big)
\end{align*}
with $G_n=\tau_{n+1}\|\nabla \bm \mu_{n+1,h}(u)\|^2+\tau_{n+1}\|\bm \mu_{n+1,h}(u)\|^2\|\nabla \bm \mu_{n+1,h}(u) \|^2+\tau_{n}\|\bm \mu_{n,h}(u)\|^2\|\nabla \bm \mu_{n,h}(u) \|^2+\tau_n\|\nabla \mathbf y_{n+1,h}(v)\|^2$, $H_n=\tau_n \|\nabla \mathbf y_{n+1,h}(v)\|^2+\tau_n\|\mathbf y_{n+1,h}(v)\|^2\|\nabla \mathbf y_{n+1,h}(v)\|^2$. Furthermore, we have
\begin{align*}
J_2&\leq C\tau_{n+1}\|\mathbf e^{\bm \mu}_{n,h}\|_{\mathbb L^4(\Omega)}\|\nabla  \theta_{n+1,h}(u)\|\|\ e^{\kappa}_{n+1,h}\|_{L^4(\Omega)}+C\tau_{n+1} \| e^{\kappa}_{n+1,h}\|_{L^4(\Omega)}\|\theta_{n+1,h}(u)\|_{L^4(\Omega)}\|\nabla \mathbf e^{\bm \mu}_{n,h}\|\\
&+C\tau_{n+1}\|\mathbf e^{\bm \mu}_{n,h}\|_{\mathbb L^4(\Omega)}\|e^{\theta}_{n+1,h}\|_{L^4(\Omega)}\|\nabla \kappa_{n+1,h}(v)\|+ C\tau_{n+1} \|\mathbf e^{\bm \mu}_{n,h}\|_{\mathbb L^4(\Omega)}\|\kappa_{n+1,h}(v)\|_{L^4(\Omega)}\|\nabla  e^{\theta}_{n+1,h}\|\\
&+ C \tau_{n}\| e^{\kappa}_{n,h}\|_{L^4(\Omega)}\|\mathbf e^{\mathbf y}_{n-1,h}\|_{\mathbb L^4(\Omega)}\|\nabla \kappa_{n,h}(u)\|+ C \tau_{n}\| e^{\kappa}_{n,h}\|_{L^4(\Omega)}\|\kappa_{n,h}(u)\|_{\mathbb L^4(\Omega)}\|\nabla \mathbf e^{\mathbf y}_{n-1,h}\|\\
&\leq \frac{\nu}{6}\tau_{n}\|\nabla \mathbf e^{\bm \mu}_{n,h} \|^2+\frac{1}{4}\big(\tau_n\chi\|\nabla e^\kappa_{n,h}\|^2 +\tau_n\eta\gamma\|e^\kappa_{n,h}\|^2_\Gamma\big)+\frac{1}{4}\big(\tau_{n+1}\chi\|\nabla e^\kappa_{n+1,h}\|^2 +\tau_{n+1}\eta\gamma\|e^\kappa_{n+1,h}\|^2_\Gamma\big)\\
&+C\Big(I_n\|\mathbf e^{\bm \mu}_{n,h}\|^2+K_n\| e^{ \kappa}_{n+1,h}\|^2+\tau_{n+1}\|e^\theta_{n+1,h}\|^2_{H^1(\Omega)}+\tau_n\|\nabla \mathbf e^{\mathbf y}_{n-1,h}\|^2\Big)
\end{align*}
with $I_n=\tau_{n+1}\|\nabla \theta_{n+1,h}(u)\|^2$, $K_n=\tau_{n+1}\|\nabla \theta_{n+1,h}(u)\|^2+\tau_{n+1}\|\theta_{n+1,h}(u)\|^2\|\theta_{n+1,h}(u)\|^2_{H^1(\Omega)}$ and
\begin{align*}
J_3\leq \frac{\nu}{6}\tau_{n}\|\nabla \mathbf e^{\bm \mu}_{n,h} \|^2+\frac{1}{4}\big(\tau_n\chi\|\nabla e^\kappa_{n,h}\|^2 +\tau_n\eta\gamma\|e^\kappa_{n,h}\|^2_\Gamma\big)+C\Big(\tau_{n+1}\beta^2|\mathbf g|^2\|\mathbf e^{\bm \mu}_{n+1,h}\|^2+\tau_n\|\mathbf e_{n,h}^{\mathbf y}\|^2+\tau_n\|e^\theta_{n,h}\|^2\Big),
\end{align*}
where we used \eqref{Full:stab:adjoint:ineq2}. Thus, combining the above estimates with \eqref{Full:adjoint:Lip} and summing over $n = N_\tau, \dots, k$ for any $1 \leq k \leq N_\tau$, we obtain the following bound
\begin{align*}
&\|\mathbf e^{\bm \mu}_{k,h}\|^2+\|e^\kappa_{k,h}\|^2+\sum_{n=k}^{N_\tau}\big[\tau_n\nu\|\nabla \mathbf e^{\bm \mu}_{n,h}\|^2+\tau_n\chi\|\nabla e^\kappa_{n,h}\|^2 +\tau_n\eta\gamma\|e^\kappa_{n,h}\|^2_\Gamma\big]\leq C\Big(\sum_{n=k}^{N_\tau}(G_n+I_n)\|\mathbf e^{\bm \mu}_{n,h}\|^2\\
&+\sum_{n=k}^{N_\tau}(H_n+\tau_{n+1}\beta^2|\mathbf g|^2)\|\mathbf e^{\bm \mu}_{n+1,h}\|^2+\sum_{n=k}^{N_\tau} K_n\| e^{ \kappa}_{n+1,h}\|^2+\sum_{n=k}^{N_\tau}\tau_{n}\|\nabla \bm \mu_{n,h}(u)\|\|\mathbf e^{\mathbf y}_{n-1,h}\|\|\nabla \mathbf e^{\mathbf y}_{n-1,h}\|\\
&\sum_{n=k}^{N_\tau}\tau_{n+1}\|\nabla \bm \mu_{n+1,h}(u)\|\|\mathbf e^{\mathbf y}_{n+1,h}\|\|\nabla \mathbf e^{\mathbf y}_{n+1,h}\|+\sum_{n=k}^{N_\tau}\tau_{n}\big[\|\nabla \mathbf e^{\mathbf y}_{n-1,h}\|^2+\|e^\theta_{n,h}\|^2_{H^1(\Omega)}\big]\Big).
\end{align*}
Using Lemmas \ref{lem:Full:YTheta:time:err}, \ref{lem:Full:YTheta:bu} and 
\ref{lem:err:Full:adjoint}, we can choose the temporal and spatial  mesh sizes fine enough so that $C (G_n+I_n) \leq \frac{1}{2}$ holds for all $n$. Then Lemma \ref{Full:lem:Lip:YTheta} and Gronwall's inequality imply the proof.
\end{proof}

We can deduce the improved temporal convergence order for the fully discrete adjoint variable under the norm $L^2(I;\mathbb L^2(\Omega))$ or $L^2(I;L^2(\Omega))$.
\begin{lemma}\label{Full_err:Mukappa:time:L2} Under the assumptions of Lemma \ref{lem:err:Full:adjoint} , the following estimate holds:
\begin{equation}\label{Full:MukappL2:time:err}
\begin{aligned}
\|\bm  \mu_h-\bm \mu_\sigma\|_{L^2(I;\mathbb L^2(\Omega))}+\|\kappa_h-\kappa_\sigma\|_{L^2(I; L^2(\Omega))}\leq C \tau.
\end{aligned}
\end{equation}
\end{lemma}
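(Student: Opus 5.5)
I would mirror the duality argument of Lemma \ref{Full_err:YTetha:time:L2}, now run forward in time. Since the discrete adjoint $(\bm\mu_\sigma,\kappa_\sigma)$ is piecewise constant on the left-closed intervals $I^l_n$ and takes the value $\bm\mu_{n,h}$ there, the natural splitting uses the left interpolant:
\[
\bm\mu_h-\bm\mu_\sigma=(\bm\mu_h-\Pi^l_\tau\bm\mu_h)+(\Pi^l_\tau\bm\mu_h-\bm\mu_\sigma)=:\bm\zeta^{\bm\mu}_\tau+\bm\eta^{\bm\mu}_\tau ,
\]
and analogously $\kappa_h-\kappa_\sigma=\zeta^\kappa_\tau+\eta^\kappa_\tau$. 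By Lemma \ref{int:L2:time:err} together with the bounds $\|\partial_t\bm\mu_h\|_{L^2(I;\mathbb L^2)}+\|\partial_t\kappa_h\|_{L^2(I;L^2)}\le\widetilde C_2$ from Lemma \ref{semi_discreMukappa:stab}, we get $\|\bm\zeta^{\bm\mu}_\tau\|_{L^2(I;\mathbb L^2)}+\|\zeta^\kappa_\tau\|_{L^2(I;L^2)}\le C\tau$. It therefore remains to show that the discrete part satisfies $\|\bm\eta^{\bm\mu}_\tau\|+\|\eta^\kappa_\tau\|\le C\tau$ in $L^2(I;L^2)$.

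For this I introduce a forward discrete dual problem that is adjoint to \eqref{MuKappa:Full_discrete:adjoint}, essentially the linearized scheme \eqref{Full_discrete:Linzied:state} with right-hand sides $(\bm\eta^{\bm\mu}_{n,\tau},\eta^\kappa_{n,\tau})$ and zero initial data. Its coefficients are chosen so that testing it with $\tau_n(\bm\eta^{\bm\mu}_{n,\tau},\eta^\kappa_{n,\tau})$, testing the adjoint error equation with $(\mathbf z_{n,h},\xi_{n,h})$, and summing by parts in $n$ (exactly as in the identity used to derive \eqref{discrete:J:diff1}) cancels all the terms involving $\bm\eta$. The resulting expression for $\|\bm\eta^{\bm\mu}_\tau\|^2_{L^2(I;\mathbb L^2)}+\|\eta^\kappa_\tau\|^2_{L^2(I;L^2)}$ then contains only three kinds of terms:
\begin{itemize}
\item consistency terms in $\bm\zeta^{\bm\mu}_\tau$ and $\zeta^\kappa_\tau$;
\item increments of $\bm\mu_h$, $\kappa_h$, $\mathbf y_h$, $\theta_h$ over a single step;
\item the mismatch between the data $(\mathbf y_h-\mathbf y_d,\theta_h-\theta_d)$ of \eqref{semi_discrete:adjoint} and $(\mathbf y_{n,h}-\mathbf y_d^n,\theta_{n,h}-\theta_d^n)$, together with the coefficient mismatches $\mathbf y_h$ versus $\mathbf y_{n\pm1,h}$.
\end{itemize}
Before using this, I would prove the stability bound for the dual solution, namely $\max_n(\|\nabla\mathbf z_{n,h}\|^2+\|\xi_{n,h}\|^2_{H^1})+\sum_n\tau_n(\|\mathbf A_h\mathbf z_{n,h}\|^2+\|\Delta_h\xi_{n,h}\|^2)\le C(\|\bm\eta^{\bm\mu}_\tau\|^2+\|\eta^\kappa_\tau\|^2)$. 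This follows as in Lemma \ref{lem:Full:YTheta:bu}, using \eqref{Interpolation:A:1}, \eqref{DeltaTheta_ineq1} and Lemma \ref{lem:Full:YTheta:bu}.

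Each term is then estimated as in $J_1$--$J_5$ of Lemma \ref{Full_err:YTetha:time:L2}. Diffusion terms involving $\bm\zeta$ are moved onto $\mathbf A_h\mathbf z_{n,h}$ and $\Delta_h\xi_{n,h}$. Trilinear terms are handled with \eqref{pre:estimate}, \eqref{Interpolation:A:1}, and the $L^\infty(H^1)$ and $\mathbf A_h$ bounds from Lemmas \ref{semi_discreMukappa:stab} and \ref{lem:stab:Full:adjoint:bu}. The state mismatch is bounded by $\|\mathbf y_h-\mathbf y_\sigma\|_{L^2(L^2)}+\|\theta_h-\theta_\sigma\|_{L^2(L^2)}\le C\tau$ from Lemma \ref{Full_err:YTetha:time:L2}, combined with $\|\mathbf y_d-P_\tau\mathbf y_d\|$ paired against $\mathbf z_\sigma$ in the usual way. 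Where the coefficients $\mathbf y_h(t)$ and $\mathbf y_{n\pm1,h}$ differ, I would insert the corresponding error in $L^2(L^2)$, or the increment bound \eqref{Full:stab:state:ineq3}, multiplied by an $L^\infty(H^1)$ norm of the adjoint.

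The main obstacle is the shifted temporal grid. The adjoint scheme couples $\bm\mu_{n+1,h}$ with the weight $\tau_{n+1}/\tau_n$ and with $\mathbf y_{n+1,h}$, whereas the continuous adjoint evaluates everything at the same time. This produces terms such as $(\tau_{n+1}-\tau_n)\,\mathbf b(\cdot,\mathbf y_{n+1,h},\bm\mu_{n+1,h})$ and $\tau_n\,\mathbf b(\cdot,\mathbf y_{n+1,h}-\mathbf y_{n,h},\bm\mu_{n+1,h})$. To bound these I use assumption $(\mathbf B)$: $|\tau_{n+1}-\tau_n|\le\epsilon_1\tau^2$, so $|\tau_{n+1}/\tau_n-1|\le C\tau$. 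The remaining one-step shifts are controlled through \eqref{Full:stab:state:ineq3} and \eqref{Full:stab:adjoint:ineq3}, paired with $\max_n\|\nabla\mathbf z_{n,h}\|$. After dividing by $\|\bm\eta^{\bm\mu}_\tau\|+\|\eta^\kappa_\tau\|$, this yields \eqref{Full:MukappL2:time:err}.
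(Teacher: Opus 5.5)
Your plan follows the paper's architecture: the $\Pi^l_\tau$ splitting, a forward discrete dual problem with data $(\bm\eta^{\bm\mu}_{n,\tau},\eta^\kappa_{n,\tau})$ and zero initial values, a stability bound that includes $\sum_n\tau_n(\|\mathbf A_h\mathbf z_{n,h}\|^2+\|\Delta_h\xi_{n,h}\|^2)$, and the shifted-grid terms handled through $(\mathbf B)$, \eqref{Full:stab:state:ineq3} and Lemma \ref{Full_err:YTetha:time:L2}. However, the dual stability step does not go through as you set it up. You take the dual to be the exact discrete adjoint of \eqref{MuKappa:Full_discrete:adjoint}, i.e.\ \eqref{Full_discrete:Linzied:state}, whose temperature coupling is $b(\mathbf z_{n-1,h},\theta_{n,h},\psi_h)$ with the \emph{fully discrete} $\theta_{n,h}$. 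The $\Delta_h\xi$ part of your bound comes from testing with $-\tau_n\Delta_h\xi_{n,h}$, which produces $\tau_n b(\mathbf z_{n-1,h},\theta_{n,h},\Delta_h\xi_{n,h})$. Since $\Delta_h\xi_{n,h}$ is controlled only in $L^2(\Omega)$, the derivative has to stay on $\theta_{n,h}$. Closing the Gronwall argument then needs a bound on $\theta_\sigma$ beyond $L^\infty(I;L^2(\Omega))\cap L^2(I;H^1(\Omega))$, for instance $\max_n\|\theta_{n,h}\|_{H^1(\Omega)}\le C$. Lemma \ref{lem:Full:YTheta:bu}, which you cite, gives such strong bounds only for $\mathbf y_\sigma$. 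For $\theta_\sigma$ the paper has only \eqref{Full:stab:state:ineq1} and the smallness $\sup_n\tau_n\|\theta_{n,h}\|^2_{H^1(\Omega)}\to0$ of Remark \ref{remark:conv:Theta}, and neither controls this term. Upgrading via an inverse estimate against $\theta_h$ costs $h^{-1}\tau^{1/2}$, which is exactly the coupling $\tau\lesssim h^2$ the paper wants to avoid.

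The paper avoids this by deliberately giving up exact adjointness. In \eqref{dual:Ytheta:Full_discrete:adjoint} the coupling is $b(\mathbf z_{n-1,h},\theta_h(t_n),\psi_h)$ with the \emph{semi-discrete} temperature, and the bound $\|\theta_h\|_{L^\infty(I;H^1(\Omega))}\le\widetilde C_1$ from Lemma \ref{semi_discreYTheta:High_regular:stab} is what makes the $\mathbf A_h$/$\Delta_h$ estimate work. The price is one extra term in the error identity that does not cancel: $-\tau_{n+1}b(\mathbf z_{n,h},\theta_h(t_{n+1})-\theta_{n+1,h},\kappa_{n+1,h})$. Since $\theta_h(t_{n+1})-\theta_{n+1,h}=\eta^\theta_{n+1,\tau}$, you move the derivative off this factor and bound the term by $\|\eta^\theta_\tau\|_{L^2(I;L^2(\Omega))}\le C\tau$ (Lemmas \ref{Full_err:YTetha:time:L2} and \ref{int:L2:time:err}). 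This is multiplied by $\mathbf A_h$-norms of $\mathbf z$ and by $\|\kappa_\sigma\|_{L^\infty(I;H^1(\Omega))}$ from \eqref{Full:stab:adjoint:ineq2}; it is the $\|\eta^\theta_\tau\|$ contribution in the paper's $J_4$. With this change the rest of your argument matches the paper's. One minor point: the $\mathbf y_d,\theta_d$ mismatch vanishes identically, because $\int_{I_n}(\mathbf y_d-\mathbf y_d^n,\mathbf z_{n,h})\,dt=0$ when $\mathbf z_{n,h}$ is constant on $I_n$. So no approximation rate for $\mathbf y_d\in L^2(I;\mathbb L^2(\Omega))$ is needed, and none would be available.
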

\begin{proof}
We use the  decomposition 
\begin{equation*}
\begin{aligned}
 &\bm \mu_h-\bm \mu_\sigma=\bm \mu_h -\Pi^l_\tau \bm \mu_h+\Pi^l_\tau \bm \mu_h-\bm \mu_\sigma=\bm \zeta^{\bm \mu}_\tau+\bm \eta^{\bm \mu}_\tau, \quad \kappa_h-\kappa_{\sigma}=\kappa_h- \Pi^l_\tau \kappa_h +\Pi^l_\tau\kappa_h- \kappa_\sigma=\zeta^{\kappa}_\tau+\eta^{\kappa}_\tau.
 \end{aligned}
 \end{equation*}
We can derive from the error estimate of the interpolation $\Pi^l_\tau$ given by Lemma \ref{int:L2:time:err} that
\begin{align*}
\|\bm \mu_h-\bm \mu_\sigma\|_{L^2(I;\mathbb L^2(\Omega))}+\|\kappa_h-\kappa_\sigma\|_{L^2(I;L^2(\Omega))}\leq C \tau+\|\bm \eta^{\bm  \mu}_\tau\|_{L^2(I;\mathbb L^2(\Omega))}+\|\eta^\kappa_\tau\|_{L^2(I; L^2(\Omega))}.
\end{align*}
We now use a duality argument to provide an estimate for $\|\bm \eta^{\bm \mu}_\tau\|_{L^2(I;\mathbb L^2(\Omega))} + \|\eta^\kappa_\tau\|_{L^2(I; L^2(\Omega))}$.  We begin by defining a discrete dual problem: seek a discrete solution pair $(\mathbf z_{n,h}, \xi_{n,h})\in \mathbb X_h\times V_h$, satisfying, for $n=1, \cdots, N_{\tau}$ and any $(\mathbf v_h,\psi_h)\in \mathbb X_h\times V_h$,
\begin{align}\label{dual:Ytheta:Full_discrete:adjoint}
&\Big(\frac{\mathbf z_{n,h}-\mathbf z_{n-1,h}}{\tau_n}, \mathbf v_h\Big)+\nu \mathbf a(\mathbf z_{n,h}, \mathbf v_h)+\mathbf b(\mathbf z_{n-1,h},\mathbf y_{n,h},\mathbf v_h)+\mathbf b(\mathbf y_{n-1,h},\mathbf z_{n,h},\mathbf v_h)+\beta(\xi_{n-1,h}\mathbf g, \mathbf v_h )\nonumber\\
&=(\big(\bm \eta^{\bm \mu}_\tau\big)^n, \mathbf v_h),\nonumber\\
&\Big(\frac{\xi_{n,h}-\xi_{n-1,h}}{\tau_n}, \psi_h\Big)+\chi a(\xi_{n,h},\psi_h)+b(\mathbf z_{n-1,h}, \theta_h(t_n),\psi_{h})+b(\mathbf y_{n-1,h}, \xi_{n,h},\psi_{h})+\eta\gamma(\xi_{n,h}, \psi_h)_\Gamma=(\big(\eta^\kappa_\tau\big)^n, \psi_h),\nonumber\\
&\mathbf z_{0,h}=0,\quad \xi_{0,h}=0.
\end{align}
Similar to the proof of Lemma \ref{lem:Full:YTheta:bu}, we know that \eqref{dual:Ytheta:Full_discrete:adjoint} has a unique solution $\big\{(\mathbf z_{n,h},\xi_{n,h})\big\}_{n=1}^{N_\tau}$ and the following stability estimate holds:
 \begin{align}\label{Full_err:MuKappa:time:L2:bu:3}
\max_{1\leq n\leq N_\tau}\Big(\|\xi_{n,h}\|^2_{H^1(\Omega)}+\|\nabla \mathbf z_{n,h}\|^2\Big)+&\sum_{n=1}^{N_\tau}\tau_n\Big(\|\nabla \mathbf z_{n,h}\|^2+\|\xi_{n,h}\|^2_{H^1(\Omega)}\Big)+\sum_{n=1}^{N_\tau}\tau_n\Big(\|\mathbf A_h \mathbf z_{n,h}\|^2+\|\Delta_h \xi_{n,h}\|^2\Big)\nonumber\\
&\leq C\Big(\|\bm \eta_\tau^{\bm \mu}\|^2_{L^2(I;\mathbb L^2(\Omega))}+\|\eta_\tau^\kappa\|^2_{L^2(I;L^2(\Omega))}\Big).
\end{align}
Setting $(\mathbf v_h,\psi_h)=\tau_n(\bm \eta^{\bm \mu}_{n,\tau},\eta_{n,\tau}^{\kappa})$ in \eqref{dual:Ytheta:Full_discrete:adjoint},  and summing  over $1$ to $N_\tau$, we obtain
\begin{align}\label{Full_err:MUkappa:time:L2:1}
&\|\bm \eta^{\bm  \mu}_\tau\|^2_{L^2(I;\mathbb L^2(\Omega))}+\| \eta^\kappa_\tau\|^2_{L^2(I;L^2(\Omega))}=\sum_{n=1}^{N_\tau}\big({\mathbf z_{n,h}-\mathbf z_{n-1,h}}, \bm \eta^{\bm \mu}_{n,\tau}\big)+\sum_{n=1}^{N_\tau}\tau_n\nu \mathbf a(\mathbf z_{n,h},\bm \eta^{\bm \mu}_{n,\tau})+\sum_{n=1}^{N_\tau}\tau_n \mathbf b(\mathbf z_{n-1,h},\mathbf y_{n,h},\bm \eta^{\bm \mu}_{n,\tau})\nonumber\\
&+\sum_{n=1}^{N_\tau}\tau_n \mathbf b(\mathbf y_{n-1,h},\mathbf z_{n,h},\bm \eta^{\bm \mu}_{n,\tau})+\sum_{n=1}^{N_\tau}\tau_n \beta(\xi_{n-1,h}\mathbf g, \bm \eta^{\bm \mu}_{n,\tau})+\sum_{n=1}^{N_\tau} \big(\xi_{n,h}-\xi_{n-1,h}, \eta^\kappa_{n,h}\big)+\sum_{n=1}^{N_\tau}\tau_n\big[\chi a(\xi_{n,h},\eta^\kappa_{n,h})\nonumber\\
&+\eta\gamma(\xi_{n,h}, \eta^\kappa_{n,h})_\Gamma\big]+\sum_{n=1}^{N_\tau} \tau_n b(\mathbf z_{n-1,h}, \theta_{h}(t_n),\eta^\kappa_{n,h})+\sum_{n=1}^{N_\tau}\tau_n b(\mathbf y_{n-1,h}, \xi_{n,h},\eta^\kappa_{n,h}).
\end{align}
For \eqref{semi_discrete:adjoint} integrating from $t_{n-1}$ to $t_n$, and  subtracting in \eqref{MuKappa:Full_discrete:adjoint} multiple $\tau_n$, from it with \((\mathbf w_h,\zeta_h)=(\mathbf z_{n,h}, \xi_{n,h}) \)  yields
\begin{align*}
&\tau_n\nu\mathbf a(\bm \eta_{n,\tau}^{\bm \mu},\mathbf z_{n,h})=-(\bm \eta_{n,\tau}^{\bm \mu}-\bm \eta_{n+1,\tau}^{\bm \mu},\mathbf z_{n,h})-\int_{t_{n-1}}^{t_n}\nu\mathbf a(\bm \zeta^{\bm \mu}_\tau, \mathbf z_{n,h})dt-\int_{t_{n-1}}^{t_n}\mathbf b(\mathbf z_{n,h}, \mathbf y_h- \mathbf y_\sigma, \bm \mu_h)dt\\
&-\int_{t_{n-1}}^{t_n}\mathbf b(\mathbf z_{n,h},\mathbf y_{n,h}-\mathbf y_{n+1,h},\bm \mu_h)dt-\int_{t_{n-1}}^{t_n}\mathbf b(\mathbf z_{n,h},\mathbf y_{n+1,h},\bm \zeta^{\bm \mu}_{\tau})dt-\tau_n\mathbf b(\mathbf z_{n,h}, \mathbf y_{n+1,h},\bm \mu_h(t_{n-1} )-\bm \mu_h(t_{n}))\\
&-(\tau_n-\tau_{n+1})\mathbf b(\mathbf z_{n,h}, \mathbf y_{n+1,h}, \bm \mu_h(t_n))-\tau_{n+1}\mathbf b(\mathbf z_{n,h}, \mathbf y_{n+1,h},\bm \eta^{\bm \mu}_{n+1,h})-\int_{t_{n-1}}^{t_n}\mathbf b(\mathbf y_h-\mathbf y_\sigma, \mathbf z_{n,h}, \bm \mu_h)dt\\
&-\int_{t_{n-1}}^{t_n}\mathbf b(\mathbf y_{n,h},\mathbf z_{n,h},\bm\zeta^{\bm\mu}_\tau)dt-\tau_n\mathbf b(\mathbf y_{n,h}-\mathbf y_{n-1,h},\mathbf z_{n,h}, \bm \mu_h(t_{n-1}))-\tau_n\mathbf b(\mathbf y_{n-1,h}, \mathbf z_{n,h}, \bm \eta^{\bm \mu}_{n,h})\\
&-\int_{t_{n-1}}^{t_n}b(\mathbf z_{n,h}, \zeta^\theta_\tau, \kappa_h)dt-\int_{t_{n-1}}^{t_n}b(\mathbf z_{n,h}, \theta_h(t_n)-\theta_h(t_{n+1}), \kappa_h)dt-\int_{t_{n-1}}^{t_n}b(\mathbf z_{n,h}, \theta_h(t_{n+1}), \zeta^{\kappa}_\tau)dt\\
&-\tau_n b(\mathbf z_{n,h}, \theta_h(t_{n+1}), \kappa_h(t_{n-1})-\kappa_h(t_{n}))-(\tau_n-\tau_{n+1})b(\mathbf z_{n,h}, \theta_h(t_{n+1}), \kappa_h(t_n))-\tau_{n+1} b(\mathbf z_{n,h}, \theta_h(t_{n+1}),\eta_{n+1,\tau}^{\kappa})\\
&-\tau_{n+1}b(\mathbf z_{n,h}, \theta_h(t_{n+1})-\theta_{n+1,h},\kappa_{n+1,h})+\int_{t_{n-1}}^{t_n}(\mathbf y_h-\mathbf y_\sigma, \mathbf z_{n,h})dt,\\
&\chi\tau_n a(\eta_{n,\tau}^\kappa, \xi_{n,h})+\tau_n\eta\gamma(\eta^\kappa_{n,\tau}, \xi_{n,h})_\Gamma=-(\eta_{n,\tau}^\kappa-\eta_{n+1, \tau}^\kappa, \xi_{n,h})-\chi \int_{t_{n-1}}^{t_n} a(\zeta_\tau^\kappa, \xi_{n,h})dt -\eta\gamma\int_{t_{n-1}}^{t_n}(\zeta_\tau^\kappa, \xi_{n,h})_\Gamma dt\\
&-\int_{t_{n-1}}^{t_n}b(\mathbf y_h-\mathbf y_\sigma, \xi_{n,h}, \kappa_h ) dt-\int_{{t_{n-1}}}^{t_n}b(\mathbf y_\sigma, \xi_{n,h}, \zeta^\kappa_{\tau})dt-\tau_n b(\mathbf y_{n,h}-\mathbf y_{n-1,h}, \xi_{n,h}, \kappa_h(t_{n-1}))\\
&-\tau_n b(\mathbf y_{n-1,h}, \xi_{n,h}, \eta^\kappa_{n,\tau})-\int_{t_{n-1}}^{t_n}\beta(\bm \zeta^{\bm \mu}_\tau,  \xi_{n,h}\mathbf g)dt-(\tau_{n}-\tau_{n+1})\beta(\bm \mu_h(t_{n-1}), \xi_{n,h}\mathbf g)-\tau_{n+1}\beta(\bm \mu_h(t_{n-1})-\bm \mu_h(t_n),\xi_{n,h}\mathbf g)\\
&- \tau_{n+1}\beta(\bm \eta^{\bm \mu}_{n+1,\tau}, \xi_{n,h}\mathbf g))+\int_{t_{n-1}}^{t_n}(\theta_h-\theta_\sigma, \xi_{n,h}\mathbf g)dt
\end{align*}
with $\theta(t_{N_\tau+1}):=\theta(t_{N_\tau})$. We substitute the above equation into \eqref{Full_err:MUkappa:time:L2:1} and perform the necessary simplifications to obtain
\begin{align}
&\|\bm \eta^{\bm  \mu}_\tau\|^2_{L^2(I;\mathbb L^2(\Omega))}+\| \eta^\kappa_\tau\|^2_{L^2(I;L^2(\Omega))}=\sum_{n=1}^{N_\tau}\Big(-\nu\int_{t_{n-1}}^{t_n}\mathbf a(\bm \zeta^{\bm \mu}_\tau, \mathbf z_{n,h})dt-\chi \int_{t_{n-1}}^{t_n} a(\zeta_\tau^\kappa, \xi_{n,h})dt -\eta\gamma\int_{t_{n-1}}^{t_n}(\zeta_\tau^\kappa, \xi_{n,h})_\Gamma dt\Big)\nonumber\\
&+\sum_{n=1}^{N_\tau}\Big(-\int_{t_{n-1}}^{t_n}\mathbf b(\mathbf z_{n,h}, \mathbf y_h- \mathbf y_\sigma, \bm \mu_h)dt-\int_{t_{n-1}}^{t_n}\mathbf b(\mathbf z_{n,h},\mathbf y_{n,h}-\mathbf y_{n+1,h},\bm \mu_h)dt-\int_{t_{n-1}}^{t_n}\mathbf b(\mathbf z_{n,h},\mathbf y_{n+1,h},\bm \zeta^{\bm \mu}_{\tau})dt\nonumber\\
&-\tau_n\mathbf b(\mathbf z_{n,h}, \mathbf y_{n+1,h},\bm \mu_h(t_{n-1} )-\bm \mu_h(t_{n}))-(\tau_n-\tau_{n+1})\mathbf b(\mathbf z_{n,h}, \mathbf y_{n+1,h}, \bm \mu_h(t_n))\Big)\nonumber\\
&+\sum_{n=1}^{N_\tau}\Big(-\int_{t_{n-1}}^{t_n}\mathbf b(\mathbf y_h-\mathbf y_\sigma, \mathbf z_{n,h}, \bm \mu_h)dt-\int_{t_{n-1}}^{t_n}\mathbf b(\mathbf y_{n,h},\mathbf z_{n,h},\bm\zeta^{\bm\mu}_\tau)dt-\tau_n\mathbf b(\mathbf y_{n,h}-\mathbf y_{n-1,h},\mathbf z_{n,h}, \bm \mu_h(t_{n-1}))\Big)\nonumber\\
&+\sum_{n=1}^{N_\tau}\Big(-\int_{t_{n-1}}^{t_n}b(\mathbf z_{n,h}, \zeta^\theta_\tau, \kappa_h)dt-\int_{t_{n-1}}^{t_n}b(\mathbf z_{n,h}, \theta_h(t_n)-\theta_h(t_{n+1}), \kappa_h)dt-\int_{t_{n-1}}^{t_n}b(\mathbf z_{n,h}, \theta_h(t_{n+1}), \zeta^{\kappa}_\tau)dt\nonumber\\
&-\tau_n b(\mathbf z_{n,h}, \theta_h(t_{n+1}), \kappa_h(t_{n-1})-\kappa_h(t_{n}))-(\tau_n-\tau_{n+1})b(\mathbf z_{n,h}, \theta_h(t_{n+1}), \kappa_h(t_n))\nonumber\\
&-\tau_{n+1}b(\mathbf z_{n,h}, \theta_h(t_{n+1})-\theta_{n+1,h},\kappa_{n+1,h})\Big)+\sum_{n=1}^{N_\tau}\Big(-\int_{t_{n-1}}^{t_n}b(\mathbf y_h-\mathbf y_\sigma, \xi_{n,h}, \kappa_h ) dt-\int_{{t_{n-1}}}^{t_n}b(\mathbf y_\sigma, \xi_{n,h}, \zeta^\kappa_{\tau})dt\nonumber\\
&-\tau_n b(\mathbf y_{n,h}-\mathbf y_{n-1,h}, \xi_{n,h}, \kappa_h(t_{n-1})) \Big)
+\sum_{n=1}^{N_\tau}\Big(\int_{t_{n-1}}^{t_n}(\mathbf y_h-\mathbf y_\sigma, \mathbf z_{n,h})dt-\int_{t_{n-1}}^{t_n}\beta(\bm \zeta^{\bm \mu}_\tau,  \xi_{n,h}\mathbf g)dt\nonumber\\
&-(\tau_{n}-\tau_{n+1})\beta(\bm \mu_h(t_{n-1}), \xi_{n,h}\mathbf g)-\tau_{n+1}\beta(\bm \mu_h(t_{n-1})-\bm \mu_h(t_n),\xi_{n,h}\mathbf g)+\int_{t_{n-1}}^{t_n}(\theta_h-\theta_\sigma, \xi_{n,h}\mathbf g)dt\Big)\nonumber\\
&=J_1+J_2+J_3+J_4+J_5+J_6.\label{Full_err:MUkappa:time:L2:2}
\end{align}
Based on the previous stability analysis, the terms $J_i\ (i = 1, \cdots, 6)$ in \eqref{Full_err:MUkappa:time:L2:2} can be estimated as follows:
\begin{align*}
J_1&\leq \nu\|\bm \zeta^{\bm \mu}_\tau\|_{L^2(I;\mathbb L^2(\Omega))}\Big(\sum_{n=1}^{N_\tau}\tau_n\|\mathbf A_h \mathbf z_{n,h}\|^2\Big)^\frac{1}{2}+\|\zeta^\kappa_\tau\|_{L^2(I;L^2(\Omega))}\Big(\sum_{n=1}^{N_\tau}\tau_n \|\Delta_h\xi_{n,h}\|^2\Big),\\
J_2+J_3&\leq C\Big(\|\mathbf y_h-\mathbf y_\sigma\|_{L^2(I;\mathbb L^2(\Omega))}+\tau^\frac{1}{2}\Big(\sum_{n=1}^{N_\tau}\|\mathbf y_{n-1,h}-\mathbf y_{n,h} \|^2\Big)^\frac{1}{2}+ \|\bm \zeta^{\bm \mu}_\tau\|_{L^2(I;\mathbb L^2(\Omega))}+\tau\|\partial_t \bm \mu_h\|_{L^2(I;\mathbb L^2(\Omega))}\\
&+\max_{1\leq n\leq N_\tau}|\tau_n-\tau_{n+1}|\tau^{-1}\Big)\Big(\Big(\sum_{n=1}^{N_\tau}\tau_n\|\nabla \mathbf z_{n,h}\|^2\Big)^\frac{1}{4}\Big(\sum_{n=1}^{N_\tau}\tau_n\|\mathbf A_h \mathbf z_{n,h}\|^2\Big)^\frac{1}{4}+\max_{1\leq n\leq N_\tau} \|\nabla \mathbf z_{n,h}\|\Big),\\
J_4\leq &C \Big(\|\zeta^\theta_\tau\|_{L^2(I;L^2(\Omega))}+ \tau\|\partial_t \theta_h\|_{L^2(I;L^2(\Omega))}+\|\zeta^\kappa_\tau\|_{L^2(I;L^2(\Omega))}+\tau\|\partial_t \kappa_h\|_{L^2(I;L^2(\Omega))}+\|\eta^\theta_\tau\|_{L^2(I;L^2(\Omega))}\\
&+\max_{1\leq n\leq N_\tau}|\tau_n-\tau_{n+1}|\tau^{-1}\Big)\Big(\Big(\sum_{n=1}^{N_\tau}\tau_n\|\nabla \mathbf z_{n,h}\|^2\Big)^\frac{1}{4}\Big(\sum_{n=1}^{N_\tau}\tau_n\|\mathbf A_h \mathbf z_{n,h}\|^2\Big)^\frac{1}{4}+\max_{1\leq n\leq N_\tau} \|\nabla \mathbf z_{n,h}\|\Big),\\
J_5&\leq C \Big(\|\mathbf y_h-\mathbf y_\sigma\|_{L^2(I;\mathbb L^2(\Omega))}+\|\zeta^\kappa_\tau\|_{L^2(I;L^2(\Omega))}+\tau^\frac{1}{2}\Big(\sum_{n=1}^{N_\tau}\|\mathbf y_{n,h}-\mathbf y_{n-1,h}\|^2\Big)^\frac{1}{2}\Big)\Big(\max_{1\leq n\leq N_\tau} \|\xi_{n,h}\|_{H^1(\Omega)}\\
&+\big(\sum_{n=1}^{N_\tau}\tau_n\|\xi_{n,h}\|^2_{H^1(\Omega)}\big)^\frac{1}{4}\big(\sum_{n=1}^{N_\tau}\tau_n\|\Delta_h \xi_{n,h}\|^2\big)^\frac{1}{4}\Big),\\
J_6&\leq C\Big(\|\mathbf y_h-\mathbf y_\sigma\|_{L^2(I;\mathbb L^2(\Omega))}+\|\bm \zeta^{\bm \mu}_\tau\|_{L^2(I;\mathbb L^2(\Omega))}+\tau^\frac{1}{2}\|\partial_t\bm \mu_h\|_{L^2(I;\mathbb L^2(\Omega))}+\|\theta_h-\theta_\sigma\|_{L^2(I;L^2(\Omega))}\\
&+\max_{1\leq n\leq N_\tau}|\tau_n-\tau_{n+1}|\tau^{-1}\Big)\Big(\sum_{n=1}^{N_\tau}\tau_n\big[\|\mathbf z_{n,h}\|^2+\|\xi_{n,h}\|^2\big]\Big).
\end{align*}
Combining the above estimates with \eqref{Full_err:MUkappa:time:L2:2} and  \eqref{Full_err:MuKappa:time:L2:bu:3}, using assumption \((\mathbf B) \), Lemmas \ref{int:L2:time:err},  \ref{lem:Full:YTheta:bu} and \ref{Full_err:YTetha:time:L2} the conclusion can be derived.
\end{proof}

Combining Lemmas \ref{Lem:semi:state:err}, \ref{Lem:semi:adjoint:err},  \ref{semi_err:MuKappa:L2},  \ref{lem:Full:YTheta:time:err},  \ref{lem:err:Full:adjoint} and  \ref{Full_err:YTetha:time:L2}, we can obtain the following theorem regarding both temporal and spatial errors.
\begin{theorem}\label{thm:All:TimeSpace:staAdj} Under the assumptions of Lemmas \ref{semi_err:MuKappa:L2} and \ref{Full_err:Mukappa:time:L2},  the following estimates hold:
\begin{align}
\|\mathbf y-\mathbf y_{\sigma}\|_{L^\infty(I;\mathbb L^2(\Omega))}&+\|\mathbf y-\mathbf y_{\sigma}\|_{L^2(I;\mathbb H^1(\Omega))}\nonumber\\
&+\|\theta-\theta_\sigma\|_{L^\infty(I;L^2(\Omega))}+\|\theta-\theta_\sigma\|_{L^2(I;H^1(\Omega))}\leq C\big(\tau^\frac{1}{2}+h\big),\label{Full:YTheta:timespace:err}\\
\|\bm \mu-\bm \mu_{\sigma}\|_{L^\infty(I;\mathbb L^2(\Omega))}&+\|\bm \mu-\bm \mu_{\sigma}\|_{L^2(I;\mathbb H^1(\Omega))}\nonumber\\
&+\|\kappa-\kappa_\sigma\|_{L^\infty(I;L^2(\Omega))}+\|\kappa-\kappa_\sigma\|_{L^2(I;H^1(\Omega))}\leq C\big(\tau^\frac{1}{2}+h\big),\label{Full:MuKappa:timespace:err}\\
&\|\kappa-\kappa_\sigma\|_{L^2(I; L^2(\Gamma))}\leq C\big(\tau^\frac{3}{4}+h^\frac{3}{2}\big).\label{Full:bound:Mukappa:timespace:err}
\end{align}
\end{theorem}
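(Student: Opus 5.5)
For \eqref{Full:YTheta:timespace:err} and \eqref{Full:MuKappa:timespace:err} I would use the triangle inequality through the semi-discrete solutions:
\[
\mathbf y-\mathbf y_\sigma=(\mathbf y-\mathbf y_h)+(\mathbf y_h-\mathbf y_\sigma),\qquad \theta-\theta_\sigma=(\theta-\theta_h)+(\theta_h-\theta_\sigma),
\]
and analogously for $(\bm\mu,\kappa)$. The spatial parts are bounded by $\widetilde C_1 h$ and $\widetilde C_2 h$ in the $L^\infty(I;L^2)$ and $L^2(I;H^1)$ norms by Lemmas \ref{Lem:semi:state:err} and \ref{Lem:semi:adjoint:err}. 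The temporal parts are bounded by $C\tau^{1/2}$ by Lemmas \ref{lem:Full:YTheta:time:err} and \ref{lem:err:Full:adjoint}.

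The only point to check is that the constants stay uniform. The constants of Lemma \ref{lem:Full:YTheta:time:err} depend on the stability bounds of Lemmas \ref{semi_discreYTheta:stab} and \ref{semi_discreYTheta:High_regular:stab} for $(\mathbf y_h,\theta_h)$. These in turn depend only on the data and on $\|u\|_{H^{1/4}(I;L^2(\Gamma))\cap L^2(I;H^{1/2}(\Gamma))}$, and not on $h$. For the adjoint one uses Lemma \ref{semi_discreMukappa:stab} in the same way. Hence both bounds hold with constants independent of $(\tau,h)$.

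For the boundary estimate \eqref{Full:bound:Mukappa:timespace:err}, the plan is to interpolate between the $L^2$ and $H^1$ error bounds with the trace inequality
\[
\|w\|_{L^2(\Gamma)}\le C\|w\|^{1/2}\|w\|_{H^1(\Omega)}^{1/2}.
\]
Applied pointwise in time and combined with Cauchy--Schwarz in time, it gives
\[
\|\kappa-\kappa_\sigma\|_{L^2(I;L^2(\Gamma))}\le C\|\kappa-\kappa_\sigma\|^{1/2}_{L^2(I;L^2(\Omega))}\,\|\kappa-\kappa_\sigma\|^{1/2}_{L^2(I;H^1(\Omega))}.
\]
The second factor is $\le C(\tau^{1/2}+h)^{1/2}$ by \eqref{Full:MuKappa:timespace:err}. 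For the first factor I would again split through $\kappa_h$ and use Lemma \ref{semi_err:MuKappa:L2} ($Ch^2$) and Lemma \ref{Full_err:Mukappa:time:L2} ($C\tau$), which gives $\|\kappa-\kappa_\sigma\|_{L^2(I;L^2(\Omega))}\le C(\tau+h^2)$. Multiplying,
\[
(\tau+h^2)^{1/2}(\tau^{1/2}+h)^{1/2}\le C(\tau^{1/2}+h)^{3/2}\le C(\tau^{3/4}+h^{3/2}),
\]
since $\tau+h^2\le(\tau^{1/2}+h)^2$.

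The main obstacle is the pairing inside this product. One must be sure the interpolation does not produce a term with mismatched orders, and it does not, because both factors are controlled by powers of $\tau^{1/2}+h$. The other delicate point is the applicability of Lemma \ref{Full_err:Mukappa:time:L2}, which requires assumption $(\mathbf B)$ and $u$ in $H^{1/4}(I;L^2(\Gamma))\cap L^2(I;H^{1/2}(\Gamma))$. These are exactly the hypotheses inherited from the cited lemmas, so I would simply carry them over and verify that the constant is independent of $(\tau,h)$ via the uniform stability lemmas.
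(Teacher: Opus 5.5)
Your proposal is correct and follows the paper's proof. The first two estimates come from the triangle inequality through $(\mathbf y_h,\theta_h)$ and $(\bm\mu_h,\kappa_h)$, using the semi-discrete and temporal error lemmas. The boundary estimate comes from the trace interpolation inequality combined with the $O(h^2)$ and $O(\tau)$ bounds in $L^2(I;L^2(\Omega))$.

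The only difference is cosmetic. The paper splits $\kappa-\kappa_\sigma=(\kappa-\kappa_h)+(\kappa_h-\kappa_\sigma)$ before interpolating, so it obtains $h^{3/2}$ and $\tau^{3/4}$ directly. You interpolate the total error and absorb the cross terms via $(\tau^{1/2}+h)^{3/2}\le C(\tau^{3/4}+h^{3/2})$, which is equally valid.
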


\begin{proof}The derivation of \eqref{Full:YTheta:timespace:err} and \eqref{Full:MuKappa:timespace:err} is straightforward. Below, we present the derivation of \eqref{Full:bound:Mukappa:timespace:err}.  Using the triangle inequality and the trace inequality, we deduce
\begin{align*}
    \|\kappa-\kappa_\sigma\|_{L^2(I; L^2(\Gamma))}&\leq \|\kappa-\kappa_h\|_{L^2(I; L^2(\Gamma))}+\|\kappa_h-\kappa_\sigma\|_{L^2(I; L^2(\Gamma))}\\
    &\leq C\|\kappa-\kappa_h\|^\frac{1}{2}_{L^2(I;L^2(\Omega))}\|\kappa-\kappa_h\|^\frac{1}{2}_{L^2(I;H^1(\Omega))}+C\|\kappa_h-\kappa_\sigma\|^\frac{1}{2}_{L^2(I;L^2(\Omega))}\|\kappa_h-\kappa_\sigma\|^\frac{1}{2}_{L^2(I;H^1(\Omega))}.
\end{align*}
Then \eqref{Full:bound:Mukappa:timespace:err} can be obtained from the above inequality along with Lemmas \ref{Lem:semi:adjoint:err}, \ref{semi_err:MuKappa:L2}, \ref{lem:err:Full:adjoint} and \ref{Full_err:Mukappa:time:L2}.
\end{proof}

\subsubsection{Analysis of the discrete control problem} 
In this subsection, we derive some convergence results for the discrete optimal control problem.  The proofs of Theorems \ref{thm:opt:conv_1} and \ref{thm:opt:conv_2} are similar to those of Theorems 4.13 and 4.15 in \cite{Casas_Chrysafinos_2015}, respectively, and are therefore omitted. The following theorem demonstrates the convergence of the discrete optimal control problem $(\mathrm{P_\sigma})$ to the continuous problem $(\mathrm{P})$.

\begin{theorem}\label{thm:opt:conv_1} Let $\bar{u}_\sigma$ be a global solution to the problem $\mathrm{(P_\sigma)}$. Then, the sequence $\{\bar{u}_\sigma\}_\sigma$ is bounded in $L^2(I; L^2(\Gamma))$ and there exist subsequences, denoted in the same way, that converge weakly to a point $\bar{u}$ in $L^2(I; L^2(\Gamma))$, where $\bar{u}$ is a global solution of $\mathrm{(P)}$.  Moreover, we have 
\begin{align}
\lim_{\sigma\to 0}\|\bar{u}_\sigma-\bar{u}\|_{L^2(I;L^2(\Gamma))}=0, \quad \lim_{\sigma\to 0} J_\sigma(\bar{u}_\sigma)=J(\bar{u}). \label{thm_sub:opt:lim:con:1}
\end{align}
\end{theorem}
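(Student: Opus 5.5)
The proof follows the standard Casas--Chrysafinos argument and has three parts: boundedness, identification of the weak limit, and upgrade to strong convergence.

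\emph{Boundedness and extraction.} Fix any $u_0\in\mathcal U_{ad}$ that is smooth, for instance a constant. By optimality of $\bar u_\sigma$,
\[
\frac{\alpha}{2}\|\bar u_\sigma\|^2_{L^2(I;L^2(\Gamma))}\le J_\sigma(\bar u_\sigma)\le J_\sigma(u_0).
\]
The right-hand side is bounded uniformly in $\sigma$ by the stability estimate \eqref{Full:stab:state:ineq1} of Lemma \ref{lem:Full:YTheta:bu}. Hence $\{\bar u_\sigma\}$ is bounded in $L^2(I;L^2(\Gamma))$, and we extract a subsequence with $\bar u_\sigma\rightharpoonup\bar u$. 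Since $\mathcal U_{ad}$ is convex and closed, it is weakly closed, so $\bar u\in\mathcal U_{ad}$.

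\emph{Convergence of the discrete states.} The key step is to show that $(\mathbf y_\sigma(\bar u_\sigma),\theta_\sigma(\bar u_\sigma))\to(\mathbf y_{\bar u},\theta_{\bar u})$ strongly in $L^2(I;\mathbb L^2(\Omega))\times L^2(I;L^2(\Omega))$. Write
\[
\mathbf y_\sigma(\bar u_\sigma)-\mathbf y_{\bar u}=\big(\mathbf y_\sigma(\bar u_\sigma)-\mathbf y_{\bar u_\sigma}\big)+\big(\mathbf y_{\bar u_\sigma}-\mathbf y_{\bar u}\big),
\]
and similarly for $\theta$.

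The second difference is handled at the continuous level. Lemma \ref{thm:Weak_Weak:S} gives weak convergence in $\mathbb W$. The compact embedding $\mathbb W\hookrightarrow L^2(I;\mathbb L^2)\times L^2(I;L^2)$ (Aubin--Lions) then upgrades this to strong $L^2$ convergence.

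The first difference is the discretization error for a merely $L^2$ control. It must tend to zero uniformly over bounded sets of controls, and this is the step I expect to be the main obstacle. Lemmas \ref{Lem:semi:state:err} and \ref{lem:Full:YTheta:time:err} require $u\in H^{1/4}(I;L^2(\Gamma))\cap L^2(I;H^{1/2}(\Gamma))$, which we do not have here. Instead I would use the low-regularity estimate of Appendix~B (Lemma \ref{appB:Lem:full:state:err}). As with the remark following Lemma \ref{Full:lem:Lip:YTheta}, that lemma shows the full discretization error tends to $0$ as $\sigma\to0$ uniformly on bounded subsets of $L^2(I;L^2(\Gamma))$. If only pointwise convergence in $u$ were available, I would instead combine a density argument with the uniform Lipschitz bounds: Lemma \ref{lem:estimate:Y} for the continuous problem and Lemma \ref{Full:lem:Lip:YTheta} together with its remark for the discrete problem. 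One approximates $\bar u$ by a smooth control and exploits the compactness coming from the discrete bounds of Lemma \ref{lem:Full:YTheta:bu}.

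\emph{Optimality of the limit and convergence of costs.} For any $u\in\mathcal U_{ad}$, Appendix~B again gives $J_\sigma(u)\to J(u)$. Combining this with weak lower semicontinuity of the control norm and the strong state convergence just established,
\[
J(\bar u)\le\liminf_{\sigma\to0} J_\sigma(\bar u_\sigma)\le\limsup_{\sigma\to0} J_\sigma(\bar u_\sigma)\le\lim_{\sigma\to0} J_\sigma(u)=J(u).
\]
Therefore $\bar u$ is a global solution of (P). Taking $u=\bar u$ in the chain forces $J_\sigma(\bar u_\sigma)\to J(\bar u)$.

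Since the tracking terms converge, it follows that $\|\bar u_\sigma\|_{L^2(I;L^2(\Gamma))}\to\|\bar u\|_{L^2(I;L^2(\Gamma))}$. Weak convergence plus convergence of norms in the Hilbert space $L^2(I;L^2(\Gamma))$ gives strong convergence, which proves \eqref{thm_sub:opt:lim:con:1}.
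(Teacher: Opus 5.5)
Your argument is correct and follows essentially the paper's route. The paper omits this proof and defers to Casas--Chrysafinos [Theorem 4.13], but in the proof of Theorem \ref{thm:opt:conv_3} it uses exactly your ingredients: Lemma \ref{thm:Weak_Weak:S}, the compact embedding of $\mathbb W$, and Lemma \ref{appB:Lem:full:state:err} applied uniformly over the bounded family of discrete optimal controls, which gives strong $L^2$ convergence of the discrete states, followed by the $\liminf/\limsup$ chain and norm convergence.

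One caution concerns your fallback. The Lipschitz bounds are stated in the strong $L^2(I;L^2(\Gamma))$ norm, and $\bar u_\sigma$ converges to $\bar u$ only weakly. So approximating $\bar u$ by a smooth $u^\epsilon$ does not make $\mathbf y_\sigma(\bar u_\sigma)-\mathbf y_\sigma(u^\epsilon)$ small. A genuine backup would need a discrete compactness argument together with identification of the limit in the discrete equations, not a density argument.
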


The next theorem asserts that strict local solutions of the optimal control problem $\mathrm{(P)}$ can be approximated by local solutions of the discrete optimal control problem $\mathrm{(P_\sigma)}$.
\begin{theorem}\label{thm:opt:conv_2} Let $\bar{u}$ be a strict local minimum of problem $\mathrm{(P)}$. Then, there exists a sequence $\{\bar{u}_\sigma\}_{\sigma}$ of local minima of problem $\mathrm{(P_\sigma)}$ such that \eqref{thm_sub:opt:lim:con:1} holds.
\end{theorem}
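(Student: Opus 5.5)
The plan is the classical localization argument of Casas--Tr\"oltzsch type, adapted as in Theorem 4.15 of \cite{Casas_Chrysafinos_2015}. Since $\bar u$ is a strict local minimum, there is $\rho>0$ such that $J(\bar u)<J(u)$ for every $u\in\mathcal U_{ad}\cap\bar{\mathcal B}_\rho(\bar u)$ with $u\neq\bar u$. We introduce the auxiliary problems
\[
(\mathrm P_{\sigma,\rho})\quad \min J_\sigma(u)\quad\text{subject to } u\in\mathcal U_{ad}\cap\bar{\mathcal B}_\rho(\bar u),
\]
together with the continuous analogue $(\mathrm P_\rho)$, whose unique global solution is $\bar u$. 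The admissible set is nonempty, closed, convex and bounded in $L^2(I;L^2(\Gamma))$. Moreover, $J_\sigma$ is weakly lower semicontinuous on it, because $\mathcal S_\sigma$ maps weakly convergent controls to strongly convergent states in the finite-dimensional space $\mathbb X_\sigma\times V_\sigma$, by the linearity of the control entering through $u^n$ together with Lemma \ref{Full:lem:Lip:YTheta}. Hence $(\mathrm P_{\sigma,\rho})$ has a global solution $\bar u_\sigma$.

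Next we show that $\bar u_\sigma\rightharpoonup\bar u$ weakly, and then strongly, as $\sigma\to0$. Since $\{\bar u_\sigma\}$ is bounded, a subsequence converges weakly to some $\tilde u\in\mathcal U_{ad}\cap\bar{\mathcal B}_\rho(\bar u)$. The key ingredient is a convergence property of the discrete states: if $u_\sigma\rightharpoonup u$ in $L^2(I;L^2(\Gamma))$, then $(\mathbf y_\sigma(u_\sigma),\theta_\sigma(u_\sigma))\to(\mathbf y_u,\theta_u)$ strongly in $L^2(I;\mathbb L^2(\Omega))\times L^2(I;L^2(\Omega))$. To prove it, we split
\[
\mathbf y_\sigma(u_\sigma)-\mathbf y_u=\big(\mathbf y_\sigma(u_\sigma)-\mathbf y_{u_\sigma}\big)+\big(\mathbf y_{u_\sigma}-\mathbf y_u\big),
\]
and similarly for $\theta$. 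The first difference is handled by the $L^2$-control version of the error estimate, namely Lemma \ref{appB:Lem:full:state:err} in Appendix \ref{Appendix:B}, which gives a rate uniform for controls bounded in $L^2$. The second difference is handled by Lemma \ref{thm:Weak_Weak:S} combined with the compact embedding $\mathbb W\hookrightarrow L^2(I;\mathbb L^2)\times L^2(I;L^2)$ (Aubin--Lions).

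With this convergence property, weak lower semicontinuity of the control cost gives $J(\tilde u)\le\liminf J_\sigma(\bar u_\sigma)$. On the other hand, testing with $\bar u$ itself yields $J_\sigma(\bar u_\sigma)\le J_\sigma(\bar u)\to J(\bar u)$, where the last convergence follows from Theorem \ref{thm:All:TimeSpace:staAdj} because $\bar u$ is regular. Hence $J(\tilde u)\le J(\bar u)$, and strictness of the minimum forces $\tilde u=\bar u$; in addition $J_\sigma(\bar u_\sigma)\to J(\bar u)$. Since the tracking terms converge, we get $\|\bar u_\sigma\|_{L^2(I;L^2(\Gamma))}\to\|\bar u\|_{L^2(I;L^2(\Gamma))}$. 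Weak convergence plus convergence of norms gives strong convergence, and uniqueness of the limit extends this from the subsequence to the whole sequence.

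Finally, strong convergence implies $\|\bar u_\sigma-\bar u\|<\rho/2$ for $\sigma$ small. Then the ball constraint is inactive, and $\bar u_\sigma$ minimizes $J_\sigma$ over $\mathcal U_{ad}\cap\mathcal B_{\rho/2}(\bar u_\sigma)$, so it is a local minimum of $(\mathrm P_\sigma)$. Together with the previous step this yields \eqref{thm_sub:opt:lim:con:1}.

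I expect the main obstacle to be the convergence of discrete states under merely weak convergence of the controls. The full error estimates require $u\in H^{1/4}(I;L^2(\Gamma))\cap L^2(I;H^{1/2}(\Gamma))$, while the $\bar u_\sigma$ are only bounded in $L^2$. I would therefore first rely on the $L^2$-control estimate of Appendix \ref{Appendix:B}, used with the smallness condition on $\sigma$ as in Lemma \ref{lem:stab:Full:adjoint:bu}. If that estimate turns out to be insufficient, a fallback is a compactness argument carried out directly on the discrete solutions: use a discrete Aubin--Lions lemma based on the bound \eqref{Full:stab:state:ineq3} for the time increments, then identify the limit equation.
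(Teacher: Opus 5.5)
Your proposal is correct and takes essentially the same route as the paper. The paper omits this proof by referring to Theorem 4.15 of \cite{Casas_Chrysafinos_2015}, and that argument is what you describe: localize to $\mathcal U_{ad}\cap\bar{\mathcal B}_\rho(\bar u)$; pass to the limit in the global discrete minimizers using Lemma \ref{thm:Weak_Weak:S}, Lemma \ref{appB:Lem:full:state:err} and the compact embedding of $\mathbb W$, which are exactly the ingredients the paper invokes in its proof of Theorem \ref{thm:opt:conv_3}; identify the limit via strictness; upgrade to strong convergence through $J_\sigma(\bar u_\sigma)\to J(\bar u)$; and note that the ball constraint becomes inactive.
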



In Theorem \ref{thm:opt:conv_3}, we replace ``strict local minimum" with ``local minimum" and investigate whether the same conclusion holds. We are unable to prove such a general result because of the lack of uniqueness of the solution. Instead, we can only provide a weaker result for each individual local minimum of \(\mathrm{(P)}\). To this end, we employ a technique from Barbu \cite{Barbu_1984}.   Let $\bar{u}$ be a local minimum of the problem $\mathrm{(P)}$.
 We associate with this minimum the adapted optimal control problem 
\begin{equation*}
    \mathrm{(\widetilde{P}_{\sigma})}\qquad \begin{aligned}
\min_{u \in \mathcal{U}_{ad}}{\widetilde{J}_\sigma(u):=}J_\sigma(u)+\frac{1}{2}\|u-\bar{u}\|^2_{L^2(I;L^2(\Gamma))}.
\end{aligned}
\end{equation*}
Similar to problem \(\mathrm{(P_\sigma)}\), we can prove that the optimal control problem \(\mathrm{(\widetilde{P}_{\sigma})}\) admits a solution.

\begin{theorem}\label{thm:opt:conv_3} Let \( \bar{u} \) be a local minimum of the problem \( \mathrm{(P)} \). Then, there exists a sequence \( \{\widetilde{u}_\sigma\}_{\sigma} \) of local minima of problem \( \mathrm{(\widetilde{P}_\sigma)} \) such that
\[
\lim_{\sigma \to 0} \|\widetilde{u}_\sigma - \bar{u}\|_{L^2(I; L^2(\Gamma))} = 0, \quad \lim_{\sigma \to 0} \widetilde{J}_\sigma(\widetilde{u}_\sigma) = J(\bar{u}).
\]
\end{theorem}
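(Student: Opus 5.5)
This follows the Barbu localization technique used in Casas--Chrysafinos. Fix $\rho>0$ such that $\bar u$ minimizes $J$ on $\mathcal U_{ad}\cap\bar{\mathcal B}_\rho(\bar u)$. Consider the localized discrete problem
\[
\min\ \widetilde J_\sigma(u)\quad\text{s.t. } u\in\mathcal U_{ad}\cap\bar{\mathcal B}_\rho(\bar u),
\]
which has a global solution $\widetilde u_\sigma$. The feasible set is convex, closed and bounded, hence weakly compact. The functional $J_\sigma$ is weakly lower semicontinuous, because $\mathcal S_\sigma$ maps into a finite-dimensional space and is continuous, so it is compact. We will show $\widetilde u_\sigma\to\bar u$ strongly. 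Then for $\sigma$ small $\widetilde u_\sigma$ lies in the open ball $\mathcal B_\rho(\bar u)$, so the ball constraint is inactive and $\widetilde u_\sigma$ is a local minimum of $(\widetilde P_\sigma)$.

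\textbf{Convergence.} Since $\{\widetilde u_\sigma\}$ is bounded, extract a subsequence with $\widetilde u_\sigma\rightharpoonup\widetilde u\in\mathcal U_{ad}\cap\bar{\mathcal B}_\rho(\bar u)$. Two ingredients are needed.

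(i) Consistency: $J_\sigma(\bar u)\to J(\bar u)$. Since $\bar u\in L^2(I;W^{1,p}(\Gamma))\cap H^{3/4}(I;L^2(\Gamma))$ by the regularity theorem, Theorem \ref{thm:All:TimeSpace:staAdj} gives $\mathbf y_\sigma(\bar u)\to\bar{\mathbf y}$ and $\theta_\sigma(\bar u)\to\bar\theta$ in $L^2(I;L^2(\Omega))$.

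(ii) Weak-to-strong state convergence: $\widetilde u_\sigma\rightharpoonup\widetilde u$ implies $\mathcal S_\sigma(\widetilde u_\sigma)\to\mathcal S(\widetilde u)$ in $L^2(I;\mathbb L^2(\Omega))\times L^2(I;L^2(\Omega))$. I would split
\[
\mathcal S_\sigma(\widetilde u_\sigma)-\mathcal S(\widetilde u)=\big[\mathcal S_\sigma(\widetilde u_\sigma)-\mathcal S(\widetilde u_\sigma)\big]+\big[\mathcal S(\widetilde u_\sigma)-\mathcal S(\widetilde u)\big].
\]
The second bracket tends to zero by Lemma \ref{thm:Weak_Weak:S} together with the Aubin--Lions compact embedding $\mathbb W\hookrightarrow L^2(I;\mathbb L^2)\times L^2(I;L^2)$. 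The first bracket requires an error estimate that is uniform for controls bounded merely in $L^2(I;L^2(\Gamma))$, giving a rate of $o(1)$ rather than $\tau^{1/2}+h$. For this I would invoke the low-regularity error estimate of Appendix B (Lemma \ref{appB:Lem:full:state:err}), combined with Lemma \ref{Full:lem:Lip:YTheta} in the $L^2$-bounded version of the subsequent remark.

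\textbf{Main step and key difficulty.} Making the uniform estimate in (ii) rigorous is the main obstacle; if it fails, I would fall back on density, approximating $\widetilde u_\sigma$ by smooth controls and using the Lipschitz bounds for both $\mathcal S$ and $\mathcal S_\sigma$.

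Granting (i) and (ii), the argument closes as follows. Using weak lower semicontinuity of the norm, optimality of $\widetilde u_\sigma$ against $\bar u$, and local optimality of $\bar u$,
\[
J(\widetilde u)+\tfrac12\|\widetilde u-\bar u\|^2\le\liminf\widetilde J_\sigma(\widetilde u_\sigma)\le\limsup\widetilde J_\sigma(\widetilde u_\sigma)\le\lim J_\sigma(\bar u)=J(\bar u)\le J(\widetilde u).
\]
Hence $\widetilde u=\bar u$, so the whole sequence converges weakly, and $\widetilde J_\sigma(\widetilde u_\sigma)\to J(\bar u)$. The tracking terms of $J_\sigma(\widetilde u_\sigma)$ converge by (ii), so we also get $\tfrac{\alpha}{2}\|\widetilde u_\sigma\|^2+\tfrac12\|\widetilde u_\sigma-\bar u\|^2\to\tfrac{\alpha}{2}\|\bar u\|^2$. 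Together with weak convergence, this gives $\|\widetilde u_\sigma-\bar u\|_{L^2(I;L^2(\Gamma))}\to0$, which completes the proof.
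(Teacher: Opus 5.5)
Your proposal is correct and is essentially the paper's own argument: localize to $\mathcal U_{ad}\cap\bar{\mathcal B}_\rho(\bar u)$, take global minimizers $\widetilde u_\sigma$, get $\mathcal S_\sigma(\widetilde u_\sigma)\to\mathcal S(\widetilde u)$ in $L^2(I;\mathbb L^2(\Omega))\times L^2(I;L^2(\Omega))$ via exactly your splitting, and close with the same chain of inequalities. The paper handles step (ii) with Lemma \ref{appB:Lem:full:state:err}, Lemma \ref{thm:Weak_Weak:S} and the compact embedding of $\mathbb W$, treating the constant of Lemma \ref{appB:Lem:full:state:err} as uniform over $L^2$-bounded controls, so it invokes neither Lemma \ref{Full:lem:Lip:YTheta} nor a density fallback; your norm argument for the final strong convergence makes explicit a step the paper leaves implicit.

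One small correction: weak lower semicontinuity of $J_\sigma$ does not follow from continuity into a finite-dimensional space alone (consider $u\mapsto\|u\|$). It does hold, because the control enters \eqref{Full_discrete:state} only through the finite-rank linear map $u\mapsto\big((u^n,\psi_h)_\Gamma\big)_{n,\psi_h}$, with $\psi_h$ running over a basis of $V_h$, and this map sends weakly convergent sequences to convergent ones.
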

\begin{proof}
We introduce a local adapted optimal control problem 
\begin{equation*}
    \mathrm{(\widetilde{P}_{\sigma, \delta})}\qquad \begin{aligned}
\min_{u\in \mathcal{U}_{ad}\cap \bar{\mathcal{B}}_{\delta}(\bar{u}) } \widetilde{J}_\sigma(u).
 \end{aligned}
\end{equation*}
It follows readily that this problem admits at least one solution. Let $\widetilde{u}_\sigma$ be a global solution to the problem $\mathrm{(\widetilde{P}_{\sigma, \delta})}$. Therefore, we can conclude the existence of subsequences that converge weakly in \( L^2(I; L^2(\Gamma)) \) to some point \( \widetilde{u}\in \mathcal{U}_{ad}\cap \bar{\mathcal{B}}_{\delta}(\bar{u}) \).  Furthermore, utilizing Lemmas \ref{thm:Weak_Weak:S}, \ref{appB:Lem:full:state:err} and the compact embedding $\mathbb{W} \hookrightarrow L^2(I; \mathbb{L}^2(\Omega)) \times L^2(I; L^2(\Omega))$, we can deduce that $(\mathbf{y}_\sigma(\widetilde{u}_\sigma), \theta_\sigma(\widetilde{u}_\sigma)) \to (\mathbf{y}_{\widetilde{u}}, \theta_{\widetilde{u}})$ in $L^2(I; \mathbb{L}^2(\Omega)) \times L^2(I; L^2(\Omega))$. Additionally, by the weakly sequential lower semicontinuity of norms, we obtain
\begin{align*}
J(\bar{u})+\frac{1}{2}\|\widetilde{u}-\bar{u}\|^2_{L^2(I;L^2(\Gamma))} \leq J(\widetilde{u})+\frac{1}{2}\|\widetilde{u}-\bar{u}\|^2_{L^2(I;L^2(\Gamma))} \leq \liminf\limits_{\sigma \to 0} \widetilde{J}_\sigma(\widetilde{u}_\sigma)\leq \limsup\limits_{\sigma \to 0} \widetilde{J}_\sigma(\bar{u}) = J(\bar{u}).
\end{align*}
This implies $\widetilde{u}=\bar{u}$,  $\lim_{\sigma \to 0} \|\widetilde{u}_\sigma - \bar{u}\|_{L^2(I; L^2(\Gamma))} = 0$ and ${\lim\limits_{\sigma\to 0} \widetilde{J}_\sigma(\widetilde{u}_\sigma)=J(\bar{u})}$. It follows that $\widetilde{u}_\sigma$ is a local solution to the problem $\mathrm{(\widetilde{P}_{\sigma,\delta})}$ for a sufficiently small $\sigma$.  This completes the proof. 
\end{proof}

Let \(\{\bar{u}_\sigma\}_{\sigma}\) be a sequence of local minima of the discrete control problem \(\mathrm{(P_\sigma)}\) such that \(\bar{u}_\sigma \to \bar{u}\) in \(L^2(I;L^2(\Gamma))\) as \(\sigma \to 0\), where \(\bar{u}\) denotes a local solution of the problem \(\mathrm{(P)}\). In the following, we will derive the error estimates for the optimal control along with the corresponding state and adjoint variables. Before presenting the error estimate, we state the following auxiliary lemma. 

\begin{lemma}\label{Lem:optim:Aux:ineq} Let \(\bar{u}\) satisfy the second-order optimality condition \eqref{second:equi:cond}. Then, there exists $\sigma_{*}>0$ such that we can extract a subsequence from $\{\bar{u}_\sigma\}_\sigma$, denoted in the same way, such that
\begin{align}
2^{-1}\min\{\alpha, \lambda\}\|\bar{u}-\bar{u}_{\sigma}\|^2_{L^2(I;L^2(\Gamma))}\leq \big[J'(\bar{u}_\sigma)-J'(\bar{u})\big](\bar{u}_\sigma-\bar{u})\quad \text{for\ any } \,|\sigma| <\sigma_*.
\end{align}
\end{lemma}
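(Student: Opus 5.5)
We read the constant $\lambda$ in the statement as the constant $\vartheta$ of the second-order condition \eqref{second:equi:cond}. The plan is the contradiction argument of Casas and coworkers, combined with a mean value expansion of $J'$.

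\textbf{Step 1: mean value expansion.} Since $J$ is $C^\infty$, for each $\sigma$ there is $u_{\theta_\sigma}=\bar u+\theta_\sigma(\bar u_\sigma-\bar u)$ with $\theta_\sigma\in(0,1)$ such that
\[
\big[J'(\bar u_\sigma)-J'(\bar u)\big](\bar u_\sigma-\bar u)=J''(u_{\theta_\sigma})(\bar u_\sigma-\bar u)^2 .
\]
Because $\bar u_\sigma\to\bar u$, for small $\sigma$ all $u_{\theta_\sigma}$ lie in $\mathcal B_\rho(\bar u)\cap\mathcal U_{ad}$ (the set is convex). Hence \eqref{J:estimate:second:Lip} gives
\[
\big|J''(u_{\theta_\sigma})v^2-J''(\bar u)v^2\big|\le \widetilde C^2_\rho\|\bar u_\sigma-\bar u\|\,\|v\|^2 .
\]
Writing $v_\sigma:=(\bar u_\sigma-\bar u)/\|\bar u_\sigma-\bar u\|_{L^2(I;L^2(\Gamma))}$ (if $\bar u_\sigma=\bar u$ the claim is trivial), it therefore suffices to show $\liminf_{\sigma\to0}J''(\bar u)v_\sigma^2\ge\min\{\alpha,\vartheta\}$ along a subsequence. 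Suppose this fails. Then there is a subsequence with $J''(\bar u)v_\sigma^2<\tfrac12\min\{\alpha,\vartheta\}$ (a slightly weaker bound suffices), and after extraction $v_\sigma\rightharpoonup v$ in $L^2(I;L^2(\Gamma))$ with $\|v\|\le1$.

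\textbf{Step 2: the weak limit is a critical direction.} We show $v\in\mathcal C_{\bar u}$. The set of $w$ with $w\ge0$ where $\bar u=u_a$ and $w\le 0$ where $\bar u=u_b$ is closed and convex, hence weakly closed. Since each $v_\sigma$ belongs to it, so does $v$.

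For \eqref{cone:cond:3}, set $\bar d=\eta\bar\kappa+\alpha\bar u$ and $d_\sigma=\eta\bar\kappa_\sigma+\alpha\bar u_\sigma$.
\begin{itemize}
\item $d_\sigma\to\bar d$ in $L^2(I;L^2(\Gamma))$. Use the triangle inequality with $\kappa_\sigma(\bar u)$, the Lipschitz bound of Lemma \ref{Full:lem:Lip:MuKappa} (in the version with $L^2$ control bounds, as in the Remark after Lemma \ref{Full:lem:Lip:YTheta}), \eqref{Full:bound:Mukappa:timespace:err}, and $\bar u_\sigma\to\bar u$.
\item $\int \bar d\,v\ge0$. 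By \eqref{first_cond:varition_ineq}, $\int \bar d\,v_\sigma\ge0$, and weak convergence passes this to the limit.
\item $\int \bar d\,v\le0$. By \eqref{first_cond:discrete:varition_ineq} tested with $\bar u$, $\int d_\sigma v_\sigma\le0$, and strong times weak convergence passes this to the limit.
\end{itemize}
Hence $\int\bar d\,v=0$. Since $\bar d\,v\ge0$ pointwise (from the sign conditions on $v$ and the projection formula \eqref{Proj:equation}), we get $\bar d\,v=0$ a.e., i.e. $v=0$ where $\bar d\neq0$.

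\textbf{Step 3: lower bound on the second derivative.} From \eqref{thm:J:diff2}, $J''(\bar u)w^2=\alpha\|w\|^2+Q(w)$, where $Q$ collects the terms in $\mathbf z_{\bar u,w},\xi_{\bar u,w}$ paired with $\bar{\bm\mu},\bar\kappa$. The map $w\mapsto(\mathbf z_{\bar u,w},\xi_{\bar u,w})$ is linear and continuous into $\mathbb W$, and $\mathbb W$ embeds compactly into $L^2(I;\mathbb L^2(\Omega))\times L^2(I;L^2(\Omega))$ (the mechanism behind Lemma \ref{lem:conv:scond}). This gives $Q(v_\sigma)\to Q(v)$. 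Therefore
\[
\lim J''(\bar u)v_\sigma^2=\alpha+Q(v)=J''(\bar u)v^2+\alpha(1-\|v\|^2)\ge\vartheta\|v\|^2+\alpha(1-\|v\|^2)\ge\min\{\alpha,\vartheta\},
\]
using \eqref{second:equi:cond} for $v\in\mathcal C_{\bar u}$. This contradicts Step 1 and proves the lemma.

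\textbf{Main obstacle.} The delicate point is the weak continuity of $Q$, in particular of the trilinear terms $\int(\mathbf z\cdot\nabla)\mathbf z\cdot\bar{\bm\mu}$ and $\int(\mathbf z\cdot\nabla)\xi\,\bar\kappa$. These need strong $L^2(I;L^4)$-type convergence of $\mathbf z_{\bar u,v_\sigma}$, together with weak $L^2(I;H^1)$ convergence of the gradients, and the $L^\infty(I;H^1)$ regularity of $(\bar{\bm\mu},\bar\kappa)$ from Theorem \ref{lem:conL2:Regualrit:YTheta}. I would get the strong convergence by interpolating the compact $L^2$ convergence with the $\mathbb W$ bound via \eqref{Interpolation:conti:1}.
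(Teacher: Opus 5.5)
Your proposal is correct and follows essentially the same argument as the paper. Both normalize $v_\sigma$, place the weak limit $v$ in $\mathcal C_{\bar u}$ using the discrete variational inequality and the convergence $\bar\kappa_\sigma\to\bar\kappa$ in $L^2(I;L^2(\Gamma))$, apply the mean value theorem, and pass to the limit to obtain $J''(\bar u)v^2+\alpha(1-\|v\|^2)\ge\min\{\alpha,\vartheta\}$. The differences are minor: you move from the intermediate point to $\bar u$ via \eqref{J:estimate:second:Lip}, whereas the paper invokes Lemmas \ref{lem:estimate:Z} and \ref{lem:Mu_Kappa} directly; your contradiction framing gives the bound for the whole family rather than a subsequence; and you spell out the compactness argument for the trilinear terms, which the paper leaves implicit.
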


\begin{proof} 
Define $v_\sigma=\|\bar{u}_\sigma-\bar{u}\|_{L^2(I;L^2(\Gamma))}^{-1}(\bar{u}_\sigma-\bar{u})$. We observe that \( v_\sigma \rightharpoonup v \) in \( L^2(I; L^2(\Gamma)) \) along a subsequence as \( \sigma \to 0 \). Without loss of generality, we may denote this subsequence in the same manner, and the limit point \( v\) satisfies the conditions \eqref{cone:cond:1}-\eqref{cone:cond:2}.  Next, we prove that \( J'(\bar{u}) \neq 0 \) implies \( v = 0 \). To this end, using Lemma \ref{Full:lem:Lip:MuKappa} and Theorem \ref{thm:All:TimeSpace:staAdj}, we can deduce that 
\[
\|\bar{\kappa} - \bar{\kappa}_{\sigma}\|_{L^2(I; L^2(\Gamma))} \leq \|\bar{\kappa} - \kappa_\sigma(\bar{u})\|_{L^2(I; L^2(\Gamma))} + \|\kappa_\sigma(\bar{u}) - \bar{\kappa}_\sigma\|_{L^2(I; L^2(\Gamma))} \to 0 \quad \text{as} \quad \sigma \to 0.
\]
Thus, combining the above convergence results, we obtain
\begin{align*}
J'(\bar{u})v=\lim_{\sigma\to 0}J'_\sigma(\bar{u}_\sigma)v_\sigma=\lim_{\sigma\to 0}\|\bar{u}_\sigma-\bar{u}\|_{L^2(I;L^2(\Gamma))}^{-1}J'_\sigma(\bar{u}_\sigma)(\bar{u}_\sigma-\bar{u})\leq 0.
\end{align*}

Furthermore, since \( v \) satisfies conditions \eqref{cone:cond:1}-\eqref{cone:cond:2}, this implies that \( \mathcal{J}(t,x) v(t,x) \geq 0 \), where \( \mathcal{J} = J'(\bar{u}) \).  Consequently, if $\mathcal J(t,x)\neq 0$, then $v=0$.  

Applying the Mean Value Theorem, we obtain
\begin{align}\label{Mean:value:Theorem}
\big[J'(\bar{u}_\sigma)-J'(\bar{u})\big](\bar{u}_\sigma-\bar{u})=J''(\hat{u}_\sigma)(\bar{u}_\sigma-\bar{u})^2, \quad \hat{u}_\sigma=\bar{u}_\sigma+\theta_{\sigma}(\bar{u}_\sigma-\bar{u}),
\end{align}
where $\theta_\sigma\in (0,1)$. Moreover, by applying Lemmas \ref{lem:estimate:Z} and \ref{lem:Mu_Kappa}, and the bound \( \|\mathbf{z}_{\hat{u}_\sigma, v_\sigma}\|_{L^\infty(I; \mathbb{H}^1(\Omega))} +\|\kappa_{\hat{u}_\sigma}\|_{L^\infty(I;H^1(\Omega))}+\|\xi_{\hat{u}_\sigma,v_\sigma}\|_{L^2(I;H^1(\Omega))}\leq C \) that follows from the standard Galerkin method, we can conclude from \eqref{thm:J:diff2} and \eqref{second:equi:cond} that
\begin{align*}
\lim_{\sigma\to 0}J''(\hat{u}_\sigma)v^2_\sigma&=\lim_{\sigma\to 0}\Big(\int_{\Omega_T}\left(|\mathbf z_{\hat{u}_\sigma,v_\sigma}|^2+|\xi_{\hat{u}_\sigma,v_\sigma}|^2-2(\mathbf z_{\hat{u}_\sigma,v_\sigma}\cdot\nabla)\mathbf z_{\hat{u}_\sigma,v_\sigma}\cdot\bm \mu_{\hat{u}_\sigma}-2(\mathbf z_{\hat{u}_\sigma,v_\sigma}\cdot\nabla)\xi_{\hat{u}_\sigma,v_\sigma} \kappa_{\hat{u}_\sigma}\right)dxdt\\
&+\alpha\Big)=J''(\bar{u})v^2+\alpha(1-\|v\|^2_{L^2(I;L^2(\Gamma))})\\
&\geq\alpha+(\lambda-\alpha)\|v\|^2_{L^2(I;L^2(\Gamma))}.
\end{align*}
Given that \( \|v\|^2_{L^2(I;L^2(\Gamma))} \leq 1 \), it follows that \( \lim_{\sigma \to 0} J''(\hat{u}_\sigma) v_\sigma^2 \geq \min\{\alpha, \lambda\} >0\). Therefore, there exists \( \sigma_* \) such that for all \( |\sigma| < \sigma_* \), we have \( J''(\hat{u}_\sigma) v_\sigma^2 \geq \frac{1}{2} \min\{\alpha, \lambda\} \). Finally, combining with \eqref{Mean:value:Theorem}, the conclusion follows.
\end{proof}

Next, the error estimates for the optimal control are provided along with the corresponding state and adjoint variables.
\begin{theorem}\label{thm:estimate:main_result} Let \(\bar{u}\) satisfy the second-order optimality condition \eqref{second:equi:cond}. Then, there exists \( \sigma_* > 0 \) such that for any $|\sigma|\leq \sigma_*$, the following error estimates hold:
\begin{align}
&\|\bar{u}-\bar{u}_\sigma\|_{L^2(I;L^2(\Gamma))}\leq C(\tau^\frac{3}{4}+h^\frac{3}{2}),\label{ineq:Final:err:cont}\\
\|\bar{\mathbf y}-\bar{\mathbf y}_{\sigma}\|_{L^\infty(I;\mathbb L^2(\Omega))}&+\|\bar{\mathbf y}-\bar{\mathbf y}_{\sigma}\|_{L^2(I;\mathbb H^1(\Omega))}\nonumber\\
&+\|\bar{\theta}-\bar{\theta}_\sigma\|_{L^\infty(I;L^2(\Omega))}+\|\bar{\theta}-\bar{\theta}_\sigma\|_{L^2(I;H^1(\Omega))}\leq C\big(\tau^\frac{1}{2}+h\big),\label{ineq:Final:err:state}\\
\|\bar{\bm \mu}-\bar{\bm \mu}_{\sigma}\|_{L^\infty(I;\mathbb L^2(\Omega))}&+\|\bar{\bm \mu}-\bar{\bm \mu}_{\sigma}\|_{L^2(I;\mathbb H^1(\Omega))}\nonumber\\
&+\|\bar{\kappa}-\bar{\kappa}_\sigma\|_{L^\infty(I;L^2(\Omega))}+\|\bar{\kappa}-\bar{\kappa}_\sigma\|_{L^2(I;H^1(\Omega))}\leq C\big(\tau^\frac{1}{2}+h\big).\label{ineq:Final:err:adjoint}
\end{align}
\end{theorem}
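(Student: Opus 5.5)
The plan follows the framework of \cite{Casas_Chrysafinos_2012}. The control estimate \eqref{ineq:Final:err:cont} comes first, and the state and adjoint estimates then follow from it. I start from Lemma \ref{Lem:optim:Aux:ineq}, which gives for $|\sigma|<\sigma_*$
\[
2^{-1}\min\{\alpha,\lambda\}\|\bar u-\bar u_\sigma\|^2_{L^2(I;L^2(\Gamma))}\le \big[J'(\bar u_\sigma)-J'(\bar u)\big](\bar u_\sigma-\bar u).
\]
Testing the continuous variational inequality \eqref{first_cond:varition_ineq} with $u=\bar u_\sigma$ gives $-J'(\bar u)(\bar u_\sigma-\bar u)\le 0$. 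Testing the discrete one \eqref{first_cond:discrete:varition_ineq} with $v=\bar u$ gives $J'_\sigma(\bar u_\sigma)(\bar u_\sigma-\bar u)\le 0$. Adding these, the right-hand side is bounded by $[J'(\bar u_\sigma)-J'_\sigma(\bar u_\sigma)](\bar u_\sigma-\bar u)$. By \eqref{thm:J:diff1} and \eqref{discrete:J:diff1} this equals $\eta\int_I(\kappa_{\bar u_\sigma}-\kappa_\sigma(\bar u_\sigma),\bar u_\sigma-\bar u)_\Gamma\,dt$.

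Next I split the adjoint difference by inserting $\bar\kappa=\kappa_{\bar u}$ and $\kappa_\sigma(\bar u)$:
\[
\kappa_{\bar u_\sigma}-\kappa_\sigma(\bar u_\sigma)=(\kappa_{\bar u_\sigma}-\kappa_{\bar u})+(\kappa_{\bar u}-\kappa_\sigma(\bar u))+(\kappa_\sigma(\bar u)-\kappa_\sigma(\bar u_\sigma)).
\]
The middle term is handled by \eqref{Full:bound:Mukappa:timespace:err} of Theorem \ref{thm:All:TimeSpace:staAdj}, applied with $u=\bar u$. That application is legitimate because $\bar u\in H^{3/4}(I;L^2(\Gamma))\cap L^2(I;W^{1,p}(\Gamma))\subset H^{1/4}(I;L^2(\Gamma))\cap L^2(I;H^{1/2}(\Gamma))$ by the regularity theorem, and it contributes $C(\tau^{3/4}+h^{3/2})\|\bar u_\sigma-\bar u\|$.

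\textbf{Main obstacle.} The first and third terms are only Lipschitz in $\bar u_\sigma-\bar u$, through Lemma \ref{lem:Mu_Kappa} and Lemma \ref{Full:lem:Lip:MuKappa} combined with a trace inequality. They therefore produce terms of size $C\|\bar u_\sigma-\bar u\|^2$, which cannot be absorbed by brute force because the constant need not be small. To avoid this, I will rearrange the argument so that these terms never appear. I replace the starting inequality by the mean-value form $J''(\hat u_\sigma)(\bar u_\sigma-\bar u)^2$ from Lemma \ref{Lem:optim:Aux:ineq}. I then use $J'(\bar u_\sigma)-J'(\bar u)$ directly, and compare $J'(\bar u_\sigma)$ with $J'_\sigma(\bar u_\sigma)$ only through the third term. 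For the third term, the discrete Lipschitz bound of Lemma \ref{Full:lem:Lip:MuKappa} is not used as a size estimate. Instead I combine it with the convergence $\|\bar\kappa_\sigma-\kappa_\sigma(\bar u)\|_{L^2(I;L^2(\Gamma))}\to 0$ and with the uniform convergence $\bar u_\sigma\to\bar u$, so that its contribution is $o(1)\|\bar u_\sigma-\bar u\|^2$. More concretely, I write
\[
J'(\bar u_\sigma)-J'_\sigma(\bar u_\sigma)=[J'(\bar u)-J'_\sigma(\bar u)]+R_\sigma,
\]
where $R_\sigma=[J'(\bar u_\sigma)-J'(\bar u)]-[J'_\sigma(\bar u_\sigma)-J'_\sigma(\bar u)]$ is a difference of second derivatives along $\bar u_\sigma-\bar u$. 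I then show $|R_\sigma(\bar u_\sigma-\bar u)|\le\varepsilon_\sigma\|\bar u_\sigma-\bar u\|^2$ with $\varepsilon_\sigma\to0$. This uses the Lipschitz continuity of $J''$ in \eqref{J:estimate:second:Lip} together with an analogous $\sigma$-uniform discrete statement. For $\sigma$ small, this term is absorbed into the left-hand side. The result is $\|\bar u-\bar u_\sigma\|\le C\|\kappa_{\bar u}-\kappa_\sigma(\bar u)\|_{L^2(I;L^2(\Gamma))}\le C(\tau^{3/4}+h^{3/2})$.

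Finally, for \eqref{ineq:Final:err:state} I write $\bar{\mathbf y}-\bar{\mathbf y}_\sigma=(\mathbf y_{\bar u}-\mathbf y_\sigma(\bar u))+(\mathbf y_\sigma(\bar u)-\mathbf y_\sigma(\bar u_\sigma))$, and similarly for $\theta$. The first part is bounded by $C(\tau^{1/2}+h)$ via \eqref{Full:YTheta:timespace:err}. The second part is bounded by $C\|\bar u-\bar u_\sigma\|$ via Lemma \ref{Full:lem:Lip:YTheta}, with $M$ fixed by the boundedness of $\{\bar u_\sigma\}$ in $L^2$ and the regularity of $\bar u$, using the remark following that lemma. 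Since $\tau^{3/4}+h^{3/2}\le C(\tau^{1/2}+h)$, the claim follows. The adjoint estimate \eqref{ineq:Final:err:adjoint} is obtained in the same way, using \eqref{Full:MuKappa:timespace:err} and Lemma \ref{Full:lem:Lip:MuKappa}.
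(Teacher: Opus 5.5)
There is a genuine gap at the central step. Your reduction to $\tfrac12\min\{\alpha,\lambda\}\|\bar u-\bar u_\sigma\|^2\le[J'(\bar u_\sigma)-J'_\sigma(\bar u_\sigma)](\bar u_\sigma-\bar u)$ is fine, and your final step for the state and adjoint errors is exactly the paper's. The problem is the claim $|R_\sigma(\bar u_\sigma-\bar u)|\le\varepsilon_\sigma\|\bar u_\sigma-\bar u\|^2$ with $\varepsilon_\sigma\to0$, which does not follow from what you cite.

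Set $v=\bar u_\sigma-\bar u$ and $u_s=\bar u+sv$. Then $R_\sigma(v)=\int_0^1[J''(u_s)-J''_\sigma(u_s)]v^2\,ds$. The bound \eqref{J:estimate:second:Lip}, even combined with a $\sigma$-uniform Lipschitz bound for $J''_\sigma$, only lets you replace $u_s$ by $\bar u$ at a cost of $o(1)\|v\|^2$. The paper does not supply such a bound for $J''_\sigma$ anyway; Lemma~\ref{Full:lem:Lip:MuKappa} is only a first-order statement.

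What remains is $[J''(\bar u)-J''_\sigma(\bar u)]v^2$. You need this to be $o(1)\|v\|^2$ along the normalized directions $v/\|v\|$, which converge only weakly. That is convergence of the discrete Hessian uniformly over bounded sets of directions. Proving it requires error estimates between the linearized systems \eqref{Diff:first:deri} and \eqref{Full_discrete:Linzied:state}, and for the second-order terms produced by the time-shifted trilinear forms of the IMEX scheme. These estimates would have to hold with boundary data only in $L^2(I;L^2(\Gamma))$, where Theorem~\ref{thm:All:TimeSpace:staAdj} does not apply. None of this is available, and it is the hard part.

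Your preliminary remark is also incorrect. Combining the Lipschitz bound of Lemma~\ref{Full:lem:Lip:MuKappa} with $\|\bar\kappa_\sigma-\kappa_\sigma(\bar u)\|_{L^2(I;L^2(\Gamma))}\to0$ gives only $o(1)\|\bar u_\sigma-\bar u\|$, not $o(1)\|\bar u_\sigma-\bar u\|^2$.

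The paper avoids any discrete second-order analysis by pivoting around a regular auxiliary control close to $\bar u_\sigma$, rather than around $\bar u$. It takes $\widetilde\kappa_\sigma$, the piecewise linear-in-time interpolant of $\{\bar\kappa_{n,h}\}$, and sets $\widetilde u_\sigma=\mathrm{Proj}_{[u_a,u_b]}(-\frac{\eta}{\alpha}\widetilde\kappa_\sigma)$. The bounds \eqref{Full:stab:adjoint:ineq2}--\eqref{Full:stab:adjoint:ineq3} give $\|\widetilde\kappa_\sigma-\bar\kappa_\sigma\|_{L^2(I;L^2(\Omega))}\le C\tau$ and $\|\widetilde\kappa_\sigma-\bar\kappa_\sigma\|_{L^2(I;H^1(\Omega))}\le C\tau^{1/2}$. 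The trace inequality then yields $\|\widetilde u_\sigma-\bar u_\sigma\|_{L^2(I;L^2(\Gamma))}\le C\tau^{3/4}$, while $\widetilde u_\sigma$ stays bounded in $H^{1/4}(I;L^2(\Gamma))\cap L^2(I;H^{1/2}(\Gamma))$ by \eqref{ineq:Final:conRate:2}.

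Splitting $J'(\bar u_\sigma)-J'_\sigma(\bar u_\sigma)$ through $\widetilde u_\sigma$, the two Lipschitz terms (Lemmas~\ref{lem:Mu_Kappa} and~\ref{Full:lem:Lip:MuKappa}) carry the factor $\|\bar u_\sigma-\widetilde u_\sigma\|=O(\tau^{3/4})$ instead of $\|\bar u_\sigma-\bar u\|$. The middle term is handled by Theorem~\ref{thm:All:TimeSpace:staAdj} at $\widetilde u_\sigma$. So only first-order Lipschitz estimates and fixed-control error estimates are needed.

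Your route, transferring coercivity to $J_\sigma$, is a legitimate alternative, and it would need the fixed-control estimate only at $\bar u$. It works only once the uniform Hessian-convergence lemma is proved, which is substantially more work than the paper's device.
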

\begin{proof}We define the piecewise linear interpolation of \(\{\kappa_{n,h}\}_{n=1}^{N_{\tau}+1}\) on \(I^l_n\) for \(n = 1, \dots, N_\tau\) as follows:
\[
\widetilde{\kappa}_\sigma(t,x) := \frac{t_n - t}{\tau_n} \bar{\kappa}_{n,h}(x) + \frac{t - t_{n-1}}{\tau_n} \bar{\kappa}_{n+1,h}(x).
\]
Moreover, using \eqref{Full:stab:adjoint:ineq2} and \eqref{Full:stab:adjoint:ineq3}, we obtain the following estimates:
\begin{equation}\label{ineq:Final:conRate:1}
\begin{aligned}
~\|\widetilde{\kappa}_\sigma-\bar{k}_\sigma\|^2_{L^2(I;L^2(\Omega))}&=\sum_{n=1}^{N_\tau}\int_{t_{n-1}}^{t_n}\|\widetilde{\kappa}_\sigma-\bar{k}_\sigma\|^2dt=\frac{1}{3}\sum_{n=1}^{N_\tau}\tau_n\|\bar{\kappa}_{n,h}-\bar{\kappa}_{n+1,h}\|^2\leq C \tau^2,\\
\|\widetilde{\kappa}_\sigma-\bar{k}_\sigma\|^2_{L^2(I;H^1(\Omega))}&\leq C\tau.
\end{aligned}
\end{equation}
We set $\widetilde{u}_\sigma=\mathrm{Proj}_{[u_a,u_b]}\Big(-\frac{\eta}{\alpha}\widetilde{\kappa}_\sigma\Big)$.  Since \(\bar{u}_\sigma \to \bar{u}\) in \(L^2(I; L^2(\Gamma))\) as \(\sigma \to 0\), it follows that 
\begin{align}
\|\bar{u}_\sigma\|_{L^2(I; L^2(\Gamma))} \leq C.\label{ineq:Final:conRate:bu:1}
\end{align} 
Furthermore, by Lemma \ref{lem:stab:Full:adjoint:bu}, we obtain 
\[
\|\widetilde{\kappa}_\sigma\|_{H^1(I; L^2(\Omega))} + \|\widetilde{\kappa}_\sigma\|_{L^2(I; H^1(\Omega))} \leq C.
\]
Again, using Lemma 2.1 of \cite[p. 80]{Malanowski_1982}, we obtain
\[
\|\widetilde{\kappa}_\sigma\|_{H^{\frac{1}{4}}(I; L^2(\Gamma))} + \|\widetilde{\kappa}_\sigma\|_{L^2(I; H^{\frac{1}{2}}(\Gamma))} \leq C.
\]
Thus, we conclude (cf. \cite[p. 1735, Lemma 3.3]{Kunisch_Vexler_2007})
\begin{align}
\|\widetilde{u}_\sigma\|_{H^{\frac{1}{4}}(I; L^2(\Gamma))} + \|\widetilde{u}_\sigma\|_{L^2(I; H^{\frac{1}{2}}(\Gamma))} \leq C.\label{ineq:Final:conRate:2}
\end{align}

With Lemma \ref{Lem:optim:Aux:ineq}, \eqref{first_cond:varition_ineq} and \eqref{first_cond:discrete:varition_ineq}, every subsequence of $\{\bar{u}_\sigma\}_\sigma$ has a further subsequence, denoted in the same way, that satisfies 
\begin{align}2^{-1}\min\{\alpha, \lambda\}\|\bar{u}-\bar{u}_{\sigma}\|^2_{L^2(I;L^2(\Gamma))}&\leq J'(\bar{u}_\sigma)(\bar{u}_\sigma-\bar{u})\leq [J'(\bar{u}_\sigma)-J'_\sigma(\bar{u}_\sigma)](\bar{u}_\sigma-\bar{u})\nonumber\\
&= [J'(\bar{u}_\sigma)-J'(\widetilde{u}_\sigma)](\bar{u}_\sigma-\bar{u})+[J'(\widetilde{u}_\sigma)-J'_\sigma(\widetilde{u}_\sigma)](\bar{u}_\sigma-\bar{u})\nonumber\\
&+[J'_\sigma(\widetilde{u}_\sigma)-J'_\sigma(\bar{u}_\sigma)](\bar{u}_\sigma-\bar{u})\label{ineq:Final:conRate:3}
\end{align}
for every $|\sigma|< \sigma_*$. For the first term, we deduce from Lemma \ref{lem:Mu_Kappa}, \eqref{ineq:Final:conRate:1}, \eqref{ineq:Final:conRate:bu:1} and \eqref{ineq:Final:conRate:2} that
\begin{align*}
[J'(\bar{u}_\sigma)-J'(\widetilde{u}_\sigma)](\bar{u}_\sigma-\bar{u})&\leq C\big(\eta\|\kappa_{\widetilde{u}_\sigma}-\kappa_{\bar{u}_\sigma}\|_{L^2(I;L^2(\Gamma))}+\alpha\|\bar{u}_\sigma-\widetilde{u}_\sigma\|_{L^2(I;L^2(\Gamma))}\big)\|\bar{u}-\bar{u}_\sigma\|_{L^2(I;L^2(\Gamma))}\\
&\leq C\|\bar{u}_\sigma-\widetilde{u}_\sigma\|_{L^2(I;L^2(\Gamma))}\|\bar{u}-\bar{u}_\sigma\|_{L^2(I;L^2(\Gamma))}\\
&\leq C\|\widetilde{\kappa}_\sigma-\bar{k}_\sigma\|_{L^2(I;L^2(\Gamma))}\|\bar{u}-\bar{u}_\sigma\|_{L^2(I;L^2(\Gamma))}\\
&\leq C\|\widetilde{\kappa}_\sigma-\bar{k}_\sigma\|_{L^2(I;L^2(\Omega))}\|^\frac{1}{2}\|\widetilde{\kappa}_\sigma-\bar{k}_\sigma\|^\frac{1}{2}_{L^2(I;H^1(\Omega))}\|\bar{u}-\bar{u}_\sigma\|_{L^2(I;L^2(\Gamma))}\\
&\leq C \tau^\frac{3}{4}\|\bar{u}-\bar{u}_\sigma\|_{L^2(I;L^2(\Gamma))}.
\end{align*}
For the second term, from Theorem \ref{thm:All:TimeSpace:staAdj} and \eqref{ineq:Final:conRate:2} it follows that
\begin{align*}
[J'(\widetilde{u}_\sigma)-J'_\sigma(\widetilde{u}_\sigma)](\bar{u}_\sigma-\bar{u})&\leq C\eta\|\kappa_\sigma(\widetilde{u}_\sigma)-\kappa(\widetilde{u}_\sigma)\|_{L^2(I;L^2(\Gamma))}\|\bar{u}-\bar{u}_\sigma\|_{L^2(I;L^2(\Gamma))}\\
&\leq C\big(\tau^\frac{3}{4}+h^\frac{3}{2}\big)\|\bar{u}-\bar{u}_\sigma\|_{L^2(I;L^2(\Gamma))}.
\end{align*}
For the last term, using Lemma \ref{Full:lem:Lip:MuKappa}, \eqref{ineq:Final:conRate:1}, \eqref{ineq:Final:conRate:bu:1} and \eqref{ineq:Final:conRate:2}, we deduce 
\begin{align*}
[J'_\sigma(\widetilde{u}_\sigma) - J'_\sigma(\bar{u}_\sigma)](\bar{u}_\sigma - \bar{u}) \leq C \tau^{\frac{3}{4}} \|\bar{u} - \bar{u}_\sigma\|_{L^2(I; L^2(\Gamma))}.
\end{align*}
Combining the above estimates with \eqref{ineq:Final:conRate:3},  we can conclude that 
every subsequence of $\{\bar{u}_\sigma\}_\sigma$ has a further subsequence that satisfies 
\begin{align}
\|\bar{u}-\bar{u}_\sigma\|_{L^2(I;L^2(\Gamma))}\leq C(\tau^\frac{3}{4}+h^\frac{3}{2})\label{ineq:Final:subseq}.
\end{align}
To derive \eqref{ineq:Final:err:cont}, we proceed by contradiction and assume that it does not hold. This implies the existence of a sequence $\sigma$ such that
$$
\lim\limits_{\sigma \to 0} \frac{\|\bar{u} - \bar{u}_\sigma\|_{L^2(I;L^2(\Gamma))}}{\tau^\frac{3}{4} + h^\frac{3}{2}} = +\infty.
$$
However, this contradicts \eqref{ineq:Final:subseq}.

The estimates \eqref{ineq:Final:err:state} and \eqref{ineq:Final:err:adjoint} follow directly from Lemmas \ref{Full:lem:Lip:YTheta}, \ref{Full:lem:Lip:MuKappa}, and Theorem \ref{thm:All:TimeSpace:staAdj}.
\end{proof}

\begin{remark} Consider the objection functionals involving the vorticity of the fluid  
\begin{equation}\begin{aligned}
		\min_{\substack{\tiny u\in \mathcal U_{ad}}}\quad J(u)=\frac{1}{2}\int_{I}\int_\Omega|\mathrm{curl} \,\mathbf y_u|^2 dx dt+\frac{\alpha}{2}\int_{I}\int_\Gamma| u|^2ds dt,
	\end{aligned}\label{Goal:Vort}
\end{equation}
where $(\mathbf y_u, \theta_u)$ is the unique solution pair  of \eqref{intr:Rb_state}. Using Theorem \ref{lem:conL2:Regualrit:YTheta}, we can define $\mathcal{S}: L^2(I;L^2(\Gamma))\rightarrow \mathbb V(I) \times W(I)$ such that $u\mapsto \mathcal{S}(u):=(\mathbf y_u, \theta_u)$.  Similarly, we obtain the same result as in Theorem~\ref{thm:estimate:main_result}.
\end{remark}

\section{Numerical Experiments}\setcounter{equation}{0}
In this section, we will report an example to verify our theoretical results in Theorem \ref{thm:estimate:main_result}. Numerical simulations are conducted using the open-source finite-element software NGSolve \cite{Schoberl2014}, which is available at \url{https://ngsolve.org/}. For solving the algebraic equations in all examples, we employ the sparse direct solver provided by PARDISO \cite{Schenk2004}. We shall focus on Mini ﬁnite elements; however, the adaptation to  Taylor-Hood element can be carried out in a similar fashion. Throughout the experiment, we used the projection gradient method with a fixed step size $\frac{1}{\alpha}$ and set the tolerance as $10^{-6}$.  The corresponding codes for all numerical experiments can be found in the git repository \url{https://github.com/TyphoohYe/OptimalControlWithBoussinesqEquations.git}.

We consider examples with pointwise control constraints  where the exact solutions are unknown and take numerical solutions with time step size $\tau=2^{-9}$ and space size $ h=2^{-7}$ as reference solution $(\bar{\mathbf y}^*, \bar{\theta}^*, \bar{\bm \mu}^*,\bar{\kappa}^*,\bar{u}^*)$ to compute the convergence order.  We set $\Omega = (0, 1) \times (0, 1)$, and the parameters $u_a=-0.2$, $u_b=0.2$, $T = 1$, $\nu = 0.1$, $\mathbf g=(-10, 10)^T$, $\beta=\chi=\gamma=\eta=1$ and $\alpha = 0.1$. We now consider the functions  
\begin{align*}
&\mathbf y_d=\big(50 x^2 (x-1)^2 (2y(y-1)^2+2y^2(y-1)), -50 y^2(y-1)^2(2x(x-1)^2+2x^2(x-1))\big)^T,\\
& \theta_d=0, ~\mathbf h=\mathbf y_0=(0, 0)^T, ~f=\theta_0=0.
\end{align*}
 
\begin{figure}[H]
\newcommand{\scale}{.18\textwidth}

\begin{center}

\includegraphics[width=.19\textwidth]{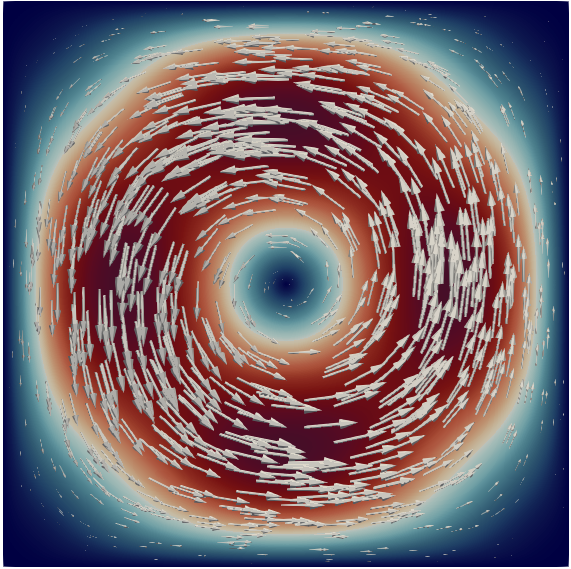}\hspace{-0.5ex}
\includegraphics[scale=0.30]{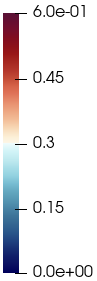}
\caption{The magnitude of the target velocity $\mathbf y_d$}\label{Fig:1}
\end{center}\vspace{-3ex}
\setcounter{subfigure}{0}
\subfloat[][$t=0.125$]{
\includegraphics[width=\scale]{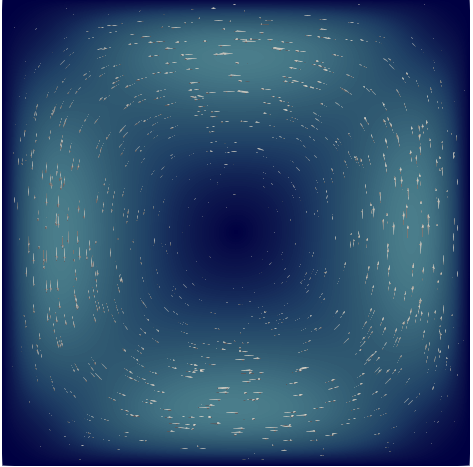}
}
\subfloat[][$t=0.25$]{
\includegraphics[width=\scale]{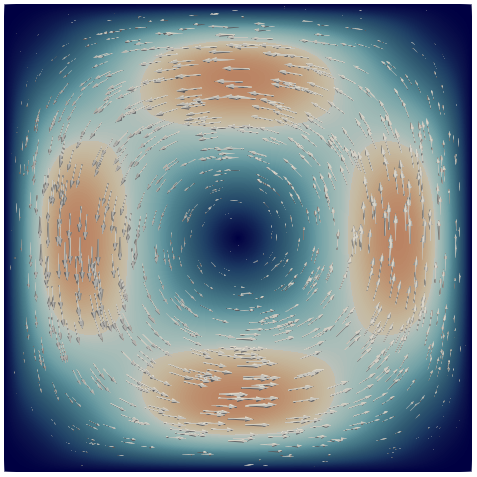}
}\hfill
\subfloat[][$t=0.5$]{
\includegraphics[width=\scale]{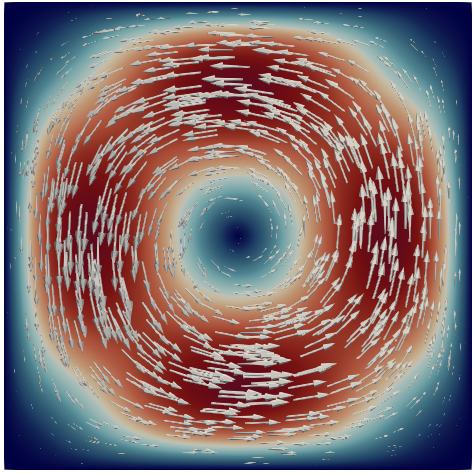}
}
\subfloat[][$t=0.75$]{
\includegraphics[width=\scale]{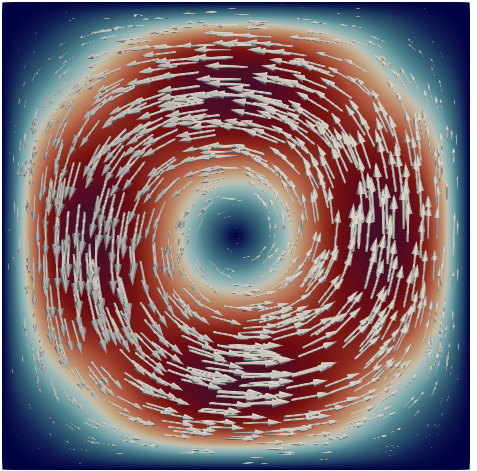}
}\hfill
\subfloat[][$t=1$]{
\includegraphics[width=\scale]{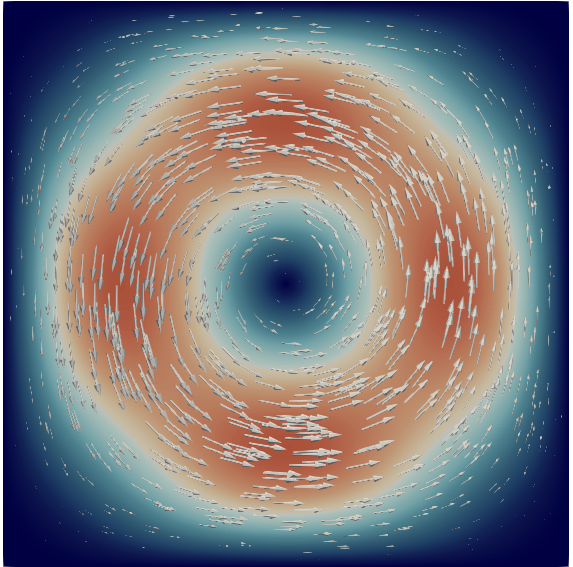}
}
\vspace{-2ex}
\begin{center}
\subfloat{
\includegraphics[scale=0.3]{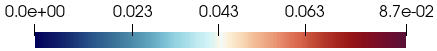}
}
\end{center}\vspace{-2ex}
\setcounter{subfigure}{0}
\subfloat[][$t=0.125$]{
\includegraphics[width=\scale]{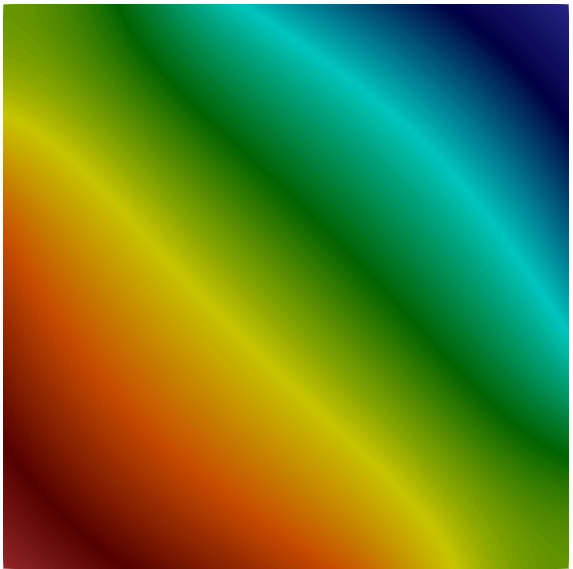}
}
\subfloat[][$t=0.25$]{
\includegraphics[width=\scale]{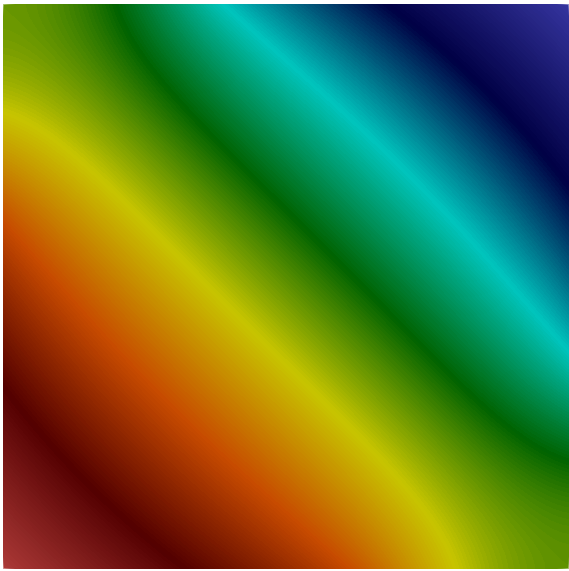}
}\hfill
\subfloat[][$t=0.50$]{
\includegraphics[width=\scale]{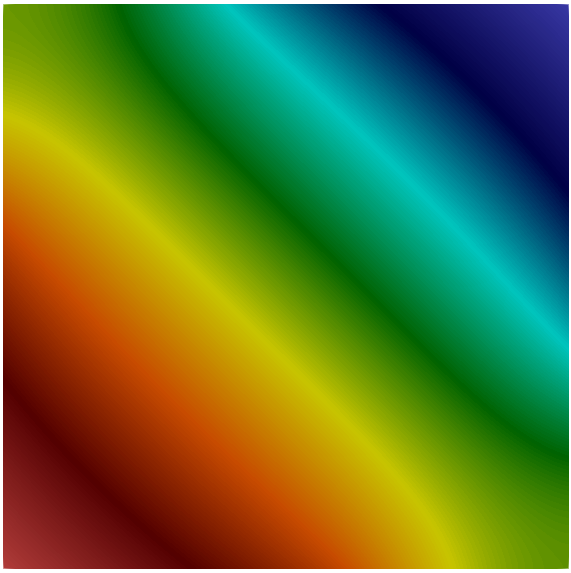}
}
\subfloat[][$t=0.75$]{
\includegraphics[width=\scale]{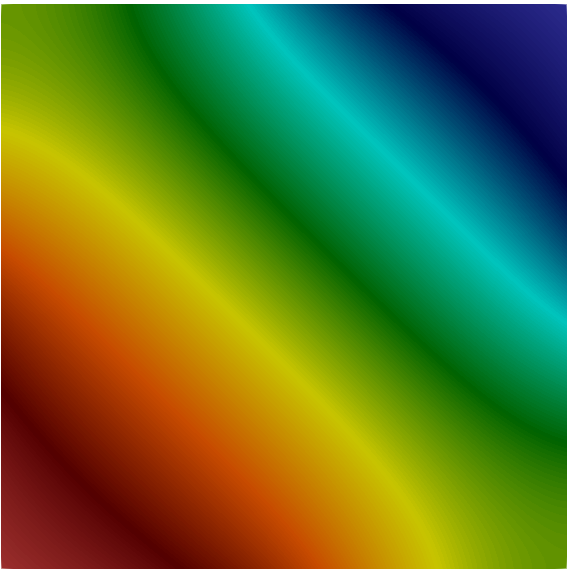}
}\hfill
\subfloat[][$t=1$]{
\includegraphics[width=\scale]{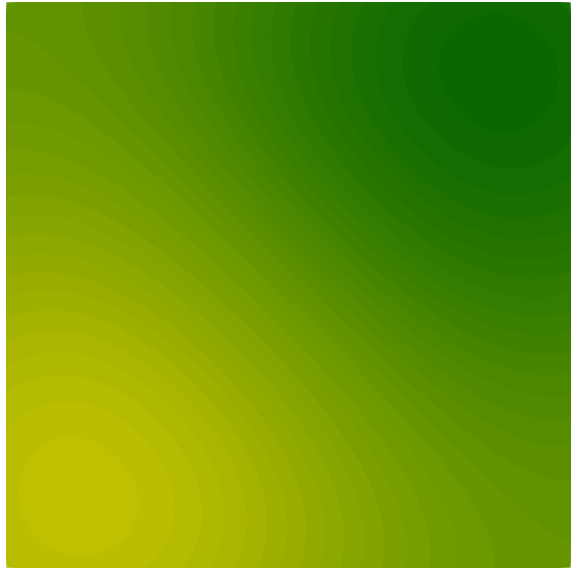}
}\vspace{-2ex}
\begin{center}
\subfloat{
\includegraphics[scale=0.3]{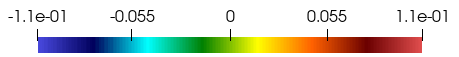}
}
\end{center}\vspace{-2ex}

\newcommand{\scala}{.18\textwidth}
\setcounter{subfigure}{0}
\subfloat[][$t=0.125$]{
\includegraphics[width=\scala]{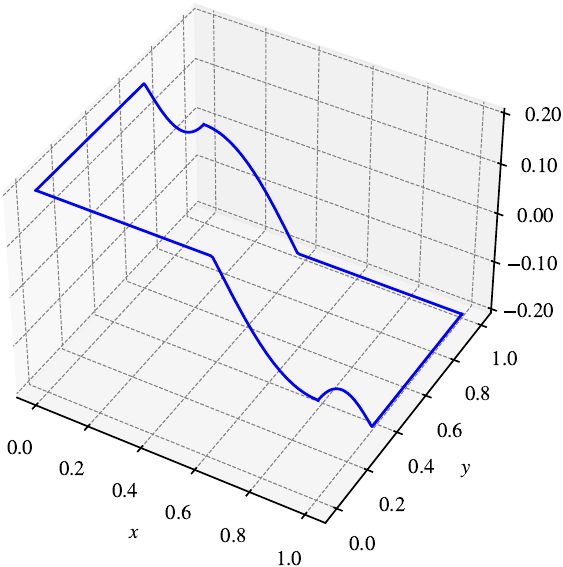}
}
\subfloat[][$t=0.25$]{
\includegraphics[width=\scala]{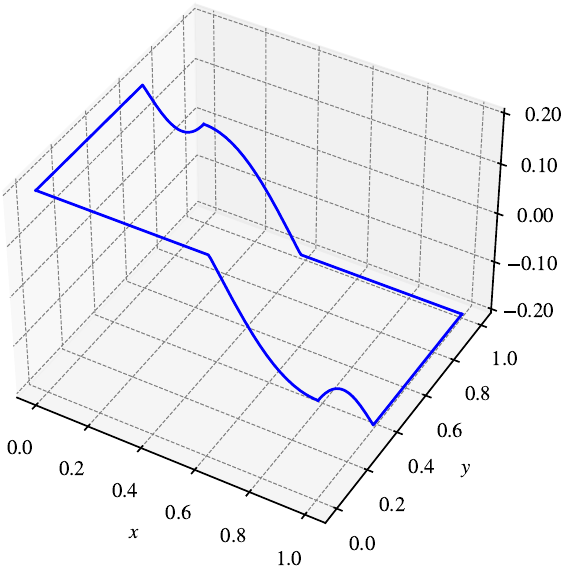}
}\hfill
\subfloat[][$t=0.5$]{
\includegraphics[width=\scala]{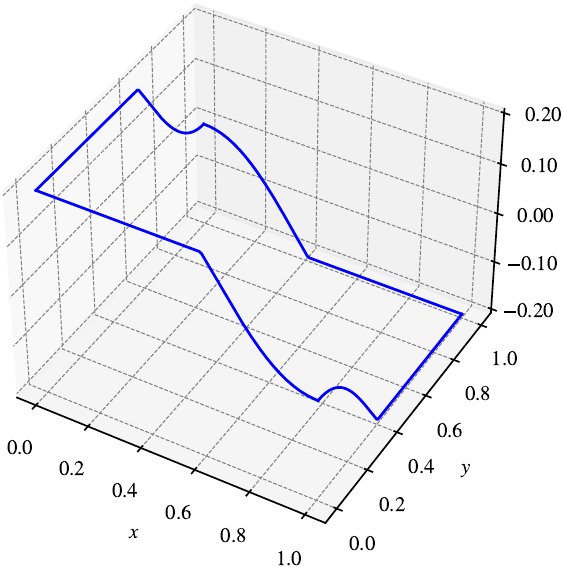}
}
\subfloat[][$t=0.75$]{
\includegraphics[width=\scala]{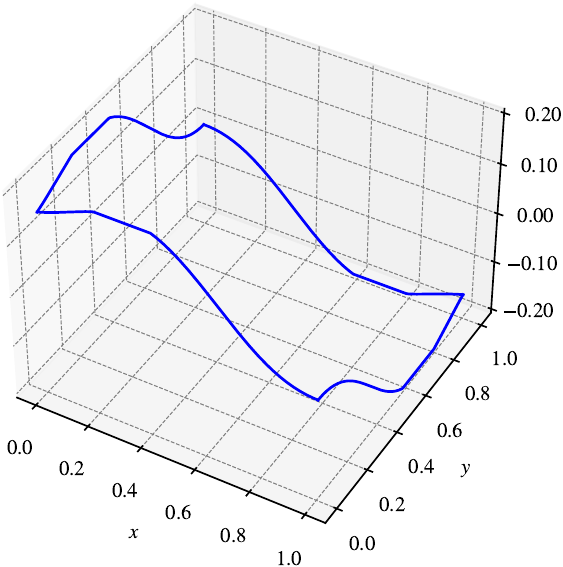}
}\hfill
\subfloat[][$t=1$]{
\includegraphics[width=\scala]{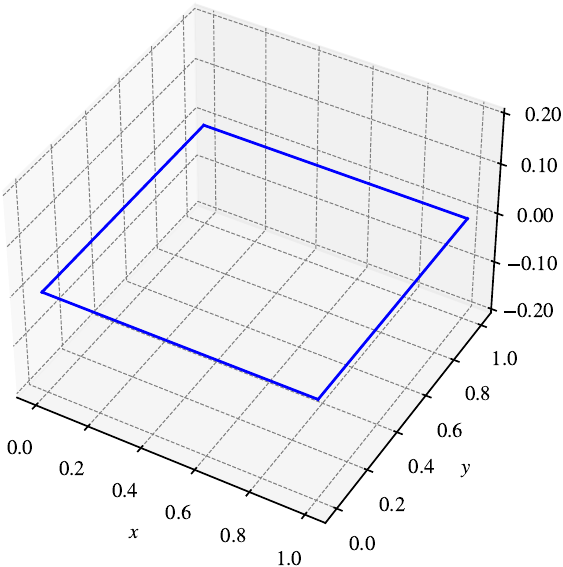}
}
\caption{The magnitude of the optimal velocity $\bar{\mathbf{y}}$, and the corresponding optimal temperature $\bar{\theta}$ and optimal control $\bar{u}$ at $t = 0.125$, $t = 0.25$, $t = 0.5$, $t=0.75$ and $t = 1$.}\label{Fig:2}
\end{figure}


The target velocity is given in Fig. \ref{Fig:1}, and the optimal solution at $t = 0.125, ~0.25, ~0.5,~0.75,~1$ is given in Fig. \ref{Fig:2}.  We define $\mathcal{E}^{\mathbf{y}}_X = \|\bar{\mathbf{y}}_\sigma - \bar{\mathbf{y}}^*\|_X$, analogously to $\mathcal{E}^{\theta}_X$, $\mathcal{E}^{\kappa}_X$, $\mathcal{E}^{\bm \mu}_X$ and $\mathcal{E}^{u}_X$.   The errors and corresponding convergence rates are summarized in Tables \ref{tab:eoc:h} and \ref{tab:eoc:tau}.  In Table \ref{tab:eoc:h}, we fix the time step size at $\tau = 2^{-9}$ while varying the spatial step size $h$ from $2^{-3}$ to $2^{-6}$.  The data show that with spatial refinement, the error exhibits an approximately quadratic convergence rate in the norms $L^2(I; \mathbb L^2(\Omega))$ (or $L^2(I;L^2(\Omega))$), $L^\infty(I; \mathbb L^2(\Omega))$, and $L^2(I;L^2(\Gamma))$. In contrast, convergence in the $L^2(I;\mathbb H^1(\Omega))$ norm follows only a linear rate.   In Table \ref{tab:eoc:tau}, we fix the spatial step size at $h = 2^{-7}$, while varying the time step size $\tau$ from $2^{-3}$ to $2^{-7}$.
 The data show that the convergence with respect to the time step is linear for all  the norms considered.  From these results, we observe that the empirical convergence rates for both the spatial and temporal discretizations of the control variable are higher than the theoretically predicted rates. This discrepancy can be attributed to the smoothness of the test data used in our numerical experiments.

\begin{sidewaystable}
\caption{Experimental order of convergence with $h$ ($\tau=2^{-9}$)}\label{tab:err:l_2}\label{tab:eoc:h}
\scalebox{0.82}{\begin{tabular}{c|ccccccccccccccccccc}
$h$ &$\mathcal{{E}}^{\mathbf y}_{L^\infty(\mathbb L^2)}$& $\mathrm{Rate}$ &$\mathcal{{E}}^{\theta}_{L^\infty( L^2)}$ &$\mathrm{Rate}$&$\mathcal{{E}}^{\mathbf y}_{L^2(\mathbb H^1)}$& $\mathrm{Rate}$ &$\mathcal{{E}}^{\theta}_{L^2( H^1)}$ &$\mathrm{Rate}$&$\mathcal{{E}}^{\bm \mu}_{L^\infty(\mathbb L^2)}$ &$\mathrm{Rate}$& $\mathcal{{E}}^{\kappa}_{L^\infty(L^2)}$ &$\mathrm{Rate}$&$\mathcal{{E}}^{\bm \mu}_{L^2(\mathbb H^1)}$ &$\mathrm{Rate}$& $\mathcal{{E}}^{\kappa}_{L^2(H^1)}$ &$\mathrm{Rate}$&  $\mathcal{{E}}^{u }_{L^2(L^2)}$ &$\mathrm{Rate}$& \\ \hline
        $2^{-3}$ & 5.087E-03 & ~ & 8.899E-04 & ~ & 1.123E-01 & ~ & 1.091E-02 & ~ & 4.290E-03 & ~ & 2.747E-03 & ~ & 9.845E-02 & ~ & 1.667E-02 & ~ & 7.107E-03 & ~ & ~ \\ \hline
        $2^{-4}$  & 1.093E-03 & 2.22 & 1.726E-04 & 2.37 & 5.308E-02 & 1.08 & 5.304E-03 & 1.04 & 8.843E-04 & 2.28 & 5.794E-04 & 2.25 & 4.496E-02 & 1.13 & 7.449E-03 & 1.16 & 1.318E-03 & 2.43 & ~ \\ \hline
        $2^{-5}$  & 2.595E-04 & 2.08 & 4.050E-05 & 2.09 & 2.643E-02 & 1.01 & 2.689E-03 & 0.98 & 2.106E-04 & 2.07 & 1.416E-04 & 2.03 & 2.254E-02 & 1.00 & 3.862E-03 & 0.95 & 3.251E-04 & 2.02 & ~ \\ \hline
        $2^{-6}$  & 5.391E-05 & 2.27 & 8.444E-06 & 2.26 & 1.285E-02 & 1.04 & 1.309E-03 & 1.04 & 4.435E-05 & 2.25 & 2.832E-05 & 2.32 & 1.139E-02 & 0.98 & 1.937E-03 & 1.00 & 6.659E-05 & 2.29 & 
\end{tabular}}

\vspace{8em} 

\caption{Experimental order of convergence with $\tau$ ($h=2^{-7}$)}\label{tab:err:l_2}\label{tab:eoc:tau}
\scalebox{0.82}{\begin{tabular}{c|ccccccccccccccccccc}
$\tau$ &$\mathcal{{E}}^{\mathbf y}_{L^\infty(\mathbb L^2)}$& $\mathrm{Rate}$ &$\mathcal{{E}}^{\theta}_{L^\infty( L^2)}$ &$\mathrm{Rate}$&$\mathcal{{E}}^{\mathbf y}_{L^2(\mathbb H^1)}$& $\mathrm{Rate}$ &$\mathcal{{E}}^{\theta}_{L^2( H^1)}$ &$\mathrm{Rate}$&$\mathcal{{E}}^{\bm \mu}_{L^\infty(\mathbb L^2)}$ &$\mathrm{Rate}$& $\mathcal{{E}}^{\kappa}_{L^\infty(L^2)}$ &$\mathrm{Rate}$&$\mathcal{{E}}^{\bm \mu}_{L^2(\mathbb H^1)}$ &$\mathrm{Rate}$& $\mathcal{{E}}^{\kappa}_{L^2(H^)}$ &$\mathrm{Rate}$&  $\mathcal{{E}}^{u }_{L^2(L^2)}$ &$\mathrm{Rate}$& \\ \hline
$2^{-3}$  & 2.025E-02 & ~ & 2.944E-02 & ~ & 7.022E-02 & ~ & 2.958E-02 & ~ & 2.529E-02 & ~ & 9.692E-03 & ~ & 3.490E-02 & ~ & 1.570E-02 & ~ & 5.248E-02 & ~ & ~ \\ \hline
        $2^{-4}$  & 1.096E-02 & 0.89 & 2.184E-02 & 0.43 & 3.658E-02 & 0.94 & 1.628E-02 & 0.86 & 1.559E-02 & 0.70 & 5.291E-03 & 0.87 & 1.903E-02 & 0.88 & 8.013E-03 & 0.97 & 2.698E-02 & 0.96 & ~ \\ \hline
        $2^{-5}$  & 5.623E-03 & 0.96 & 1.431E-02 & 0.61 & 1.841E-02 & 0.99 & 8.780E-03 & 0.89 & 8.642E-03 & 0.85 & 2.684E-03 & 0.98 & 9.830E-03 & 0.95 & 3.982E-03 & 1.01 & 1.351E-02 & 1.00 & ~ \\ \hline
        $2^{-6}$  & 2.705E-03 & 1.06 & 8.315E-03 & 0.78 & 8.857E-03 & 1.06 & 4.609E-03 & 0.93 & 4.351E-03 & 0.99 & 1.288E-03 & 1.06 & 4.849E-03 & 1.02 & 1.899E-03 & 1.07 & 6.489E-03 & 1.06 & ~ \\ \hline
        $2^{-7}$  & 1.179E-03 & 1.20 & 4.203E-03 & 0.98 & 3.927E-03 & 1.17 & 2.282E-03 & 1.01 & 1.942E-03 & 1.16 & 5.599E-04 & 1.20 & 2.247E-03 & 1.11 & 8.377E-04 & 1.18 & 2.890E-03 & 1.17 & ~ 
\end{tabular}}
\end{sidewaystable}

\bibliographystyle{abbrv}

\begin{appendices}
\section{Fundamental Lemmas}\label{Appendix:A}
The following properties of the trilinear form $\mathbf b$ can be found in various books (cf. \cite{Girault_1986,Temam_1970})
\begin{lemma}\label{lem:b:xingzhi}
The trilinear form \( \mathbf b \) satisfies the following properties:
\begin{enumerate}
    \item  \( \mathbf b(\mathbf u, \mathbf v, \mathbf w) = \mathbf c(\mathbf u, \mathbf v, \mathbf w) = -\mathbf c(\mathbf u, \mathbf w, \mathbf v) \quad \forall \mathbf u \in \mathbb X, \quad \forall \mathbf v,\mathbf w \in \mathbb H^1(\Omega) \).

\item \( \mathbf b(\mathbf u, \mathbf v, \mathbf w) = -\mathbf b(\mathbf u, \mathbf w, \mathbf v) \quad \forall \mathbf u \in \mathbb X, \quad \forall \mathbf v, \mathbf w \in \mathbb H^1(\Omega) \).

\item \( \mathbf b(\mathbf u, \mathbf v, \mathbf v) = 0 \quad \forall \mathbf u \in\mathbb X, \quad \forall \mathbf v \in \mathbb H^1(\Omega) \).
\end{enumerate}
\end{lemma}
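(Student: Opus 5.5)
The three properties follow from one integration by parts. The plan is to establish item 1 first and obtain items 2 and 3 as direct consequences.

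\textbf{Step 1: the key identity for $\mathbf c$.} Fix $\mathbf u\in\mathbb X$ and $\mathbf v,\mathbf w\in \mathbb C^\infty(\bar\Omega)^2$ (smooth up to the boundary, no boundary conditions). Writing $\mathbf c(\mathbf u,\mathbf v,\mathbf w)=\sum_{i,j}\int_\Omega u_i\,\partial_i v_j\,w_j\,dx$ and using Green's formula gives
\[
\mathbf c(\mathbf u,\mathbf v,\mathbf w)+\mathbf c(\mathbf u,\mathbf w,\mathbf v)=\int_\Omega \mathbf u\cdot\nabla(\mathbf v\cdot\mathbf w)\,dx=-\int_\Omega(\mathrm{div}\,\mathbf u)(\mathbf v\cdot\mathbf w)\,dx+\int_\Gamma(\mathbf u\cdot\mathbf n)(\mathbf v\cdot\mathbf w)\,ds .
\]
Both terms on the right vanish: $\mathrm{div}\,\mathbf u=0$ because $\mathbf u\in\mathbb X$, and $\mathbf u\cdot\mathbf n=0$ on $\Gamma$ because the trace of $\mathbf u\in\mathbb H^1_0(\Omega)$ is zero. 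Hence $\mathbf c(\mathbf u,\mathbf v,\mathbf w)=-\mathbf c(\mathbf u,\mathbf w,\mathbf v)$ for smooth $\mathbf v,\mathbf w$.

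\textbf{Step 2: density.} This is the only step needing care. In two dimensions $\mathbb H^1(\Omega)\hookrightarrow\mathbb L^4(\Omega)$, so \eqref{b_estimate} (which holds with $\mathbf c$ in place of $\mathbf b$ by Hölder) shows that $\mathbf c$ is continuous on $\mathbb L^4\times\mathbb H^1\times\mathbb L^4$, hence on $\mathbb X\times\mathbb H^1(\Omega)\times\mathbb H^1(\Omega)$. Since $\Omega$ is a Lipschitz (convex polygonal) domain, smooth functions up to the boundary are dense in $\mathbb H^1(\Omega)$. Passing to the limit in the identity of Step 1 extends it to all $\mathbf v,\mathbf w\in\mathbb H^1(\Omega)$.

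\textbf{Step 3: conclusion.} From the definition of $\mathbf b$ and Step 2,
\[
\mathbf b(\mathbf u,\mathbf v,\mathbf w)=\tfrac12\big[\mathbf c(\mathbf u,\mathbf v,\mathbf w)-\mathbf c(\mathbf u,\mathbf w,\mathbf v)\big]=\mathbf c(\mathbf u,\mathbf v,\mathbf w)=-\mathbf c(\mathbf u,\mathbf w,\mathbf v),
\]
which is item 1. Item 2 is immediate from the antisymmetric definition of $\mathbf b$ in its last two arguments; it actually holds for every $\mathbf u\in\mathbb L^4(\Omega)$. Item 3 follows by taking $\mathbf w=\mathbf v$ in item 2, which gives $\mathbf b(\mathbf u,\mathbf v,\mathbf v)=-\mathbf b(\mathbf u,\mathbf v,\mathbf v)$ and so $\mathbf b(\mathbf u,\mathbf v,\mathbf v)=0$.

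The same argument, applied componentwise with scalar $v,w\in H^1(\Omega)$, gives the analogous statements for $b$ and $c$.
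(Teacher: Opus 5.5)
Your proof is correct. It is the standard argument behind the textbook references (Girault--Raviart, Temam) that the paper cites instead of giving a proof: integrate by parts in $\mathbf c(\mathbf u,\mathbf v,\mathbf w)+\mathbf c(\mathbf u,\mathbf w,\mathbf v)$, use $\mathrm{div}\,\mathbf u=0$ and $\mathbf u|_\Gamma=0$, extend by density, and observe that items 2 and 3 follow immediately from the skew-symmetric definition of $\mathbf b$. Your explicit handling of the boundary term is exactly what this version needs, since here $\mathbf v,\mathbf w$ range over $\mathbb H^1(\Omega)$ rather than $\mathbb H^1_0(\Omega)$ as in the usual textbook statement.
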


We present some useful results regarding the projection operators (cf. \cite{Girault_1986}).
\begin{lemma}\label{Lem:stokes:err} Let \( \mathbf{u} \in \mathbb{H}^2(\Omega) \cap \mathbb{H}^1_0(\Omega) \), \( p \in H^1(\Omega) \cap L_0^2(\Omega) \), the Stokes-Ritz projection $R^S_h(\mathbf u,p)$ satisfies
\begin{align*}
&\|\mathbf u-\mathbf R^S_h(\mathbf u,p)\|\leq Ch\big(\|\mathbf u\|_{\mathbb H^1(\Omega)}+\|p\|\big),\quad\|\mathbf u-\mathbf R^S_h(\mathbf u,p)\|\leq Ch^2\big(\|\mathbf u\|_{\mathbb H^2(\Omega)}+\|p\|_{H^1(\Omega)}\big).
\end{align*}
\end{lemma}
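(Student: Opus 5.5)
The statement is Lemma \ref{Lem:stokes:err}: two error bounds for the Stokes--Ritz projection \eqref{StokePro}, a first-order $L^2$ bound under $\mathbb H^1\times L^2$ regularity and a second-order $L^2$ bound under $\mathbb H^2\times H^1$ regularity. I would follow the standard saddle-point argument: first an energy (quasi-optimality) estimate in $\mathbb H^1\times L^2$, then an Aubin--Nitsche duality argument for the $L^2$ error.

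\textbf{Energy estimate.} Write $\mathbf e:=\mathbf u-\mathbf R^S_h(\mathbf u,p)$ and $e_p:=p-R^{S,p}_h(\mathbf u,p)$. The system \eqref{StokePro} is Galerkin orthogonality for the discrete Stokes problem. By \textbf{(A1)}, the Babu\v{s}ka--Brezzi theory gives quasi-optimality:
\[
\|\mathbf e\|_{\mathbb H^1(\Omega)}+\|e_p\|\le C\Big(\inf_{\mathbf v_h\in\mathbb V_h}\|\mathbf u-\mathbf v_h\|_{\mathbb H^1(\Omega)}+\inf_{q_h\in Q_h}\|p-q_h\|\Big).
\]
To see this concretely, use \textbf{(A1)} to correct an approximant of $\mathbf u$ into a discretely divergence-free one at a cost bounded by its best-approximation error. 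Then test the first equation of \eqref{StokePro} with the resulting element of $\mathbb X_h$, on which $R^{S,p}_h$ drops out and $p$ may be replaced by $p-q_h$. Finally, recover the pressure error from the inf-sup condition.

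The first bound then follows by taking $\mathbf v_h=0$, $q_h=0$ and applying Poincar\'e:
\[
\|\mathbf e\|\le C\|\mathbf e\|_{\mathbb H^1(\Omega)}\le C\big(\|\mathbf u\|_{\mathbb H^1(\Omega)}+\|p\|\big).
\]
This is bounded, but not $O(h)$. So for the first bound I would read the intended statement as requiring a duality step as well: an $O(h)$ $L^2$ error from $\mathbb H^1\times L^2$ data comes from the same duality argument below, combined with $O(1)$ stability of the energy error. For $\mathbf u\in\mathbb H^2$ and $p\in H^1$, \textbf{(A2)} gives energy error $\le Ch(\|\mathbf u\|_{\mathbb H^2(\Omega)}+\|p\|_{H^1(\Omega)})$.

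\textbf{Duality step.} Let $(\mathbf w,r)\in\mathbb H^2(\Omega)\cap\mathbb H^1_0(\Omega)\times H^1(\Omega)\cap L^2_0(\Omega)$ solve the Stokes problem
\[
-\nu\Delta\mathbf w+\nabla r=\mathbf e,\qquad \operatorname{div}\mathbf w=0 .
\]
On the convex polygon $\Omega$, the regularity result of \cite{Kellogg_Osborn_1976} gives $\|\mathbf w\|_{\mathbb H^2(\Omega)}+\|r\|_{H^1(\Omega)}\le C\|\mathbf e\|$. Testing with $\mathbf e$ yields
\[
\|\mathbf e\|^2=\nu(\nabla\mathbf e,\nabla\mathbf w)-(r,\operatorname{div}\mathbf e).
\]
Subtract the Galerkin orthogonality relations from \eqref{StokePro} with $\mathbf v_h=\mathbf I_h\mathbf w$ and $q_h=I_hr$, and use $\operatorname{div}\mathbf w=0$ to insert $e_p$. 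This gives
\[
\|\mathbf e\|^2=\nu(\nabla\mathbf e,\nabla(\mathbf w-\mathbf I_h\mathbf w))-(r-I_hr,\operatorname{div}\mathbf e)-(e_p,\operatorname{div}(\mathbf w-\mathbf I_h\mathbf w)).
\]
By \textbf{(A2)}, each factor involving $(\mathbf w,r)$ is $\le Ch\|\mathbf e\|$, so
\[
\|\mathbf e\|\le Ch\big(\|\mathbf e\|_{\mathbb H^1(\Omega)}+\|e_p\|\big).
\]
Inserting the two energy bounds gives the stated $O(h)$ bound under $\mathbb H^1\times L^2$ data and the $O(h^2)$ bound under $\mathbb H^2\times H^1$ data.

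\textbf{Main obstacle.} The delicate point is the pressure term in the duality identity. Since $\mathbf e$ is only discretely divergence-free, $(r,\operatorname{div}\mathbf e)$ does not vanish. It has to be handled by subtracting $I_hr\in Q_h$ via the second equation of \eqref{StokePro}. This requires the pressure approximation in \textbf{(A2)} together with the $H^1$-regularity of $r$ from \cite{Kellogg_Osborn_1976}.
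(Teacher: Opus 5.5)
Your proof is correct. It is the standard argument the paper defers to by citing Girault--Raviart: Brezzi quasi-optimality from \textbf{(A1)}, followed by an Aubin--Nitsche duality step using the Kellogg--Osborn $\mathbb H^2\times H^1$ regularity of the dual Stokes problem on the convex polygon. As you observed, the duality step combined with $O(1)$ energy stability gives the first $O(h)$ bound, and combined with the $O(h)$ energy bound from \textbf{(A2)} it gives the $O(h^2)$ bound.

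One small precision: the lemma does not assume $\mathrm{div}\,\mathbf u=0$. So in your concrete quasi-optimality sketch, the corrected approximant $\mathbf w_h\in\mathbb V_h$ should satisfy $(\mathrm{div}(\mathbf u-\mathbf w_h),q_h)=0$ for all $q_h\in Q_h$, so that $\mathbf R^S_h(\mathbf u,p)-\mathbf w_h\in\mathbb X_h$ is the admissible test function. It should not be required to lie in $\mathbb X_h$ itself. Your duality identity is unaffected, since it only uses $\mathrm{div}\,\mathbf w=0$ for the dual solution.
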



\begin{lemma}\label{Lem:L2:err} Let \( \mathbf u \in \mathbb H^2(\Omega) \), \(  u \in  H^s(\Omega) \) with $s\in [1,2]$,  we have the following estimates for the $L^2$ projections \( \mathbf P_h \),  \(P_h\) and the Ritz projection \(R_h\)
\begin{align*}
&\|\mathbf u-\mathbf P_h \mathbf u\|+h\|\nabla \big(\mathbf u-\mathbf P_h \mathbf u\big)\|\leq Ch^2\|u\|_{\mathbb H^2(\Omega)},\\
&\|u-P_h u\|+h\|\nabla \big( u- P_h u\big)\|\leq C h^s\|u\|_{H^s(\Omega)},\\
&\|u-R_h u\|+h\|\nabla \big( u- R_h u\big)\|\leq C h^2\|u\|_{H^2(\Omega)}.
\end{align*}
\end{lemma}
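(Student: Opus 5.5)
The estimates of Lemma~\ref{Lem:L2:err} are the classical projection error bounds. I would prove them separately for the two $L^2$ projections and for the Ritz projection $R_h$, working on a quasi-uniform mesh with polynomial degree $\ell\ge1$.

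\textbf{Scalar $L^2$ projection.} Since $P_h$ is the best approximation in $L^2$, we have $\|u-P_hu\|\le\|u-I_h^{SZ}u\|$, where $I_h^{SZ}$ is the Scott--Zhang (or Cl\'ement) interpolant. Interpolating between the local estimates for $s=1$ and $s=2$ gives $\|u-P_hu\|\le Ch^s\|u\|_{H^s(\Omega)}$. For the gradient term I would use quasi-uniformity. Write $u-P_hu=(u-I_h^{SZ}u)+P_h(I_h^{SZ}u-u)$. The inverse estimate and the $L^2$-stability of $P_h$ then yield
\[
\|\nabla P_h(I_h^{SZ}u-u)\|\le Ch^{-1}\|I_h^{SZ}u-u\|\le Ch^{s-1}\|u\|_{H^s(\Omega)}.
\]
This is the $H^1$-stability argument of \cite{Crouzeix1987}, which also works on suitably graded meshes.

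\textbf{Vector projection $\mathbf P_h$.} Here $\mathbf P_h$ maps onto the discretely divergence-free space $\mathbb X_h$, so the argument must be adapted. I would use the Stokes--Ritz projection $\mathbf R_h^S(\mathbf u,0)$ as the comparison function, or a Fortin-type interpolant with values in $\mathbb X_h$; by \textbf{(A1)}--\textbf{(A2)} either one approximates a divergence-free $\mathbf u$ to order $h^2$ in $L^2$ and order $h$ in $H^1$. Best approximation gives the $L^2$ bound. The gradient bound then follows, as in the scalar case, from the inverse estimate together with the splitting
\[
\mathbf u-\mathbf P_h\mathbf u=(\mathbf u-\mathbf R_h^S\mathbf u)+\mathbf P_h(\mathbf R_h^S\mathbf u-\mathbf u).
\]
This is the main obstacle. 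For a general $\mathbf u\in\mathbb H^2$ that is not divergence-free, $\mathbf P_h\mathbf u$ does not approximate $\mathbf u$ at all. The statement has to be read for $\mathbf u\in\mathbb X\cap\mathbb H^2(\Omega)$, which is the case in every application (velocities and adjoint velocities). With that restriction, Lemma~\ref{Lem:stokes:err} applied with zero pressure supplies the needed comparison function.

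\textbf{Ritz projection $R_h$.} Coercivity of $\chi a(\cdot,\cdot)+\eta\gamma(\cdot,\cdot)_\Gamma$ on $H^1(\Omega)$ holds by a Friedrichs-type inequality. C\'ea's lemma then gives
\[
\|u-R_hu\|_{H^1}\le C\inf_{v_h\in V_h}\|u-v_h\|_{H^1}\le Ch\|u\|_{H^2}.
\]
The $L^2$ estimate follows from the Aubin--Nitsche argument. Let $w$ solve $-\chi\Delta w=u-R_hu$ with $\chi\partial_{\mathbf n}w+\eta\gamma w=0$. Because $\Omega$ is convex, the Robin problem has $H^2$ regularity with $\|w\|_{H^2}\le C\|u-R_hu\|$. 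Galerkin orthogonality then gives
\[
\|u-R_hu\|^2\le C\|u-R_hu\|_{H^1}\,h\|w\|_{H^2}.
\]
Combining this with the $H^1$ bound yields the $h^2$ estimate.
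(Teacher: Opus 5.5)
Your argument is correct and is the standard one. The paper gives no proof of this lemma and only defers to the literature (Girault--Raviart), which uses exactly your ingredients: best approximation plus an inverse estimate on the quasi-uniform mesh for the $L^2$ projections, and C\'ea plus Aubin--Nitsche with $H^2$ regularity of the Robin problem on the convex polygon for $R_h$.

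Your restriction of the first estimate to $\mathbf u\in\mathbb X\cap\mathbb H^2(\Omega)$ is a genuine and necessary correction of the statement. For the constant field $\mathbf u=(1,0)^T$ the claimed bound would force $\|\nabla\mathbf P_h\mathbf u\|\le Ch$, so $\mathbf P_h\mathbf u\to\mathbf 0$ by Poincar\'e, which contradicts $\|\mathbf u-\mathbf P_h\mathbf u\|\le Ch^2$. The restriction still covers every place the paper actually applies the lemma, since there it is used only for the velocity $\mathbf y$ and the adjoint velocity $\bm\mu$.
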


Below is the error estimate for the interpolation operators $\Pi^r_\tau$, $\Pi^l_\tau$, and the $L^2$-projection $P_\tau$, as discussed in \cite[p. 762-763]{Liang_Gong_Xie_2025}.
\begin{lemma}\label{Proj:L2:time:err}For arbitrary $s\in [0, 1]$ and $w\in H^s(I;L^2(\Omega))$, there holds
\begin{align*}
\|w-P_\tau w\|_{L^2(I;L^2(\Omega))}\leq C \tau^s \|w\|_{H^s(I;L^2(\Omega))}.
\end{align*}
\end{lemma}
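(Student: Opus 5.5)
The estimate is a purely one-dimensional statement in time, applied pointwise in $x$, so the plan is to prove it for $w\in H^s(I;X)$ with $X$ a Hilbert space (here $X=L^2(\Omega)$) by interpolation between the two endpoint cases $s=0$ and $s=1$. Since $P_\tau$ is linear, the error operator $E_\tau:=\mathrm{Id}-P_\tau$ is linear on $L^2(I;X)$. The goal is to show it maps $L^2(I;X)\to L^2(I;X)$ with norm $\le 1$, and $H^1(I;X)\to L^2(I;X)$ with norm $\le C\tau$. Then, using the definition $H^s(I;X)=[H^1(I;X),L^2(I;X)]_{1-s}$ recalled in Section 2, together with the exactness of complex (or real) interpolation of exponent type, we would get
\[
\|E_\tau\|_{H^s(I;X)\to L^2(I;X)}\le 1^{1-s}(C\tau)^s = C^s\tau^s,
\]
which is the claim.

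\emph{Case $s=0$.} $P_\tau$ is the $L^2(I;X)$-orthogonal projection onto the piecewise constants $X^r_\tau$. Indeed, on each $I^r_n$, $P^n_\tau w$ is the mean value, and $(w-P^n_\tau w,c)_{L^2(I^r_n;X)}=0$ for every constant $c\in X$. Hence $\|w-P_\tau w\|_{L^2(I;X)}\le\|w\|_{L^2(I;X)}$.

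\emph{Case $s=1$.} We work interval by interval. For $w\in H^1(I^r_n;X)$ and $t\in I^r_n$,
\[
w(t)-P^n_\tau w=\frac{1}{\tau_n}\int_{t_{n-1}}^{t_n}\big(w(t)-w(r)\big)\,dr,\qquad \|w(t)-w(r)\|_X\le\int_{t_{n-1}}^{t_n}\|\partial_t w\|_X\,ds.
\]
By Cauchy--Schwarz,
\[
\|w(t)-P^n_\tau w\|_X^2\le \tau_n\|\partial_t w\|^2_{L^2(I^r_n;X)} .
\]
Integrating over $I^r_n$ gives $\tau_n^2\|\partial_t w\|^2_{L^2(I^r_n;X)}$. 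Summing over $n$ and using $\tau_n\le\tau$ yields $\|w-P_\tau w\|_{L^2(I;X)}\le\tau\|\partial_t w\|_{L^2(I;X)}\le\tau\|w\|_{H^1(I;X)}$. Alternatively, one could use the one-dimensional Poincaré inequality with constant $\tau_n/\pi$.

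\emph{Main obstacle.} The only subtle point is the interpolation step. We must ensure that the interpolation functor used to define $H^s(I;X)$ (Lions--Magenes, with $k=1$) is an exact interpolation functor of exponent $1-s$, so that the constants combine multiplicatively. We also need that no dependence on the mesh enters through the spaces themselves. This holds because $E_\tau$ acts boundedly on the fixed pair $(H^1(I;X),L^2(I;X))$, and only its operator norms depend on $\tau$. If one prefers to avoid abstract interpolation, a fallback is a direct Sobolev--Slobodeckij argument for $s\in(0,1)$. On each interval,
\[
\|w-P^n_\tau w\|^2_{L^2(I^r_n;X)}=\frac{1}{2\tau_n}\iint_{(I^r_n)^2}\|w(t)-w(r)\|_X^2\,dr\,dt\le\frac{\tau_n^{2s}}{2}\iint_{(I^r_n)^2}\frac{\|w(t)-w(r)\|_X^2}{|t-r|^{1+2s}}\,dr\,dt .
\]
Summing over $n$ bounds the error by $C\tau^{2s}|w|^2_{H^s(I;X)}$, using the equivalence of the Slobodeckij seminorm with the interpolation norm.
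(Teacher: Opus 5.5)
Your proof is correct. The paper does not prove this lemma itself; it imports it from \cite{Liang_Gong_Xie_2025} (pp.~762--763), so there is no in-paper argument to compare against step by step. What you wrote is the standard self-contained proof of such an estimate, and it matches the paper's definition $H^s(I;X)=[H^1(I;X),L^2(I;X)]_{1-s}$. Its three steps are:
\begin{itemize}
\item[(i)] $P_\tau$ is the $L^2(I;X)$-orthogonal projection onto piecewise constants, which gives the bound for $s=0$;
\item[(ii)] the elementary bound $\|w-P_\tau w\|_{L^2(I;X)}\le \tau\|\partial_t w\|_{L^2(I;X)}$ for $s=1$;
\item[(iii)] interpolation for intermediate $s$.
\end{itemize}

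Your worry about exactness of the interpolation functor is unnecessary. The Lions--Magenes interpolation theorem gives $\|E_\tau\|_{H^s(I;X)\to L^2(I;X)}\le c\,(C\tau)^s$ with a constant $c$ independent of $\tau$. That is all the lemma asserts, since its $C$ is generic.

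Your Slobodeckij fallback is also valid. It uses the identity $\|w-P^n_\tau w\|^2_{L^2(I^r_n;X)}=\frac{1}{2\tau_n}\iint_{(I^r_n)^2}\|w(t)-w(r)\|_X^2\,dr\,dt$ together with $|t-r|\le\tau_n$. This variant avoids abstract interpolation and bounds the error by the seminorm only. The price is relying on the equivalence of the Slobodeckij norm with the interpolation norm on an interval.

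Both routes use only $\tau_n\le\tau$, not the mesh assumption $(\mathbf B)$, which is consistent with the lemma being stated without it.
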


\begin{lemma}\label{int:L2:time:err}For any $s \in [\tfrac{1}{2}, 1]$, if $w \in H^s(I; L^2(\Omega)) \cap C(\bar{I}; L^2(\Omega))$, the following estimate holds:
\begin{align*}
\|w-\Pi^r_\tau w\|_{L^2(I;L^2(\Omega))}+\|w-\Pi^l_\tau w\|_{L^2(I;L^2(\Omega))}\leq C \tau^s \|w\|_{H^s(I;L^2(\Omega))}.
\end{align*}
\end{lemma}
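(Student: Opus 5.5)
The statement to prove is Lemma \ref{int:L2:time:err}: for $s\in[\tfrac12,1]$, $\|w-\Pi^r_\tau w\|_{L^2(I;L^2(\Omega))}+\|w-\Pi^l_\tau w\|_{L^2(I;L^2(\Omega))}\le C\tau^s\|w\|_{H^s(I;L^2(\Omega))}$.

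The plan is to prove the two endpoint cases $s=1$ and $s=\tfrac12$ directly, and then interpolate. Both operators $\Pi^r_\tau$ and $\Pi^l_\tau$ are handled identically, so I describe $\Pi^r_\tau$.

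\emph{Case $s=1$.} On $I^r_n$ we have
\[
w(t)-w(t_n)=-\int_t^{t_n}\partial_t w\,ds .
\]
By Cauchy--Schwarz, $\|w(t)-w(t_n)\|^2\le \tau_n\int_{t_{n-1}}^{t_n}\|\partial_t w\|^2ds$. Integrating over $I^r_n$ and summing over $n$ gives
\[
\|w-\Pi^r_\tau w\|_{L^2(I;L^2(\Omega))}\le \tau\,\|\partial_t w\|_{L^2(I;L^2(\Omega))}.
\]

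\emph{Case $s=\tfrac12$.} Here I need an intermediate quantity, since the point values $w(t_n)$ are not controlled by $H^{1/2}$ alone. I use the $L^2$-projection $P_\tau$ and split
\[
w-\Pi^r_\tau w=(w-P_\tau w)+(P_\tau w-\Pi^r_\tau w).
\]
The first term is $\le C\tau^{1/2}\|w\|_{H^{1/2}(I;L^2(\Omega))}$ by Lemma \ref{Proj:L2:time:err}. For the second term, on each subinterval
\[
P^n_\tau w-w(t_n)=\frac{1}{\tau_n}\int_{t_{n-1}}^{t_n}\big(w(t)-w(t_n)\big)\,dt .
\]
This point-value discrepancy is the main obstacle. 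It is exactly the borderline case, because $H^{1/2}(I)$ does not embed into $C(\bar I)$; this is why the lemma also assumes $w\in C(\bar I;L^2(\Omega))$.

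To handle it, I would work locally. On each interval, map to the reference interval $(0,1)$ and consider the functional $v\mapsto v(1)-\int_0^1 v$. It vanishes on constants, so a Bramble--Hilbert/Deny--Lions argument bounds it by the Slobodeckij seminorm $|v|_{H^{s}(0,1)}$ for $s>\tfrac12$. Scaling then yields
\[
\tau_n\|P^n_\tau w-w(t_n)\|^2\le C\tau_n^{2s}|w|^2_{H^s(I^r_n;L^2)} .
\]
Summing over $n$, using that the squared seminorms over disjoint intervals add up to at most the global one, gives the bound $C\tau^{2s}\|w\|_{H^s(I;L^2(\Omega))}^2$.

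\emph{General $s$.} For $s\in(\tfrac12,1]$ the argument above is direct, with the constant uniform on compact subsets of $(\tfrac12,1]$; alternatively one can interpolate between the cases $s=1$ and $s=\tfrac12+\epsilon$. Since the endpoint $s=\tfrac12$ with a uniform constant is genuinely delicate, I would follow the cited reference \cite{Liang_Gong_Xie_2025}. There the estimate is stated for $w\in H^s\cap C$, and the $C(\bar I)$ norm is absorbed into the constant; this is consistent with how the paper later applies the lemma, to $\mathbf y_h$ and $\theta_h$ with $H^1(I)$ regularity.
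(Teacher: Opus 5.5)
Your argument is correct for $s\in(\tfrac12,1]$. The $s=1$ bound $\|w-\Pi^r_\tau w\|_{L^2(I;L^2(\Omega))}\le\tau\|\partial_t w\|_{L^2(I;L^2(\Omega))}$ is right. The local Bramble--Hilbert step also works: the functional $v\mapsto v(1)-\int_0^1 v$, the scaling, and $\sum_n|w|^2_{H^s(I_n)}\le|w|^2_{H^s(I)}$ together give the estimate with a constant $C_s$ that blows up as $s\downarrow\tfrac12$. The paper gives no proof of its own; it only cites \cite{Liang_Gong_Xie_2025}.

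The gap is the endpoint $s=\tfrac12$, which the statement includes and which your opening plan (``prove $s=1$ and $s=\tfrac12$, then interpolate'') relies on. Your local step needs point evaluation to be bounded on $H^s(0,1)$, which holds only for $s>\tfrac12$. Interpolating between $s=1$ and $s=\tfrac12+\epsilon$ never reaches $s=\tfrac12$. Deferring to the reference with the $C(\bar I)$ norm ``absorbed into the constant'' is not a proof.

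In fact the endpoint cannot be proved, because it is false.

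\emph{Single spike.} Points have zero $H^{1/2}$-capacity in one dimension. Center at a node $t_n$ the function $U(z)=\min\{1,\max\{0,\log(R/|z-(t_n,0)|)/\log(R/r)\}\}$ on the upper half-plane. Its Dirichlet energy is $\pi/\log(R/r)$, so its trace $\phi$ on the real axis has $|\phi|^2_{H^{1/2}(\mathbb R)}\le C/\log(R/r)$. This $\phi$ is continuous, $0\le\phi\le1$, $\phi(t_n)=1$, and it is supported in $[t_n-R,t_n+R]$. Choosing $R$ small and then $r\ll R$ makes $\|\phi\|_{H^{1/2}(I)}\le\epsilon$. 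For $w=\phi v$ with a fixed $v\in L^2(\Omega)$, $\|v\|=1$, you get $\|w-\Pi^r_\tau w\|_{L^2(I;L^2(\Omega))}\ge\tau_n^{1/2}-\epsilon$. The right-hand side, however, is at most $C\tau^{1/2}\epsilon$, so the estimate fails as $\epsilon\to0$.

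\emph{Spikes at every node.} Place such nonnegative spikes, with disjoint supports, at every node; the cross terms of the Gagliardo seminorm are then nonpositive. This gives $w$ with $\|w\|_{C(\bar I)}=1$ and $\|w\|_{H^{1/2}(I)}$ bounded independently of $\tau$. Yet $\Pi^r_\tau w\equiv1$, so $\|w-\Pi^r_\tau w\|_{L^2(I)}\approx T^{1/2}$. Hence even adding $\|w\|_{C(\bar I;L^2(\Omega))}$ to the right-hand side does not give a $\tau^{1/2}$ rate.

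What you can establish is the lemma for $s\in(\tfrac12,1]$ with $C=C_s$. At $s=\tfrac12$ the right-hand side must involve a norm that controls point values.

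Your remark that the paper only applies the lemma under $H^1(I)$ regularity is also inaccurate. The $\tau^{1/2}$ bounds on $\|\nabla\bm\zeta^{\mathbf y}_\tau\|_{L^2(I;\mathbb L^2(\Omega))}$ and $\|\zeta^\theta_\tau\|_{L^2(I;H^1(\Omega))}$ in Lemma \ref{lem:Full:YTheta:time:err} rest on the $H^{1/2}(I;H^1(\Omega))$ bounds of Lemma \ref{semi_discreYTheta:High_regular:stab}, i.e.\ on exactly this endpoint.
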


\section{Supplementary Lemma}\label{Appendix:B}

\begin{lemma}\label{appB:Lem:full:state:err}Given $u\in L^2(I;L^2(\Gamma))$, let \((\mathbf y,\theta)\in \mathbb V(I)\times \big(L^2(I; {H}^{\frac{3}{2}}(\Omega))\cap (H^{\frac{3}{4}}(I; L^2(\Omega))\cap H^1(I;H^{1}(\Omega)^*))\big) \) be the solution of \eqref{Exist:weak:solution} and $\big\{(\mathbf y_{n,h}, \theta_{n,h})\big\}_{n=1}^{N_\tau}$ be the solution of  the discrete state equation \eqref{Full_discrete:state}. Then 
\begin{equation}\label{Lem:full:lowRe:state:err:ineq}
\begin{aligned}
&\|\mathbf y-\mathbf y_\sigma\|_{L^2(I;\mathbb H^1(\Omega))}+\|\theta-\theta_\sigma\|_{L^2(I; H^1(\Omega))}\leq C\big(\tau^{\frac{1}{4}}+ h^\frac{1}{2}\big).
\end{aligned}
\end{equation}
\end{lemma}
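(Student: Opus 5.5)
The plan is to mimic the two-stage splitting used for Theorem \ref{thm:All:TimeSpace:staAdj}, but at the low regularity available when only $u\in L^2(I;L^2(\Gamma))$ is given. We write
\[
\mathbf y-\mathbf y_\sigma=(\mathbf y-\mathbf y_h)+(\mathbf y_h-\mathbf y_\sigma),\qquad
\theta-\theta_\sigma=(\theta-\theta_h)+(\theta_h-\theta_\sigma),
\]
where $(\mathbf y_h,\theta_h)$ solves the semi-discrete problem \eqref{semi_discrete:state}. The velocity still lies in $\mathbb V(I)$ with pressure $p\in L^2(I;H^1(\Omega))$ by Theorem \ref{lem:conL2:Regualrit:YTheta}. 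The temperature, however, only has $\theta\in L^2(I;H^{3/2}(\Omega))\cap H^{3/4}(I;L^2(\Omega))$, and $\partial_t\theta\in L^2(I;H^1(\Omega)^*)$. The expected rates $h^{1/2}$ and $\tau^{1/4}$ are precisely the half-order losses coming from $H^{3/2}$ instead of $H^2$ in space and from $H^{3/4}$ instead of $H^1$ in time.

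\textbf{Spatial step.} We repeat the energy argument of Lemma \ref{Lem:semi:state:err} with the splitting \eqref{decomp:YTheta_1}, now using $R_h\theta$ in place of $P_h\theta$ for the temperature. The Ritz projection kills the terms $\chi a(\zeta^\theta_h,\cdot)+\eta\gamma(\zeta^\theta_h,\cdot)_\Gamma$, leaving only $\langle\partial_t(\theta-R_h\theta),\eta^\theta_h\rangle$. This term is bounded by $\|\partial_t(\theta-R_h\theta)\|_{L^2(I;H^1(\Omega)^*)}\|\eta^\theta_h\|_{L^2(I;H^1(\Omega))}$ and absorbed. The projection error satisfies $\|\theta-R_h\theta\|_{L^2(I;H^1(\Omega))}\le Ch^{1/2}\|\theta\|_{L^2(I;H^{3/2}(\Omega))}$, obtained by interpolating between $H^1$ and $H^2$. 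The convective terms are handled exactly as in Lemma \ref{Lem:semi:state:err}, using $\mathbf y\in L^\infty(I;\mathbb H^1(\Omega))$ and the interpolation inequality \eqref{Interpolation:conti:1}. They produce factors $\|\theta\|_{H^1}$ that are only $L^2$ in time, so Gronwall has to be applied with integrable weights $\|\theta(t)\|^2_{H^1}$ rather than $L^\infty$ bounds. The outcome is
\[
\|\mathbf y-\mathbf y_h\|_{L^2(I;\mathbb H^1(\Omega))}+\|\theta-\theta_h\|_{L^2(I;H^1(\Omega))}\le Ch^{1/2},
\]
with the velocity part even $O(h)$.

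\textbf{Temporal step.} We follow Lemma \ref{lem:Full:YTheta:time:err} with the splitting \eqref{FUll:decomp:YTheta_1}. Previously the key inputs were $\tau\|\partial_t\theta_h\|^2_{L^2(L^2)}$ and $\|\theta_h-\Pi^r_\tau\theta_h\|_{L^2(H^1)}$, which required $\theta_h\in H^{1/2}(I;H^1)$. We replace these by uniform-in-$h$ bounds $\|\theta_h\|_{H^{1/4}(I;H^1(\Omega))}\le C$ and $\|\theta_h\|_{H^{3/4}(I;L^2(\Omega))}\le C$, inherited from $\theta$ through the inverse-estimate trick of Lemma \ref{semi_discreYTheta:High_regular:stab}. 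To avoid needing point values of $\theta_h$ in time, we use the projection $P_\tau$ instead of $\Pi^r_\tau$ for the temperature; Lemma \ref{Proj:L2:time:err} then gives $\|\theta_h-P_\tau\theta_h\|_{L^2(I;H^1)}\le C\tau^{1/4}$. The explicit coupling terms $\beta(\theta_{n-1,h}\mathbf g,\cdot)$ and $b(\mathbf y_{n-1,h},\theta_{n,h},\cdot)$ create shift errors of the form $\theta_h(t_n)-\theta_h(t_{n-1})$, which we bound in $L^2$ by $\tau^{3/4}$-type quantities in an averaged sense. The velocity shift terms keep the bounds of Lemma \ref{semi_discreYTheta:stab}.

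\textbf{Main obstacle.} The hardest step is the Gronwall closure in the temporal step. Only $\sum_n\tau_n\|\theta_{n,h}\|^2_{H^1}$ is bounded, so the trilinear term $b(\mathbf e^{\mathbf y}_{n-1},\theta_{n,h},\cdot)$ must be controlled by a weight $\tau_n\|\theta_{n,h}\|^2_{H^1}$ that is merely summable. For this we use the smallness statement \eqref{err:conv:theta:bu}, derived from the $u\in L^2$ analogue of Remark \ref{remark:conv:Theta}, to absorb the implicit part. The discrete Gronwall inequality then closes the argument and yields $C(\tau^{1/4}+h^{1/2})$ after combining with the triangle inequality.
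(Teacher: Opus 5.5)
Your two-stage route (continuous $\to$ semi-discrete $\to$ fully discrete) is not the paper's. There the semi-discrete solution is not used at all: $(\mathbf y_\sigma,\theta_\sigma)$ is compared directly with $(\mathbf P_h\Pi^r_\tau\mathbf y,\Pi^r_\tau P_h\theta)$ in one fully discrete energy argument. This sidesteps redoing Lemmas \ref{Lem:semi:state:err}, \ref{semi_discreYTheta:High_regular:stab} and \ref{lem:Full:YTheta:time:err}, which are proved only for $u\in H^{\frac14}(I;L^2(\Gamma))\cap L^2(I;H^{\frac12}(\Gamma))$. Going through $\theta_h$ is legitimate in principle, but your spatial step fails at the choice of $R_h$. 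The Ritz projection acts, by \eqref{definite_Rhhh}, on $H^1(\Omega)$ but not on $H^1(\Omega)^*$. With only $\partial_t\theta\in L^2(I;H^1(\Omega)^*)$ and $\theta\in H^{\frac14}(I;H^1(\Omega))$, the function $R_h\theta$ therefore in general has no time derivative in $L^2(I;V_h)$. So $R_h\theta-\theta_h$ lacks the regularity needed for the energy identity, and in any case there is no estimate $\|\partial_t(\theta-R_h\theta)\|_{L^2(I;H^1(\Omega)^*)}\le Ch^{\frac12}$ to feed into Young's inequality. Keep $P_h$, as the paper does. It extends to $H^1(\Omega)^*$ through $(P_hg,\psi_h)=\langle g,\psi_h\rangle$, so $\langle\partial_t\theta,\psi_h\rangle=(\partial_tP_h\theta,\psi_h)$ and the time-derivative term drops out. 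The surviving terms $\chi a(\theta-P_h\theta,\cdot)+\eta\gamma(\theta-P_h\theta,\cdot)_\Gamma$ are bounded by $\|\theta-P_h\theta\|_{L^2(I;H^1(\Omega))}\le Ch^{\frac12}\|\theta\|_{L^2(I;H^{\frac32}(\Omega))}$, using the $H^1$-stability of $P_h$ on quasi-uniform meshes.

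Second, the Gronwall closure is circular as written. The $u\in L^2(I;L^2(\Gamma))$ analogue of \eqref{err:conv:theta:bu} is derived in the paper from the very lemma you are proving (see the proof of Lemma \ref{lem:stab:Full:adjoint:bu}). Lemma \ref{lem:Full:YTheta:bu} only bounds $\sum_n\tau_n\|\theta_{n,h}\|^2_{H^1(\Omega)}$. Smallness of $\sup_n\tau_n\|\theta_{n,h}\|^2_{H^1(\Omega)}$ needs strong convergence of $\theta_\sigma$ in $L^2(I;H^1(\Omega))$, which is the conclusion itself. The missing idea is to keep the discrete solution out of the weight: write $\theta_{n,h}$ as a projection of the reference solution minus the error, and use $b(\cdot,\psi,\psi)=0$. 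The paper does this with $\theta_{n,h}=P_h\theta(t_n)-\eta^\theta_{n,h}$, obtaining the weight $\tau_n\|P_h\theta(t_n)\|^2_{H^1(\Omega)}$, which is uniformly small by \eqref{App:err:B11}. In your setting, $\eta_n:=P^n_\tau\theta_h-\theta_{n,h}$ gives the weight $\tau_n\|P^n_\tau\theta_h\|^2_{H^1(\Omega)}\le\int_{I_n}\|\theta_h\|^2_{H^1(\Omega)}\,dt$. This is uniformly small once $\theta_h\to\theta$ in $L^2(I;H^1(\Omega))$, and needs no nodal values. Replacing $\Pi^r_\tau$ by $P_\tau$ is thus a sensible idea; the paper's own step obtains \eqref{App:err:B11} by applying Lemma \ref{int:L2:time:err} with $s=\frac14$, outside its stated range $s\in[\frac12,1]$. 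But you have not paid its price. The backward Euler quotient matches $\theta_h(t_n)-\theta_h(t_{n-1})$, so the error equation contains $\big((P^n_\tau\theta_h-\theta_h(t_n))-(P^{n-1}_\tau\theta_h-\theta_h(t_{n-1})),\eta_n\big)$. This term has to be summed by parts and absorbed into the numerical dissipation $\frac12\|\eta_n-\eta_{n-1}\|^2$. That requires $\sum_n\|P^n_\tau\theta_h-\theta_h(t_n)\|^2\le C\tau^{\frac12}$, e.g.\ a uniform bound of $\theta_h$ in $H^{\frac34}(I;L^2(\Omega))$. Neither that bound nor your $H^{\frac14}(I;H^1(\Omega))$ bound is inherited from Lemma \ref{semi_discreYTheta:High_regular:stab}. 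Its inputs, an $O(h)$ error in $L^2(I;H^1(\Omega))$ and $\partial_t\theta_h$ bounded in $L^2(I;L^2(\Omega))$, are unavailable for $u\in L^2(I;L^2(\Gamma))$, so both bounds need their own proof.
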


\begin{proof} We start with the following decomposition:
\begin{equation}\label{App:FULL:decomp:YTheta}
\begin{aligned}
\mathbf y-\mathbf y_{\sigma}=\mathbf y -\Pi^{\mathbf y}_\sigma \mathbf y+\Pi^{\mathbf y}_\sigma \mathbf y-\mathbf y_\sigma=\bm \zeta^{\mathbf y}_\sigma+\bm \eta^{\mathbf y}_\sigma, \quad
\theta-\theta_{\sigma}=\theta-\Pi^\theta_\sigma\theta+\Pi^\theta_\sigma\theta- \theta_\sigma=\zeta^{\theta}_\sigma+\eta^{\theta}_\sigma,
\end{aligned}
\end{equation}
where we set $\Pi^{\mathbf y}_\sigma \mathbf y=\mathbf P_h \Pi^r_\tau \mathbf y$, $\Pi^{\theta}_\sigma \theta=\Pi_\tau^rP_h\theta$,   $\bm \zeta^{\mathbf y}_{n, h}=\bm \zeta^{\mathbf y}_{\sigma}(t_n)$ and $\zeta^{\theta}_{n,h}=\zeta^\theta_\sigma(t_n)$. Integrating from $t_{n-1}$ to $t_n$ for \eqref{Regualrit:equa:reform}, \eqref{Exist:weak:solution:c},  subtracting \eqref{Full_discrete:state} from it with $(\mathbf v,\psi)=(\mathbf v_h,\psi_h)\equiv (\bm \eta^{\mathbf y}_{n, h}, \eta^\theta_{n, h})$ and using the decomposition \eqref{App:FULL:decomp:YTheta},  we obtain
\begin{align}
&\frac{1}{2}\|\bm \eta^{\mathbf y}_{n,h}\|^2-\frac{1}{2}\|\bm \eta^{\mathbf y}_{n-1,h}\|^2+\frac{1}{2}\|\eta^\theta_{n,h}\|^2-\frac{1}{2}\| \eta^\theta_{n-1,h}\|+\nu\tau_n\|\nabla \bm  \eta^{\mathbf y}_{n,h}\|^2+\tau_n\chi\|\nabla \eta^\theta_{n,h}\|^2+\tau_n\eta\gamma\|\eta^\theta_{n,h}\|^2_{\Gamma}\leq\nonumber\\
&-\int_{t_{n-1}}^{t_n}\big[\nu\mathbf a(\bm \zeta^{\mathbf y}_\sigma, \bm \eta^{\mathbf y}_{n,h})-(p,\mathrm{div}\,\bm\eta^{\mathbf y}_{n,h})-\chi a( \zeta^{\theta}_\sigma,  \eta_{n,h}^{\theta})-\eta\gamma(\zeta^{\theta}_\sigma,\eta_{n,h}^{\theta})_\Gamma-\beta (\zeta^\theta_\sigma+P_h(\theta(t_n)-\theta(t_{n-1}))+\eta^\theta_{n-1,h},\bm \eta^{\mathbf y}_{n,h}\cdot \mathbf g)\big] dt\nonumber\\
&+\int_{t_{n-1}}^{t_n}\big[\mathbf b(\mathbf y_{n-1,h},\mathbf y_{n,h},\bm \eta^{\mathbf y}_{n,h})-\mathbf b(\mathbf y, \mathbf y,\bm  \eta^{\mathbf y}_{n,h}) \big]dt+\int_{t_{n-1}}^{t_n}\big[\tau_n b(\mathbf y_{n-1,h}, \theta_{n,h},\eta^\theta_{n,h})-b(\mathbf y, \theta, \eta^\theta_{n,h})\big]dt\nonumber\\
&=J_1+J_2+J_3.\label{App:err:lem:1}
\end{align}
Using Assumption \textbf{(A2)}, we can obtain 
\begin{align*}
J_1&\leq \frac{\nu}{4}\tau_n\|\nabla\bm \eta^{\mathbf y}_{n,h}\|^2+\frac{\chi}{6}\tau_{n}\|\nabla  \eta^{\theta}_{n,h}\|^2+\frac{\eta\gamma}{6}\tau_{n}\|\eta^\theta_{n,h}\|^2_\Gamma+C\Big(\tau_n\|\eta^\theta_{n-1,h}\|^2+\int_{t_{n-1}}^{t_n}\big[\|\nabla\bm \zeta^{\mathbf y}_\sigma\|^2+\|\zeta^\theta_\sigma\|^2_{H^1(\Omega)}\\
&+h^2\|p\|^2_{H^1(\Omega)}\big]dt+\tau_n^2\|\partial_t\theta\|^2_{H^{1}(\Omega)^*}\Big).
\end{align*}
Using an argument similar to \eqref{Full:err:YTheta:bu:1}, we have the estimates for $J_2$ and $J_3$
\begin{align*}\footnotesize
&J_2\leq \frac{\nu\tau_n}{4}\|\nabla \bm \eta^{\mathbf y}_{n,h}\|^2+\frac{\nu\tau_{n-1}}{8}\|\nabla \bm \eta^{\mathbf y}_{n-1,h}\|^2+C\big(\int_{t_{n-1}}^{t_n}\|\nabla \bm \zeta^{\mathbf y}_{\sigma}\|^2dt+\tau_{n}\|\bm \eta^{\mathbf y}_{n-1,h}\|^2+\tau_n \int_{t_{n-1}}^{t_n}\|\partial_t \mathbf y\|dt\,\|\nabla(\mathbf y(t_n)-\mathbf y(t_{n-1}))\|\big),\\
&J_3\leq\frac{\nu}{8}\tau_{n-1}\|\nabla \bm \eta^{\mathbf y}_{n-1,h}\|^2+ \frac{\chi}{6}\tau_{n}\|\nabla  \eta^{\theta}_{n,h}\|^2+\frac{\eta\gamma}{6}\tau_{n}\|\eta^\theta_{n,h}\|^2_\Gamma+C\Big(\int_{t_{n-1}}^{t_n}\|\partial_t \mathbf y\|dt\Big(\int_{t_{n-1}}^{t_n}\|\theta\|^2_{H^1(\Omega)}dt\\
&+\int_{t_{n-1}}^{t_n}\|\Pi^\theta_\sigma\theta\|^2_{H^1(\Omega)}dt\Big)+\int_{t_{n-1}}^{t_n}\|\zeta_\sigma^\theta\|^2_{H^1(\Omega)}dt+\tau_{n}\|\Pi^\theta_\sigma \theta\|^2_{H^1(\Omega)}\|\bm \eta^{\mathbf y}_{n-1,h}\|^2+\tau_{n}\|\Pi^\theta_\sigma \theta_{n,h}\|^2_{H^1(\Omega)}\|\eta_{n,h}^\theta\|^2\Big).
\end{align*}
 Thus, combining the above estimates with \eqref{App:err:lem:1} and summing over $n = 1, \dots, k$ for any $1 \leq k \leq N_\tau$, we obtain the following bound
\begin{align}
&\|\bm \eta_{k,h}^{\mathbf y}\|^2+\| \eta_{k,h}^{\theta}\|^2+\nu\sum_{n=1}^k\tau_n\|\nabla \bm \eta_{n,h}^{\mathbf y}\|^2+\chi\sum_{n=1}^k\tau_n\|\nabla \eta_{n, h}^\theta\|^2+\eta\gamma\sum_{n=1}^k\tau_n\|\eta_{n,h}^\theta\|^2_\Gamma\nonumber\\
&\leq C  \Big(\int_{0}^{t_k}\big[\|\nabla \bm \zeta^{\mathbf y}_\sigma \|^2+\|\zeta_{\sigma}^\theta\|^2_{H^1(\Omega)}+h^2\|p\|_{H^1(\Omega)}^2\big]dt+\tau^\frac{1}{2}\big(\int_0^{T}\|\partial_t\mathbf  y\|^2dt\big)^\frac{1}{2}\sum_{n=1}^{k}\tau_n\|\nabla\big( \mathbf y(t_n)-\mathbf y(t_{n-1})\big)\|\nonumber\\
&+\tau_n^2\|\partial_t\theta\|^2_{H^{1}(\Omega)^*}+\tau^\frac{1}{2}\big(\int_0^{T}\|\partial_t\mathbf  y\|^2dt\big)^\frac{1}{2}\int_{0}^{t_k}\big[\|\theta\|^2_{H^1(\Omega)}+\|\Pi^\theta_\sigma \theta\|^2_{H^1(\Omega)}\big]dt\nonumber\\
&+\sum_{n=1}^{k}\tau_{n}\big(\|P_h \theta(t_n)\|^2_{H^1(\Omega)}\|\bm \eta^{\mathbf y}_{n-1,h}\|^2+\tau_n\|\eta^\theta_{n-1,h}\|^2+\|P_h \theta(t_n)\|^2_{H^1(\Omega)}\|\eta_{n,h}^\theta\|^2)\Big).\label{App:err:B10}
\end{align}
Since $L^2(I; {H}^{\frac{3}{2}}(\Omega))\cap H^1(I;H^{1}(\Omega)^*)) \hookrightarrow C(\bar{I}; L^2(\Omega))$, it follows that $\theta, P_h\theta \in C(\bar{I}; L^2(\Omega))$, and consequently $P_h\theta \in C(\bar{I}; H^1(\Omega))$. Additionally, using the interpolation space $\big[L^2(I;H^\frac{3}{2}(\Omega)), H^\frac{3}{4}(I;L^2(\Omega))\big]_\frac{1}{3} = H^\frac{1}{4}(I;H^1(\Omega))$ (cf. \cite[Chapter 1, Equation 9.24]{Lions_1972}), we have $\theta \in H^\frac{1}{4}(I;H^1(\Omega))$. Next, employing the $H^{1}$-stability of the $L^2$ projection, we obtain
\[\|P_h \theta\|_{H^\frac{1}{4}(I;H^1(\Omega))}\leq C \|\theta\|_{H^\frac{1}{4}(I;H^1(\Omega))}.\]
Therefore,  applying Lemmas \ref{Lem:L2:err} and \ref{int:L2:time:err}, we have 
\begin{equation}
\begin{aligned}
\|\theta-\Pi^\theta_\sigma\theta\|_{L^2(I;H^1(\Omega))}\leq\|\theta-P_h\theta\|_{L^2(I;H^1(\Omega))}+\|P_h\theta-\Pi^\theta_\sigma\theta\|_{L^2(I;H^1(\Omega))}\leq C\big(\tau^\frac{1}{4}+h^\frac{1}{2}\big).\label{App:err:B11}
\end{aligned}
\end{equation}
Similar to Remark \ref{remark:conv:Theta},  \eqref{App:err:B11} implies that $\tau_n\|P_h \theta(t_n)\|^2_{H^1(\Omega)} \to 0$ uniformly in $n$ as $\sigma \to 0$. Hence, we can choose the temporal and space  mesh sizes fine enough such that $C  \tau_n \|P_h \theta(t_n)\|^2_{H^1(\Omega)} \leq \frac{1}{2}$ holds for all $n$. Combining \eqref{App:err:B10} and \eqref{App:err:B11} with Lemmas \ref{Lem:L2:err} and \ref{int:L2:time:err}, and applying Gronwall's inequality together with the triangle inequality, completes the proof.
\end{proof}

\end{appendices}

\end{document}